\def\?[#1]{\textbf{[#1]}\marginpar{\Large{\textbf{??}}}}
\renewcommand{\tilde}{\widetilde}          
\DeclareMathSymbol{\leqslant}{\mathalpha}{AMSa}{"36} 
\DeclareMathSymbol{\geqslant}{\mathalpha}{AMSa}{"3E} 
\DeclareMathSymbol{\eset}{\mathalpha}{AMSb}{"3F}     
\renewcommand{\leq}{\;\leqslant\;}                   
\renewcommand{\geq}{\;\geqslant\;}                   
\renewcommand{\d}{\mathrm{d}}             
\numberwithin{equation}{section}
\newtheorem{theorem}{Theorem}[section]
\newtheorem{lemma}[theorem]{Lemma}
\newtheorem{proposition}[theorem]{Proposition}
\newtheorem{corollary}[theorem]{Corollary}
\theoremstyle{remark}
\newtheorem{remark}{Remark}
\theoremstyle{definition}
\newtheorem{definition}{Definition}[section]
\newcommand{\C}{\mathbb{C}}
\newcommand{\D}{\mathbb{D}}
\newcommand{\R}{\mathbb{R}}
\newcommand{\Z}{\mathbb{Z}}
\newcommand{\N}{\mathbb{N}}
\newcommand{\E}{\mathbb{E}}
\renewcommand{\P}{\mathbb{P}}
\newcommand{\T}{\mathbb{T}}
\renewcommand{\Im}{\mathrm{Im}}
\renewcommand{\Re}{\mathrm{Re}}
\newcommand{\la}{\lambda}
\newcommand{\cC}{\mathcal{C}}
\newcommand{\cN}{\mathcal{N}}
\newcommand{\ind}{\mathds{1}}
\newcommand{\cD}{\mathcal{D}}
\newcommand{\supp}{\mathrm{supp}}
\renewcommand{\hat}{\widehat}
\newcommand{\del}{\partial}
\newcommand{\bv}{\mathbf{v}}
\newcommand{\norm}[1]{\left\Vert #1\right\Vert}
\newcommand{\cQ}{\mathcal{Q}}
\newcommand{\bL}{\mathbf{L}}
\newcommand{\bH}{\mathbf{H}}
\newcommand{\eps}{\epsilon}
\newcommand{\mc}{\mathcal}
\newcommand{\cjg}{\langle}
\newcommand{\cjd}{\rangle}
\newcommand{\pl}{\partial}
\renewcommand{\varepsilon}{\epsilon}
\renewcommand{\log}{\ln}
\newcommand{\dd}{\text{\rm d}}
\newcommand{\bbar}{\overline}
\begin{document}

\title{The Virasoro structure and the scattering matrix for Liouville conformal field theory}

\author{Guillaume Baverez}
\address{Humboldt-Universit\"at zu Berlin, Institut f\"ur Mathematik, Rudower Chaussee 25, 12489 Berlin, Germany}
\email{guillaume.baverez@hu-berlin.de}

\author{Colin Guillarmou}
\address{Universit\'e Paris-Saclay, CNRS,  Laboratoire de math\'ematiques d'Orsay, 91405, Orsay, France.}
\email{colin.guillarmou@universite-paris-saclay.fr}

\author{Antti Kupiainen}
\address{University of Helsinki, Department of Mathematics and Statistics}
\email{antti.kupiainen@helsinki.fi}

\author{R\'emi Rhodes}
\address{Aix Marseille Univ, CNRS, I2M, Marseille, France\\
Institut Universitaire de France (IUF)}
\email{remi.rhodes@univ-amu.fr}

\author{Vincent Vargas}
\address{Universit\'e de Gen\`eve, Section de math\'ematiques, UNI DUFOUR, 24 rue du G\'en\'eral Dufour,
CP 64 1211 Geneva 4, Switzerland}
\email{Vincent.Vargas@unige.ch}

\begin{abstract} 
In this work, we construct a representation of the Virasoro algebra in the canonical Hilbert space associated to Liouville conformal field theory. 
The study of the Virasoro operators is performed through the introduction of a new family of Markovian dynamics associated to holomorphic vector fields defined in the disk. As an output, we show that the Hamiltonian of Liouville conformal field theory can be diagonalized 
through the action of the Virasoro algebra. This enables to show that the scattering matrix of the theory is diagonal and that the 
family of the so-called primary fields (which are eigenvectors of the Hamiltonian) admits an analytic extension to the whole complex plane, as conjectured in the physics literature. 
\end{abstract}

\maketitle

\section{Introduction and main results}

Liouville conformal field theory (LCFT) is a family of conformal field theories which arises in a wide variety of contexts ranging form random planar maps to 4d gauge theory. It was introduced by Polyakov in 1981 \cite{Pol} in his attempt to construct string theory; in this paper, LCFT appears under the form of a 2d version of a Feynman path integral. Recently, in a series of works, a rigorous probabilistic construction of the path integral was provided using the theory of the Gaussian Free Field and the theory of Gaussian multiplicative chaos: see \cite{DKRV} for the sphere, \cite{DRV} for the torus and \cite{GRV} for higher genus surfaces. In this theory, the main objects are the correlation functions of fields $V_\alpha$, denoted $\cjg V_{\alpha_1}(z_1)\dots V_{\alpha_m}(z_m)\cjd_{(\Sigma,g)}$, associated to $m$ marked points $z_1,\dots,z_m$ on a closed Riemannian surface $(\Sigma,g)$. 
A general formalism, called \emph{conformal bootstrap} in physics by Belavin-Polyakov-Zamolodchikov \cite{BPZ}, was developed in order to find explicit formulas for these correlation functions in terms of the $3$-point function on the sphere $\mathbb{S}^2$ and the so-called \emph{conformal blocks}, which are holomorphic functions of the points $z_j$ and the moduli of the Riemann surface $(\Sigma,g)$. The  conformal bootstrap method relies on the representation theory of an infinite dimensional Lie algebra of operators,  called the \emph{Virasoro algebra}, encoding the conformal symmetries of the system. It is formally generated by elements $({\bf L}_n)_{n\in \Z}$ together with  a central element denoted $c_L$, called \emph{the central charge}, with commutation relations   
\[ [\mathbf{L}_n,\mathbf{L}_m]=(n-m)\mathbf{L}_{n+m}+\frac{c_L}{12}(n^3-n)\delta_{n,-m}, \quad [\mathbf{L}_n, c_L]=0.   \]

In the context of the probabilistic  construction of LCFT,   the conformal bootstrap has  recently been established by the last 4 authors in the papers \cite{dozz,GKRV,GKRV1}. To implement this programme, 
a Hilbert space $\mathcal{H}=L^2(H^{-s}(\mathbb{T}),\mu_0)$ has been introduced where $H^{-s}(\T)$ is the Sobolev space of order $-s<0$ on the unit circle $\T=\{z\in \C\,|\, |z|=1\}$ 
and $\mu_0$ is the distribution of the Gaussian Free Field on the extended complex plane $\hat{\mathbb{C}}$ restricted to $\T$ times the Lebesgue measure for the zero Fourier mode on $\T$; the Hilbert space $\mc{H}$ is therefore a space of fields $\varphi$ on the circle. 

The dilation $s_{e^{-t}}: z\mapsto e^{-t}z$ in $\C$ for $t\geq 0$ is a conformal transformation that can be obtained as the flow of the holomorphic vector field $-z\pl_z$. By composing the Gaussian Free Field and Gaussian multiplicative chaos on the unit disk $\D\subset \C$ by $s_{e^{-t}}$, 
a Markovian dynamic has been constructed in \cite{GKRV} which produces a contraction semi-group $e^{-t{\bf H}}:\mc{H}\to \mc{H}$, for some operator  $\bH$  called the Hamiltonian  of LCFT. The diagonalisation in $\mc{H}$ of this Hamiltonian has then been performed in \cite{GKRV}, providing a Plancherel type formula which, when applied to  correlation functions, leads   to the explicit formulas involving the conformal blocks in  \cite{GKRV,GKRV1}. The  operator product expansion used in the physics literature can then be interpreted in terms of the 
 eigenbasis of $\bH$. The family of eigenfunctions is denoted 
 $(\Psi_{Q+iP,\nu,\tilde{\nu}})_{P,\nu,\tilde{\nu}}$ 
 where $P\in \R_+$ and $\nu,\tilde{\nu}$ are Young diagrams: the spectrum of $\bH$ is absolutely continuous and equal to $[Q^2/2,\infty)$ (with $Q>2$).
This family carries an important algebraic structure, namely that  the Young diagrams encode the action of the Virasoro generators on the highest weight vector $\Psi_{Q+iP}:=\Psi_{Q+iP,0,0}$:
\begin{equation}\label{descendantsLn}
\Psi_{Q+iP,\nu,\tilde{\nu}} = {\bf L}_{-\nu_k}\dots {\bf L}_{-\nu_1}\tilde{\bf L}_{-\tilde{\nu}_{k'}}\dots\tilde{\bf L}_{-\tilde{\nu}_1}\Psi_{Q+iP}
\end{equation}
where $\nu=(\nu_1,\dots,\nu_k)\in \N^k$ with $\nu_j\geq \nu_{j+1}$ for all $j$, $\tilde{\nu}=(\tilde{\nu}_1,\dots,\tilde{\nu}_{k'})\in \N^{k'}$ with $\tilde{\nu}_j\geq \tilde{\nu}_{j+1}$ for all $j$,  with ${\bf L}_n,\tilde{\bf L}_m$ being two commuting representations in $\mc{H}$ of the Virasoro algebra with central charge $c_L=1+6Q^2$, 
such that $[{\bf L}_n,\tilde{\bf L}_m]=0$. This can be compared to the way   the harmonic oscillator in $\R^n$ is diagonalised by the action of  the Heisenberg algebra on the constant function (the ground state), except that here one has a continuous family of highest weight states indexed by the  parameter $P>0$ in addition.

Yet the construction in \cite{GKRV} of these descendant states $\Psi_{Q+iP,\nu,\tilde{\nu}}$  does not make  the action  of the Virasoro algebra transparent. Indeed, in the Liouville CFT, the construction of the Virasoro generators ${\bf L}_n$, $\tilde{\bf L}_m$ is technically subtle due to singularities     coming from the potential  (i.e. Gaussian multiplicative chaos). Instead,  \cite{GKRV} bypassed the construction of the Virasoro generators using   scattering theory to construct the descendant states 
$\Psi_{Q+iP,\nu,\tilde{\nu}}$ and heavy algebraic computations (based on Ward identities) to derive algebraic relations between those eigenstates needed to prove the conformal bootstrap. This leads to a picture that is not completely satisfactory, not only  from the conceptual angle but also from the technical angle, because of  certain restrictions on the analytic extension of $ \Psi_{\alpha,\nu,\tilde{\nu}}$ from the line 
$\alpha \in Q+i\R$ to the complex plane. 

The physics literature links the Virasoro generators $({\bf L}_n)_{n\in \Z}$ (or $(\tilde{\bf L}_n)_{n\in \Z}$) to the dynamics obtained as  flows of holomorphic vector fields of the form $v(z)\partial_z$,  generalizing this way the picture drawn in the case of dilations (see for instance \cite[Ch. 9]{polch} or \cite{Gaw}). Yet, as stressed in \cite{Gaw}, a rigorous interpretation of these operators from this angle is far from being straightforward due to the intricate structure of their domains. In this work and inspired by these ideas, we give a probabilistic construction of the operators ${\bf L}_n$ using a family of Markovian dynamics generated by some flows of holomorphic vector fields, as suggested in the first author's doctoral dissertation \cite[Section 1.5.2]{these}. The crux of the matter is that the Markovian nature leads to perfectly well defined operators when seen as generators of contraction semigroups: this property is valid under some conditions of the holomorphic vector field, in which case the vector field will be called Markovian, and 
the case of non Markovian vector field is then treated via polarization type arguments. This leads to an explicit construction of the whole Virasoro algebra as unbounded operators ${\bf L}_n$  acting on $\mc{H}$, 
and we show that the descendant states 
$\Psi_{Q+iP,\nu,\tilde{\nu}}$ constructed by scattering theory in \cite{GKRV} are related to the highest weight state $\Psi_{Q+iP}$ by the relation 
\eqref{descendantsLn}.  This point of view is new and offers a different perspective from the algebraic approach usually developed in physics. Let us stress that the results of this paper rely heavily on the scattering theory part of \cite{GKRV} and therefore this paper does not simplify the arguments of \cite{GKRV}: it may  rather be seen as a conceptualization of \cite{GKRV}. Furthermore it develops new tools that will be needed to the future study of conformal blocks. Let us now explain this construction and its applications. 

\subsection{Hilbert space of Liouville CFT}

The Hilbert space $\mathcal{H}$ can be constructed explicitly as follows. Let $\T$ denote the standard unit circle in the complex plane and $\Z^{\ast}= \Z \setminus \lbrace 0 \rbrace $ be the set of non zero integers. We consider the space $\R \times \Omega_\T$ where $\Omega_\T$ is the set of real sequences $(x_n)_{n\geq 1}$ and $(y_n)_{n\geq 1}$. We introduce $(\varphi_{n})_{n \in \Z^{\ast}}$ (with $\varphi_{-n}=\bbar\varphi_{n}$) such that $\varphi_n:=\frac{1}{2\sqrt{n}}(x_n+iy_n)$. The Hilbert space $\mathcal{H}$ is then  the space $L^2(\R \times \Omega_\T)$ equipped with the measure $\d c \otimes \P_\T$ (and the standard Borel sigma algebra) where $\d c$ denotes the Lebesgue measure and $\P_\T$ is the Gaussian measure
 \begin{align}\label{Pdefin}
 \P_\T:=\bigotimes_{n\geq 1}\frac{1}{2\pi}e^{-\frac{1}{2}(x_n^2+y_n^2)}\dd x_n\dd y_n.
\end{align}
The inner product on $L^2(\R \times \Omega_\T)$ is denoted $\langle \cdot , \cdot\rangle_2$ and the norm $ \| \cdot \|_2$. Expectation with respect to $\P_\T$ will be denoted $\E[\cdot]$. Under $ \P_\T$ the random variable  
\begin{equation}\label{GFFcircle}
\varphi(\theta)=\sum_{n\not=0}\varphi_ne^{in\theta} 
\end{equation}
is the Gaussian Free Field (GFF) on the circle with covariance  
\begin{equation}
\E[\varphi({\theta})\varphi({\theta'})]=-\log |e^{i\theta}-e^{i\theta'}|.
\end{equation}
In fact, we will see $\varphi$ as a variable defined on a larger probability space $\Omega$ where the underlying probability measure will be denoted $\P$ (and expectation $\E[\cdot]$): see subsection \ref{subsectionGMC}. Also we will often identify the sequence $(c,(\varphi_{n})_{n \in \Z^{\star}})$ with the corresponding Fourier series $c+ \sum_{n\not=0}\varphi_ne^{in\theta} $, which is almost surely an element of $H^{-s}(\T):=\{\sum a_ne^{in\theta}\,|\,\sum_n (1+|n|)^{-2s}|a_n|^2<\infty\}$
 for $s>0$. We denote by $\norm{\cdot}_{H^{-s}(\T)}$ and $\langle\cdot,\cdot\rangle_{H^{-s}(\T)}$ the norm and inner-product defined by the sum in the definition of $H^{-s}(\T)$. The space $L^2(\R \times \Omega_\T)$ is then equivalent to $L^2(H^{-s}(\T),\mu_0)$ where $\mu_0=\varphi_*(\dd c\otimes \P_\T)$ ($\varphi_*$ denotes pushforward). 

\subsection{Semi-groups of Liouville CFT and representations of Virasoro algebra}
The Virasoro algebra ${\rm Vir}(c_L)$ is by definition a central extension of the Witt algebra, whose elements are represented by vector fields 
$-z^{n+1}\pl_z$ for $z\in \C\setminus \{0\}$ if $n\in \Z$. The central element, being denoted $c_L$ in our setting, will simply be a constant $c_L:=1+6Q^2$ where $Q\in(2,\infty)$. To represent ${\rm Vir}(c_L)$ into $\mc{H}$, we first consider a certain family of holomorphic vector fields 
$\bv$, called \emph{Markovian} and defined in the closed unit disk $\D:=\{z\in \C\,|\, |z|\leq 1\}$, of the form $\bv=v(z)\pl_z$ where $v(z)=-\sum_{n=-1}^\infty v_nz^{n+1}$, satisfying the property 
${\rm Re}(\bar{z}v(z))<0$ for $ z \in \T$. Each such Markovian vector field generates a flow of holomorphic transformations $f_t: \D\to \D$ solving 
$\pl_t f_t(z)=v(f_t(z))$ with initial condition $f_0(z)=z$, with a unique fixed point in $\D$ and such that $f_{t'}(\D)\subset f_t(\D)$ if $t'\geq t$: as $t\to +\infty$, $f_t$ contracts $\D$ to the unique zero of $v$ in $\D$, which is the attractor of the flow $f_t$. Up to composing $v$ with a M\"obius transformation, we shall choose $v$ so that $v_{-1}=v(0)=0$ and $v_0=-v'(0)=\omega>0$. 

 We consider $X= P\varphi+X_\D$ where $P\varphi$ is the harmonic extension of $\varphi$ on $\D$ and $X_\D$ an independent Dirichlet Gaussian Free Field. One can then define a semigroup $P_t$ on $L^2(\R \times \Omega_\T)$ as follows: for $F=F(c,\varphi)$ 
 depending only on finitely many variables $(\varphi_n)_{|n|\leq N}$ and decaying faster than any exponentials when $|c|\to \infty$ (this set of functions will be denoted $\mc{C}_{\rm exp}$ and is defined rigorously in Section \ref{freeham}), let 
\begin{equation} \label{FeynmanKacintro}
  P_t F(c,\varphi)   :=    |f'_t(0)|^{\frac{Q^2}{2}}    \E_\varphi\Big[  F\Big (  c+ \big(X \circ f_t  + Q \log  \frac {|f_t'|} {|f_t|}\big)\Big|_{\T}    \Big )     e^{- \mu e^{\gamma c} \int_{\D\setminus f_t(\D)}     \frac{e^{\gamma X(x)}} {|x| ^{\gamma Q} } \dd x  } \Big]  
 \end{equation}
where for a function $u$, $u|_{\T}  $ denotes restriction of the function to the unit circle $\T$ , $\E_\varphi[ \cdot ]$  denotes the conditional expectation with respect to $\varphi$, $\mu>0, \gamma\in (0,2)$ are some parameters and $Q=2/\gamma+\gamma/2$.  Since the Gaussian field $X$ is not defined pointwise, the  term $ \int_{\D\setminus f_t(\D)}    \frac{ e^{\gamma X(x)}} {|x| ^{\gamma Q} } \dd x $ is defined via a renormalisation procedure and yields a non trivial quantity, i.e. non zero, for $\gamma \in (0,2)$ (see \eqref{GMCsphere})\footnote{The condition $\gamma \in (0,2)$ in this paper comes from this non triviality result on Gaussian multiplicative chaos. It is a restriction of the probabilistic approach; indeed  LCFT in the physics literature is studied for all $\gamma \in \C$.  }; 
it is called the Gaussian multiplicative chaos measure of $\D\setminus f_t(\D)$. One can check that $P_t$ indeed defines a Markovian semigroup.

In the simple case where $v(z)=-z$, the flow has expression $f_t(z)=e^{-t}z$, and the semigroup $P_t$ coincides with the semigroup $e^{-t\bH}$, where ${\bf H}$ is the self-adjoint Hamiltonian defined and studied 
in \cite{GKRV}, with expression
\[\bH=-\frac{1}{2}\pl_c^2+\frac{Q^2}{2}+{\bf P}+\mu e^{\gamma c}V(\varphi)\]
where ${\bf P}$ and $V$ are unbounded non-negative operators on $L^2(\Omega_\T)$ defined by
\[{\bf P}:=2\sum_{n\geq 1} ( {\bf A}_n^*{\bf A}_n+\tilde{\bf A}_n^*\tilde{\bf A}_n), \quad V(\varphi) =\int_0^{2\pi}e^{\gamma \varphi(\theta)}\dd \theta \]
and ${\bf A}_n:=\frac{i}{2}\sqrt{n}(\pl_{x_n}-i\pl_{y_n})$, $\tilde{\bf A}_n:=\frac{i}{2}\sqrt{n}(\pl_{x_n}+i\pl_{y_n})$. 
Here, ${\bf P}$ has discrete spectrum equal to $\N$ and the operator $V$ has to be defined using Gaussian multiplicative chaos theory with a renormalisation procedure (see \eqref{GMCcircle}): it is a non-negative unbounded  operator, which becomes a multiplication by an $L^{p}(\Omega_\T)$ function for $p<2/\gamma^2$ when $\gamma<\sqrt{2}$. The operator ${\bf H}$ defines a quadratic form $\mc{Q}(F,F):=\cjg \bH F,F\cjd_2$ and we denote $\mc{D}(\mc{Q})$ its domain, and $\mc{D}'(\mc{Q})$ its dual.
We will also use the notation ${\bf A}_0=\tilde{\bf A}_0:=\frac{i}{2}(\pl_c+Q)$, ${\bf A}_{-n}:={\bf A}_n^*$ and $\tilde{\bf A}_{-n}:=\tilde{\bf A}_n^*$ if $n>0$, where the adjoint is taken with respect to the  scalar product on $\mc{H}$.

Our first main theorem is: 
\begin{theorem}\label{theoremfreefieldintro}
 Let $\bv=v(z)\pl_z$ be a Markovian vector field with $v(z)=-\sum_{n=0}^\infty v_{n}z^{n+1}$ and $v'(0)=-\omega$ for $\omega>0$ such that $v$ admits a holomorphic extension in a neighborhood of $\D$. If $\omega> 0$ is large enough\footnote{One could in fact work with a general complex $\omega$ with a large real part and still get the same results but we will stick to real $\omega$ for simplicity.},  depending on the sequence $(v_n)_{n\geq 0}$, then the operator $P_t$ is a contraction semi-group on $L^2(\R\times \Omega_\T)$ whose generator $\bH_\bv$ has the form 
\[\bH_\bv= \omega {\bf H}+ \sum_{n\geq 1}v_n\, \mathbf{L}_n  + \sum_{n\geq 1}\bbar{v_n} \, \widetilde{\mathbf{L}}_n,\]
where $\bH_\bv, {\bf L}_n, \tilde{\bf L}_n$ are bounded as linear maps $\mc{D}(\mc{Q})\to \mc{D}'(\mc{Q})$ but unbounded on $L^2(\R\times \Omega_\T)$. Moreover, they are given by the formula 
\begin{equation}\label{virasoro}
\begin{gathered}
\mathbf{L}_n:=-i(n+1)Q\mathbf{A}_n+\sum_{m\in\Z}:\mathbf{A}_{n-m}\mathbf{A}_m: +  \frac{\mu}{2} e^{\gamma c} \int_0^{2 \pi}  e^{in \theta} e^{\gamma \varphi(\theta)}  \dd \theta,\\
\widetilde{\mathbf{L}}_n:=-i(n+1)Q\widetilde{\mathbf{A}}_n+\sum_{m\in\Z}:\widetilde{\mathbf{A}}_{n-m}\widetilde{\mathbf{A}}_m: 
+  \frac{\mu}{2} e^{\gamma c} \int_0^{2 \pi}  e^{-i n \theta} e^{\gamma \varphi(\theta)}  \dd \theta 
\end{gathered}
\end{equation}
where the normal order product $:{\bf A}_n{\bf A}_m:$ is defined as ${\bf A}_n{\bf A}_m$ if $m>0$ or ${\bf A}_m{\bf A}_n$ if $n>0$.
 \end{theorem}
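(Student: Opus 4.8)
The plan is to read off all three conclusions — contraction, invariant measure, and the generator formula — from the right-hand side of \eqref{FeynmanKacintro}, systematically separating the Gaussian (``free'', $\mu=0$) semigroup $P_t^0$ from the potential (Gaussian multiplicative chaos) part.

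\textbf{Semigroup property and contraction.} First I would prove the Markov identity $P_{t+s}=P_t\circ P_s$. It rests on two inputs: the autonomy of the flow, which gives $f_{t+s}=f_t\circ f_s$ and the disjoint decomposition $\D\setminus f_{t+s}(\D)=(\D\setminus f_t(\D))\sqcup f_t(\D\setminus f_s(\D))$; and the domain Markov property of $X=P\varphi+X_\D$, which lets one resample the field after composing with $f_t$ while the conformal covariance of the chaos turns the integral over $f_t(\D\setminus f_s(\D))$ into the $s$-integral for the pushed-forward field, producing exactly the factors $|f_t'(0)|^{Q^2/2}$ and $Q\log(|f_t'|/|f_t|)$. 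For contractivity I would use the pointwise bound $|P_tF|\leq P_t^0|F|$, valid because the chaos weight is $\leq1$; this reduces $L^2$-contractivity of $P_t$ to that of $P_t^0$. The latter is an explicit Mehler-type operator (conditionally on $\varphi$, $(X\circ f_t+Q\log|f_t'/f_t|)|_\T$ is Gaussian with mean $(P\varphi\circ f_t+Q\log|f_t'/f_t|)|_\T$ and covariance $G_\D(f_t(\cdot),f_t(\cdot))$), and its contractivity for $\omega$ large follows from m-accretivity of the free generator $\omega\bH_0+\sum_{n\geq1}v_n\bL_n^0+\sum_{n\geq1}\bar v_n\tilde\bL_n^0$, where $\bH_0:=-\tfrac12\pl_c^2+\tfrac{Q^2}2+\bP\geq\tfrac{Q^2}2$: since $v$ is holomorphic past $\T$ its Taylor coefficients $v_n$ ($n\geq1$) are fixed and decay exponentially, so for $\omega$ large the perturbation $\sum v_n\bL_n^0+\mathrm{c.c.}$ is $\bH_0$-form bounded with relative bound $<1$ and sub-leading, whence accretivity.

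\textbf{The free generator.} On the core $\mc{C}_{\rm exp}$ I would expand $P_t^0F$ to first order in $t$. Using $f_t(z)=z+tv(z)+O(t^2)$ and $|f_t'(0)|^{Q^2/2}=1-\tfrac{Q^2}2\omega t+O(t^2)$, the $O(t)$ part of $F-P_t^0F$ splits into a first-order-in-$X$-derivatives piece — coming from the transport $v(z)\pl_zX+\overline{v(z)}\pl_{\bar z}X$ of the field and from the shift $\pl_t\big(Q\log(|f_t'|/|f_t|)\big)\big|_{0}=-Q\sum_{k\geq1}k\,\Re(v_ke^{ik\theta})$ — and a second-order piece coming from the $O(t)$ expansion of the covariance $G_\D(f_t(\cdot),f_t(\cdot))$. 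Re-expressing in the modes $\bA_n,\tilde\bA_n$ (on $\mc{C}_{\rm exp}$ one has $\pl_{\varphi_n}=\bA_n$), the first piece produces the linear terms $-i(n+1)Q\bA_n$ and the non-normal-ordered part of $\sum_{m}:\!\bA_{n-m}\bA_m\!:$, and the second piece produces the remaining contracted terms; this is the rigorous version of the free-boson (Feigin--Fuks) representation of the Virasoro modes with background charge $Q$. The $O(t^2)$ remainder is controlled uniformly on $\mc{C}_{\rm exp}$ by a Gaussian computation, and the bounds $|\langle\bL_n^0F,G\rangle_2|\lesssim\|F\|_{\mc{D}(\mc{Q})}\|G\|_{\mc{D}(\mc{Q})}$ follow by power counting in $\bP$.

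\textbf{The potential term (the main obstacle).} The genuinely new point is the chaos contribution. The complement $\D\setminus f_t(\D)$ is a thin collar of $\T$ of radial width $t\,|\Re(e^{-i\theta}v(e^{i\theta}))|+O(t^2)$ at angle $\theta$, so one expects
\[\frac1t\,\mu e^{\gamma c}\int_{\D\setminus f_t(\D)}\frac{e^{\gamma X(x)}}{|x|^{\gamma Q}}\,\dd x\;\xrightarrow{t\to 0}\;\mu e^{\gamma c}\int_0^{2\pi}\big|\Re(e^{-i\theta}v(e^{i\theta}))\big|\,e^{\gamma\varphi(\theta)}\,\dd\theta,\]
and since $|\Re(e^{-i\theta}v(e^{i\theta}))|=\omega+\tfrac12\sum_{n\geq1}(v_ne^{in\theta}+\bar v_ne^{-in\theta})$ the right-hand side equals $\omega\,\mu e^{\gamma c}V(\varphi)+\tfrac\mu2 e^{\gamma c}\sum_{n\geq1}\big(v_n\int_0^{2\pi}e^{in\theta}e^{\gamma\varphi}\dd\theta+\bar v_n\int_0^{2\pi}e^{-in\theta}e^{\gamma\varphi}\dd\theta\big)$, i.e.\ the chaos parts of \eqref{virasoro}. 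Making this precise is delicate because the bulk chaos $e^{\gamma X}$ on the collar and the circle chaos $e^{\gamma\varphi}$ carry different renormalisations: writing $X=P\varphi+X_\D$ and using $\E[X_\D(x)^2]\to0$ as $x\to\T$, one must match the renormalising constants and show that the collar chaos converges as $t\to0$ to the circle chaos not only in $L^1(\P_\T)$ but as quadratic forms on $\mc{D}(\mc{Q})$. This requires upgrading the multiplicative-chaos trace estimates of \cite{GKRV} for $V(\varphi)$ to the oscillatory weights $\int_0^{2\pi}e^{\pm in\theta}e^{\gamma\varphi}\dd\theta$ — whose operator norm $\mc{D}(\mc{Q})\to\mc{D}'(\mc{Q})$ must be bounded with controlled enough $n$-dependence that, multiplied by the exponentially small $v_n$, the series converges — as well as to the collar geometry. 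This ``very singular'' behaviour, flagged in the introduction, is where I expect essentially all of the difficulty to lie.

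\textbf{Conclusion and the invariant measure.} Combining the last two steps gives $\bH_\bv=\lim_{t\to0}\tfrac1t(1-P_t)=\omega\bH+\sum_{n\geq1}v_n\bL_n+\sum_{n\geq1}\bar v_n\tilde\bL_n$ on $\mc{C}_{\rm exp}$ with the explicit formula \eqref{virasoro}; the bound $\mc{D}(\mc{Q})\to\mc{D}'(\mc{Q})$ for $\bH_\bv,\bL_n,\tilde\bL_n$ follows by assembling the form bounds of the previous two steps, while unboundedness on $L^2(\R\times\Omega_\T)$ is immediate (already $\bP$ and $V$ are unbounded). Finally, for the invariant measure I would take $h$ to be the positive solution of the form equation $\bH_\bv^\dagger h=0$ — concretely, the explicit Gaussian ``zero-energy'' solution for the free adjoint (a density in $c$ times the ground state of $\bP$) tilted by a renormalised version of $\E_\varphi[\exp(-\mu e^{\gamma c}\int_\D e^{\gamma X}|x|^{-\gamma Q}\dd x)]$ — and verify $P_t^\dagger h=h$ directly from the collar decomposition of the first step; absolute continuity $\mu_h=h\,\mu_0\ll\mu_0$ is then built in. This last step should be routine relative to the analysis of the chaos term.
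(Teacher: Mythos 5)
Your treatment of the semigroup property, the domination $|P_tF|\leq P_t^0|F|$, the accretivity of the free form for large $\omega$, and the first/second-order expansion of $P_t^0$ all follow the paper's route, and you correctly locate the hardest analytic point in the matching of renormalisations between the collar chaos and the circle chaos. But you leave that point unresolved, and the paper's resolution hinges on an identity you do not have: the flow satisfies $v(f_s(z))=f_s'(z)\,v(z)$ (a consequence of $\pl_s(v(f_s)/f_s')=0$), so in the polar decomposition $(s,\theta)\mapsto f_s(e^{i\theta})$ of the collar $\D\setminus f_t(\D)$ the angular density $-\Re\big(e^{-i\theta}v(f_s(e^{i\theta}))/f_s'(e^{i\theta})\big)=-\Re\big(e^{-i\theta}v(e^{i\theta})\big)$ is \emph{independent of} $s$. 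After converting the bulk circle-average cutoff to a cutoff of the pulled-back field at scale $\epsilon/|f_s'|$, this turns the collar integral exactly into $\int_0^t\int_0^{2\pi}e^{\gamma\varphi_s(\theta)}\,\dd\varrho_\bv(\theta)\,\dd s$ with the correct renormalisation, with no $t$-dependent error to estimate. Without this identity your ``match the renormalising constants'' step is a genuine open gap, not a technicality. Relatedly, positivity of the density $-\Re(e^{-i\theta}v(e^{i\theta}))$ gives the two-sided comparison $C^{-1}V\leq V_\bv\leq CV$ as quadratic forms, which is how the paper gets closability and the coercive closed form; individual bounds on the oscillatory potentials $\int e^{\pm in\theta}e^{\gamma\varphi}\dd\theta$ are then immediate from $|\cjg V_nF,F\cjd|\leq\cjg V|F|,|F|\cjd$, so no delicate $n$-dependence is needed. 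Finally, computing the generator on $\mc{C}_{\rm exp}$ does not by itself identify $P_t$ with the semigroup generated by the closed form; the paper goes through resolvents of finite-mode truncations and a Ma--R\"ockner compactness lemma to close this.

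The invariant measure is where your proposal genuinely fails. The measure $\mu_h=\dd c\otimes\P_h$ is \emph{not} a chaos-tilted zero mode of $\bH_\bv^{\dagger}$: it is the Gaussian measure given by the law of $X_h-Q\log|v|\,|_{\T}$, where $h=\lim_{t\to\infty}e^{\omega t}f_t$ is the uniformising limit of the flow and $X_h$ has covariance $-\log|h(e^{i\theta})-h(e^{i\theta'})|$; it is obtained as the $t\to\infty$ limit in law of the process $\varphi_t$, and it is an invariant (eigen-)measure for the \emph{free} semigroup $P_t^0$, which is what the body of the paper actually proves. Your proposed density --- ``a density in $c$ times the ground state of $\bP$'' --- would reproduce $\mu_0$ itself and misses the essential $\bv$-dependence through $h$ and the deterministic shift $-Q\log|v|$; attaching the factor $\E_\varphi[\exp(-\mu e^{\gamma c}\int_\D\cdots)]$ points at an eigenfunction of $\bH$ with eigenvalue $Q^2/2$, not at an invariant density for $\bH_\bv^{\dagger}$, and no positive integrable solution of $\bH_\bv^{\dagger}h=0$ is exhibited. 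Moreover absolute continuity with respect to $\mu_0$ is not ``built in'': it requires showing that the covariance operators satisfy $G_h=G_{\rm Id}+W$ with $W$ smoothing (via analyticity of $(h(z)-h(w))/(z-w)$ near $\bbar{\D}^2$), hence that $X_h$ and $\varphi$ share the same Cameron--Martin space, and then invoking the Feldman--H\'ajek equivalence criterion. None of this is routine, and your route does not produce it.
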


We recover this way the formulas announced by J. Teschner in \cite[Section 10]{Tesc1}.  Also, we notice that if $\bv_n=-z^{n+1}\pl_z$ and $\bv_0=-z\pl_z$, then we can recover ${\bf L}_n$ by the expression 
\[{\bf L}_n=\frac{1}{2}({\bf H}_{\omega \bv_0+\bv_n}-i{\bf H}_{\omega\bv_0+i\bv_n})-\frac{1}{2}\omega(1-i){\bf H}\]
where $\omega>0$ is chosen large enough so that $\omega \bv_0+\bv_n$ is Markovian. We define for $n>0$ 
\[{\bf L}_{-n}:= {\bf L}_n^* , \quad \tilde{\bf L}_{-n}:=\tilde{\bf L}_n^* \]
where the adjoints are taken with respect to the Hermitian product on $\mc{H}$ while acting on a dense set of regular functions:  
$\cjg{\bf L}_{-n}F,F'\cjd_2=\cjg F,{\bf L}_{n}F'\cjd_2$ for all $F,F'\in \mc{D}(\mc{Q})$.
We will explain below that $({\bf L}_n)_{n\in \Z}$ and $(\tilde{\bf L}_n)_{n\in \Z}$ are two commuting representations of ${\rm Vir}(c_L)$ into $\mc{H}$. A first look at these operators prevents us from composing them, even when acting on very regular functions $F$, which makes it difficult to define 
the commutators $[{\bf L}_n,{\bf L}_m]$ on a dense set. The technical difficulty in dealing with these operators was already stressed in \cite{Gaw}. We will however construct infinite dimensional vector spaces, called Verma modules, 
contained in $e^{N|c|}L^2(\R\times \Omega_\T)$ for $N>0$ which are preserved by all ${\bf L}_n$.

\subsection{Highest weight states,  descendant states and scattering coefficients} 
 In \cite{GKRV}, we constructed a family of eigenfunctions of $\bH$ for $\alpha<Q$
 \begin{equation}
\label{Psialphadef} 
 \Psi_\alpha(c,\varphi):=e^{(\alpha-Q)c} \E_\varphi\Big[ \exp\Big(-\mu e^{\gamma c}\int_{\D} |x|^{-\gamma\alpha }e^{\gamma X(x)}\dd x\Big)\Big]
\in e^{-\beta c_-}\mc{D}(\mc{Q})
\end{equation}
where $\beta>|{\rm Re}(\alpha)-Q|$ and $c_-=\min(c,0)$, satisfying
\[ \big(\bH -2 \Delta_{\alpha}\big)\Psi_{\alpha}=0, \quad \Psi_\alpha(c,\varphi)=e^{(\alpha-Q)c}+\mc{O}(e^{(\alpha-Q+\eps)c}) \textrm{ as }c\to -\infty,\]
where the conformal weight is given by 
\begin{equation}\label{defconformalweight}
\Delta_{\alpha}= \frac{\alpha}{2}(Q-\frac{\alpha}{2}).
\end{equation}
Moreover, we proved in \cite{GKRV} that 
\[ \alpha \mapsto \Psi_{\alpha}\]
admits an analytic continuation to the region $\{{\rm Re}(\alpha)\leq Q\}\setminus \cup_{j\geq 1}\{Q\pm i\sqrt{2j}\}$, it is continuous at the points 
$Q\pm i\sqrt{2j}$ with possible square root singularities. We also constructed using scattering theory a whole family 
\[\Psi_{\alpha,\nu,\tilde{\nu}}\] 
of eigenfunctions of $\bH$ with eigenvalues $\alpha(Q-\frac{\alpha}{2})+|\nu|+|\tilde{\nu}|$
for each Young diagram $\nu=(\nu_1,\dots,\nu_k)$, $\tilde{\nu}=(\tilde{\nu}_1,\dots,\tilde{\nu}_{k'})$, with $|\nu|=\sum_j\nu_j$ and $|\tilde{\nu}|=\sum_j\tilde{\nu}_j$, we showed that they are analytic in an open set $W_{\ell}\subset \{{\rm Re}(\alpha)\leq Q\}\setminus \cup_{j\geq 0}\{Q\pm i\sqrt{2j}\}$ 
containing $(Q+i\R)\setminus \cup_{j\geq 0}\{Q\pm i\sqrt{2j}\}$, where $\ell=|\nu|+|\tilde{\nu}|$. Moreover the following Plancherel type formula holds: for $u,u'\in \mc{H}$
\begin{equation}\label{diagonalisation}
\cjg u,u'\cjd_{2}= \frac{1}{2\pi}\sum_{\substack{\nu,\nu',\tilde{\nu},\tilde{\nu}'\in \mc{T}\\
|\nu|=|\nu'|, |\tilde{\nu}|=|\tilde{\nu}'|}}\int_0^\infty \cjg u,\Psi_{Q+iP,\nu,\tilde{\nu}}\cjd_2 \cjg \Psi_{Q+iP,\nu',\tilde{\nu}'},u'\cjd_2 F_{Q+iP}^{-1}(\nu,\nu')F_{Q+iP}^{-1}(\tilde{\nu},\tilde{\nu}')\dd P \end{equation}
 where $\mc{T}$ denotes the set of Young diagrams, and $(F_{Q+iP}^{-1}(\nu,\nu'))_{\nu,\nu'\in \mc{T}_n}$ 
 are positive definite matrices for each $n>0$ if $\mc{T}_n=\{\nu \in \mc{T}\,|\, |\nu|=n\}$ is the set of Young diagrams of size $n$. In our convention $\mc{T}$ contains $\{0\}$ and $\Psi_{Q+iP,0,0}=\Psi_{Q+iP}$.
 
In this paper, we prove that the $\Psi_{\alpha,\nu,\tilde\nu}$ can be obtained from $\Psi_\alpha$ by 
applying the direct sum of two copies of the Virasoro algebra ${\rm Vir}(c_L)$. 
\begin{theorem}\label{descendantsandLn} The following properties hold:\\
 1) The function $\Psi_{\alpha}$ defined by \eqref{Psialphadef} admits an analytic extension to $\alpha \in \C$, with values in $e^{-\beta c_-}\mc{D}(\mc{Q})$ for 
any $\beta>|{\rm Re}(\alpha)-Q|$, and for each $n\in \Z$, ${\bf L}_{n}\Psi_\alpha$ is well-defined as an element in $e^{-\beta c_-}\mc{D}(\mc{Q})$  for
$\beta>|{\rm Re}(\alpha)-Q|$, it is analytic in $\alpha$ and equal to $0$ when $n>0$.\\
2) The functions $\Psi_{\alpha,\nu,\tilde{\nu}}$ appearing in \eqref{diagonalisation} admit an analytic extension to 
$\alpha\in \C$ as elements of  $e^{-\beta c_-}\mc{D}(\mc{Q})$ for $\beta>|{\rm Re}(\alpha)-Q|$, 
 and for each $n\in \Z$, ${\bf L}_{n}\Psi_{\alpha,\nu,\tilde{\nu}}$ is well-defined as an element in $e^{-\beta c_-}\mc{D}(\mc{Q})$  for
$\beta>|{\rm Re}(\alpha)-Q|$, analytic in $\alpha$, and is equal to $0$ when $n>|\nu|+|\tilde{\nu}|$.\\
3) These functions are related by the formula
\[ \Psi_{\alpha,\nu,\tilde{\nu}} = {\bf L}_{-\nu_k}\dots {\bf L}_{-\nu_1}\tilde{\bf L}_{-\tilde{\nu}_{k'}}\dots\tilde{\bf L}_{-\tilde{\nu}_1}\Psi_{\alpha}\]
and for each $n\in \Z$, and $\alpha\in \C \setminus (Q\pm (\frac{2}{\gamma}\N_0+\frac{\gamma}{2}\N_0))$
\[{\bf L}_{n} \Psi_{\alpha,\nu,\tilde{\nu}}\in {\rm span}\{ \Psi_{\alpha,\nu',\tilde{\nu}}\,|\, \nu' \in \mc{T},\, |\nu|-n=|\nu'|
\}.\]
4) They satisfy the functional equation for all $\alpha\in \C \setminus (Q\pm (\frac{2}{\gamma}\N_0+\frac{\gamma}{2}\N_0))$ and all $\nu,\tilde{\nu}\in \mc{T}$
\[ \Psi_{2Q-\alpha,\nu,\tilde{\nu}}=R(2Q-\alpha)\Psi_{\alpha,\nu,\tilde{\nu}}\]
where $R(\alpha)$ is the scattering coefficient given by 
\[R(\alpha)=-\Big(\pi \mu \frac{\Gamma(\frac{\gamma^2}{4})}{\Gamma(1-\frac{\gamma^2}{4})}\Big)^{2\frac{(Q-\alpha)}{\gamma}}\frac{\Gamma(-\frac{\gamma(Q-\alpha)}{2})\Gamma(-\frac{2(Q-\alpha)}{\gamma})}{\Gamma(\frac{\gamma(Q-\alpha)}{2})\Gamma(\frac{2(Q-\alpha)}{\gamma})}.\]
5) For $P>0$, $\Psi_{Q+iP,\nu,\tilde{\nu}}$ satisfy the asymptotic expansion as $c\to -\infty$
\[ \Psi_{Q+iP,\nu,\tilde{\nu}}(c,\varphi)=e^{iPc}\mc{Q}_{Q+iP,\nu,\tilde{\nu}}(\varphi)+R(Q+iP)e^{-iPc}\mc{Q}_{Q-iP,\nu,\tilde{\nu}}(\varphi) +G_{Q+iP}(c,\varphi)\]
where $G_{Q+iP}(c,\varphi)\in \mc{D}(\mc{Q})\subset L^2$ and $\mc{Q}_{Q\pm iP,\nu,\tilde{\nu}}\in L^2(\Omega_\T)$ are particular eigenfunctions of ${\bf P}$ with eigenvalues $|\nu|+|\tilde{\nu}|$ (see Section \ref{Vermafreefield} for their definition).
\end{theorem}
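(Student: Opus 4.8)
The plan is to bootstrap from Theorem \ref{theoremfreefieldintro} and the scattering construction of \cite{GKRV}, establishing the five items essentially in the logical order they are stated, with the analytic continuation of $\Psi_\alpha$ (item 1) as the foundation on which everything else rests. First I would revisit the defining formula \eqref{Psialphadef}: the obstruction to extending $\alpha\mapsto\Psi_\alpha$ past $\mathrm{Re}(\alpha)\leq Q$ in \cite{GKRV} came from the scattering-theoretic detour, not from the probabilistic expression itself, so the strategy is to show directly that the Gaussian multiplicative chaos expectation $\E_\varphi[\exp(-\mu e^{\gamma c}\int_\D|x|^{-\gamma\alpha}e^{\gamma X(x)}\dd x)]$ is entire in $\alpha$ with values in the weighted form domain $e^{-\beta c_-}\mc D(\mc Q)$ for $\beta>|\mathrm{Re}(\alpha)-Q|$. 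Here one needs moment bounds on the GMC mass $\int_\D|x|^{-\gamma\alpha}e^{\gamma X}\dd x$ that are locally uniform in $\alpha\in\C$ (the weight $|x|^{-\gamma\alpha}$ is integrable near $0$ for all complex $\alpha$ once one expands the exponential, using negative moments of GMC à la Rhodes--Vargas/Remy), together with the fact that $c\mapsto e^{(\alpha-Q)c}$ contributes the stated exponential weight. The $\mc D(\mc Q)$-membership follows from controlling $\langle(\bH-\alpha(Q-\alpha/2))\cdot,\cdot\rangle$, which reduces to controlling $V(\varphi)$-pairings, exactly as in \cite{GKRV}, uniformly on compacts in $\alpha$.

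Next, for the action of $\mathbf L_n$ on $\Psi_\alpha$ I would use the explicit formula \eqref{virasoro} termwise: the creation/annihilation part $-i(n+1)Q\mathbf A_n+\sum_m:\mathbf A_{n-m}\mathbf A_m:$ acts on the Gaussian side and is handled by standard Malliavin-type computations (differentiating the harmonic-extension GFF), while the potential term $\frac\mu2 e^{\gamma c}\int_0^{2\pi}e^{in\theta}e^{\gamma\varphi(\theta)}\dd\theta$ acts as a GMC-weighted multiplication; the point is that both pieces keep the same weighted-form-domain estimates. The vanishing $\mathbf L_n\Psi_\alpha=0$ for $n>0$ when $\alpha\in Q+i\R$ is precisely the statement that $\Psi_\alpha$ is a highest-weight vector, which I would first prove on the line $Q+i\R$ by recognizing that $\mathbf L_n=\tfrac12(\bH_{\omega\bv_0+\bv_n}-i\bH_{\omega\bv_0+i\bv_n})-\tfrac12\omega(1-i)\bH$ and that the $\bH_\bv$-semigroup applied to $\Psi_\alpha$ has a purely scaling-covariant behaviour at the fixed point of the flow $f_t$ (so that the $v_n$-derivative at $t=0$ vanishes); concretely, a short computation using the Feynman--Kac formula \eqref{FeynmanKacintro} and the conformal covariance of GMC on $\D\setminus f_t(\D)$ should show $P_t\Psi_\alpha = |f_t'(0)|^{\cdots}\Psi_\alpha$ up to lower-order terms, whose only $t$-linear part is $-\omega\alpha(Q-\alpha/2)$, i.e.\ no $\mathbf L_n$, $n>0$, contribution. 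Then analytic continuation in $\alpha$ from $Q+i\R$ to $\C$ (already established) upgrades $\mathbf L_n\Psi_\alpha=0$ to all $\alpha\in\C$ by the identity theorem, giving item 1 in full.

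For item 3, the descendant identity $\Psi_{\alpha,\nu,\tilde\nu}=\mathbf L_{-\nu_k}\cdots\mathbf L_{-\nu_1}\tilde{\mathbf L}_{-\tilde\nu_{k'}}\cdots\tilde{\mathbf L}_{-\tilde\nu_1}\Psi_\alpha$, I would compare the two constructions of the descendant states on the line $Q+iP$: the scattering construction of \cite{GKRV} builds $\Psi_{Q+iP,\nu,\tilde\nu}$ as the unique eigenfunction of $\bH$ with eigenvalue $\alpha(Q-\alpha/2)+|\nu|+|\tilde\nu|$ and the prescribed oscillatory asymptotics \eqref{diagonalisation}; on the other hand, using $[\bH,\mathbf L_{-n}]=n\mathbf L_{-n}$ (which follows from the commutation relations once the representation property is known, or can be checked directly from $\bH_\bv$) one sees that the right-hand side is also an eigenfunction with the same eigenvalue, so it suffices to match the leading $c\to-\infty$ asymptotics — this is exactly where the $\mathbf P$-eigenfunctions $\mc Q_{Q\pm iP,\nu,\tilde\nu}$ of item 5 enter, since the free (potential-dropped) part of $\mathbf L_{-n}$ generates precisely the free Virasoro algebra acting on $e^{iPc}\mc Q_{Q+iP}$, whose descendants are by definition the $\mc Q_{Q+iP,\nu,\tilde\nu}$. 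Item 2 (analytic extension of $\Psi_{\alpha,\nu,\tilde\nu}$ and vanishing of $\mathbf L_n$ for $n>|\nu|+|\tilde\nu|$) is then immediate from items 1 and 3 together with the Virasoro commutation relations: $\mathbf L_n\mathbf L_{-\nu_k}\cdots\Psi_\alpha$ can be reduced, by commuting $\mathbf L_n$ to the right, to a combination of terms all of which end in $\mathbf L_m\Psi_\alpha$ with $m\geq n-|\nu|-|\tilde\nu|>0$, hence zero. The containment $\mathbf L_n\Psi_{\alpha,\nu,\tilde\nu}\in\mathrm{span}\{\Psi_{\alpha,\nu',\tilde\nu'}:|\nu|-n=|\nu'|\}$ (for generic $\alpha$, where the Verma module is irreducible so these spans are well-defined) follows from the same commutation bookkeeping plus item 3. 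Finally, items 4 and 5 are obtained by transporting the known reflection relation $\Psi_{2Q-\alpha}=R(2Q-\alpha)\Psi_\alpha$ for the highest-weight vector (proved in \cite{GKRV} via the scattering operator and the DOZZ-type computation of $R$, or re-derived here from the $c\to-\infty$ asymptotics of \eqref{Psialphadef} using the GMC reflection identity) through the descendant map: since the $\mathbf L_{-n}$ commute with the reflection $\alpha\mapsto 2Q-\alpha$ at the level of the free Virasoro action (the scattering coefficient $R$ being a scalar), applying $\mathbf L_{-\nu_k}\cdots$ to both sides and using item 3 gives $\Psi_{2Q-\alpha,\nu,\tilde\nu}=R(2Q-\alpha)\Psi_{\alpha,\nu,\tilde\nu}$; item 5 then follows by reading off the asymptotics of the right-hand side of item 3 as $c\to-\infty$, the subleading remainder $G_{Q+iP}$ being the genuinely $L^2$ piece coming from the potential term in the $\mathbf L_{-n}$.

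I expect the main obstacle to be item 1 — specifically, showing that $\mathbf L_n\Psi_\alpha$ is well-defined as an element of $e^{-\beta c_-}\mc D(\mc Q)$ for \emph{all} complex $\alpha$ and that it vanishes for $n>0$. The difficulty is twofold: first, the potential term in \eqref{virasoro} involves the singular GMC multiplication operator $e^{\gamma c}\int e^{in\theta}e^{\gamma\varphi(\theta)}\dd\theta$ paired against a state $\Psi_\alpha$ that itself carries a GMC expectation, so one must establish joint integrability of a product of (circle and disk) GMC fields with a complex weight $|x|^{-\gamma\alpha}$ that is no longer positive — this requires Cauchy--Schwarz plus fractional-moment bounds that hold locally uniformly in $\alpha\in\C$, and the interplay of the weight $e^{-\beta c_-}$ with $\beta$ depending on $\mathrm{Re}(\alpha)$ must be tracked carefully so that the estimates close. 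Second, the vanishing $\mathbf L_n\Psi_\alpha=0$ for $n>0$ on the line $Q+i\R$ needs a clean argument; the semigroup/flow computation sketched above is the natural route but requires justifying the $t$-derivative at $t=0$ rigorously, i.e.\ interchanging $\partial_t$ with the GMC expectation and the $c$-integration, for which one needs the contraction-semigroup estimates of Theorem \ref{theoremfreefieldintro} applied to the specific (unbounded-in-$c$) state $\Psi_\alpha$ rather than to $\mc C_{\rm exp}$. Once item 1 is secured, items 2--5 are, in comparison, a matter of algebraic bookkeeping with the Virasoro relations plus matching asymptotics, and should go through without new analytic input.
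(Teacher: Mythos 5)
Your plan founders at its foundation, item 1. You propose to extend $\alpha\mapsto\Psi_\alpha$ to all of $\C$ by showing that the probabilistic formula \eqref{Psialphadef} is entire in $\alpha$, arguing that the GMC mass $\int_\D|x|^{-\gamma\alpha}e^{\gamma X(x)}\dd x$ admits moment bounds locally uniform in $\alpha\in\C$. This cannot work: the integral $\int_\D|x|^{-\gamma\alpha}e^{\gamma X(x)}\dd x$ converges absolutely only under the (local) Seiberg bound ${\rm Re}(\alpha)<Q$; for ${\rm Re}(\alpha)\geq Q$ the singularity of the weight at $x=0$ makes the GMC integral almost surely infinite, so the formula either degenerates or is ill-defined, and no expansion of the exponential rescues it. The paper's continuation is necessarily indirect: (i) inside $\{{\rm Re}(\alpha)\leq Q\}$ the extension from \cite{GKRV} a priori lives on the spectral Riemann surface $\Sigma$ with possible square-root branch points at $\mc{D}_0=\{Q\pm i\sqrt{2j}\}$, and one must first show (Lemma \ref{analyticity}) that it descends to $\C$ by checking $\gamma_j^*\Psi_\alpha=\Psi_\alpha$, which uses the absence of $L^2$-eigenvalues of $\bH$ — a step entirely missing from your proposal; (ii) the extension to $\{{\rm Re}(\alpha)>Q\}$ is \emph{defined} by the functional equation $\Psi_\alpha=R(\alpha)\Psi_{2Q-\alpha}$ together with the explicit pole/zero structure of $R$. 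In other words, the reflection identity of your item 4 is the mechanism that produces the continuation of item 1, not a consequence of it; your logical ordering (1 before 4) is inverted and the argument for 1 as written would fail.

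The rest of the proposal is closer to the paper in spirit but leans on the broken item 1 and glosses over the analytic content elsewhere. For item 3 the paper does not match asymptotics of two eigenfunctions; it proves the intertwining ${\bf L}_n\Psi_{\alpha,\nu,\tilde\nu}=\lim_{t\to\infty}e^{t(2\Delta_\alpha+|\nu|+|\tilde\nu|-n)}e^{-t\bH}{\bf L}_n^0\Psi^0_{\alpha,\nu,\tilde\nu}$ for $\alpha\ll-1$ real (via the semigroup commutation $\bH_\bv e^{-t\bH}=e^{-t\bH}\bH_{\bv_t}$ and the vanishing of the potential contribution in the limit), and then continues analytically; this also yields ${\bf L}_n\Psi_\alpha=0$ for $n>0$ for free, replacing your ``scaling-covariant semigroup'' computation. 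For item 5, ``reading off the asymptotics of the right-hand side of item 3'' is not a proof: one must control how the singular potential part of ${\bf L}_{-n}$ acts on the asymptotic expansion, including the $\mc{D}(\mc{Q})$ remainder, which the paper does by an induction on $\ell$ and a duality argument pairing against localized test functions $\chi_T(c)e^{iP'c}h(\varphi)$ and sending $T\to\infty$. These are not cosmetic differences: without the functional-equation continuation, the descent from $\Sigma$ to $\C$, and the $T\to\infty$ pairing argument, items 1, 4 and 5 remain unproved.
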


\begin{remark}
The same statement as 1), 2), 3) holds with $\tilde{\bf L}_n$ in place of ${\bf L}_n$ except that  in 3) one has
\[\tilde{\bf L}_{n} \Psi_{\alpha,\nu,\tilde{\nu}}\in {\rm span}\{ \Psi_{\alpha,\nu,\tilde{\nu}'}\,|\, \tilde{\nu}' \in \mc{T},\, |\tilde{\nu}|-n=|\tilde{\nu}'|
\}.\]

\end{remark}
We notice that Statement 5) means that the scattering matrix is essentially (up to identifying $\mc{Q}_{Q+iP,\nu,\tilde{\nu}}$ with 
$\mc{Q}_{Q-iP,\nu,\tilde{\nu}}$) a constant $R(Q+iP)$ times the identity map, a feature that is important in physics and that is related to the integrability of LCFT (see \cite{Tesc1} for instance). Indeed, the diagonal form of the scattering matrix  is reminiscent of similar results in integrable field theories such as the Sine-Gordon model (see \cite{FSK}). The construction of the scattering matrix for the Liouville CFT was done in \cite{GKRV} using scattering theory: 
for fixed frequency $p>0$ corresponding to the equation $({\bf H}-2\Delta_{Q+ip})$, this is a linear operator acting on 
a finite dimensional subspace $L^2(\Omega_{\mathbb{T}})$ encoding the coefficients in the asymptotics at $c\to -\infty$ of the generalized eigenstates 
\[ \{\Psi_{Q+iP,\nu,\tilde{\nu}} \,|\, P\geq 0, \nu,\tilde{\nu}\in\mc{T}, 2\Delta_{Q+iP}+|\nu|+|\tilde{\nu}|= 2\Delta_{Q+ip}\}.\]
To prove that this operator is diagonal in the basis associated to $\Psi_{Q+iP,\nu,\tilde{\nu}}$, the role of the Virasoro algebras $({\bf L}_n)_n ,(\tilde{{\bf L}}_n)$ is essential: 
the idea is that this algebra generates all the generalized eigenfunctions $\Psi_{\alpha,\nu,\tilde{\nu}}$ from the primary field $\Psi_{\alpha}$, implying some relations on the scattering matrix, namely it says that the asymptotics at $c\to-\infty$ of $\Psi_{Q+iP,\nu,\tilde{\nu}}$ are purely characterized 
by those of the primary field $\Psi_{Q+iP}$.  

This result allows us in particular to define the vector space $\mc{W}_\alpha:={\rm span}\{\Psi_{\alpha,\nu,\tilde{\nu}}\,|\, \nu,\tilde{\nu}\in \mc{T}\}$ and to show that the operators ${\bf L}_n,\tilde{{\bf L}}_m$ preserve $\mc{W}_\alpha$, with the commutation relations  
\[ [\mathbf{L}_n,\mathbf{L}_m]=(n-m)\mathbf{L}_{n+m}+\frac{c_L}{12}(n^3-n)\delta_{n,-m}, \quad [\tilde{\bf L}_n,\tilde{\bf L}_m]=(n-m)\tilde{\bf L}_{n+m}+\frac{c_L}{12}(n^3-n)\delta_{n,-m}\]
and $[{\bf L}_n,\tilde{\bf L}_m]=0$.
The space $\mc{W}_\alpha$ is a Verma module for a highest weight representation 
of the direct sum ${\rm Vir}(c_L)\oplus {\rm Vir}(c_L)$ of two Virasoro algebras with central charge $c_L=1+6Q^2$, it can be splitted as a tensor product of two Verma modules $\mc{W}_{\alpha}=\mc{V}_\alpha\otimes \bbar{\mc{V}}_\alpha$, each one associated to the representations ${\bf L}_n$ and $\tilde{\bf L}_n$ of ${\rm Vir}(c_L)$. We refer to Section \ref{VermaLiouville} for more details on these aspects related to representation theory.

\subsection{Future applications}
This approach will be instrumental in several subsequent works in the context of the conformal bootstrap for LCFT. First it will serve as a key tool in a program aiming at establishing the conformal bootstrap for open surfaces: the analyticity of the eigenstates over the full complex plane as well as their asymptotics is needed to analyze the contribution of the one point bulk correlator, the FZZ structure constant introduced in \cite{ARS}. This issue already appears in \cite{wu} in the case of the annulus and will be further developed in a forthcoming work treating the case of general open surfaces. 

Moreover, the conformal blocks appearing in the conformal bootstrap formulae depend on a choice of pant decomposition of the underlying Riemann surface. As stated in  \cite{GKRV1}, the conformal blocks depend on the splitting curves used to obtain the pant decomposition. Therefore they are not yet fully understood as analytic functions on Teichm\"uller space as they depend on the choice of local coordinates (the splitting curves). In a forthcoming work, we will use the Markovian dynamics introduced in this manuscript to show that the conformal blocks only depend in a quite simple way on local deformations of the splitting curves. This will show that conformal blocks, if viewed as sections of some line bundle, only depend on the homotopy classes of the splitting curves, and therefore are well-defined on Teichm\"uller space. As an intermediate step, we will identify the annulus amplitudes with the kernels of the semigroups introduced in the present work. This is in agreement with Segal's axioms \cite{Gaw} and generalizes \cite[Section 6]{GKRV1}.

 On the other hand, conformal blocks are not expected to be single-valued on moduli space (contrarily to correlation functions), but it is conjectured that there exists a representation of the mapping class group describing the variation of the blocks under a change of pants decomposition. This is related to a conjecture of Verlinde identifying the space of conformal blocks of Liouville theory with the Hilbert space of quantum Teichm\"uller theory as isomorphic representations of the mapping class group (see \cite{Teschner04} for an overview). In geometric terms, the Virasoro generators introduced in this article should allow to define a (projectively) flat connection on the bundle of conformal blocks, whose monodromy is described by the above-mentioned representation; this is work in progress. Our interpretation of the Virasoro operators as generators of diffusion processes is reminiscent to a type of connection on the moduli space introduced by Hitchin \cite{Hitchin}, for which parallel transport is given by the heat kernel. Finally, the full analycity of the descendants  $\Psi_{\alpha,\nu,\tilde{\nu}}$ will be used to bridge the gap between the probabilistic construction of LCFT and the Vertex Operator Algebra approach.

\subsubsection{Organization of the paper}
The paper is organized as follows. In section 2, we introduce the main notations and the framework of the paper; in this section, we also construct the semigroup associated to the free field case $\mu=0$ and we study its stationary measure $\mu_h$. In section 3, we construct the semigroup associated to the general case, i.e. with the Liouville potential, via a Feynman-Kac type formula. Also, we use the dynamics to show that the basis $\Psi_{\alpha, \nu,\tilde \nu}$ can be constructed via the action of the Virasoro algebra: this is the content of Proposition \ref{intertwining_for_descendants}. In section 4, we use these results to study the scattering coefficients and show the analytic extension and functional equations for the eigenstates $\Psi_{\alpha,\nu,\tilde{\nu}}$.

{\bf Acknowledgements:} 
We thank the referees for their careful reading and comments in order to improve the paper.
Colin Guillarmou acknowledges the support of European Research Council (ERC) Consolidator grant 725967 and Antti Kupiainen the support of the ERC Advanced Grant 741487. R\'emi Rhodes is
partially supported by the Institut Universitaire de France (IUF). R\'emi Rhodes and Vincent Vargas acknowledge
the support of the French National Research Agency (ANR)  ANR-21-CE40-003.

\section{Segal-Sugawara representation and the Gaussian Free Field}\label{sec:free_field}

In this Section, we introduce the notion of Markovian holomorphic vector fields on the unit disk and show that they generate Markovian processes and 
semi-groups on a Hilbert space by composing the Gaussian Free Field by the flow of these vector fields. The generators of these 
semi-groups induce a unitary representation of the Virasoro algebra with central charge $1+6Q^2$ in a Hilbert space $\mc{H}$. 
This approach provides a probabilistic construction of the so-called \emph{Segal-Sugawara representation} of Virasoro algebra.

\subsection{Free Hamiltonian and Virasoro generators for the free field}\label{freeham}

Let us introduce some material taken from \cite[Section 4]{GKRV}. 
As mentionned in the Introduction, we consider the space $\Omega_\T=(\R^2)^\N$ with the Gaussian probability measure 
 \[
 \P_\T:=\bigotimes_{n\geq 1}\frac{1}{2\pi}e^{-\frac{1}{2}(x_n^2+y_n^2)}\dd x_n\dd y_n.
\]
The space $\mathcal{H}$ is then defined to be the space $L^2(\R \times \Omega_\T)$ equipped with the measure $\mu_0:=\dd c \times \P_\T$ (and the standard Borel sigma algebra) where $\dd c$ denotes the Lebesgue measure. Consider the real valued random variable on $\T$
\[
\varphi(\theta)=\sum_{n\not=0}\varphi_ne^{in\theta} , \quad \varphi_n=\frac{x_n+iy_n}{2\sqrt{n}} \textrm{ for }n>0 .
\]
Recall that we will see $\varphi$ as a variable defined on a larger probability space $\Omega$ where the underlying probability measure will be denoted $\P$ (and expectation $\E[\cdot]$): see subsection \ref{subsectionGMC}. Note that $\varphi\in H^{-s}(\T)$ for all $s>0$ almost surely. The space $H^{-s}(\T)$ can be equipped with the pushforward measure $\mu_0$ (still denoted $\mu_0$ for simplicity) by the map 
$(c,(\varphi_n)_n)\in \R\times \Omega_\T\mapsto c+\varphi\in H^{-s}(\T)$ so that $\mc{H}\simeq L^2(H^{-s}(\T),\mu_0)$. 

We set $\partial_n:= \partial_{\varphi_n}= \sqrt{n} (\partial_{x_n}-i \partial_{y_n})$ (and $\partial_{-n}:= \partial_{\varphi_{-n}}= \sqrt{n} (\partial_{x_n}+i \partial_{y_n})$) and introduce $\mathcal{S}$ the set of smooth functions of the form $F(x_1,y_1, \cdots, x_N,y_N)$ for some $N \geq 1$ where   $F\in C^\infty((\R^2)^N)$ with at most polynomial growth at infinity for $F$ and its derivatives. We consider the  vector space $\mc{C}_{\rm exp}$ 
of smooth functions $F:\R\times \Omega_\T\to \C$ so that there is $N>0$ such that $F(c,\varphi)=F(c,x_1,y_1, \cdots, x_N,y_N)$ for all $c\in\R$, and (with $\N_0:=\N\cup\{0\}$)
\[ \forall k\in \N_0, \forall \alpha,\beta \in \N_0^{N}, \exists L\geq 0, \forall M\geq 0,  \exists C>0, 
\quad  |\pl_c^k \pl_{x}^\alpha\pl_y^\beta F(c,\varphi)|\leq Ce^{-M|c|}\cjg \varphi\cjd_N^{L}
\]
if $\pl_{x}^\alpha=\pl_{x_1}^{\alpha_1}\dots \pl_{x_N}^{\alpha_N}$ and $\cjg \varphi\cjd_N=(1+\sum_{|n|\leq N}|\varphi_n|^2)^{1/2}$ is the Japanese bracket.

The space $\mc{C}_{\rm exp}$ is a dense vector subspace of $\mc{H}$ and 
the free Hamiltonian is the Friedrichs extension of the operator defined on $\mc{C}_{\rm exp}$ by the expression
\begin{equation}\label{defH0}
{\bf H}^0=\frac{1}{2}(-\pl_c^2+Q^2+2{\bf P}) ,\quad \textrm{ with }{\bf P}=\sum_{n=1}^\infty n(\pl_{x_n}^*\pl_{x_n}+\pl_{y_n}^*\pl_{y_n}),
\end{equation}
where $\pl_{x_n}^*=-\partial_{x_n}+x_n$ denotes the adjoint of $\pl_{x_n}$ with respect to $\mu_0$. Recall that the construction of the Friedrichs extension relies on the quadratic   form   denoted $\mc{Q}_0(F,F):=\cjg{\bf H}^0F,F\cjd_2$ for $F\in \mc{C}_{\rm exp}$. By \cite[Proposition 4.3]{GKRV}, 
the quadratic form is closable with closure still denoted $\mc{Q}_0$ and domain denoted $\mc{D}(\mc{Q}_0)$, which becomes a Hilbert space when equipped with the norm $\|F\|_{\mc{Q}_0}:=\sqrt{\mc{Q}_0(F,F)}$. 
This quadratic form produces a self-adjoint extension (Friedrichs extension) for ${\bf H}^0$ on a domain $\mc{D}(\mc{H}^0)\subset \mc{D}(\mc{Q}_0)$. 
Moreover ${\bf H}^0$ extends to a bounded operator ${\bf H}^0:\mc{D}(\mc{Q}_0)\to \mc{D}'(\mc{Q}_0)$ where $\mc{D}'(\mc{Q}_0)$ 
is the dual to $\mc{D}(\mc{Q}_0)$. 
 
Next we recall the construction of the representation of two copies of the Virasoro algebra as operators on the space $\mc{C}_{\rm exp}$. Consider first
 the operators for $n\geq 1$
\begin{equation}\label{defAn}
\mathbf{A}_n= \tfrac{i\sqrt{n}}{2}(\pl_{x_n}-i\pl_{y_n}),\, \,  \mathbf{A}_{-n}={\bf A}_n^*, \qquad \widetilde{\mathbf{A}}_n=  \tfrac{i\sqrt{n}}{2}(\pl_{x_n}+i\pl_{y_n}),\,\,
 \widetilde{\mathbf{A}}_{-n}=\widetilde{\bf A}_n^*,
 \end{equation} 
where the adjoint here is formal and has to be understood in the sense of pairing on $\mc{C}_{\rm exp}$ with respect to the measure $\mu_0$, and let 
\[\mathbf{A}_0=\widetilde{\mathbf{A}}_0=\tfrac{i}{2}(\partial_c+Q).\] 
In fact, considering the notation $\partial_n:= \frac{\partial}{\partial \varphi_n}$ (where $\partial_0:= \partial_c$), we see that, for $n \not = 0$,
\begin{equation}\label{defAnbis}
\mathbf{A}_n= \tfrac{i}{2} (\partial_n+ 2n \varphi_{-n}1_{n <0}) ,\, \, \, \, \widetilde{\mathbf{A}}_n= \tfrac{i}{2} (\partial_{-n}+ 2n \varphi_{n}1_{n <0}).
 \end{equation} 
Recall the following commutation relations for $n,m$
\begin{equation}\label{commutationAn}
 [\mathbf{A}_n,\mathbf{A}_m]= \frac{n}{2} \delta_{n,-m}, \quad [ \widetilde{\mathbf{A}}_n, \widetilde{\mathbf{A}}_m]= \frac{n}{2} \delta_{n,-m}, \quad  [\mathbf{A}_n, \widetilde{\mathbf{A}}_m]=0 .
\end{equation}
Then for $n\in \Z$ define the operators acting on $\mc{C}_{\rm exp}$ 
\[\mathbf{L}_n^0=-i(n+1)Q\mathbf{A}_n+\sum_{m\in\Z}:\!\mathbf{A}_{n-m}\mathbf{A}_m\!:, \quad \widetilde{\mathbf{L}}_n^0:=-i(n+1)Q\widetilde{\mathbf{A}}_n+\sum_{m\in\Z}:\!\widetilde{\mathbf{A}}_{n-m}\widetilde{\mathbf{A}}_m\!: 
\]
and where the normal ordering is defined by $:\!\mathbf{A}_n\mathbf{A}_m\!\!:\,=\mathbf{A}_n\mathbf{A}_m$ if $m>0$ and $\mathbf{A}_m\mathbf{A}_n$ if $n>0$ (i.e. annihilation operators are on the right). In particular, for $n \not = 0$ the only $c$ derivative in ${\bf L}_n^0$ is the term involving ${\bf A}_0$, which is given by 
${\bf A}_n{\bf A}_0+{\bf A}_0{\bf A}_n=2{\bf A}_n{\bf A}_0$.
Notice that $({\bf L}_n^0)^*= {\bf L}_{-n}^0$ in the weak sense, i.e. for each $F_1,F_2\in \mc{C}_{\rm exp}$ the following relation holds  
\[ \cjg {\bf L}_n^0F_1,F_2\cjd_2 =  \cjg F_1, {\bf L}_{-n}^0 F_2\cjd_2.\]
A direct computation allows us to express the Hamiltonian in terms of the $n=0$ Virasoro elements
\[ {\bf H}^0={\bf L}_0^0+ \widetilde{\mathbf{L}}_0^0=\frac{1}{2}(-\pl_c^2+Q^2)+2\sum_{n=1}^\infty ( {\bf A}_n^*{\bf A}_n+\widetilde{\bf A}_n^*\widetilde{\bf A}_n)\]
and one can derive the following commutation relations by using \eqref{commutationAn} (for $c_L=1+6Q^2)$ (see \cite[Prop 2.3]{kac} )
\[ [\mathbf{L}^0_n,\mathbf{L}^0_m]=(n-m)\mathbf{L}^0_{n+m}+\frac{c_L}{12}(n^3-n)\delta_{n,-m}, \quad [\widetilde{\mathbf{L}}^0_n,\widetilde{\mathbf{L}}^0_m]=(n-m)\widetilde{\mathbf{L}}^0_{n+m}+\frac{c_L}{12}(n^3-n)\delta_{n,-m}.\] 
Finally, using expression \eqref{defAnbis} and little bit of algebra yields for $n \geq 0$ (for $n \leq -1$, one has to integrate by parts the relation below)
\begin{equation}\label{newexpressionLno}
\mathbf{L}_n^0=  \tfrac{Q^2}{4}1_{n=0}+  \tfrac{Qn}{2} \partial_n   - \tfrac{1}{4} \sum_{m \in \Z} \partial_{n-m} \partial_{m}+ \sum_{m \geq 1} m \varphi_{n+m} \partial_{n+m}
\end{equation}
and a similar relation for $\widetilde{\mathbf{L}}_n^0$. Notice that $\mc{Q}_0$ can be written in terms of the ${\bf A}_n$'s as 
\[\begin{split} 
\mc{Q}_0(F,F)= \frac{1}{2}(\|\pl_cF\|_2^2+Q^2\|F\|^2_2)+2\sum_{n=1}^\infty (\|{\bf A}_nF\|^2_{2}+\|\widetilde{\bf A}_nF\|^2_{2})= 2\|{\bf A}_0F\|_2^2+2\sum_{n=1}^\infty (\|{\bf A}_nF\|^2_{2}+\|\widetilde{\bf A}_nF\|^2_{2}).
\end{split}\]
We first show the following: 
\begin{lemma}\label{boundonLn}
There exist some constant $C>0$ such that for all $n \geq 1$ and all $F,G\in\mc{C}_{\rm exp}$ we have  
\begin{equation}\label{bilinearboundLn}
 | \cjg {\bf L}^0_n F,G \cjd_2|\leq C(1+n)^{3/2}\|F\|_{\mc{Q}_0}\|G\|_{\mc{Q}_0}   
 \end{equation}
and the operator ${\bf L}_n^0$ extends as a bounded operator ${\bf L}_n^0: \mc{D}(\mc{Q}_0)\to \mc{D}'(\mc{Q}_0)$. The same statement holds for $\widetilde{{\bf L}}_n^0$. 
\end{lemma}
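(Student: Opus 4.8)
The plan is to estimate $\langle \mathbf{L}_n^0 F, G\rangle_2$ term by term, splitting $\mathbf{L}_n^0$ into the linear part $-i(n+1)Q\mathbf{A}_n$ and the quadratic part $\sum_{m\in\Z}:\!\mathbf{A}_{n-m}\mathbf{A}_m\!:$, and controlling everything by the norm $\|\cdot\|_{\mc{Q}_0}$, which by the displayed identity just above the lemma is comparable to $\big(\sum_{m\in\Z}\|\mathbf{A}_m\cdot\|_2^2\big)^{1/2}$ (using $\mathbf{A}_{-m}=\mathbf{A}_m^*$ for $m>0$ and including the $m=0$ term $2\|\mathbf{A}_0\cdot\|_2^2$). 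For the linear term this is immediate: $|\langle -i(n+1)Q\mathbf{A}_nF,G\rangle_2| \leq (n+1)Q\,\|\mathbf{A}_nF\|_2\,\|G\|_2 \leq (n+1)Q\,\|F\|_{\mc{Q}_0}\|G\|_{\mc{Q}_0}$, which is $O(n)$ and hence absorbed into $C(1+n)^{3/2}$.

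For the quadratic part I would first rewrite $\langle :\!\mathbf{A}_{n-m}\mathbf{A}_m\!:F,G\rangle_2$ using the normal ordering: when $m>0$ it equals $\langle \mathbf{A}_m F,\mathbf{A}_{n-m}^*G\rangle_2 = \langle \mathbf{A}_m F,\mathbf{A}_{m-n}G\rangle_2$, and when $n-m>0$ (the remaining case, with $m\leq 0$) it equals $\langle \mathbf{A}_{-m}^* F\,\text{-type pairing}\rangle$ — more precisely $\langle \mathbf{A}_{n-m}\mathbf{A}_m F, G\rangle_2 = \langle \mathbf{A}_m F, \mathbf{A}_{n-m}^* G\rangle_2 = \langle \mathbf{A}_m F, \mathbf{A}_{m-n}G\rangle_2$ again after moving the creation operator $\mathbf{A}_{n-m}$ (with $n-m>0$, so it is a creation operator) to the other side. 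So in all cases the summand is bounded by $\|\mathbf{A}_m F\|_2\,\|\mathbf{A}_{m-n}G\|_2$ up to harmless factors (one must be slightly careful with the $m=0$ or $m=n$ endpoints where $\mathbf{A}_0$ carries the $+Q$ shift, but these contribute only finitely many extra terms each bounded by $\|F\|_{\mc{Q}_0}\|G\|_{\mc{Q}_0}$ times a constant, so they cost nothing). Then by Cauchy–Schwarz in $m$,
\[
\sum_{m\in\Z}\|\mathbf{A}_mF\|_2\,\|\mathbf{A}_{m-n}G\|_2 \leq \Big(\sum_{m}\|\mathbf{A}_mF\|_2^2\Big)^{1/2}\Big(\sum_{m}\|\mathbf{A}_{m-n}G\|_2^2\Big)^{1/2}\lesssim \|F\|_{\mc{Q}_0}\|G\|_{\mc{Q}_0},
\]
which is actually $O(1)$ in $n$. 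The catch — and the reason the stated bound carries a factor $(1+n)^{3/2}$ rather than $O(1)$ — is convergence/absolute summability of the series $\sum_m :\!\mathbf{A}_{n-m}\mathbf{A}_m\!:$ before one is allowed to pair against $G$: for a fixed $F\in\mc{C}_{\rm exp}$ only finitely many $\mathbf{A}_m F$ are nonzero, so the sum over $m$ is genuinely finite and all manipulations are legitimate; the apparent $n$-dependence then enters only through the handful of "diagonal" terms near $m\approx 0$ and $m\approx n$ where both $\mathbf{A}_{n-m}$ and $\mathbf{A}_m$ act on the low modes, together with the single term where $n-m=m$ forces a square and the term $\mathbf{A}_n\mathbf{A}_0+\mathbf{A}_0\mathbf{A}_n=2\mathbf{A}_n\mathbf{A}_0$ coming from $m=0$; bounding $\|\mathbf{A}_n\mathbf{A}_0 F\|$ type quantities against $\|F\|_{\mc{Q}_0}$ is where a power of $n$ is spent, since $\mathbf{A}_n$ on a second-order quantity in $F$ need not be controlled by $\|F\|_{\mc{Q}_0}$ alone — so one pairs it instead as $\langle \mathbf{A}_0 F, \mathbf{A}_{-n}G\rangle_2$, costing $\|\mathbf{A}_0F\|_2\|\mathbf{A}_nG\|_2\leq \|F\|_{\mc{Q}_0}\|G\|_{\mc{Q}_0}$, again $O(1)$, and the genuine $(1+n)^{3/2}$ is really only needed to have a uniform clean statement and to match the central-charge scaling $n^3-n$ appearing later in the commutators.

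\emph{Main obstacle.} The real work is bookkeeping the normal ordering so that every creation operator lands on the $G$ side and every annihilation operator on the $F$ side, \emph{uniformly in $n$}, while tracking the $\mathbf{A}_0=\tfrac{i}{2}(\partial_c+Q)$ term (which is its own adjoint up to the $Q$-shift and appears in $:\!\mathbf{A}_n\mathbf{A}_0\!:$ for both signs of $n$). Once the pairing is organized this way, \eqref{bilinearboundLn} follows from Cauchy–Schwarz as above; and a bounded bilinear form on $\mc{D}(\mc{Q}_0)\times\mc{D}(\mc{Q}_0)$ (which $\mc{C}_{\rm exp}$ is dense in, by \cite[Proposition 4.3]{GKRV}) defines, by Riesz representation on the Hilbert space $(\mc{D}(\mc{Q}_0),\|\cdot\|_{\mc{Q}_0})$, a bounded operator $\mathbf{L}_n^0:\mc{D}(\mc{Q}_0)\to\mc{D}'(\mc{Q}_0)$ extending the one on $\mc{C}_{\rm exp}$, which is the second assertion.
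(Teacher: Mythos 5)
Your overall strategy --- normal-order, move one operator across the pairing so that annihilation operators act on $F$, then Cauchy--Schwarz in $m$ --- is the same as the paper's, but your key quantitative step is false. In the display
\[
\sum_{m}\|\mathbf{A}_mF\|_2\,\|\mathbf{A}_{m-n}G\|_2\;\leq\;\Big(\sum_m\|\mathbf{A}_mF\|_2^2\Big)^{1/2}\Big(\sum_m\|\mathbf{A}_{m-n}G\|_2^2\Big)^{1/2}\;\lesssim\;\|F\|_{\mc{Q}_0}\|G\|_{\mc{Q}_0},
\]
the last inequality cannot hold uniformly in $n$ (and, as literally written with $m$ ranging over $\Z$, both factors are infinite). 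After the reorganization, the operators landing on $G$ in the terms $m\in\{0,1,\dots,n\}$ are the \emph{creation} operators $\mathbf{A}_{m-n}=\mathbf{A}_{n-m}^{*}$ with $m-n<0$, and the commutation relation $[\mathbf{A}_k,\mathbf{A}_{-k}]=\tfrac{k}{2}$ gives $\|\mathbf{A}_{-k}G\|_2^2=\|\mathbf{A}_kG\|_2^2+\tfrac{k}{2}\|G\|_2^2$. Summing these corrections over $k=1,\dots,n$ contributes an extra $\tfrac{n(n+1)}{4}\|G\|_2^2$, so the $G$-factor is genuinely of size $(1+n)\|G\|_{\mc{Q}_0}$ (using $\|G\|_2\leq\tfrac{\sqrt{2}}{Q}\|G\|_{\mc{Q}_0}$), not $O(1)\|G\|_{\mc{Q}_0}$. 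The same slip recurs when you bound the $m\in\{0,n\}$ contribution by $\|\mathbf{A}_0F\|_2\|\mathbf{A}_nG\|_2$: the correct pairing is $\cjg\mathbf{A}_0F,\mathbf{A}_{-n}G\cjd_2$ and $\|\mathbf{A}_{-n}G\|_2=O(\sqrt{n})\|G\|_{\mc{Q}_0}$. (Your bookkeeping for $m\leq0$ is also off: there the normal ordering gives $\mathbf{A}_m\mathbf{A}_{n-m}$, and one should move the creation operator $\mathbf{A}_m$ onto $G$ as $\mathbf{A}_{-m}$, obtaining $\cjg\mathbf{A}_{n-m}F,\mathbf{A}_{-m}G\cjd_2$, which duplicates the $m\geq n+1$ terms; your formula leaves a creation operator acting on $F$.)

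Consequently the quadratic part is not $O(1)$ in $n$, and your closing diagnosis --- that the $(1+n)^{3/2}$ is ``only needed for a clean uniform statement'' and to match the central charge --- is wrong: the polynomial growth is forced by the $n$ creation operators $\mathbf{A}_{-1},\dots,\mathbf{A}_{-n}$ that unavoidably hit $G$ in the range $0\leq m\leq n$. The repair is exactly the paper's argument: keep the commutator corrections, bound $\sum_{k=1}^{n-1}\|\mathbf{A}_{-k}G\|_2^2\leq\sum_{k\geq1}\|\mathbf{A}_kG\|_2^2+\tfrac{n(n-1)}{4}\|G\|_2^2$, and run one global Cauchy--Schwarz in which the $F$-factor is $O((1+n)^{1/2})\|F\|_{\mc{Q}_0}$ and the $G$-factor is $O(1+n)\|G\|_{\mc{Q}_0}$, which yields \eqref{bilinearboundLn}. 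Your last step --- a bilinear form bounded on the dense subspace $\mc{C}_{\rm exp}$ of $\mc{D}(\mc{Q}_0)$ extends to a bounded operator $\mc{D}(\mc{Q}_0)\to\mc{D}'(\mc{Q}_0)$ --- is fine.
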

\begin{proof}
First we note that the extension of ${\bf L}_n^0$ follows directly from the estimate \eqref{bilinearboundLn}.
Using Cauchy-Schwarz, we have for $F,G\in\mc{C}_{\rm exp}$ 
\begin{align*}
 | \cjg {\bf L}_n^0 F,G \cjd_2   | & =   \Big| -i (n+1) Q \cjg {\bf A}_n F , G \cjd_2  +  2 \cjg {\bf A}_0 F,  {\bf A}_{-n} G \cjd_2 + \sum_{k=1}^{n-1} \cjg {\bf A}_{n-k}F , {\bf A}_{-k}G \cjd_2 + 2 \sum_{m \geq 1} \cjg {\bf A}_{n+m} F, {\bf A}_m G \cjd_2   \Big|   \\
& \leq  \Big (  (n+1) Q \|{\bf A}_nF\|^2_2+ 2 \|{\bf A}_0F\|^2_2  +  \sum_{k=1}^{n-1}   \| {\bf A}_kF \|^2_2+ 2   \sum_{m \geq 1} \|{\bf A}_{n+m} F \|^2_2 \Big)^{\frac{1}{2}}    \\
& \quad \times \Big (  (n+1) Q \| G \|^2_2  +  2 \| {\bf A}_{-n} G \|^2_2 + \sum_{k=1}^{n-1} \| {\bf A}_{-k}G \|^2_2 + 2 \sum_{m \geq 1} \| {\bf A}_m G \|^2_2\Big)^{\frac{1}{2}}  
\end{align*}
and using the commutation relation $[{\bf A}_k,{\bf A}_{-k}]= \frac{k}{2}$ which implies $\| {\bf A}_{-k} G \|^2= \|{\bf  A}_{k} G \|^2+ \frac{k}{2}  \|  G \|^2 $ we obtain 
\[ \begin{split} 
| \cjg {\bf L}_n^0 F,G \cjd_2   |  & \leq  \Big (  (n+1) Q \|{\bf A}_nF\|^2+ 2 \|{\bf A}_0F\|^2_2  +  \sum_{k=1}^{n-1}   \| {\bf A}_{k}F \|^2_2+ 2   \sum_{m \geq 1} \| {\bf A}_{n+m} F \|^2_2 \Big)^{\frac{1}{2}}     \\
& \quad \times \Big (  ( (n+1) Q+ \frac{n(n-1)}{4}+n) \| G \|^2_2  +  2 \| {\bf A}_n G \|^2_2 + \sum_{k=1}^{n-1} \| {\bf A}_{k}G \|^2_2 + 2 \sum_{m \geq 1} \| {\bf A}_m G \|^2_2\Big)^{\frac{1}{2}} \end{split}\]
The first term is bounded by $C(1+n)^{1/2}\|F\|_{\mc{Q}_0}$ and the second by $C(1+n)\|G\|_{\mc{Q}_0}$ for some uniform $C>0$, which concludes the proof.
\end{proof}

\subsection{Gaussian Free Field and Gaussian multiplicative chaos}\label{subsectionGMC}

On the Riemann sphere $\hat{\C}$, we put the Riemannian metric $g_0=|dz|^2/|z|_+^4$ (with $|z|_+:=\max(1,|z|)$)
 which is invariant by the inversion $z\mapsto 1/z$. We shall use freely the complex variable $z$ in $\C$ or $x\in \R^2$ when we think of it as a real variable.
The Gaussian Free Field $X$ (GFF in short) in the metric $g_0$ is a random variable in the Sobolev space $H^{s}(\mathbb{C})$ for $s<0$ 
with mean $0$ on $\T$ and covariance kernel 
 \begin{equation*}
 \E[X(z)X(z')]= \ln \frac{|z|_+|z'|_+}{|z-z'|}.
 \end{equation*}
If $\varphi$ is the random variable \eqref{GFFcircle} on the unit circle $\mathbb{T}:=\{z\in \C\,|\, z=1\}$, it is easily checked (see \cite{GKRV}) that 
\begin{equation}\label{decomposGFF}
 X= P\varphi+ X_\D+X_{\D^*}
 \end{equation} 
 where $P\varphi$ is the harmonic extension of $\varphi$  and $X_\D,X_{\D^*}$ are two independent GFFs on $\D:=\{z\in \C\,|\, |z|\leq 1\}$  
 and $\D^*:=\{z\in \hat{\C}\,|\, |z|\geq 1\}$  with Dirichlet boundary conditions defined respectively on  the probability spaces $ (\Omega_\D, \Sigma_\D,\P_\D)$ and $ (\Omega_{\D^*}, \Sigma_{\D^*}, \P_{\D^*})$\footnote{With a slight abuse of notations, we will assume that these spaces are canonically embedded in the product space $(\Omega,\Sigma)$ and we will identify them with the respective images of the respective embeddings.}. 
 The covariance of $X_{\D}$ is the Dirichlet Green's function on $\D$, given by 
 \[ \E[X_{\D}(z)X_{\D}(z')]=G_\D(z,z')= \ln \frac{|1-z\bar{z}'|}{|z-z'|}.\]
 The random variable $X$  is defined on a probability space $(\Omega, \Sigma, \P)$ (with expectation $\E[.]$) where $\Omega= \Omega_\T \times \Omega_\D \times \Omega_{\D^*} $, $\Sigma= \Sigma_\T \otimes \Sigma_\D \otimes \Sigma_{\D^*}$ and $\P$ is a product measure $\P=\P_\T\otimes \P_{\D}\otimes \P_{\D^*}$. We will write  $\E_\varphi[\cdot ]$ for conditional expectation with respect to the GFF on the circle $\varphi$.
 
We introduce the Gaussian multiplicative chaos measure (GMC in short), defined by Kahane \cite{cf:Kah},
\begin{equation}\label{GMCsphere}
e^{\gamma X(x)}\dd x:=  \underset{\epsilon \to 0} {\lim}  \; \; e^{\gamma X_\epsilon(x)-\frac{\gamma^2}{2} \E[X_\epsilon(x)^2]}\dd x
\end{equation}
where $\dd x$ denotes the Lebesgue measure of $\C$ and  $X_\epsilon= X \ast \theta_\epsilon$ is the mollification of $X$ with an  approximation   $(\theta_\epsilon)_{\epsilon>0}$ of the Dirac mass $\delta_0$; indeed, one can show that the limit \eqref{GMCsphere} exists in probability in the space of Radon measures on $\hat\C$ and that the limit does not depend on the mollifier $\theta_\eps$: see \cite{RoV, review, Ber}.   The condition $\gamma \in (0,2)$ stems from the fact that the random measure $e^{\gamma X(x)}\dd x$ is different from zero if and only if $\gamma \in (0,2)$. We will also consider the GMC measure on the circle $\T$ associated to $\varphi$ defined via 
\begin{equation}\label{GMCcircle}
e^{\gamma \varphi(\theta)} \dd \theta:=  \underset{\epsilon \to 0} {\lim}  \; \; e^{\gamma \varphi_\epsilon(\theta)-\frac{\gamma^2}{2} \E[\varphi_\epsilon(\theta)^2]} \dd \theta
\end{equation}
where $\varphi_\epsilon$ is a mollification at scale $\epsilon$ of $\varphi$. The measure $e^{\gamma \varphi(\theta)} \dd \theta$ is different from zero if and only if $\gamma \in (0,\sqrt{2})$ and therefore for $\gamma \in [\sqrt{2},2)$, the Fourier coefficients in \eqref{virasoro} will not act as multiplication by a variable and will rather be defined via the Girsanov transform in the context of Dirichlet forms. 

\subsection{Liouville Hamiltonian and its quadratic form} 
In \cite[Section 5]{GKRV}, an Hamiltonian ${\bf H}$ associated to Liouville CFT is defined as an unbounded
 operator acting on the Hilbert space $\mc{H}$, it is formally  given by the expression on $\mc{C}_{\rm exp}$ 
\begin{equation}\label{LiouvilleH}
{\bf H}={\bf H}^0+\mu e^{\gamma c}V 
\end{equation}
where $V$ is a non-negative unbounded operator on $L^2(\Omega_\T,\mathbb{P}_\T)$ associated to a symmetric 
quadratic form $\mc{Q}_V$; when $\gamma<\sqrt{2}$, $V$ is a multiplication operator by the potential  
\[V(\varphi)=\int_0^{2\pi} e^{\gamma \varphi(\theta)}d\theta \in L^{\frac{2}{\gamma^2}-\eps}(\Omega_\T), \quad \forall \eps>0\]
with $e^{\gamma \varphi(\theta)}d\theta$ being the GMC measure on the circle defined in \eqref{GMCcircle}.
The rigorous construction of   ${\bf H}$ uses again the notion of Friedrichs extension. Consider the symmetric quadratic form 
\begin{equation}\label{quadformQ}
\mc{Q}(F,F):=\mc{Q}_0(F,F)+\mu \int_{\R} e^{\gamma c}\mc{Q}_V(F,F)\dd c \geq \mc{Q}_0(F,F).
\end{equation}
This quadratic form admits a closure on a domain $\mc{D}(\mc{Q})\subset \mc{D}(\mc{Q}_0)$ and one can view ${\bf H}$ as a self-adjoint operator using the Friedrichs extension \cite[Proposition 5.5]{GKRV}. The operator ${\bf H}$ has a domain $\mc{D}({\bf H})\subset \mc{D}(\mc{Q})$ but it is also bounded as a map 
\[ {\bf H} : \mc{D}(\mc{Q})\to \mc{D}'(\mc{Q})\]
where $ \mc{D}'(\mc{Q})$ is the dual space to $\mc{D}(\mc{Q})$. The propagator $e^{-t{\bf H}}$ for $t\geq 0$ is a contraction 
semi-group that has the Markov property \cite[Proposition 5.2]{GKRV}, it will appear also below as a special case of 
Markovian dynamic that one can construct using Markovian vector fields.

\subsection{Markovian holomorphic vector fields}
For $\eps>0$ small, let ${\rm BiHol}_\eps(\D)$ be the space of biholomorphic maps $f:(1+\eps)\D^\circ \to f((1+\eps)\D^\circ)\subset \C$ extending smoothly to $(1+\eps)\T$. 
Infinitesimal variations around the identity in ${\rm BiHol}_\eps(\D)$ give rise to 
the space $\mathrm{Vect}_\eps(\D)$ of holomorphic vector fields on $(1+\eps)\D^\circ$ extending smoothly to $(1+\eps)\T$. 
These vector fields can be  written as $\bv=v\del_z$ for some holomorphic function $v$, with power series expansion at $z=0$
\begin{equation}\label{eq:coordinates_vector_fields}
v(z)=-\sum_{n=-1}^\infty v_nz^{n+1}.
\end{equation}
These vector fields encode the infinitesimal motion $f_t(z)=z+tv(z)+o(t)$ on ${\rm BiHol}_\eps(\D)$.

The basis elements 
\[\bv_n=-z^{n+1}\del_z \in\mathrm{Vect}_\eps (\D) ,\quad n\geq-1\] 
span a Lie subalgebra of the complex Witt algebra. Recall that the complex Witt algebra is the complex Lie algebra of polynomial vector fields on $\T$, i.e.vector fields of the form $\sum_{|k|\leq N}a_k e^{ik\theta}\pl_{\theta}$ for $a_k\in \C$ and some $N<\infty$. This is a subalgebra of the Lie algebra $\C(z)\del_z$ of meromorphic vector fields in the Riemann sphere $\hat{\C}$. 
There is an antilinear involution on $\C(z)\del_z$ given by $\bv\mapsto \bv^*:=z^2\bar{v}(1/\bar{z})\del_z$. On the canonical generators, we have $\bv_n^*=\bv_{-n}$ and $(i\bv_n)^*=-i\bv_{-n}$. The vector fields with negative powers encode the deformations near the identity of the space of conformal transformations of the outer disc. The involution $^*$ relates the two types of deformations (i.e inner and outer discs). 

Let us introduce the following subspace of $\mathrm{Vect}_\eps(\D)$:
\begin{equation}\label{eq:def_vect_plus}
\begin{aligned}
&\mathrm{Vect}_\eps^+(\D):=\left\lbrace\bv\in\mathrm{Vect}_\eps(\D)|\,  \forall z\in\T,  \Re\left(\bar{z}v(z)\right)<0\,\right\rbrace.
\end{aligned}
\end{equation}
We will refer to such vector fields as \emph{Markovian}. 
Notice that for each $\bv\in\mathrm{Vect}_\eps (\D)$ with $v(0)=0$, one can find $\omega>0$ large enough such that $\omega \bv_0+\bv$ is Markovian, if $\bv_0=-z\pl_z$. A solution $f_t(z)$ of the ODE $\pl_tf_t(z)=v(f_t(z))$  in some open subset of $(1+\eps)\D^\circ$ with initial condition $f_0(z)=z$ must satisfy
that the curve $t\mapsto ({\rm Re}(f_t(z)),{\rm Im}(f_t(z))\in \R^2$ is the integral curve of the real vector field $V={\rm Re}(v)\pl_x+{\rm Im}(v)\pl_y=2{\rm Re}(\bv)$, 
and the assumption $\Re\left(\bar{z}v(z)\right)<0$ means that $V$ is pointing strictly inside $\D$ at $\T=\pl\D$. In particular this implies that the flow 
$f_t(z)$ of $\bv$ is defined for all $t\geq 0$ in $\D$. By continuity, the vector field $V$ also points inside the disk $(1+\eps')\D$ if $\eps'>0$ is small enough, and therefore $f_t(z)$ is also well-defined for all $t\geq 0$ inside $(1+\eps')\D$.

\begin{lemma}\label{properties_flows}
For  $\bv=v(z)\pl_z \in \mathrm{Vect}_\eps^+(\D)$, let $f_t(z)$ be the solution of the complex ordinary differential equation $\pl_tf_t(z)=v(f_t(z))$ with $f_0(z)=z$, defined for all $t\geq 0$ in $(1+\eps')\D$ for some $0<\eps'<\eps$. 
The family $(f_t)_{t\geq 0}$ is a family of holomorphic maps in ${\rm BiHol}_{\eps'}(\D)$ satisfying $f_{t}(\D)\subset f_{s}(\D)$ for all  
$t\geq s$ and $f_t(\D)$ converges exponentially fast to $a\in \D^\circ$, the unique zero of $v(z)$ in $\D$.
\end{lemma}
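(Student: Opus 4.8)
The strategy is to establish each assertion by working directly with the ODE $\partial_t f_t(z)=v(f_t(z))$, exploiting the hypothesis $\Re(\bar z v(z))<0$ on $\T$ as the engine that forces contraction of the disc. First I would check that $v$ has exactly one zero $a$ in $\D$: since $v$ extends holomorphically to a neighborhood of $\bar\D$ and $\Re(\bar z v(z))<0$ on $\T$, the function $z\mapsto \bar z v(z)$ (equivalently $v(z)$, after noting $v$ has no zero on $\T$) has winding number $1$ around $0$ along $\T$; indeed $\Re(\bar z v(z))<0$ means $\bar z v(z)$ stays in the left half-plane, and one computes the winding of $v$ by writing $v(z) = z \cdot (\bar z v(z))\cdot \bar z^{-1}\cdot z$... more cleanly: on $\T$, $v(z)/(-z)$ has positive real part, hence never vanishes and has winding number $0$, so $v(z)$ has the same winding number as $-z$, namely $1$. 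By the argument principle $v$ has exactly one zero $a\in\D$ (counted with multiplicity), and that multiplicity is $1$.

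Next I would address existence, holomorphy and the nesting property of the flow. For fixed $z\in\bar\D$ the solution $t\mapsto f_t(z)$ exists as long as it stays in the domain of holomorphy of $v$; the key point is that it cannot leave $\bar\D$. Consider $\rho(t):=|f_t(z)|^2$ for $z\in\T$: then $\rho'(t)=2\Re(\overline{f_t(z)}\,\partial_t f_t(z))=2\Re(\overline{f_t(z)}\,v(f_t(z)))$, which is strictly negative whenever $f_t(z)\in\T$ by the Markovian hypothesis. Hence $\T$ is strictly inward-pointing for the flow, so no trajectory starting in $\bar\D$ ever reaches $\T$ from inside, the flow is defined for all $t\geq 0$ and maps $\bar\D$ into $\D$ (for $t>0$). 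Holomorphy of $z\mapsto f_t(z)$ follows from smooth/analytic dependence of ODE solutions on initial conditions together with the Cauchy–Riemann equations being preserved by the flow of a holomorphic vector field (equivalently, $\partial_{\bar z} f_t$ solves a linear homogeneous ODE with zero initial data). The semigroup identity $f_{t+s}=f_t\circ f_s$ gives $f_t(\D)=f_{t-s}(f_s(\D))\subset f_s(\D)$ for $t\geq s$, using that $f_{t-s}$ maps $\bar\D$ into $\D$; univalence of $f_t$ is automatic from the ODE (the flow is invertible, with inverse the backward flow where defined), so $f_t\in\mathcal M$.

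Finally I would prove the exponential contraction to $a$. Since $v(a)=0$ and $v$ is holomorphic, $a$ is a fixed point of every $f_t$. Linearizing at $a$: writing $v'(a)=:-\lambda$, the hypothesis forces $\Re(\lambda)>0$ — this can be seen by applying the inward-pointing computation to small circles around $a$, or by a Grönwall argument on $|f_t(z)-a|$. Concretely, set $g(t):=\sup_{z\in\bar\D}|f_t(z)-a|$; using $v(w)-v(a)=v(w)=\int_a^w v'$ and the negativity of $\Re(\overline{(f_t(z)-a)}v(f_t(z)))$ near $a$ — which propagates from the boundary estimate by the maximum-type argument on the nested domains $f_t(\bar\D)$ — one gets $\frac{d}{dt}|f_t(z)-a|^2\leq -2\delta|f_t(z)-a|^2$ for some $\delta>0$ uniform in $z$, once $t$ is large enough that $f_t(\bar\D)$ lies in a neighborhood of $a$ where $\Re(-\overline{(w-a)}v(w))>\delta|w-a|^2$ holds; such a neighborhood exists precisely because $\Re(v'(a))>0$. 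Grönwall then yields $|f_t(z)-a|\leq Ce^{-\delta t}$ uniformly on $\bar\D$, i.e. $f_t(\D)$ shrinks exponentially fast to $\{a\}$.

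\textbf{Main obstacle.} The delicate point is propagating the \emph{strict} negativity of $\Re(\bar z v(z))$ from $\T$ to the interior in a form strong enough to drive the Grönwall estimate — i.e. establishing that on each nested curve $f_t(\T)$ the radial-type quantity $\Re(\overline{(w-a)}v(w))$ (with respect to the fixed point $a$, not the origin) is uniformly negative. One clean route is to transport the vector field by $f_s$: $\bar z v(z)<0$ on $\T$ plus $f_s(\T)=\partial f_s(\D)$ being a smooth Jordan curve enclosing $a$, combined with the flow invariance, shows the trajectories cross every such curve transversally inward; then a compactness argument on $[0,T]$ for any fixed $T$, plus the linearization near $a$ for $t\geq T$, closes the estimate. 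Verifying $\Re(v'(a))>0$ rigorously (rather than just $\neq 0$) is the one spot that genuinely uses the global Markovian hypothesis and not merely local data.
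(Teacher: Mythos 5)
Your treatment of the zero of $v$ (argument principle via the winding number of $v(z)/(-z)$, which lies in the right half-plane on $\T$), of global existence (strict inward pointing of $v$ on $\T$), of holomorphy, and of the nesting is correct — and the winding-number count is actually cleaner than the paper's, which only produces existence of a zero by applying Cauchy's theorem to $1/v$. The gap is in the last step, and it sits exactly where you flag it. Your scheme is: (i) $\Re(v'(a))<0$, hence a neighborhood $U$ of $a$ on which $\Re\bigl(\overline{(w-a)}\,v(w)\bigr)\leq-\delta|w-a|^2$; (ii) $f_t(\bar\D)\subset U$ for $t\geq T$; (iii) Gr\"onwall on $U$. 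Step (ii) is never established: the observation that $v$ points strictly inward along every nested curve $f_s(\T)$ (which does hold, since $v(f_s(z))=f_s'(z)v(z)$ makes the outward normal component equal to $|f_s'(z)|^2\,\Re(\bar z v(z))<0$) only yields strict nesting of the domains $f_t(\bar\D)$, not that their intersection is $\{a\}$; a priori the limit set could be a continuum strictly containing $a$, and no compactness argument on a finite interval $[0,T]$ can rule this out because $T$ is precisely what you do not control. Step (i) is also only gestured at ("small circles around $a$" are not covered by the hypothesis, which concerns $\T$ only), and there is a sign slip: with $v'(a)=:-\lambda$ and $\Re\lambda>0$ you need $\Re(v'(a))<0$, yet you twice write $\Re(v'(a))>0$.

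The paper closes both points at once with a device you did not use: conjugate by the M\"obius map $\psi(z)=\frac{z-a}{1-\bar a z}$, so the fixed point moves to $0$, and the pushed-forward field $v_\psi=(\psi'\circ\psi^{-1})\,v\circ\psi^{-1}$ is still Markovian because $\overline{\psi(z)}\psi'(z)=\bar z\,\tfrac{1-|a|^2}{|1-\bar a z|^2}$ on $\T$. Then $v_\psi(z)/z$ is holomorphic on $\D$, so $\Re(v_\psi(z)/z)$ is harmonic, is $<0$ on $\T$, and by the maximum principle is $\leq -c<0$ on all of $\bar\D$; this gives $\partial_t|f^\psi_t(z)|^2\leq-2c|f^\psi_t(z)|^2$ globally, hence $|f^\psi_t(z)|\leq e^{-ct}|z|$, with no need to first reach a small neighborhood of the fixed point (and it yields $\Re(v'(a))=\Re(v_\psi'(0))\leq-c$ for free). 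If you prefer to keep your linearization route, you would need an independent input to supply step (ii) — for instance Schwarz–Pick at $a$ to get $|f_1'(a)|<1$ and then Denjoy–Wolff for locally uniform convergence of the iterates of $f_1$ on the compact set $f_1(\bar\D)$ — but as written the proposal does not prove the exponential contraction.
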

\begin{proof}
The functions $(f_t)_{t\geq 0}$ are biholomorphisms in $(1+\eps')\D$ as a well-defined flow of a holomorphic vector field. 
We first show that there exists an $a \in \D^\circ$ such that $v(a)=0$. If this is not the case then $1/v(z)$ is holomorphic in $\D$ and hence by Cauchy's theorem
\begin{equation*}
\int_{0}^{2\pi} \frac{e^{i \theta} \overline{v (e^{i \theta})}}{|v (e^{i \theta})|^2} d \theta = \int_{0}^{2\pi} \frac{e^{i \theta} }{v (e^{i \theta})} d \theta =0
\end{equation*} 
which is in contradiction with the fact that $\Re (e^{i \theta} \overline{v (e^{i \theta})}  )    <0$ for all $\theta \in [0,2\pi]$. Uniqueness of $a$ will then be a consequence of the fact that $f_t(\D)$ converges exponentially fast to $a$ (indeed $f_t(\D)$ can only have one limit).

We first consider a vector field $\bv \in \mathrm{Vect}_\eps^+(\D)$ which satisfies $v(0)=0$. By the maximum principle for harmonic functions applied to $\Re\left( \frac{v(z)}{z}\right)$, there exists $c>0$ such that
\begin{equation}\label{inequa}
\sup_{|z| \leq 1} \Re\left( \frac{v(z)}{z}\right) \leq -c.
\end{equation} 
The flow satisfies $\partial_t f_t(z)= v( f_t(z) )$ and $f_{0}(z)=z$. 
Thanks to \eqref{inequa}, one has the following 
\begin{equation*}
\partial_t  |f_t(z)|^2 =2{\rm Re}\Big (\frac{\pl_tf_t(z)}{f_t(z)}\Big)|f_t(z)|^2=2 {\rm Re}\Big(\frac{v(f_t(z))}{f_t(z)}\Big)|f_t(z)|^2 \leq -2c |f_t(z)|^2
\end{equation*}  
and therefore $|f_t(z)| \leq e^{-ct}|z|$.
Next we consider the case where there exists some $a \in \D^\circ$ such that $v(a)=0$. 
We then introduce the M\"obius map $\psi(z)= \frac{z-a}{1-\bar a z}$ preserving $\D$ and such that $\psi(a)=0$ and consider the vector field $v_\psi= (\psi' \circ \psi^{-1})  v \circ \psi^{-1} $ which satisfies $v_\psi(0)=0$ and 
\begin{equation*}
\sup_{|z| = 1} \Re\left(  \bar{z}  v_\psi(z) \right) = \sup_{|z| = 1} \Re\left(  \bbar{\psi (z)} \psi'(z)  v(z) \right)= \sup_{|z| = 1} 
\Re\left(  \bar{ z}  v(z) \right) \frac{1-|a|^2}{|1-a\bar{z}|^2}< 0
\end{equation*} 
by noticing that $ \overline{\psi (z)} \psi'(z)= \bar{z}\frac{1-|a|^2}{|1-\bar{z}a|^2}$ when $|z| = 1$. The vector field $v_\psi$ has  $ \psi \circ f_t \circ \psi^{-1}$ as flow. This concludes the proof.
\end{proof}

\begin{lemma}\label{limith}
Assume that  $\bv=v(z)\pl_z \in \mathrm{Vect}_\eps^+(\D)$, that $v(0)=0$, $v'(0)=-\omega$ for $\omega>0$. Let $f_t(z)$ be the solution of the complex ordinary differential equation $\pl_tf_t(z)=v(f_t(z))$ with initial condition  $f_0(z)=z$ defined for all $t\geq 0$ in $|z|<1+\eps'$ for some $0<\eps'<\eps$. 
For each $k$, the function $e^{\omega t}f_t$ converges in $C^k(\D)$ norm (i.e. all the partial derivatives of order $j \leq k$ converge uniformly on $\D$) as $t\to \infty$ towards a function $h\in {\rm BiHol}_{\eps'}(\D)$ for some $\eps'>0$.
Finally, $f'_t(0)=e^{-\omega t}$ and $h'(0)=1$.
 \end{lemma}
\begin{proof}
By Lemma \ref{properties_flows}, we know that $v$ has no other zeros than $z=0$ in $(1+\eps')\D$ for some $\eps'<\eps$ and that $|f_t(z)|$ converges exponentially fast to $0$.
Since $v(z)=-\omega z+\mc{O}(|z|^2)$ near $z=0$, for all $\delta>0$ small there is $\eta>0$ small such that for $|z|\leq \eta$  
we have ${\rm Re}(v(z)/z)<-\omega+\delta$. By the argument below \eqref{inequa}, we deduce that $|f_t(z)|\leq e^{-(\omega-\delta)t}|\eta|$ for $|z|\leq \eta$ 
and thus there is $C>0$ such that for all $|z|\leq 1+\eps'$, we have $|f_t(z)|\leq Ce^{-(\omega-\delta)t}$.
 Therefore, the function $g_t(z):=e^{\omega t}f_t(z)$ satisfies, uniformly in $|z|\leq 1+\eps'$, 
\[ \pl_t g_t(z)=e^{\omega t}(\omega f_t(z)+v(f_t(z)))=\mc{O}(e^{\omega t}|f_t(z)|^2)=\mc{O}(e^{-(\omega-2\delta)t}),\]
where we have used $v(z)=-\omega z+\mc{O}(|z|^2)$ for the second equality. Thus, choosing $\delta<\omega/2$, we deduce that there is some function $h: (1+\eps')\D \to \C$ so that, uniformly on $(1+\eps')\D$, the following converge holds
\[ e^{\omega t}f_t(z)=g_t(z)= z+\int_{0}^{t}  \pl_s g_s(z)ds \to_{t\to \infty} h(z). \]
The function $h$ is then holomorphic in  $(1+\eps')\D^\circ$  as uniform limit of holomorphic functions, moreover $g_t\to h$ in all $C^k((1+\eps')\D)$ norms 
(using Cauchy formula, the derivatives are dealt with using the $C^0$ convergence). The fact that $h$ is injective on $(1+\eps)\D^\circ$ follows from Carath\'eodory's kernel theorem, since $g_t$ is injective on $(1+\eps')\D$ and converges uniformly on  $(1+\eps')\D$.
Expanding $f_t(z)=\sum_{n\geq 1}a_n(t)z^n$ at $z=0$, we see that $a_n(t)$ must solve the ODE $\pl_t a_1(t)=-\omega a_1(t)$ with $a_1(0)=1$, 
thus $a_1(t)=e^{-\omega t}$ and this implies that $h'(0)=1$. 
\end{proof}

We conclude the basic properties of the low  with the following lemma, which will be useful in the sequel:

\begin{lemma}\label{limitandh}
Assume that $\bv=v(z)\pl_z \in {\rm Vect}_\eps^+(\D)$, that $v(0)=0$, $v'(0)=-\omega$. Then for all $t\geq 0$, the flow $f_t(z)$ of $\bv$ solves 
$v(f_t(z))=f_t'(z)v(z)$  and  the function $h$ of Lemma \ref{limith} solves the differential equation $\omega h(z)= -h'(z) v(z)$.
\end{lemma}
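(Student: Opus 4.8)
The plan is to prove the two identities separately, using the flow equation $\pl_t f_t(z) = v(f_t(z))$ and the characterization of $h$ from Lemma \ref{limith} as the $C^k(\D)$-limit of $e^{\omega t} f_t$.

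For the first identity $v(f_t(z)) = f_t'(z) v(z)$, the idea is to differentiate both sides in $z$ and show they satisfy the same linear ODE in $t$. Set $\phi_t(z) := f_t'(z)$ (derivative with respect to $z$), which is legitimate since $f_t$ is holomorphic in a neighborhood of $\D$ by Lemma \ref{limith}. Differentiating the flow equation $\pl_t f_t(z) = v(f_t(z))$ with respect to $z$ and using the chain rule gives $\pl_t \phi_t(z) = v'(f_t(z))\,\phi_t(z)$, with $\phi_0(z) = 1$. On the other hand, set $\psi_t(z) := v(f_t(z))/v(z)$ for $z$ with $v(z) \neq 0$; differentiating in $t$ gives $\pl_t \psi_t(z) = v'(f_t(z)) \pl_t f_t(z) / v(z) = v'(f_t(z))\, \psi_t(z)$, and $\psi_0(z) = 1$. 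So $\phi_t$ and $\psi_t$ solve the same first-order linear ODE in $t$ with the same initial condition and the same (time-dependent) coefficient $v'(f_t(z))$, hence $\phi_t(z) = \psi_t(z)$, i.e. $f_t'(z) v(z) = v(f_t(z))$, for all $z$ with $v(z) \neq 0$; since $v$ has only the simple zero at $z = 0$ in a neighborhood of $\D$ (by the proof of Lemma \ref{limith}), and both sides are holomorphic in $z$, the identity extends to all of $\D$ by continuity. (Alternatively one can note that $v(f_t(z))$ and $f_t'(z)v(z)$ both vanish to the right order at $z=0$.)

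For the second identity $\omega h(z) = -h'(z) v(z)$, the plan is to pass to the limit $t \to \infty$ in the relation just obtained. Write $g_t(z) = e^{\omega t} f_t(z)$, so that $g_t'(z) = e^{\omega t} f_t'(z)$, and by Lemma \ref{limith} we have $g_t \to h$ and $g_t' \to h'$ uniformly on compact subsets of $\D$ (indeed in all $C^k(\D)$ norms). From the first identity, $e^{\omega t} f_t'(z) v(z) = e^{\omega t} v(f_t(z))$, i.e. $g_t'(z) v(z) = e^{\omega t} v(f_t(z))$. Now I compute the limit of the right-hand side: since $f_t(z) \to 0$ and $v(w) = -\omega w + \mc{O}(|w|^2)$ near $w = 0$, we have $e^{\omega t} v(f_t(z)) = e^{\omega t}(-\omega f_t(z) + \mc{O}(|f_t(z)|^2)) = -\omega g_t(z) + \mc{O}(e^{\omega t}|f_t(z)|^2)$, and since $|f_t(z)| \leq C e^{-\omega t}|z|$ the error term is $\mc{O}(e^{-\omega t}) \to 0$ uniformly on $\D$. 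Hence $e^{\omega t} v(f_t(z)) \to -\omega h(z)$, while the left-hand side $g_t'(z) v(z) \to h'(z) v(z)$, giving $h'(z) v(z) = -\omega h(z)$, which is the claimed equation.

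The argument is essentially a routine manipulation of the flow ODE together with the uniform convergence already established in Lemma \ref{limith}, so there is no serious obstacle; the only points requiring a little care are the justification that one may differentiate $f_t(z)$ in $z$ and interchange the $\pl_t$ and $\pl_z$ derivatives (fine because $f_t$ is jointly smooth and holomorphic in $z$ on a neighborhood of $\D$), and the handling of the zero of $v$ at the origin when dividing by $v(z)$ in the first step — which is harmless because the resulting identity is between holomorphic functions and extends by continuity across the single simple zero.
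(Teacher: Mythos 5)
Your proof is correct and follows essentially the same route as the paper: the paper shows $\pl_t\bigl(v(f_t(z))/f_t'(z)\bigr)=0$, which is just your "two solutions of the same linear ODE" argument in quotient form (dividing by $f_t'$, which never vanishes, instead of by $v$, which sidesteps the zero at the origin you handle by continuity), and the passage to the limit $t\to\infty$ is identical. Incidentally, your version correctly keeps the factor $\omega$ in $e^{\omega t}v(f_t(z))=-\omega e^{\omega t}f_t(z)+\mc{O}(e^{-\omega t})$, which the paper's displayed equation drops (though its stated conclusion has it).
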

\begin{proof}
Since $f_t(z)$ is one-to-one, $f_t'(z)$ never vanishes in $\D$. We then have 
\begin{equation*}
\pl_t  \frac{v(f_t(z))}{f_t'(z)}= \frac{ \pl_t f_t(z) v'( f_t(z)  )}{f_t'(z)}- \frac{ ( \pl_t f_t )'(z) v( f_t(z)  )}{(f_t'(z))^2} =  \frac{ v( f_t(z)  ) v'( f_t(z)  )}{f_t'(z)}-\frac{ f_t'(z) v'( f_t(z))   v( f_t(z)  )}{(f_t'(z))^2}=0.
\end{equation*}
This shows that $v(f_t(z))=f_t'(z)v(z)$ and thus as $t\to +\infty$ 
\begin{equation}\label{limitandheq}
e^{\omega t}f'_t(z)v(z)= e^{\omega t}v(f_t(z))=-\omega e^{\omega t}f_t(z)+\mc{O}(e^{-\omega t})
\end{equation}
and by letting $t\to +\infty$, we get $h'(z)v(z)=-\omega h(z)$.
\end{proof}

\subsection{The probabilistic formula for the vector field dynamics}

Let $\bv=v(z)\del_z$ be a Markovian vector field, and we assume that $v(0)=0$ and $v'(0)=-\omega$ for $\omega>0$. This vector field induces a family of small conformal transformations $f_t(z)=z+tv(z)+o(t)$ on $\D$ such that $f_t(\T)\subset\D$ for all $t$ and $f'_t(0)=e^{-\omega t}$. Given an initial condition $c+\varphi \in H^{-s}(\T)$ with $s>0$, we define a stochastic process $(c+B_t,\varphi_t)\in H^{-s}(\T)$ via the following formula
\begin{equation}\label{eq:process}
(c+B_t,\varphi_t):=\left(c+(P\varphi+X_\D)\circ f_t+Q\log \frac{|f_t'|}{|f_t|}\right)\Big|_{\T}
\end{equation}
where $|_{\T}$ denotes restriction to the unit circle and $(c+B_t,\varphi_t)$ is the standard decomposition into the average and the average zero part. That is, we look at the trace of the Gaussian Free Field $X=P\varphi+X_\D+X_{\D^*}$ on the deformed circle $f_t(\T)$ and pull this field back to $\T$ using $f_t$ (notice that the last term $X_{\D^*}|_{f_t(\T)}=0$) . The extra term $Q\log \frac{|f_t'|}{|f_t|}$ is here to conform with the transformation properties of the Liouville field. 
Notice that $X_{\D}\circ f_t=\sum_{n}X_n(t)e^{in\theta}$ where 
\begin{equation}\label{X_n(t)}
X_n(t)=\frac{1}{2\pi}\int_0^{2\pi} X_{\D}(f_t(e^{i\theta}))e^{-in\theta}d\theta
\end{equation} 
\begin{equation}\label{covarianceXn} 
\E[ X_n(t)X_{m}(t')]= \frac{1}{4\pi^2}\int_0^{2\pi}\int_0^{2\pi} e^{-in\theta-im\theta'}\log \frac{| 1-f_t(e^{i\theta})\bbar{f_{t'}(e^{i\theta'})}|}{|f_t(e^{i\theta})-f_{t'}(e^{i\theta'})|}d\theta d\theta'.
\end{equation}
We can now register a few formulas: for  all $x \in \D \setminus f_t(\D)$
\begin{equation*}
\frac{1}{2 \pi}  \int_0^{2 \pi}  G_{\D} (x,f_t(e^{i \theta'}) )  d \theta'= \log \frac{|   1- f_t(0) \overline{x})|}{|f_t(0)-x|}= \log \frac{1}{|x|}.   
\end{equation*}
This formula is just the average principle applied to the harmonic function $z \mapsto G_{\D} (x,f_t(z) )$ for $z \in \D$. Taking $x \to f_t(e^{i \theta})$ yields by continuity for all $\theta$
\begin{equation*}
\frac{1}{2 \pi}  \int_0^{2 \pi}  G_{\D} (f_t(e^{i \theta}),f_t(e^{i \theta'}) )  d \theta'= \log \frac{ 1 }{|f_t(e^{i \theta}) |}.  
\end{equation*}
Now using the mean value property applied to the harmonic function $z \mapsto \log \frac {|f_t(z) |}{|z|} $ for $z \in \D$, we get
\begin{equation*}
\frac{1}{2 \pi} \frac{1}{2 \pi}  \int_0^{2 \pi}  \int_0^{2 \pi}  G_{\D} (f_t(e^{i \theta}),f_t(e^{i \nu}) )  d \theta d \nu= - \frac{1}{2 \pi}   \int_0^{2 \pi}   \log \frac {|f_t(e^{i \theta}) |} { |e^{i \theta}| } d \theta =     -\log |f_t'(0)|.
\end{equation*}
Similar considerations yield for $s \leq t$
\begin{equation*}
\frac{1}{2 \pi} \frac{1}{2 \pi}  \int_0^{2 \pi}  \int_0^{2 \pi}  G_{\D} (f_t(e^{i \theta}),f_s(e^{i \nu}) )  d \theta d \nu= - \frac{1}{2 \pi}   \int_0^{2 \pi}   \log \frac {|f_s(e^{i \theta}) |} { |e^{i \theta}| } d \theta =     -\log |f_s'(0)|.
\end{equation*}
In particular this shows that  $\E[ X_0(s) X_0(t) ]=  \omega \min(  s,t) $  and hence $X_0(t)=B_t$ is a Brownian motion with diffusivity $\omega$. 

\begin{proposition}\label{continuousprocess}
Let $\bv=v(z)\pl_z \in  {\rm Vect}_\eps^+(\D) $ with $v(0)=0$ and $v'(0)=-\omega<0$, and let $f_t$ be the associated flow. Then the field $(B_t,\varphi_t)$ is a continuous process with values in $H^{-s}(\T)$ for $s>0$ and the operator $P_t^0$ associated to this process, defined for all bounded and continuous $F$ by
\begin{equation}\label{definitionPt0}
  P_t^0F(c,\varphi):= |f_t'(0)|^{\frac{Q^2}{2}} \E_\varphi[  F( c+B_t+\varphi_t )]  
\end{equation} 
is a Markov semi-group.
\end{proposition}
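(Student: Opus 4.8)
\textbf{Proof plan for Proposition \ref{continuousprocess}.}

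The plan is to verify the two assertions separately: first that $(B_t,\varphi_t)$ is a continuous $H^{-s}(\T)$-valued process, and then that $P_t^0$ is a Markov semi-group. For the continuity, I would work with the Fourier expansion $X_\D\circ f_t=\sum_n X_n(t)e^{in\theta}$ together with the harmonic piece $P\varphi\circ f_t$ and the deterministic correction $Q\log(|f_t'|/|f_t|)|_\T$. The deterministic term is manifestly smooth in $(t,\theta)$ because $f_t$ is the flow of a holomorphic vector field that stays strictly inside $\D$ on $\T$, so $|f_t|$ is bounded away from $0$ and $f_t'$ away from $0$ as well (using Lemma \ref{limith} and the fact that $f_t$ is univalent). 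For the Gaussian part, the key estimate is a bound on $\E[|X_n(t)-X_m(s)|^2]$ obtained from \eqref{covarianceXn}; since $(t,\theta)\mapsto f_t(e^{i\theta})$ is smooth and takes values in a compact subset of $\D$, the Green's function $G_\D(f_t(e^{i\theta}),f_s(e^{i\theta'}))$ is smooth away from the diagonal and has only a mild logarithmic singularity on it, so one gets Hölder-type control of the covariance in $t$ uniformly in $n$ with polynomial growth in $n$. Plugging this into a Kolmogorov continuity argument in the Hilbert space $H^{-s}(\T)$ (for any $s>0$, choosing the regularity exponent to absorb the polynomial factors in $n$ through the weights $(1+|n|)^{-2s}$) yields a continuous modification.

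For the semi-group and Markov properties, the essential input is the flow property $f_{t+s}=f_t\circ f_s$ (which follows from uniqueness of solutions to the autonomous ODE $\pl_t f_t=v(f_t)$) together with the domain Markov property of the GFF. Concretely, I would write $X_\D\circ f_{t+s}=(X_\D\circ f_t)\circ f_s$ and decompose $X_\D$ restricted to the subdomain $f_t(\D)$, using the conformal invariance of the Dirichlet GFF, as a harmonic extension of its boundary values on $f_t(\T)$ plus an independent Dirichlet GFF on $f_t(\D)$. After pulling back by $f_t$ this says precisely that, conditionally on $\varphi_t$, the field $\varphi_{t+s}$ is distributed as the time-$s$ image of the process started from $\varphi_t$, with the independent increment of Brownian motion for the zero mode (the identity $\E[X_0(s)X_0(t)]=\omega\min(s,t)$ already established gives $B_t$ a Brownian motion, and its independence from the nonzero modes over disjoint ``shells'' follows from the same domain Markov decomposition). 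The cocycle $|f_t'(0)|^{Q^2/2}=e^{-\omega Q^2 t/2}$ multiplies correctly under composition since $f_{t+s}'(0)=f_t'(0)f_s'(0)$, so the prefactor is consistent with the semi-group law. Combining these gives $P_{t+s}^0=P_t^0P_s^0$ on bounded continuous $F$, and the Markov property of the semi-group is then immediate from the construction; continuity in $t$ (strong continuity) follows from the continuity of the process established in the first part together with dominated convergence.

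The main obstacle I anticipate is the careful justification of the conditional decomposition of $X_\D$ on the nested domains $f_t(\D)\subset\D$ and the bookkeeping needed to identify $(B_{t+s},\varphi_{t+s})$ given $\mathcal F_t$ with an independent copy of the process evaluated at time $s$ and started from $(B_t,\varphi_t)$ — in particular verifying that the correction term $Q\log(|f_t'|/|f_t|)$ composes additively under the flow in exactly the way needed (this is a small computation using $f_{t+s}'=(f_t'\circ f_s)f_s'$ and Lemma \ref{limitandh}), and handling the interaction between the harmonic extension $P\varphi$ and the Dirichlet field $X_\D$ under composition. The continuity part is more routine but still requires some care to get estimates uniform in the Fourier index $n$; this is where I would spend the technical effort, but the structure of \eqref{covarianceXn} makes it tractable.
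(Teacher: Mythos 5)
Your plan follows essentially the same route as the paper: the continuity is obtained from an $L^2$ estimate on $\|(X_\D\circ f_t)|_\T-(X_\D\circ f_{t'})|_\T\|_{H^{-s}(\T)}$ derived from the covariance formula \eqref{covarianceXn} (with the diagonal log singularity of $G_\D$ handled by a splitting in $|t-t'|$ versus $|\theta-\theta'|$), upgraded to $L^p$ by Gaussian hypercontractivity and fed into Kolmogorov's criterion, while the semi-group/Markov property rests on the flow identity $f_{t+s}=f_t\circ f_s$ and the domain Markov property of the GFF, exactly as you describe. The only difference is one of emphasis: the paper spends all its effort on the covariance estimate and dismisses the Markov property in one sentence, whereas you (correctly) flag the additive composition of the $Q\log(|f_t'|/|f_t|)$ term and the conditional decomposition on nested domains as the points needing verification.
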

\begin{proof}
First, the field $\left(c+P\varphi \circ f_t+Q\log \frac{|f_t'|}{|f_t|}\right)|_{\T}$ is clearly continuous in  $H^{-s}(\T)$. The property of the 
second part of the process is a consequence of the following lemma
\begin{lemma}
Let $s>0$. There exist $\alpha>0$ and $C>0$ such that for all $t,t'\geq 0$
\[
\E[ \| (X_\D \circ f_t)|_{\T} - (X_\D \circ f_{t'})|_{\T} \|_{H^{-s}(\T)}^2  ]  \leq C |t-t'|^\alpha
\]
\end{lemma}
\proof 
Writing $\cjg n\cjd=(1+|n|)$, we have 
\begin{align*}
& \E\Big[   \sum_{n \in \Z} \cjg n\cjd^{-2s}  \Big|  \int_0^{2 \pi }  (X_\D \circ f_t)  (e^{i \theta})  e^{-i n \theta}   d \theta -   \int_0^{2 \pi }  (X_\D \circ f_{t'})  (e^{i \theta})  e^{-i n \theta}   d \theta  \Big| ^2     \Big]   \\
& = \sum_{n \in \Z} \frac{\cjg n\cjd^{-2s}}{2 \pi}  \int_{[0,2\pi]^2}  \left ( G_\D (f_t (e^{i \theta}), f_t (e^{i \theta'}))+G_\D (f_{t'} (e^{i \theta}), f_{t'} (e^{i \theta'})) -2 G_\D (f_t (e^{i \theta}), f_{t'} (e^{i \theta'}))  \right )    e^{in (\theta-\theta')} d\theta d \theta'    \\
&=   \int_{[0,2\pi]^2}   \left ( G_\D (f_t (e^{i \theta}), f_t (e^{i \theta'}))+G_\D (f_{t'} (e^{i \theta}), f_{t'} (e^{i \theta'})) -2 G_\D (f_t (e^{i \theta}), f_{t'} (e^{i \theta'}))  \right )    \left (   \sum_{n \in \Z} \frac{\cjg n\cjd^{-2s}}{2\pi} e^{in (\theta-\theta')}  \right ) d\theta d \theta' \\
& \leq   C  \int_0^{2 \pi } \int_0^{2 \pi }   | G_\D (f_t (e^{i \theta}), f_t (e^{i \theta'}))+G_\D (f_{t'} (e^{i \theta}), f_{t'} (e^{i \theta'})) -2 G_\D (f_t (e^{i \theta}), f_{t'} (e^{i \theta'}))  |   \frac{1}{|\theta-\theta'|^{1-2s}} d\theta d \theta' \\
& \leq  2C  \int_0^{2 \pi } \int_0^{2 \pi }   | G_\D (f_{t'} (e^{i \theta}), f_{t'} (e^{i \theta'})) - G_\D (f_t (e^{i \theta}), f_{t'} (e^{i \theta'}))  |   \frac{1}{|\theta-\theta'|^{1-2s}} d\theta d \theta' \\
& \quad +   2C  \int_0^{2 \pi } \int_0^{2 \pi }   | G_\D (f_t (e^{i \theta}), f_t (e^{i \theta'}))- G_\D (f_t (e^{i \theta}), f_{t'} (e^{i \theta'}))  |   \frac{1}{|\theta-\theta'|^{1-2s}} d\theta d \theta', 
\end{align*}
where we used the bound $  \sum_{n \in \Z} \cjg n\cjd^{-2s} e^{in (\theta-\theta')}  \leq C|\theta-\theta'|^{-1+2s}$. We only deal with the first term since and the second is similar. Now we have
\begin{align*}
 & \int_0^{2 \pi } \int_0^{2 \pi }   \Big| G_\D (f_{t'} (e^{i \theta}), f_{t'} (e^{i \theta'})) - G_\D (f_t (e^{i \theta}), f_{t'}(e^{i \theta'}))  \Big|   \frac{1}{|\theta-\theta'|^{1-2s}} d\theta d \theta'   \\
&= \int_{|t-t'| \leq |\theta-\theta'|^2}  \Big| G_\D (f_{t'} (e^{i \theta}), f_{t'} (e^{i \theta'})) - G_\D (f_t (e^{i \theta}), f_{t'} (e^{i \theta'})) \Big |   \frac{1}{|\theta-\theta'|^{1-2s}} d\theta d \theta'   \\
& \quad +\int_{|t-t'| > |\theta-\theta'|^2}  \Big | G_\D (f_{t'} (e^{i \theta}), f_{t'} (e^{i \theta'})) - G_\D (f_t (e^{i \theta}), f_{t'} (e^{i \theta'}))  \Big|   \frac{1}{|\theta-\theta'|^{1-2s}} d\theta d \theta'.
\end{align*}
But, using the explicit formula $G_{\D}(z,z')=\log \frac{|z-z'|}{|1-\bar{z}z'|}$, the bounds 
\[|f_t (e^{i \theta})-f_{t'} (e^{i \theta'})|\geq C^{-1}(|t-t'|+|\theta-\theta'|),\quad |1-\bbar{f_t (e^{i \theta})}f_{t'} (e^{i \theta'})|\geq c(|t-t'|+|\theta-\theta'|)
\] 
for some $c>0$ locally uniform in $t',t$,  there is $C>0$ locally uniform in $t,t'$ so that
\begin{align*}
& \int_{|\theta-\theta'|^2 < |t-t'|}     \Big| G_\D (f_{t'} (e^{i \theta}), f_{t'} (e^{i \theta'})) - G_\D (f_t (e^{i \theta}), f_{t'} (e^{i \theta'}))  \Big|   \frac{1}{|\theta-\theta'|^{1-2s}} d\theta d \theta'  \\
& \leq C \int_{|\theta-\theta'|^2 < |t-t'|}  |\ln (|\theta-\theta'|)|    \frac{1}{|\theta-\theta'|^{1-2s}} d\theta d \theta'    \leq C  |t-t'|^{s} .
\end{align*}
Similarly, one gets
\begin{align*}
& \int_{|t-t'| \leq |\theta-\theta'|^2}      \Big|  \ln \Big|\frac{1-f_{t'}(e^{i \theta})  \overline{ f_t(e^{i \theta'})  } }{  1-  f_t(e^{i \theta})  \overline{ f_t(e^{i \theta'})  }  }\Big| \Big|   \frac{1}{|\theta-\theta'|^{1-2s}} d\theta d \theta'  \\ 
& =  \int_{|t-t'| \leq |\theta-\theta'|^2}    \Big |  \ln\Big |1+ \frac{    ( f_t(e^{i \theta})-f_{t'}(e^{i \theta})  )    \overline{ f_t(e^{i \theta'})  } }{  1-  f_t(e^{i \theta})  \overline{ f_t(e^{i \theta'})  }  }\Big| \Big |   \frac{1}{|\theta-\theta'|^{1-2s}} d\theta d \theta'    \\
& \leq  \int_{|t-t'| \leq |\theta-\theta'|^2}     \Big|    \frac { ( f_t(e^{i \theta})-f_{t'}(e^{i \theta})  )    \overline{ f_t(e^{i \theta'})  } }{  1-  f_t(e^{i \theta})  \overline{ f_t(e^{i \theta'})  }  } \Big |   \frac{1}{|\theta-\theta'|^{1-2s}} d\theta d \theta'    \\
& \leq  C \int_{|t-t'| \leq |\theta-\theta'|^2}     |t-t'| \frac{1}{|\theta-\theta'|^{2-2s}} d\theta d \theta'    \leq  C  |t-t'|^{s}.
\end{align*}
The term in $\ln |  \frac{f_t(e^{i \theta}) -f_{t'}(e^{i \theta'}) }{f_t(e^{i \theta}) -f_t(e^{i \theta'})}  |$ can be dealt with similarly.\qed

On Gaussian spaces, the $L^p$ norms are all equivalent on polynomials of fixed bounded degree (see \cite[Theorem 3.50]{jan}) and therefore for all $p \geq 1$ there exists some $C>0$ such that for  $t,t'\geq 0$ 
\begin{equation*}
\E[   \| (X_\D \circ f_t)|_{\T} - (X_\D \circ f_{t'})|_{\T} \|_{H^{-s}(\T)}^p  ]  \leq C |t-t'|^{p\frac{\alpha}{2}}
\end{equation*}
hence by choosing $p \alpha >2$ the Kolmogorov continuity theorem ensures that the process $(\varphi_t)_{t \geq 0}$ admits a continuous 
version in $H^{-s}(\T)$ for $s>0$. 

We now turn to the Markov property. Let $s,t>0$; since $f_{t+s}= f_t \circ f_s$ we have the following decomposition
\begin{equation*}
(P\varphi+X_\D)\circ f_{t+s}+Q\log \frac{|f_{t+s}'|}{|f_{t+s}|}= P\varphi \circ f_t \circ f_s+ X_\D \circ f_t \circ f_s+ Q \log \frac{|f_{t}' \circ f_s| |f_s|}{|f_{t}\circ f_s|}+ Q\log \frac{|f_{s}'|}{|f_{s}|}
\end{equation*}
By conformal invariance of $X_\D$ and using the Markov property for the Dirichlet GFF, we get that
\begin{equation*}
 X_\D \circ f_t \circ f_s= (P(X_\D \circ f_t)|_\T)\circ f_s+ \tilde{X}_\D \circ f_s
\end{equation*}
where $P(X_\D \circ f_t)|_\T$ is the Harmonic extension of $(X_\D \circ f_t)|_\T$ and $\tilde{X}_\D$ is an independent Dirichlet GFF. Obviously for all $|z| \leq 1$
\begin{equation*}
(P\varphi \circ f_t)(z)= P ( (P\varphi \circ f_t)|_\T )(z)
\end{equation*}
since both sides are harmonic with same value at $|z|=1$ and therefore we can compose with $f_s$ to get
\begin{equation*}
P\varphi \circ f_t \circ f_s= P ( (P\varphi \circ f_t)|_\T )\circ f_s.
\end{equation*}
Finally, we have for all $|z| \leq 1$
\begin{equation*}
P  (  \log \frac{|f_{t}'|}{|f_{t}|} |_\T )(z) = \log \frac{|f_{t}'(z)| |z|}{|f_{t}(z)|}
\end{equation*}
since both sides are harmonic with same value at $|z|=1$ and therefore we can compose with $f_s$ to get  
\begin{equation*}
P  (  \log \frac{|f_{t}'|}{|f_{t}|} |_\T ) \circ f_s(z)= \log \frac{|f_{t}' \circ f_s | |f_s(z)|}{|f_{t}\circ f_s|}.
\end{equation*}
We can combine the previous identities to get the claimed Markov property.
\end{proof}

In order to describe the invariant measure of the operator $|f_t'(0)|^{-\frac{Q^2}{2}} P_t^0$, we identify the kernel $-\log |h(e^{i \theta})-h(e^{i \theta'})|$ (recall that $h$ appears in Lemma \ref{limith}) with the resolvent of a jump operator across the curve $h(\T)$ and deduce the existence of a Gaussian field with this covariance.
\begin{lemma}\label{firstlemma1}
There exists a Gaussian random variable $X_h\in H^{-s}(\T)$, with vanishing mean and covariance
\[
\E[  X_h(e^{i \theta})  X_h(e^{i \theta'})   ]  = \log \frac{1}{|h(e^{i \theta})-h(e^{i \theta'})  |}.
\]
\end{lemma}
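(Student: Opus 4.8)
The plan is to realise the kernel $K(\theta,\theta')=\log\frac{1}{|h(e^{i\theta})-h(e^{i\theta'})|}$ as (a multiple of) the resolvent of a non-negative self-adjoint operator attached to the Jordan curve $\Gamma:=h(\T)$, and then to invoke the standard correspondence between non-negative kernels of the right regularity and centred Gaussian random variables in $H^{-s}(\T)$. By Lemma \ref{limith} the map $h$ is holomorphic and injective on a neighbourhood of $\overline{\D}$, so $\Gamma$ is a smooth Jordan curve; it bounds the interior domain $\Omega_+:=h(\D)\ni 0$ and the exterior domain $\Omega_-:=\hat\C\setminus\overline{\Omega_+}\ni\infty$. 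For a smooth function $f$ on $\Gamma$ let $H_\pm f$ denote its harmonic extension to $\Omega_\pm$, the exterior one being the bounded harmonic extension, and set
\[
N_\Gamma f:=\partial_n(H_+f)-\partial_n(H_-f)\big|_\Gamma,
\]
the jump of the normal derivative across $\Gamma$, with $n$ the unit normal pointing from $\Omega_+$ to $\Omega_-$. Green's first identity gives $\langle N_\Gamma f,f\rangle_{L^2(\Gamma)}=\int_{\Omega_+}|\nabla H_+f|^2+\int_{\Omega_-}|\nabla H_-f|^2\geq 0$ (the exterior integral converges because $H_-f$ tends to a constant at infinity at rate $|z|^{-1}$), so $N_\Gamma$ is symmetric and non-negative with kernel exactly the constants.

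Next I would compute the inverse of $N_\Gamma$ on its mean-zero range via the single-layer potential. Given $g$ on $\Gamma$ with $\int_\Gamma g\,ds=0$, the logarithmic potential $U(z)=-\frac{1}{2\pi}\int_\Gamma\log|z-w|\,g(w)\,ds(w)$ is harmonic on $\hat\C\setminus\Gamma$, continuous across $\Gamma$, equal to $O(|z|^{-1})$ at infinity (here the vanishing of the total charge is used), and satisfies the classical jump relation $\partial_n(U|_{\Omega_+})-\partial_n(U|_{\Omega_-})=g$ on $\Gamma$; hence $U|_\Gamma=N_\Gamma^{-1}g$, so $N_\Gamma^{-1}$ has integral kernel $-\frac{1}{2\pi}\log|w-w'|$ on the mean-zero subspace, and $\langle N_\Gamma^{-1}g,g\rangle=\frac{1}{2\pi}\iint_{\Gamma\times\Gamma}\log\frac{1}{|w-w'|}\,g(w)g(w')\,ds\,ds=\frac{1}{2\pi}\int_{\C}|\nabla U|^2\geq 0$. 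Equivalently, the logarithmic energy of a signed measure of total mass zero carried by $\Gamma$ is non-negative. Pulling this back through the parametrisation $\theta\mapsto h(e^{i\theta})$ of $\Gamma$ (so that the measure becomes $|h'(e^{i\theta})|\,d\theta$) and rescaling by $2\pi$ to match the normalisation of the GFF on $\T$, I obtain that $K(\theta,\theta')$ is, modulo the harmless additive ambiguity $a(\theta)+b(\theta')$, the kernel of a non-negative operator on the space of functions on $\T$ with $\int_0^{2\pi}\cdot\,d\theta=0$; this is the meaning of ``vanishing mean'' in the statement, and the preceding sentence of the paper.

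It remains to pass from the non-negative kernel to an honest $H^{-s}(\T)$-valued Gaussian variable. Here I would write $K(\theta,\theta')=-\log|e^{i\theta}-e^{i\theta'}|-\log\bigl|\tfrac{h(e^{i\theta})-h(e^{i\theta'})}{e^{i\theta}-e^{i\theta'}}\bigr|$, observe that $(z,z')\mapsto\frac{h(z)-h(z')}{z-z'}$ is holomorphic and non-vanishing on a neighbourhood of $\overline{\D}\times\overline{\D}$ (injectivity of $h$, with value $h'(z)\neq 0$ on the diagonal), so the second term is the restriction to $\T\times\T$ of a real-analytic function; thus the covariance operator of $X_h$ is a smoothing, hence trace-class on every $H^{-s}(\T)$, perturbation of the explicitly diagonalised GFF-on-$\T$ covariance operator. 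Since $\mu_0$-a.s.\ the field $\varphi$ lies in $H^{-s}(\T)$ for all $s>0$, the same holds for the centred Gaussian measure on $H^{-s}(\T)$ with this covariance operator, and $X_h$ is by definition a sample from it; by construction $\E[X_h]=0$ and $\E[X_h(e^{i\theta})X_h(e^{i\theta'})]=K(\theta,\theta')$.

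The step I expect to be the main obstacle is precisely the identification of $K$ with the resolvent kernel of a non-negative operator, i.e.\ the positivity itself: the kernel $\log\frac{1}{|h(e^{i\theta})-h(e^{i\theta'})|}$ is not literally positive as a kernel on $L^2(\T,d\theta)$ — only the mean-zero sector is relevant, which is exactly why the ``vanishing mean'' normalisation is imposed — and making this precise requires the potential-theoretic input above, equivalently the Green's identity computation $\langle N_\Gamma f,f\rangle=\int|\nabla H_\pm f|^2$ together with the mapping properties and jump relations of the single-layer potential on the smooth curve $\Gamma$. The remaining analytic points (boundary regularity of the harmonic extensions, convergence of the exterior Dirichlet integral, and the $H^{-s}$ membership) are routine once $\Gamma$ is known to be smooth.
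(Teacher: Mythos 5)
Your proposal is correct and follows essentially the same route as the paper: the positivity of the kernel $\log\frac{1}{|h(e^{i\theta})-h(e^{i\theta'})|}$ on mean-zero test functions is obtained exactly as in the paper's proof, via the single-layer potential on the smooth Jordan curve $h(\T)$, its jump relation for the normal derivative, its decay at infinity under the zero-total-charge condition, and Green's identity expressing the logarithmic energy as $\int_{\R^2}|\nabla Kf|^2\geq 0$. Your additional remarks on $H^{-s}(\T)$ membership, via the real-analyticity of $\log\bigl|\tfrac{h(z)-h(z')}{z-z'}\bigr|$ near $\overline{\D}\times\overline{\D}$, anticipate what the paper establishes separately in Lemma \ref{secondlemma1}, so they are consistent with the text.
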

\begin{proof}
It suffices to show that for all real non zero $f\in L^2(\T)$ with $\int_{0}^{2 \pi}f(e^{i \theta})\dd \theta=0$,
\begin{equation}\label{positivecovariance}
\int_0^{2 \pi} \int_0^{2 \pi}   \log \frac{1}{|h(e^{i \theta})-h(e^{i \theta'})  |}  f(e^{i\theta}) f(e^{i\theta'}) \dd \theta \dd \theta' >0
\end{equation}
namely that the kernel is positive definite (see \cite{jan}). Let $K$ be the single layer operator $K:C^\infty(\T)\to C^0(\R^2)$
\[ Kf(x)=\int_{\T} G(x,h(e^{i\theta}))f(e^{i\theta}) \dd \theta \]
with  $G(x,x'):=(2\pi)^{-1} \log \frac{1}{|x-x'|}$ the Green's function on $\R^2$. We have $\Delta Kf=0$ in $\R^2\setminus h(\T)$ and 
$((\pl_{\nu}^+-\pl_{\nu}^-)Kf)(h(e^{i\theta}))=\frac{f(e^{i\theta})}{ |h'(e^{i\theta})|  }$ where $\pl_{\nu}^+$ is the outward  unit normal derivative in $h(\D)$ and $\pl_{\nu}^-$ the outward unit normal derivative in $\R^2\setminus h(\D)$. Thus $Kf$ solves a Neumann problem 
\begin{equation}\label{Neumannprb}
 \Delta Kf=0 \textrm{  in } \R^2 \setminus h(\T) ,\quad (\pl_{\nu}^+-\pl_{\nu}^-)Kf |_{h(\T)} =\frac{f\circ h^{-1}}{|h'|\circ h^{-1}},
 \end{equation}
moreover we see that 
\[ Kf(x)=-(2\pi)^{-1}\log(|x|) \int_{0}^{2 \pi} f(e^{i\theta}) \dd \theta + \mc{O}(1/|x|) \textrm{ as }|x|\to \infty\]
so $Kf(x)\to 0$ as $|x|\to \infty$ if $\int_{0}^{2 \pi}  f(e^{i\theta}) \dd \theta=0$. It is also direct to check that $|\nabla Kf(z)|=\mc{O}(1/|z|^2)$ for large $|z|$ so that we can write using Green's formula and \eqref{Neumannprb}
\[ \int_{\R^2} |\nabla Kf(x)|^2 \dd x=\int_{\R^2\setminus h(\D)}|\nabla Kf(x)|^2 \dd x+\int_{h(\D)}|\nabla Kf(x)|^2\dd x= \int_{0}^{2\pi} f(e^{i\theta})(Kf)(h(e^{i\theta}))\dd \theta \]
which is nothing more than the left hand side of \eqref{positivecovariance}.  
\end{proof}

Let $\P_h$ be the probability measure induced by $X_h+Q \log \frac{\omega}{|v|}$ on $H^{-s}(\T)$ for $s>0$ (recall that $\omega$ and $v$ appear in Proposition \ref{continuousprocess}) 
 and denote 
 \begin{equation}\label{defmuh}
 \mu_h:= \dd c \otimes \P_h
 \end{equation} 
 the product measure on $H^{-s}(\T)$.

If ${\bf v}=v(z)\pl_z$ with $v(z)=-\omega z-\sum_{n=1}^\infty v_nz^{n+1}$, we define the operator $\bH_\bv^0$ via the following formula: 
\begin{equation}\label{eq:def_L}
 \forall F\in\cC_\mathrm{exp},\quad  \bH_\bv^0 F=\omega   \bH^0 F + \sum_{n=1}^{\infty}  \Re (v_n)  (\mathbf{L}_n^0+\widetilde{\mathbf{L}}_n^0 ) F  + i \sum_{n=1}^{\infty}  \Im (v_n) (\mathbf{L}_n^0-\widetilde{\mathbf{L}}_n^0)F.
\end{equation}
One should notice that in the two sums $\sum_{n = 1}^{\infty}$ only a finite number of terms are not equal to $0$ because $F \in \cC_\mathrm{exp}$ and therefore the above quantity is well defined. In particular, $\bH^0:=\bH_{\bv_0}^0$ (recall $\bv_0=-z\pl_z$) is the free field Hamiltonian whose expression is  \eqref{defH0}.

The main result of this section is the following theorem which establishes the link between  $\bH_\bv^0$ and $P_t^0$:

\begin{theorem}\label{theoremfreefield}
 Let $\bv=v(z)\pl_z$ be a Markovian vector field such that $v(0)=0$, $v'(0)=-\omega$ and $v$ admits a holomorphic extension in a neighborhood of $\D$. There exists an absolute constant $K>0$ such that if $\omega> K \sum_{n \geq 1}|v_n|n^2 $ then the operator $\bH_\bv^0$ defined in \eqref{eq:def_L} admits a closed extension such that $e^{-t \bH_\bv^0 }$ is a continuous contraction semigroup on $\mc{H}=L^2(\R\times \Omega_\T)$. The semigroup $|f_t'(0)|^{-\frac{Q^2}{2}} P_t^0$ admits $\mu_h$ of \eqref{defmuh} as invariant measure, this measure is absolutely continuous with respect to $\mu_0$ and $P_t^0$ coincides with $e^{-t \bH_\bv^0 }$ on $L^2(\R\times \Omega_\T)$. 
 \end{theorem}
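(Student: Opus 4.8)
We obtain the four assertions by combining sectorial form theory on the analytic side with an It\^o/generator computation on the probabilistic side, matched through a Laplace transform (resolvent) argument. The steps are: (i) $\bH_\bv^0$ defines a closed $m$-sectorial operator whose semigroup is a contraction on $\mc{H}$; (ii) $\mu_h\ll\mu_0$ and $\mu_h$ is invariant, and $P_t^0$ extends to a bounded $C_0$-semigroup on $\mc{H}$; (iii) the generator of $P_t^0$ equals $-\bH_\bv^0$ on $\mc{C}_{\rm exp}$; (iv) $P_t^0=e^{-t\bH_\bv^0}$ on $\mc{H}$, which then also gives the contraction property. For (i), set $a(F,G):=\cjg\bH_\bv^0F,G\cjd_2$ on $\mc{C}_{\rm exp}$. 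Since $\cjg\bH^0F,F\cjd_2=\|F\|_{\mc{Q}_0}^2$ and $|\cjg\mathbf{L}_n^0F,G\cjd_2|\leq C(1+n)^{3/2}\|F\|_{\mc{Q}_0}\|G\|_{\mc{Q}_0}$ by Lemma~\ref{boundonLn} (and the same for $\widetilde{\mathbf{L}}_n^0$), $a$ is bounded on $\mc{D}(\mc{Q}_0)$ and, provided $\omega>K\sum_n|v_n|n^2$ for a suitable absolute constant $K$ (so that $C\sum_n|v_n|(1+n)^{3/2}<\omega$, using $(1+n)^{3/2}\leq Cn^2$), it is coercive with numerical range in a proper sector; by Kato's representation theorem its closure represents an $m$-sectorial operator, still written $\bH_\bv^0$, and $-\bH_\bv^0$ generates a bounded analytic $C_0$-semigroup $e^{-t\bH_\bv^0}$ on $\mc{H}$, which is a contraction since $\Re a\geq0$. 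Moreover $\mc{C}_{\rm exp}\subset\mc{D}(\bH_\bv^0)$, where $\bH_\bv^0$ acts by \eqref{eq:def_L} ($=\omega\bH^0+\sum_{n\geq1}(v_n\mathbf{L}_n^0+\bbar{v_n}\widetilde{\mathbf{L}}_n^0)$, a finite sum on $\mc{C}_{\rm exp}$), and a core argument shows $\mc{C}_{\rm exp}$ is a core for $\bH_\bv^0$.

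\textbf{Step (ii).} For the absolute continuity, note that the covariance $\log|h(e^{i\theta})-h(e^{i\theta'})|^{-1}$ of $X_h$ differs from the covariance $-\log|e^{i\theta}-e^{i\theta'}|$ of $\varphi$ by the kernel $\log\frac{|e^{i\theta}-e^{i\theta'}|}{|h(e^{i\theta})-h(e^{i\theta'})|}$, which is smooth and symmetric on $\T\times\T$ (it extends to the diagonal as $-\log|h'(e^{i\theta})|$, using that $h$ is holomorphic with $h'\neq0$ near $\bbar\D$), i.e. by a smoothing operator; since in addition the shift $-Q\log|v||_\T$ is smooth, the Feldman--Hájek criterion gives $\P_h\sim\P_\T$, hence $\mu_h=\dd c\otimes\P_h\ll\mu_0$. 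Invariance of $\mu_h$ relies on the conjugation identity $h\circ f_t=e^{-\omega t}h$, which follows from $e^{\omega t}f_t\to h$ (Lemma~\ref{limith}) and $h'v=-\omega h$ (Lemma~\ref{limitandh}): it presents $f_t$ as conjugate, through the univalent map $h$, to the linear dilation $z\mapsto e^{-\omega t}z$, so that transporting the field by $h$ turns the dynamics \eqref{eq:process} into the free dilation dynamics (with $Q\log\frac{|f_t'|}{|f_t|}$ playing the role of the cocycle enforcing the Liouville transformation law), for which $X_h-Q\log|v||_\T$ is the stationary field; invariance then follows from the free-field case, the zero-mode increment being absorbed by the Lebesgue factor $\dd c$. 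Together with the sub-Markov estimate $\|P_t^0\|_{L^\infty\to L^\infty}=|f_t'(0)|^{Q^2/2}=e^{-\omega Q^2t/2}$ and interpolation, this gives boundedness of $P_t^0$ on $L^2(\mu_h)$, hence (by $\P_h\sim\P_\T$, with densities in all $L^p_{\rm loc}$) the boundedness of $P_t^0$ on $\mc{H}=L^2(\mu_0)$ with at most exponential norm growth in $t$; strong continuity on $\mc{H}$ follows once Step~(iii) is in place, using the density of $\mc{C}_{\rm exp}$.

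\textbf{Steps (iii)--(iv).} For $F\in\mc{C}_{\rm exp}$ we compute $\lim_{t\to0}\tfrac1t(P_t^0F-F)=-\bH_\bv^0F$ in $\mc{H}$. Writing $B_t+\varphi_t=\sum_nZ_n(t)e^{in\theta}$, each $Z_n$ is a continuous semimartingale: its martingale part comes from the Gaussian field $X_\D\circ f_t$, with quadratic covariation rate at $t=0$ obtained by differentiating $G_\D(f_t(e^{i\theta}),f_t(e^{i\theta'}))$ in \eqref{covarianceXn} (which vanishes at $t=0$), and its finite-variation part comes from the deterministic terms $P\varphi\circ f_t$ and $Q\log\frac{|f_t'|}{|f_t|}$, whose $t$-derivatives at $0$ are $(\partial_zP\varphi)\,v+(\partial_{\bar z}P\varphi)\,\bbar v$ and $Q\Re(v'-v/z)$. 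Applying It\^o's formula to $F(c+B_t+\varphi_t)$ and expanding in Fourier modes via $v(z)=-\omega z-\sum_{n\geq1}v_nz^{n+1}$ and $v'(z)-v(z)/z=-\sum_{n\geq1}nv_nz^n$: the prefactor contributes the constant $-\tfrac{\omega Q^2}{2}$, the finite-variation part contributes the first-order (drift) operators, and the quadratic-variation part the second-order operators; grouping these according to $\omega$ and the $v_n$ (and keeping track of normal ordering, in particular the $\mathbf{A}_0$-terms) reproduces $-\bH_\bv^0F$ as in \eqref{eq:def_L}. The convergence is in $\mc{H}$ because the It\^o remainder is controlled by uniform, small-$t$ moment bounds on $\varphi_t-\varphi$, using that $F\in\mc{C}_{\rm exp}$ decays in $c$ with polynomial growth in $\varphi$ together with quantitative control of $f_t',f_t''$ and of $\sum_nn^2|v_n|$. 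A Fubini--It\^o manipulation upgrades this to $\tfrac{d}{dt}\cjg P_t^0F,\psi\cjd_2=-\cjg P_t^0\bH_\bv^0F,\psi\cjd_2$ for all $\psi\in\mc{C}_{\rm exp}$ (using that $\bH_\bv^0F\in\mc{C}_{\rm exp}$). Since $P_t^0$ is a bounded $C_0$-semigroup on $\mc{H}$ (Step~(ii)) whose generator extends $-\bH_\bv^0|_{\mc{C}_{\rm exp}}$, and $\mc{C}_{\rm exp}$ is a core for the $m$-sectorial $\bH_\bv^0$ (Step~(i)), the two generators coincide, so $P_t^0=e^{-t\bH_\bv^0}$ on $\mc{H}$; equivalently, for $\Re\lambda$ large the Laplace transform $R_\lambda=\int_0^\infty e^{-\lambda t}P_t^0\,\dd t$ is bounded on $\mc{H}$, the identity $\tfrac{d}{dt}P_t^0F=-P_t^0\bH_\bv^0F$ gives $R_\lambda(\lambda+\bH_\bv^0)F=F$ for $F\in\mc{C}_{\rm exp}$, and density of $(\lambda+\bH_\bv^0)\mc{C}_{\rm exp}$ identifies $R_\lambda$ with the resolvent of $\bH_\bv^0$. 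In either case $P_t^0=e^{-t\bH_\bv^0}$, which in turn shows $P_t^0$ is a contraction on $\mc{H}$.

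\textbf{Main obstacle.} The heart of the argument is Step~(iii): besides the lengthy bookkeeping matching each It\^o term to the correctly normal-ordered expression of $\mathbf{L}_n^0,\widetilde{\mathbf{L}}_n^0$, one must control the remainder in the $\mc{H}$-norm uniformly as $t\to0$, which requires quantitative estimates on the flow $f_t$ near its fixed point and on the Gaussian increments of $X_\D\circ f_t$; this is the step that genuinely uses the smallness condition $\omega>K\sum_n|v_n|n^2$. Secondary technical points are the a~priori boundedness of $P_t^0$ on $\mc{H}$ (which we obtain only via the invariance of $\mu_h$ and the equivalence $\mu_h\sim\mu_0$) and the verification that $\mc{C}_{\rm exp}$ is a core for $\bH_\bv^0$, i.e. that $(\lambda+\bH_\bv^0)\mc{C}_{\rm exp}$ is dense in $\mc{H}$, both of which are needed to close the identification.
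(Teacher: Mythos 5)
Your overall architecture coincides with the paper's: the closed extension and the contraction property come from the sectorial/accretive form estimate built on Lemma \ref{boundonLn} (with $\omega>K\sum_n n^2|v_n|$ entering exactly there, and only there --- the generator computation itself does not use it, contrary to your ``main obstacle'' remark), and the generator of $P_t^0$ on $\mc{C}_{\rm exp}$ is computed by differentiating the drift $P\varphi\circ f_t+Q\log\frac{|f_t'|}{|f_t|}$ and the covariance of $X_\D\circ f_t$ at $t=0$, which is the content of Propositions \ref{prop:first_order} and \ref{prop:second_order}; your It\^o phrasing is an equivalent packaging of the same computation. Two sub-arguments genuinely differ. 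For the invariance of $\mu_h$ you invoke the conjugation $h\circ f_t=e^{-\omega t}h$ (which is indeed correct, by Lemmas \ref{limith} and \ref{limitandh}) to reduce to the dilation case; the paper instead proves directly that $(B_t,\varphi_t)$ converges in law to $\dd c\otimes \P_h$ and extracts invariance from $P_t^0P_{t_0}^0=P_{t+t_0}^0$. Your route is more conceptual but needs one more step to be complete: under $h$ the Dirichlet field on $\D$ is carried to the Dirichlet field on $h(\D)\neq\D$, so the conjugated dynamics is not literally the free dilation dynamics of \cite{GKRV}, and one still has to identify the limiting covariance as $-\log|h(e^{i\theta})-h(e^{i\theta'})|$ (the $\log|1-f_t\bbar{f_t}|$ term disappearing in the limit), which is what the paper's computation does.

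The one genuine gap is your a priori bound on $\|P_t^0\|_{\mc{H}\to\mc{H}}$. Equivalence $\P_h\sim\P_\T$ does \emph{not} transfer $L^2(\mu_h)$-boundedness to $L^2(\mu_0)$-boundedness: the Radon--Nikodym density of two equivalent Gaussian measures is the exponential of a quadratic functional and is generically unbounded above and below, so ``densities in all $L^p_{\rm loc}$'' is not enough. Since your identification of $P_t^0$ with $e^{-t\bH_\bv^0}$ goes through the Laplace transform $R_\lambda=\int_0^\infty e^{-\lambda t}P_t^0\,\dd t$, whose boundedness requires precisely this at-most-exponential growth, the argument as written is circular at this point (the contraction property is only obtained \emph{after} the identification). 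The paper avoids the issue by proving that $P_t^0$ maps $\mc{C}_{\rm exp}$ into $\mc{C}_{\rm exp}$ (via the explicit triangular action \eqref{varphi_tn} on Fourier modes) and then solving the forward Cauchy problem $\partial_tP_t^0F=-\bH_\bv^0P_t^0F$, $P_0^0F=F$, pointwise on $\mc{C}_{\rm exp}$; uniqueness for the m-accretive generator then yields $P_t^0F=e^{-t\bH_\bv^0}F$ on the dense set with no prior $L^2$ bound, and the contraction estimate follows. You have all the ingredients to run this version (your weak identity $\tfrac{\d}{\d t}\cjg P_t^0F,\psi\cjd_2=-\cjg P_t^0\bH_\bv^0F,\psi\cjd_2$ should be replaced by the generator acting on the left, which is where the stability of $\mc{C}_{\rm exp}$ under $P_t^0$ is needed), so the gap is repairable, but the boundedness-by-measure-equivalence step should be removed.
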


We will first prove the following intermediate lemma:

\begin{lemma}
The semigroup $|f_t'(0)|^{-\frac{Q^2}{2}}P_t^0$ with $P_t^0$ defined by \eqref{definitionPt0} admits $\mu_h$ as an invariant measure and the measure $\mu_h$ is absolutely continuous with respect to $\mu_0$.
\end{lemma}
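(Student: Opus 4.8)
The plan is to treat the two assertions separately. Invariance will be obtained by a direct computation: one writes out $\int P_t^0F\,d\mu_h$ from the explicit formula \eqref{definitionPt0}, performs an algebraic simplification using the identities of Lemmas \ref{limith}--\ref{limitandh} and the decomposition \eqref{decomposGFF}, and then recognizes the law of the deformed field as $\mu_h$ again. Absolute continuity will follow from the theory of equivalence of Gaussian measures. So fix $F\in\mc{C}_{\rm exp}$ (or bounded continuous); by \eqref{definitionPt0} and \eqref{defmuh}, evaluating $\int P_t^0F\,d\mu_h$ amounts to sampling $(c,\varphi)$ with $\varphi$ the (average-zero part of the) field $X_h-Q\log|v||_\T$ of Lemma \ref{firstlemma1}, together with an independent Dirichlet GFF $X_\D$, and averaging $|f_t'(0)|^{Q^2/2}F(c+B_t+\varphi_t)$ with $(B_t,\varphi_t)$ as in \eqref{eq:process}. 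The first step is a cancellation of the conformal anomaly: since $v$ has a single simple zero in $\D$ (at $0$), the map $z\mapsto\log|v(z)/z|$ is harmonic on $\D$, so the harmonic extension of $\log|v||_\T$ is $\log|v|-\log|\cdot|$; combining this with the identity $v\circ f_t=f_t'\,v$ of Lemma \ref{limitandh} and with the weight $Q\log\frac{|f_t'|}{|f_t|}$ one gets
\[
\Big((P\varphi+X_\D)\circ f_t+Q\log\tfrac{|f_t'|}{|f_t|}\Big)\Big|_\T=\big(P(X_h)\circ f_t+X_\D\circ f_t\big)\big|_\T-Q\log|v|\big|_\T ,
\]
so the deterministic drift $-Q\log|v||_\T$ built into $\mu_h$ is reproduced exactly, and one is left with the Gaussian field $W_t:=\big(P(X_h)\circ f_t+X_\D\circ f_t\big)|_\T$, plus this drift, plus the free variable $c$.

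It then remains to identify the law of $W_t$. Differentiating and using $\omega h=-h'v$ (Lemma \ref{limitandh}) gives $\partial_t(h\circ f_t)=(h'v)\circ f_t=-\omega(h\circ f_t)$, hence $h\circ f_t=e^{-\omega t}h$. Feeding this into the explicit form of the double harmonic extension of the covariance of $X_h$ (harmonic off the diagonal with the correct logarithmic singularity, corrected by the average-zero projection), into the covariance $G_\D$ of $X_\D$, and into the Green's-function averages over $f_t(\T)$ recorded just before Proposition \ref{continuousprocess}, a routine simplification shows that $W_t$ is a centered Gaussian field whose average-zero part has \emph{exactly} the covariance of the average-zero part of $X_h$, and whose zero mode $B_t$ is the centered Gaussian of variance $\omega t$ (the Brownian motion already identified there). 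Hence $\varphi_t$ has the law of the average-zero part of $X_h-Q\log|v||_\T$, and, conditionally on $\varphi_t$, $c+B_t$ is distributed as a translate of Lebesgue measure; integrating $c$ out by translation invariance of $\dd c$ shows that the image of $\mu_h\otimes\P_\D$ under $(c,\varphi,X_\D)\mapsto c+B_t+\varphi_t$ is again $\mu_h$, which (in view of \eqref{definitionPt0}) gives $\int P_t^0F\,d\mu_h=\int F\,d\mu_h$, i.e.\ the asserted invariance.

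For absolute continuity, note that $\mu_h=\dd c\otimes\P_h$ and $\mu_0=\dd c\otimes\P_\T$ share the same Lebesgue zero-mode factor, so it suffices to prove $\P_h\ll\P_\T$, where $\P_\T$ is the law of the circle GFF with covariance $-\log|e^{i\theta}-e^{i\theta'}|$ and $\P_h$ is the law of (the average-zero part of) $X_h-Q\log|v||_\T$. The deterministic shift by $-Q\log|v||_\T$ is harmless by the Cameron--Martin theorem, because $v$ is holomorphic and non-vanishing near $\T$, so $\log|v||_\T$ is smooth and hence lies in the Cameron--Martin space $H^{1/2}(\T)$ of $\P_\T$. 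For the centered Gaussians one compares covariance kernels: the difference
\[
\big(-\log|h(e^{i\theta})-h(e^{i\theta'})|\big)-\big(-\log|e^{i\theta}-e^{i\theta'}|\big)=-\log\Big|\tfrac{h(e^{i\theta})-h(e^{i\theta'})}{e^{i\theta}-e^{i\theta'}}\Big|,
\]
together with the smooth corrections coming from the average-zero projection ($\log|h(e^{i\theta})|+\log|h(e^{i\theta'})|$), is a smooth function on $\T\times\T$, because the divided difference $(e^{i\theta}-e^{i\theta'})^{-1}(h(e^{i\theta})-h(e^{i\theta'}))$ extends smoothly and without zeros across the diagonal ($h$ being univalent with non-vanishing derivative on $\bbar{\D}$) and $h$ is non-vanishing on $\T$. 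A smooth kernel defines a smoothing, hence Hilbert--Schmidt, perturbation of the covariance operator with range in $C^\infty(\T)\subset H^{1/2}(\T)$; by the Feldman--H\'ajek criterion the two centered Gaussian measures are equivalent, and combining with the Cameron--Martin shift we conclude $\P_h\sim\P_\T$, in particular $\mu_h\ll\mu_0$.

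I expect the delicate point to be the bookkeeping of the zero mode in the invariance step: the average of the deformed field is a Brownian motion of variance $\omega t$ that is correlated with the average-zero part, so a Girsanov/linear-regression argument is needed to see that, once the flat variable $c$ is integrated out, the zero mode is again flat \emph{and} decoupled from the rest — this, rather than any hard estimate, is where care is required (and it is also where the prefactor $|f_t'(0)|^{Q^2/2}$ is absorbed). A secondary technical point is verifying the hypotheses of the Feldman--H\'ajek theorem, namely that $X_h$ and the circle GFF have the same Cameron--Martin space (which follows since their covariances differ by a smoothing operator, provided $I+(\text{smoothing})$ is invertible, itself a consequence of positive-definiteness in Lemma \ref{firstlemma1}) and that the corresponding perturbation is Hilbert--Schmidt.
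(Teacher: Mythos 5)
Your argument for invariance takes a genuinely different route from the paper's. The paper does \emph{not} compute $\int P_t^0F\,\dd\mu_h$ directly: it identifies $\mu_h$ as the $t\to\infty$ limit in law of the process (verifying the hypotheses of a weak-convergence criterion for Gaussian measures from Bogachev, and handling the $c$-variable by conditioning on $B_t$ and using that $\sqrt{2\pi\omega t}\,\E[g(c+B_t)]\to\int g$), and then deduces invariance from the semigroup property by computing the limit of $P_{t+t_0}F$ in two ways. You instead prove the exact finite-$t$ stationarity identity: the drift cancellation via $P(\log|v|\,|_\T)=\log|v|-\log|\cdot|$ together with $v\circ f_t=f_t'\,v$, the conjugation identity $h\circ f_t=e^{-\omega t}h$ (which does follow from $\omega h=-h'v$), and the covariance computation showing that $\E[W_t(e^{i\theta})W_t(e^{i\theta'})]$ equals $-\log|h(e^{i\theta})-h(e^{i\theta'})|$ up to additive functions of one variable, hence reproduces the law of $X_h$ after the mean-zero projection. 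I checked this computation and it is correct; it is arguably cleaner and gives slightly more (stationarity at every $t$, rather than invariance inferred from convergence to equilibrium). For the absolute continuity, your argument is essentially the paper's: the difference of covariance kernels is smooth (the paper proves this via exponential decay of Fourier coefficients in Lemma \ref{secondlemma1}, you via the non-vanishing holomorphic divided difference of $h$ across the diagonal), hence a Hilbert--Schmidt perturbation after conjugation by $G_{\rm Id}^{\pm 1/2}$, and Feldman--H\'ajek (plus Cameron--Martin for the smooth shift $-Q\log|v|\,|_\T$) gives equivalence.

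One caveat: the prefactor $|f_t'(0)|^{Q^2/2}=e^{-\omega Q^2 t/2}$ is a deterministic constant and is \emph{not} absorbed by any regression on the zero mode, contrary to your parenthetical remark. Your own computation yields $\int P_t^0F\,\dd\mu_h=e^{-\omega Q^2t/2}\int F\,\dd\mu_h$ (test with $F=g(c)$), so $\mu_h$ is invariant for the normalized Markov kernel $\E_\varphi[F(c+B_t+\varphi_t)]$ rather than literally for $P_t^0$ as defined in \eqref{definitionPt0}. This is exactly what the paper's proof establishes as well (its identity \eqref{invariance} is written for the kernel with the prefactor dropped), so it is a shared imprecision of the statement rather than a defect of your approach --- but you should not assert that the prefactor disappears.
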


\proof
We first want to apply  Example 3.8.15 in \cite{boga} to show convergence in law of $\varphi_t$ towards $X_h+Q \log \frac{\omega}{|v|}$ as $t$ tends to infinity.
Let $s>0$. For all  $g(\theta)= \sum_{k \in \Z^\ast}  z_k e^{ik \theta} \in H^{-s}(\T)$ and $g'(\theta)= \sum_{k \in \Z^\ast} z_k' e^{ik \theta}\in H^{-s}(\T)$, we have
\[
\E\Big[  \cjg   g , (X_\D \circ f_t)|_{\T} \cjd_{H^{-s}(\T)} \cjg   g' ,  (X_\D \circ f_t)|_{\T} \cjd_{H^{-s}(\T)} \Big] =  \frac{1}{(2 \pi)^2}\int_{0}^{2 \pi} \int_0^{2 \pi} \tilde{g}(\theta) \tilde{g}'(\theta')  \log \frac{| 1-f_t(e^{i\theta})\bbar{f_t(e^{i\theta'})}|}{|f_t(e^{i \theta})-f_t(e^{i \theta'})  |}  d \theta d\theta'
\]
where $\tilde{g}(\theta)= \sum_{k \in \Z^\ast} (1+|k|)^{-2s} z_k e^{ik \theta} $ and $\tilde{g}'(\theta')= \sum_{k \in \Z^\ast} (1+|k|)^{-2s} z_k' e^{ik \theta'}$. Since $g,g'$ have average $0$,
\begin{equation*}
\E\Big[  \cjg   g ,  (X_\D \circ f_t)|_{\T} \cjd_{H^{-s}(\T)} \cjg   g' ,  (X_\D \circ f_t)|_{\T} \cjd_{H^{-s}(\T)}\Big ] =  \frac{1}{(2 \pi)^2}\int_{0}^{2 \pi} \int_0^{2 \pi} \tilde{g}(\theta) \tilde{g}'(\theta')  \log \frac{| 1-f_t(e^{i\theta})\bbar{f_t(e^{i\theta'})}|}{|e^{\omega t} f_t(e^{i \theta})-e^{\omega t}f_t(e^{i \theta'})  |}  d \theta d\theta'.
\end{equation*}
We have the following convergence (recall $f_t(z)\to 0$ and $e^{\omega t}f_t(z)\to h(z)$ as $t\to \infty$ uniformly in $z$)
\begin{equation}\label{HSconvergence}
\int_{0}^{2 \pi} \int_0^{2 \pi}  \Big|   \log \frac{| 1-f_t(e^{i\theta})\bbar{f_t(e^{i\theta'})}|}{|e^{\omega t} f_t(e^{i \theta})-e^{\omega t}f_t(e^{i \theta'})  |}  -  \log \frac{1}{|h(e^{i \theta})-h(e^{i \theta'})  |} \Big|^2 \dd \theta \dd \theta'  \underset{t \to \infty}{ \rightarrow}  0 .
\end{equation}
Now, since $  \|  \tilde{g}   \|^2_{L^2(\T)}  \leq  \|  g   \|_{H^{-2s}(\T)}^2\leq   \|  g   \|_{H^{-s}(\T)}^2$, we get from the convergence \eqref{HSconvergence} that 
\begin{align*}
& \sup_{  \|  g   \|_{H^{s}(\T)}  \leq 1   }   \big | \E[   \cjg   g , (X_\D \circ f_t)|_{\T} \cjd_{H^{-s}(\T)}^2  ]    -  \E[   \cjg   g , X_h \cjd_{H^{-s}(\T)}^2  ]  \big|  \\  
&  \leq  \sup_{  \|  \tilde{g}   \|_{L^2(\T)}^2  \leq 1   }  \Big|  \frac{1}{(2 \pi)^2}\int_{0}^{2 \pi} \int_0^{2 \pi} \tilde{g}(\theta) \tilde{g}(\theta') \Big (  \log \frac{| 1-f_t(e^{i\theta})\bbar{f_{t}(e^{i\theta'})}|}{|e^{\omega t} f_t(e^{i \theta})-e^{\omega t}f_t(e^{i \theta'})  |}-  \log \frac{1}{|h(e^{i \theta})-h(e^{i \theta'})  |}  \Big )  d \theta d\theta'   \Big|  \underset{t \to \infty}{ \rightarrow}  0 .
\end{align*}
This establishes (ii) of Example 3.8.15 in \cite{boga}. 
We also have that
\begin{align*}
 \E[     \| (X_\D \circ f_t) |_{\T}   \|_{H^{-s}(\T)}^2   ]    & = \frac{1}{(2 \pi)^2}    \int_{0}^{2 \pi} \int_0^{2 \pi}    G_{\D}   (  f_t(e^{i \theta}),  f_t(e^{i \theta'})   )  \Big (   \sum_{n \in \Z} \frac{\cjg n\cjd^{-2s}}{2\pi} e^{in (\theta-\theta')}  \Big )    \dd \theta   \dd \theta'  \\
& =\frac{1}{(2 \pi)^2}    \int_{0}^{2 \pi} \int_0^{2 \pi}    \log |1- f_t(e^{i \theta}) \overline{f_t(e^{i \theta'})}  |    \Big (   \sum_{n \in \Z} \frac{\cjg n\cjd^{-2s}}{2\pi} e^{in (\theta-\theta')}  \Big )    \dd \theta   \dd \theta'  \\
& + \frac{1}{(2 \pi)^2}    \int_{0}^{2 \pi} \int_0^{2 \pi}     \log \frac{1}{|e^{\omega t} f_t(e^{i \theta})-e^{\omega t}f_t(e^{i \theta'})  |}  \Big (   \sum_{n \in \Z} \frac{\cjg n\cjd^{-2s}}{2\pi} e^{in (\theta-\theta')}  \Big )    \dd \theta   \dd \theta'.
\end{align*}
The first term has norm bounded by $C e^{-2\omega t} \int_{0}^{2 \pi} \int_0^{2 \pi}      \left (   \sum_{n \in \Z} \frac{\cjg n\cjd^{-2s}}{2\pi} e^{in (\theta-\theta')}  \right )    \dd \theta   \dd \theta'  $ hence converges to $0$ as $t$ goes to infinity and the second term converges to $ \frac{1}{(2 \pi)^2}    \int_{0}^{2 \pi} \int_0^{2 \pi}     \log \frac{1}{|h(e^{i \theta})-h(e^{i \theta'})  |}  \left (   \sum_{n \in \Z} \frac{\cjg n\cjd^{-2s}}{2\pi} e^{in (\theta-\theta')}  \right )    \dd \theta   \dd \theta' $. This establishes (iii) of Example 3.8.15 in \cite{boga}.

Finally, using \eqref{limitandheq} and the fact that $P\varphi(0)=0$,  the sequence $(P\varphi \circ f_t+Q \log \frac{|f'_t|}{|f_t|}) |_{\T}$ converges in $H^{-s}(\T)$ towards $Q \log \frac{\omega}{|v|}_{\T}$ as $t$ goes to infinity; this establishes (i) of Example 3.8.15 in \cite{boga}. Hence we have established convergence  of $\varphi_t$ towards $X_h+Q \log \frac{\omega}{|v||_{\T}}$. 

Now we turn to the convergence of the couple $(B_t=X_0(t), \varphi_t)$. We denote by $\varphi_{n,t}$ the Fourier modes of $\varphi_t$ and  $X_n(t)$ the random part defined in \eqref{X_n(t)}.

Consider a continuous function $(u,\varphi) \to F(u,\varphi)$ ($u$ stands for the constant part) such that there exists a function $G(u)$ such that $G(u)=\mc{O}((1+|u|^2)^{-N})$ for all $N>0$ and such that $|F(u,\varphi)| \leq G(u)$. Let us decompose 
\begin{align*}
& \varphi_{n,t}= \tilde{\varphi}_{n,t}+ c_{n,t} B_t, \quad \textrm{ with }\\ 
& c_{n,t}:= \frac{\E[ B_t  X_{n}(t) ]}{\omega t}=\frac{1}{\omega t (2 \pi)^2} \int_{0}^{2 \pi} \int_{0}^{2\pi}  \log  \frac{|1- f_t(e^{i \theta}) \overline{f_t(e^{i \theta'})}  |    }{| e^{\omega t}f_t(e^{i \theta}) -e^{\omega t}f_t(e^{i \theta'})  |}   e^{-in \theta} \dd \theta \dd \theta'=\mc{O}(t^{-1})
\end{align*}
and $\tilde{\varphi}_{n,t}$ is independent of $B_t$. Let $c_t= \sum_{n\in \Z^\ast}c_{n,t}e^{in\theta}$ 
and $\tilde{\varphi}_t =\sum_{n\in \Z^\ast }\tilde{\varphi}_{n,t}e^{in\theta}$.
We have by conditioning on $B_t$ 
\begin{align*}
|  \E[  \sqrt{2 \pi \omega t} F(c+B_t, \varphi_t) - \int_{\R}  F(u, c_t u+ \tilde{\varphi}_t) \dd u ] |  & \leq   \E\Big[   \int_{\R} | e^{-\frac{(u-c)^2}{2 \omega t}}  -1| |F(u, c_t (u-c)+ \tilde{\varphi}_t)| \dd u \Big]   \\
& \leq   \int_{\R} | e^{-\frac{(u-c)^2}{2 \omega t}}  -1| G(u)  \dd u \underset{t \to \infty}{\rightarrow}  0   .
\end{align*}
Now, one can show that $\tilde{\varphi}_t$ converges in law towards $ X_h+Q \log \frac{\omega}{|v|}$ (since $\varphi_t-\tilde{\varphi}_t$ converges in probability towards $0$) and $c_t$ converges as $t$ goes to infinity towards $0$ in $H^s(\T)$.
We have
\begin{equation*}
\E[  \int_{\R}  F(u, c_t(u-c)+ \tilde{\varphi}_t) \dd u  ]=  \int_{\R} \E[   F(u, c_t(u-c)+ \tilde{\varphi}_t)   ]  \dd u  \underset{t \to \infty}{\rightarrow}  \int_{\R} \E[   F(u, X_h-Q \log \frac{|v||_{\T}}{\omega})   ]  \dd u
\end{equation*}
where we have used the weak convergence at $u$ fixed and the dominated convergence theorem.

If we fix $t_0>0$ the continuous function $(u, \varphi) \to P_{t_0}F(u, \varphi)$ satisfies also that its norm is dominated by a positive 
function $\tilde{G}(u)=\mc{O}((1+ |u|^2)^{-N})$ for all $N>0$, hence we get that
\begin{equation*}
 \sqrt{2 \pi \omega t} P_t(P_{t_0}F)(c,\varphi)  \underset{t \to \infty}{\rightarrow}  \int_{\R} \E[   (P_{t_0}F)(u, X_h-Q \log \frac{|v||_{\T}}{\omega})   ]  \dd u.
\end{equation*} 
By using the semigroup property, we deduce the following identity
\begin{equation}\label{invariance}
\int_{\R} \E[   (P_{t_0}F)(u, X_h)   ]  \dd u=  \int_{\R} \E[   F(u, X_h)   ]  \dd u .
\end{equation}
To summarize, we have proved that \eqref{invariance} holds  for all continuous function $(u,\varphi) \to F(u,\varphi)$ such that there exists a function 
$G(u)=\mc{O}((1+|u|^2)^{-N})$ for all $N>0$ and for which $|F(u,\varphi)| \leq G(u)$. We can extend the above identity to all bounded Borelian functions by a density argument.

Finally, we have to show that the measure $\mu_h$ is absolutely continuous with respect to $\mu_0$. In order to do so, we have to show that $\P_h$ is absolutely continuous with respect to $\P_\T$. We have the following lemma:
\begin{lemma}\label{secondlemma1}
There exists a constant $C>0$ and $\rho \in (0,1)$ such that for all $n, p \in \Z^*$
\begin{equation}\label{secondlemma1eq}
 \Big|\int_0^{2 \pi} \int_0^{2 \pi}   \log \frac{|e^{i \theta}- e^{i \theta'}|}{|h(e^{i \theta})-h(e^{i \theta'})  |}  e^{- in \theta} e^{-ip \theta'} \dd \theta \dd \theta' \Big| \leq  C \rho^{|n|+|p|}.
\end{equation}
\end{lemma}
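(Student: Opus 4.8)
The plan is to use the analyticity of $h$ near $\overline{\D}$ (Lemmas~\ref{limith} and~\ref{limitandh}) to realise the kernel $\log\frac{|e^{i\theta}-e^{i\theta'}|}{|h(e^{i\theta})-h(e^{i\theta'})|}$ as minus the real part, restricted to the torus $\T\times\T$, of a function holomorphic on a bidisk of radius strictly larger than $1$. The geometric decay~\eqref{secondlemma1eq} of its double Fourier coefficients is then nothing but the Cauchy estimate for the Taylor coefficients of that holomorphic function.

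First I would set $H(z,w):=\frac{h(z)-h(w)}{z-w}$ for $z\neq w$ and $H(z,z):=h'(z)$. Since $h$ is holomorphic on a neighbourhood $U$ of $\overline{\D}$, the function $H$ is holomorphic on $U\times U$: the diagonal is a removable singularity (indeed $H(z,w)=\int_0^1 h'(w+t(z-w))\,\dd t$). The crucial point is that $H$ does not vanish on $\overline{\D}\times\overline{\D}$. Off the diagonal this is the injectivity of $h$ on $\overline{\D}$, which holds because $h\in\cM$ and $h(\T)$ is a Jordan curve (Carath\'eodory), so $h$ extends to a homeomorphism of $\overline{\D}$ onto $\overline{h(\D)}$. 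On the diagonal it amounts to $h'\neq 0$ on $\overline{\D}$, which I would deduce from the identity $\omega h(z)=-h'(z)v(z)$ of Lemma~\ref{limitandh}: on $\overline{\D}$ the Markovian field $v$ vanishes only at $z=0$ (it is nonzero on $\T$ since $\Re(\bar z v(z))<0$ there, and $0$ is its unique zero in $\D$), while $h$ vanishes only at $0$ (injectivity) and $h'(0)=1$; hence $h'=-\omega h/v$ is nonzero on $\overline{\D}\setminus\{0\}$ and $h'(0)=1\neq 0$. By continuity of $H$ on $U\times U$ and compactness of $\overline{\D}\times\overline{\D}$ there is then $r>1$ with $\{|z|\leq r\}\subset U$ and $\inf_{|z|,|w|\leq r}|H(z,w)|>0$.

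Since the bidisk $\{|z|<r,\ |w|<r\}$ is simply connected and $H$ is holomorphic and nonvanishing there, it admits a holomorphic logarithm $\Phi$ with $e^{\Phi}=H$ and, say, $\Phi(0,0)=0$. Writing $\Phi(z,w)=\sum_{j,k\geq 0}a_{jk}z^jw^k$, the Cauchy estimates on a bidisk of radius $r'\in(1,r)$ give $|a_{jk}|\leq M (r')^{-(j+k)}$ with $M:=\sup_{|z|,|w|\leq r'}|\Phi|$. Taking moduli in $e^{\Phi(e^{i\theta},e^{i\theta'})}=H(e^{i\theta},e^{i\theta'})$ yields, for a.e. $(\theta,\theta')$, the identity $\Re\Phi(e^{i\theta},e^{i\theta'})=\log\frac{|h(e^{i\theta})-h(e^{i\theta'})|}{|e^{i\theta}-e^{i\theta'}|}$, so the integrand in~\eqref{secondlemma1eq} equals $-\Re\Phi(e^{i\theta},e^{i\theta'})\,e^{-in\theta-ip\theta'}$. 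I would then expand $\Re\Phi(e^{i\theta},e^{i\theta'})=\tfrac12\sum_{j,k\geq 0}\big(a_{jk}e^{ij\theta+ik\theta'}+\overline{a_{jk}}e^{-ij\theta-ik\theta'}\big)$ --- a series converging absolutely and uniformly on $\T\times\T\subset\{|z|\leq r'\}^2$ --- and integrate term by term. By orthogonality of the characters, the integral in~\eqref{secondlemma1eq} equals $-2\pi^2 a_{n,p}$ when $n,p\geq 1$, equals $-2\pi^2\overline{a_{-n,-p}}$ when $n,p\leq -1$, and equals $0$ when $np<0$ (no pair $(j,k)$ with $j,k\geq 0$ can then match $(n,p)$ or $(-n,-p)$). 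In every case the modulus is $\leq 2\pi^2 M (r')^{-(|n|+|p|)}$, which is the claimed bound with $C=2\pi^2 M$ and $\rho=1/r'\in(0,1)$.

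The only genuinely delicate step is the non-vanishing of $H$ up to (and hence slightly past) the boundary, which is where the special structure of $h$ as the limit of the flow enters, via the identity $\omega h=-h'v$ and the non-degeneracy of $v$ on $\overline{\D}$; the remainder is the standard dictionary between analyticity of a periodic kernel and geometric decay of its Fourier coefficients.
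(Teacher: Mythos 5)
Your proposal is correct and follows essentially the same route as the paper: both write $\log\frac{|h(e^{i\theta})-h(e^{i\theta'})|}{|e^{i\theta}-e^{i\theta'}|}$ as $\Re$ of a holomorphic logarithm of the difference quotient $(h(z)-h(w))/(z-w)$ on a bidisk of radius strictly greater than $1$, and then combine Cauchy estimates with Fourier orthogonality. The only (harmless) difference is that the paper simply asserts that $h$ extends univalently to $(1+\delta)\D$, whereas you justify the non-vanishing of the difference quotient on $\overline{\D}\times\overline{\D}$ directly via injectivity and the identity $\omega h=-h'v$ before pushing out by compactness.
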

\begin{proof} 
The function $h$ can be extended to a univalent function on $(1+\delta) \D$ for some $\delta >0$. Therefore, the holomorphic function $(z,w) \mapsto \frac{h(z)- h(w)}{z-w}$ is equal to $\exp( \sum_{j,k \geq 0}  a_{j,k}  z^j w^k)$ where  
 $|a_{j,k}| \leq C \frac{1}{(1+\delta/2)^{j+k}}$. Relation \eqref{secondlemma1eq} can then be seen by writing $ \log \frac{|h(e^{i \theta})-h(e^{i \theta'})  |} {|e^{i \theta}- e^{i \theta'}|}= {\rm Re} \left (  \sum_{j,k \geq 0}  a_{j,k}  e^{ij \theta} e^{i k \theta'}   \right )$.
\end{proof}
Let $G_{\rm Id}$ be the operator on $L^2(\T)$ whose integral kernel is given by the covariance $-\log|e^{i\theta}-e^{i\theta'}|$ of $X_{{\rm Id}}$: a direct computation shows that it is equal to the Fourier multiplier $G_{\rm Id}=\pi |D|^{-1}$ where we set $|D|^{-s}e^{in\theta}:= |n|^{-s}e^{in\theta}$ and $|D|^{-s}1:=0$ for $s\in \R$. Let $G_h$ be the operator on $L^2(\T)$ whose integral kernel is given by the covariance of $X_h$. By 
Lemma \ref{secondlemma1}, we have 
\[ G_{h}=G_{\rm Id}+W\]
where $W$ is a smoothing operator, bounded as operators $H^{-N}(\T)\to H^N(\T)$ for all $N>0$. We can then write 
$G_h=G^{1/2}_{\rm Id}({\rm Id}+\tilde{W})G^{1/2}_{\rm Id}$ for some $\tilde{W}$ satisfying the same properties as $W$, and ${\rm Id}+\tilde{W}$ is a positive self-adjoint Fredholm operator on $H_0^{-1-2s}(\T)=\{f\in H^{-1-2s}(\T)\,|\, \cjg f,1\cjd=0\}$ for $s>0$ thus there is $C>0$ such that for all $f\in H_0^{-2s}(\T)$
\[  C^{-1}  \cjg G_{\rm Id}f,f\cjd_{L^2} \leq  \cjg G_hf,f\cjd_{L^2} \leq C \cjg G_{\rm Id}f,f\cjd_{L^2}.\]
Since $\E[ \cjg   g , X_{h}  |_{\T} \cjd_{H^{-s}(\T)}^2 ] = \cjg G_h |D|^{-2s}g,|D|^{-2s}g\cjd_{L^2}= \cjg G_hf,f\cjd_{L^2}$ with $f:=|D|^{-2s}g\in H_0^{-2s}(\T)$, we get
\begin{equation*}
C^{-1}\E[   \cjg   g , X_{\rm Id}  |_{\T} \cjd_{H^{-s}(\T)}^2 ] \leq \E[   \cjg   g , X_{h}  |_{\T} \cjd_{H^{-s}(\T)}^2 ]  \leq   C \E[   \cjg   g, X_{\rm Id}  |_{\T} \cjd_{H^{-s}(\T)}^2 ]
\end{equation*}
and so $X_h$ and $X_{\rm Id}$ have the same Cameron-Martin space. Therefore we can conclude that both fields yield equivalent probability measures by using the discussion at the bottom of p.294 in \cite{boga}.   \qed

\noindent
\emph{Proof of Theorem \ref{theoremfreefield}}. 
Consider the Hilbert space $(\mc{D}(\mc{Q}_0),\mc{Q}_0)$ and the quadratic form 
$\mc{Q}_{0}^{\bv}(F,F):= \cjg  \bH_\bv^0 F , F \cjd_2$ on $\mc{C}_{\rm exp}$. 

Using Lemma \ref{boundonLn}, there is $C>0$ such that for for all $F \in \cC_{\rm exp}$ 
\begin{align*}
 {\rm Re}(\mc{Q}_{0}^{\bv}(F,F)) & = \omega  \cjg  \bH^0 F , F \cjd_2 + {\rm Re}(\sum_{n \geq 1}  \Re (v_n)  \cjg   (\mathbf{L}_n^0+\widetilde{\mathbf{L}}_n^0 ) F  , F \cjd_2+ i \sum_{n \geq 1}  \Im (v_n)  \cjg   (\mathbf{L}_n^0-\widetilde{\mathbf{L}}_n^0 ) F  , F \cjd_2)   \\
& \geq \omega  \mc{Q}_0 (F, F)  - \sum_{n \geq 1} | \Re (v_n)  | | \cjg   (\mathbf{L}_n^0+\widetilde{\mathbf{L}}_n^0 ) F  , F \cjd_2  | - \sum_{n \geq 1}  | \Im (v_n) |  | \cjg   (\mathbf{L}_n^0-\widetilde{\mathbf{L}}_n^0 ) F  , F \cjd_2    \\
& \geq \omega  \mc{Q}_0 (F, F) - 2C\Big(  \sum_{n \geq 1} n^2  ( | \Re (v_n)  |+ |\Im v_n| )\Big )  \mc{Q}_0 (F, F)  \\
& \geq \Big( \omega - 2C \sum_{n \geq 1} n^2 |v_n|\Big)  \mc{Q}_0 (F, F).
\end{align*}
On the other hand one also has by the same argument 
\[  |\mc{Q}_{0}^{\bv}(F,F)| \leq \Big(\omega +2C\sum_{n \geq 1} n^2 |v_n|\Big)\mc{Q}_0(F,F).\]
Choosing  $\omega >2C \left (  \sum_{n \geq 1} n^2 |v_n|  \right )+1$, we see that there is $C_0>1$ such that 
\[  C_0^{-1}\mc{Q}_0(F,F) \leq |\mc{Q}_{0}^{\bv}(F,F)|  \leq C_0\mc{Q}_0(F,F)\]
which implies that $\mc{Q}_{0}^{\bv}$ is a closed quadratic form, that extends to $\mc{D}(\mc{Q}_0)$, 
and also that there is $\theta\in (0,\pi/2)$ so that 
 $|{\rm arg}(\mc{Q}_{0}^{\bv}(F,F)|\leq \theta$ for all $F\in \mc{D}(\mc{Q}_0)$. This means that $\mc{Q}_0^{\bv}$ is strictly m-accretive \cite[Chapter VIII.6]{rs1}. By Theorem VIII.16 and the following Lemma in \cite{rs1}, there is a unique closed operator extending ${\bf H}^0_{\bv}$ defined in a dense domain $\mc{D}({\bf H}^0_{\bv})\subset\mc{D}(\mc{Q}_0)$, with $({\bf H}^0_{\bv}-\lambda)^{-1}$ invertible  if ${\rm Re}(\la)<0$, and resolvent 
 bound $\|({\bf H}^0_{\bv}-\lambda)^{-1}\|_{\mc{H}\to \mc{H}}\leq (-{\rm Re}(\la))^{-1}$. By the Hille-Yosida theorem, ${\bf H}^0_{\bv}$ is the generator of a contraction semi-group denoted $e^{-t{\bf H}^0_{\bv}}$. 
Next, we notice that if $\mc{H}_{\R}$ is the real Hilbert space consisting of the real valued elements $F\in \mc{H}$, we can 
restrict  $\mc{Q}_0^{\bv}$ to  $\mc{D}_{\R}(\mc{Q}_0):= \mc{D}(\mc{Q}_0)\cap \mc{H}_{\R}$, then it is easily seen that 
 $\mc{Q}_{0}^{\bv}(F,F)>0$. In view of the discussion above, it is then a coercive closed form in the sense of \cite[Definition 2.4]{MaRockner}.
 
Now we show that $e^{-t{\bf H}_{\bv}^0}$ and $P_t^0$ coincide on $\cC_{\rm exp}$.
We first consider a function $F$ in $\mc{C}_{\rm exp}$ of the form $f(c, (\varphi_n)_{n \in [-N,N]})$. In this case, we get that for all $t$, with $B_t$ the Brownian motion,
\begin{equation*}
P_t^0F(c,\varphi)=  \E_{\varphi}  [ f(c+B_t, \varphi_{-N,t},\dots, \varphi_{N,t})    ]
\end{equation*}
where $\varphi_{n,t}$ denotes the Fourier modes of $\varphi_t$.
From the previous discussions on $f_t$, we have the series representation
\begin{equation*}
f_t(e^{i \theta})= e^{-\omega t} e^{i \theta}+ \sum_{j \geq 2} \alpha_j(t) e^{ij \theta}
\end{equation*}
for some $\alpha_j(t)=\mc{O}(e^{- \omega t})$ and therefore for $n \geq 1$
\begin{equation}\label{varphi_tn}
\begin{split}
\varphi_{n,t}= &  \frac{1}{2 \pi}  \int_0^{2 \pi}  P\varphi( f_t(e^{i \theta}) ) e^{-i n \theta} d \theta + \frac{1}{2 \pi}  \int_0^{2 \pi}  X_{\D}( f_t(e^{i \theta}) ) e^{- i n \theta} d \theta 
= e^{-\omega n t} \varphi_n+ \sum_{k=1}^{n-1} \tilde{\alpha}_k(t) \varphi_k + X_n(t)
\end{split}
\end{equation}
where   
\begin{equation*}
\tilde{\alpha}_k(t)= \sum_{\substack{j_1, \dots, j_k \geq 1\\ j_1+ \cdots +j_k=n}}  \alpha_{j_1}(t) \cdots \alpha_{j_k}(t). 
\end{equation*}
A similar relation holds for negative $n$ (with $\varphi_{-n}$ in place of $\varphi_n$). We deduce that 
for fixed $t$ the function $P_t^0F$ depends only on $(c,(\varphi_n)_{|n|\leq N})$. From \eqref{varphi_tn} and using that $B_t$ is a Brownian motion, for all 
$k,\beta,M$, there is $C>0$ (depending locally uniformly on $t$ and whose value changes from line to line) so that
\[\begin{split} 
|\pl_c^k \pl_{(x,y)}^\beta P_t^0F(c,\varphi)| \leq &  C\cjg (x,y)\cjd^L\E_{\varphi}[ e^{-M|c+B_t|}\cjg (X_{-N}(t),\dots,X_N(t))\cjd^L]\\
\leq & Ce^{-M|c|}\cjg (x,y)\cjd^L
\E_{\varphi}[ e^{M|B_t|}\cjg (X_{-N}(t),\dots,X_N(t))\cjd^L]\\
\leq & Ce^{-M|c|}\cjg (x,y)\cjd^L
\end{split}\]
thus $P_t^0F\in \mc{C}_{\rm exp}$, i.e. $P_t^0:\mc{C}_{\rm exp}\to \mc{C}_{\rm exp}$.

In this case, we can apply Propositions  \ref{prop:first_order} and \ref{prop:second_order} (recall the expression \eqref{newexpressionLno}) to show that the following holds in $L^2(\R\times \Omega_\T,\mu_0)$
\begin{equation}\label{itsevolutionbaby}
\partial_t P_t^0 F= \partial_{s}|_{s=0}  P_{s}^0 (  P_t^0F ) = -  \bH_\bv^0 P_t^0F, \quad P_{t}^0 F|_{t=0}= F
\end{equation}
This is because for fixed $t$ the function $P_t^0F$ belongs to $\mc{C}_{\rm exp}$. By using uniqueness of the solution in equation \eqref{itsevolutionbaby}, we see that for all $t \geq 0$ the identity $e^{-t{\bf H}_{\bv}^0}F=P_t^0F$ holds everywhere. This identity can be extended to $L^2(\R\times \Omega_\T)$ by a density argument.

Now, we prove the key relation \eqref{itsevolutionbaby}. This is the purpose of the following two Propositions:

\begin{proposition}\label{prop:first_order}
For all $\bv\in\mathrm{Vect}_\eps^+(\D)$, let us define the operator $\nabla_\bv$ on $\cC_{\rm exp}$ by
\[\nabla_\bv F(\varphi):=-\frac{\d}{\d t}_{|t=0} F\Big((P\varphi\circ f_t+Q\log\frac{|f_t'|}{|f_t|})|_{\T}\Big).\]
The above quantity exists in the classical sense for all $\varphi \in H^{s}(\T)$ and the limit which defines the derivative converges in $L^2$ if $F \in \cC_\mathrm{exp}$. We have $\nabla_\bv= \sum_{n \geq 0} v_n \nabla_n $ where
\[\nabla_n=\Re\left(Qn\del_n+2\sum_{m=1}^\infty m\varphi_m\del_{n+m}\right).\]
 The $\C$-linear and $\C$-antilinear parts of $\nabla_n$ are
\begin{align*}
&\nabla_n^{1,0}=\frac{1}{2}Qn\del_n+\sum_{m=1}^\infty m\varphi_m\del_{n+m},\quad \nabla_n^{0,1}=\frac{1}{2}Qn\del_{-n}+\sum_{m=1}^\infty m\varphi_{-m}\del_{-n-m}.
\end{align*}
\end{proposition}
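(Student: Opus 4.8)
The plan is to expand $\nabla_\bv$ by the chain rule, reducing everything to an explicit computation of the velocity $\dot\Phi_0:=\frac{\d}{\d t}|_{t=0}\Phi_t$ of the curve $t\mapsto\Phi_t:=\big(P\varphi\circ f_t+Q\log\tfrac{|f_t'|}{|f_t|}\big)|_{\T}$; after that, the identification of $\nabla_n$ is pure Fourier bookkeeping. First I would record the elementary facts $f_0=\mathrm{id}$, $\frac{\d}{\d t}f_t|_{t=0}=v$ and $\frac{\d}{\d t}f_t'|_{t=0}=v'$ (differentiate $\pl_tf_t=v\circ f_t$ in $t$ and in $z$), and write the harmonic extension as $P\varphi=\Phi+\bbar\Phi$ with $\Phi(z):=\sum_{m\geq1}\varphi_mz^m$ holomorphic. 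Since $v$ is holomorphic on a neighbourhood of $\bbar\D$ with $v(0)=0$, one has $v(z)=-\sum_{n\geq0}v_nz^{n+1}$ with uniform convergence near $\bbar\D$. Differentiating $\Phi_t$ at $t=0$ then gives, pointwise on $\T$,
\[
\dot\Phi_0(e^{i\theta})=2\Re\!\big(\Phi'(e^{i\theta})v(e^{i\theta})\big)+Q\,\Re\!\big(v'(e^{i\theta})-e^{-i\theta}v(e^{i\theta})\big),
\]
the first term being $\frac{\d}{\d t}|_{t=0}(\Phi+\bbar\Phi)(f_t(z))$ and the second $Q\frac{\d}{\d t}|_{t=0}\big(\log|f_t'|-\log|f_t|\big)(z)$, using $\frac{\d}{\d t}|_{t=0}\log|f_t(z)|=\Re(v(z)/z)$ and $\frac{\d}{\d t}|_{t=0}\log|f_t'(z)|=\Re v'(z)$.

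Expanding $v'(z)-v(z)/z=-\sum_{n\geq0}nv_nz^{n}$ and $\Phi'(z)v(z)=-\sum_{m\geq1,\,n\geq0}m\varphi_mv_nz^{m+n}$, the Fourier coefficient of $-\dot\Phi_0$ at frequency $k>0$ equals $\tfrac12Qk\,v_k+\sum_{m=1}^{k}m\varphi_mv_{k-m}$, at $k<0$ its complex conjugate, and at $k=0$ it vanishes — indeed the mean of $\Phi_t$ is $0$ for every $t\geq0$, since $P\varphi\circ f_t$ and $\log|f_t'|$ are harmonic with $f_t(0)=0$ while Jensen's formula handles $\log|f_t|$. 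Plugging this into $\nabla_\bv F=-\sum_{k\neq0}(\pl_{\varphi_k}F)\,(\dot\Phi_0)_k$ and collecting terms according to which $v_n$ they carry (the $\pl_\ell$ with $\ell>0$ pairing with $v_n$, the $\pl_{-\ell}$ with $\ell>0$ with $\bbar{v_n}$) yields $\nabla_\bv=\sum_{n\geq0}v_n\nabla_n$ with $\nabla_n=\Re\!\big(Qn\pl_n+2\sum_{m\geq1}m\varphi_m\pl_{n+m}\big)$, the $\pl_\ell$-part ($\ell>0$) being $\nabla_n^{1,0}=\tfrac12Qn\pl_n+\sum_{m\geq1}m\varphi_m\pl_{n+m}$ and the $\pl_{-\ell}$-part ($\ell>0$) being $\nabla_n^{0,1}=\tfrac12Qn\pl_{-n}+\sum_{m\geq1}m\varphi_{-m}\pl_{-n-m}$. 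The existence of the derivative in the classical sense is then clear: for a fixed $\varphi$ and each $k$ the map $t\mapsto(\Phi_t)_k$ is real-analytic — by the power series for $f_t$ it is a finite $\C$-linear combination of $\varphi_1,\dots,\varphi_{|k|}$ with coefficients analytic in $t$, plus the analytic deterministic contribution of $Q\log\tfrac{|f_t'|}{|f_t|}$ — and $F\in\cC$ depends smoothly on only finitely many $\varphi_k$, so $t\mapsto F(\Phi_t)$ is real-analytic near $t=0$.

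For the $L^2$-convergence when $F\in\cC_{\rm exp}$, I would write the difference quotient by Taylor's formula with integral remainder as
\[
\tfrac1t\big(F(c,\Phi_t)-F(c,\varphi)\big)=\int_0^1\sum_{|k|\leq N}(\pl_{\varphi_k}F)\big(c,\varphi+u(\Phi_t-\varphi)\big)\,\tfrac1t(\Phi_t-\varphi)_k\,\d u,
\]
and then dominate uniformly in $t\in(0,1]$: the explicit Taylor coefficients of $f_t$ (as in \eqref{varphi_tn}, whose geometric decay in the index rests on the holomorphic extension of $v$ past $\bbar\D$) give $|\tfrac1t(\Phi_t-\varphi)_k|\leq C_N(1+\sum_{j\leq N}|\varphi_j|)$ together with $\Phi_t-\varphi=\mc{O}(t)$ uniformly, while the $\cC_{\rm exp}$-bound $|\pl_{\varphi_k}F(c,\varphi')|\leq Ce^{-M|c|}\cjg\varphi'\cjd_N^{L}$ combines with these to bound the integrand by $C'e^{-M|c|}$ times a fixed polynomial in $(|\varphi_j|)_{j\leq N}$; the latter lies in $L^2(\mu_0)$ since all Gaussian moments of the $\varphi_j$ are finite and $e^{-2M|c|}$ is $\d c$-integrable. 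Dominated convergence then turns the pointwise convergence into convergence in $L^2(\mu_0)$.

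The step I expect to be the main obstacle is exactly this uniform-in-$t$ control of $\tfrac1t(\Phi_t-\varphi)$: one has to verify that composing the harmonic field with $f_t$ and restricting to $\T$ costs no regularity, and that the time-derivatives of the Fourier modes $(\Phi_t)_k$ stay bounded on compact $t$-intervals by constants that remain summable against the polynomial growth of $F$ and its derivatives. This is precisely where the hypothesis that $v$ extends holomorphically past $\bbar\D$ enters, forcing the Taylor coefficients $\alpha_j(t)$ of $f_t$ to decay geometrically in $j$, uniformly for $t$ in compacts, which is what legitimizes the series manipulations and the domination above; everything else in the proof is elementary differentiation and Gaussian moment bounds.
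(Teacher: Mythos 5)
Your proof is correct and follows essentially the same route as the paper's: a first-order expansion of $f_t$, the chain rule, and Fourier identification of the velocity $\dot\Phi_0=2\Re(\Phi'v)+Q\Re(v'-v/z)$, which reproduces the paper's coefficient computation (done there mode-by-mode for $\bv_n=-z^{n+1}\del_z$) in one stroke via power-series products. Your added domination argument for the $L^2$-convergence of the difference quotient, using the geometric decay of the Taylor coefficients of $f_t$ and Gaussian moment bounds, is a sound elaboration of a step the paper states without detail.
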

\begin{proof}
Let $\varphi\in\cC^\infty(\T)$ (we can suppose that $\varphi\in\cC^\infty(\T)$ because $F$ depends on a finite number of variables), which we write in Fourier expansion
\[\varphi(\theta)=\sum_{n\in\Z}\varphi_n e^{in\theta},\]
where  $\varphi_{-n}=\bar{\varphi}_n$ for all $n\in\Z$.

Let $\bv=v\partial_z\in\mathrm{Vect}_\eps^+(\D)$ generating the infinitesimal deformation $f_t(z)=z+tv(z)+o(t)$. We have
\begin{align*}
P\varphi\circ f_t(z)+Q\log \frac{|f_t'(z)|}{|f_t(z)|}
&=P\varphi(z+tv(z))+Q\Re\log(1+tv'(z))-Q \log |z| - Q\Re\log\Big(1+t \frac{v(z)}{z}\Big)+o(t) \\
&=P\varphi(z)+2t{\rm Re}(v(z)\pl_z P\varphi(z))+tQ\Re(v'(z))-Q \log |z| -tQ \Re(\frac{v(z)}{z}) +o(t)\\
&=P\varphi(z)-Q\log |z|+t\Re\Big(2v(z)\partial_z\varphi(z)+Qv'(z)  -Q \frac{v(z)}{z}) \Big)+o(t).
\end{align*}

We can consider the case $\bv_n=-z^{n+1}\del_z$ since the general case is just a linear combination of this case. Specialising to $\bv_n=-z^{n+1}\del_z$, we obtain for all $k \in \Z$
\begin{equation*}
\begin{split}
\frac{1}{2 \pi}\int_{0}^{2 \pi} \left (  P\varphi\circ f_t(e^{i \theta})+Q\log \frac{|f_t'(e^{i \theta})|}{|f_t(e^{i \theta})|}  \right )  e^{-ik \theta}  d \theta = & 
\varphi_k -t(k-n) \varphi_{k-n}{\bf 1}_{k \geq n+1}- tn\frac{Q}{2}\delta_{|k|-n}\\
& +t(k+n) \varphi_{k+n}{\bf 1}_{k\leq -n-1}+o(t) 
\end{split}
\end{equation*}
For $F\in\cC_\mathrm{exp}$, we then deduce the following limit in $L^2$
\[ -\frac{\d}{\d t}_{|t=0} F\Big(\big(P\varphi\circ f_t+Q\log\frac{|f_t'|}{|f_t|}\big)\Big|_{\T}\Big)=\frac{nQ}{2}(\pl_nF+\pl_{-n}F)+\sum_{m\geq 1}
(m\varphi_m\pl_{n+m}+m\varphi_{-m}\pl_{-m-n})F.\qedhere\]
\end{proof}

\begin{proposition}\label{prop:second_order}
For all $\bv\in\mathrm{Vect}_+(\D)$, let us define the operator $\Delta_\bv$ on $\cC_\mathrm{exp}$ by
\[\Delta_\bv F(\varphi)=-\frac{\d}{\d t}_{|t=0}\E_\varphi\left[F\left(\varphi+(X_\D\circ f_t)|_{\T}\right)\right].\]
The above quantity exists in the classical sense for all $\varphi \in H^{s}(\T)$ and the limit which defines the derivative converges in $L^2$ if $F \in \cC_\mathrm{exp}$. This definition extends uniquely to all $\bv\in\mathrm{Vect}(\D)$, where $\Delta_n:=\Delta_{\bv_n}$ is given for all $n\geq0$ by
\[\Delta_n=-\frac{1}{2}\Re\left(\sum_{m\in\Z}\del_{n-m}\del_m\right).\]
The $\C$-linear and $\C$-antilinear parts of $\Delta_n$ are
\begin{align*}
&\Delta_n^+=\frac{1}{4}\sum_{m\in\Z}\del_{n-m}\del_m;\qquad\Delta_n^-=\frac{1}{4}\sum_{m\in\Z}\del_{-n-m}\del_m.
\end{align*}
\end{proposition}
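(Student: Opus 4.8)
\emph{Proof strategy for Proposition \ref{prop:second_order}.}
Since $F\in\cC$ depends only on finitely many Fourier modes (with $\del_0$ standing for $\del_c$), the quantity to be differentiated only involves the centered Gaussian vector $Y_t=(X_\D\circ f_t)|_{\T}=\sum_{l\in\Z}X_l(t)e^{il\theta}$, whose constant mode $X_0(t)=B_t$ is a Brownian motion of variance $\omega t$ and whose covariance matrix $C_{lm}(t):=\E[X_l(t)X_m(t)]$ is given by \eqref{covarianceXn}. Because $f_0=\mathrm{id}$ and $X_\D$ has Dirichlet boundary values, one has $Y_0=0$ and $C(0)=0$. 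A Gaussian Taylor expansion, in which only even Wick contractions survive since $Y_t$ is centered, gives for $F\in\cC$
\[
\E_\varphi\big[F(\varphi+Y_t)\big]=F(\varphi)+\tfrac12\sum_{l,m}C_{lm}(t)\,\del_l\del_m F(\varphi)+E_t(\varphi),
\]
with a remainder $E_t$ of order $\|C(t)\|^2$ times polynomials in $\varphi$. Hence, once we establish $C_{lm}(t)=t\,\dot C_{lm}(0)+o(t)$, we obtain $\Delta_\bv F=-\tfrac12\sum_{l,m}\dot C_{lm}(0)\,\del_l\del_m F$, the difference quotient converging both pointwise and in $L^2(\mu_0)$ (the remainder being $o(t)$ in $L^2$ by the growth/decay bounds on $F$).

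The crux is the computation of $\dot C_{lm}(0)$, and the subtle point is that one \emph{cannot} differentiate \eqref{covarianceXn} under the integral sign: from $G_\D(z,z')=-\log|z-z'|+\log|1-z\bar z'|$ one checks that $\del_z G_\D(e^{i\theta},e^{i\theta'})=0$ for $\theta\neq\theta'$, so the pointwise $t$-derivative of $G_\D(f_t(e^{i\theta}),f_t(e^{i\theta'}))$ at $t=0$ vanishes off the diagonal, yet differentiation under the integral fails near $\theta=\theta'$ and the true derivative is a purely diagonal effect. I will extract it using the identity $|1-z\bar z'|^2-|z-z'|^2=(1-|z|^2)(1-|z'|^2)$, i.e.
\[
G_\D(z,z')=\tfrac12\log\Big(1+\tfrac{(1-|z|^2)(1-|z'|^2)}{|z-z'|^2}\Big).
\]
For $|\theta-\theta'|\geq\delta$ one has $1-|f_t(e^{i\theta})|^2=\mc{O}(t)$ while $|f_t(e^{i\theta})-f_t(e^{i\theta'})|$ stays bounded below, so $G_\D(f_t(e^{i\theta}),f_t(e^{i\theta'}))=\mc{O}(t^2)$ there and this part of the integral is $o(t)$. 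On the diagonal, set $\theta'=\theta+u$ and use the expansions $1-|f_t(e^{i\theta})|^2=t\,\alpha(\theta)+o(t)$ with $\alpha(\theta):=-2\Re\big(e^{-i\theta}v(e^{i\theta})\big)>0$ (positive precisely by the Markovian condition \eqref{eq:def_vect_plus}), together with $|f_t(e^{i\theta})-f_t(e^{i(\theta+u)})|=|u|\big(1+\mc{O}(t+|u|)\big)$, which follow from the flow equation $\del_t f_t=v(f_t)$ and the $C^1$-convergence $f_t\to\mathrm{id}$ as $t\to0$. Rescaling $u=t\,\alpha(\theta)s$ and using $\int_{\R}\log(1+s^{-2})\,\dd s=2\pi$ — with dominated convergence absorbing the factor $e^{-imu}\to1$ and the lower-order errors — yields
\[
\dot C_{lm}(0)=\frac1{4\pi}\int_0^{2\pi}e^{-i(l+m)\theta}\,\alpha(\theta)\,\dd\theta,
\]
and combined with the off-diagonal bound this proves $C_{lm}(t)=t\,\dot C_{lm}(0)+o(t)$.

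For $\bv_n=-z^{n+1}\del_z$ one has $e^{-i\theta}v(e^{i\theta})=-e^{in\theta}$, hence $\alpha(\theta)=e^{in\theta}+e^{-in\theta}$ and $\dot C_{lm}(0)=\tfrac12(\delta_{l+m,n}+\delta_{l+m,-n})$. Substituting into the above and using $\overline{\del_k}=\del_{-k}$,
\[
\Delta_n F=-\tfrac12\sum_{l,m}\dot C_{lm}(0)\,\del_l\del_m F=-\tfrac14\Big(\sum_{m\in\Z}\del_{n-m}\del_m+\sum_{m\in\Z}\del_{-n-m}\del_m\Big)F=-\tfrac12\Re\Big(\sum_{m\in\Z}\del_{n-m}\del_m\Big)F,
\]
and the decomposition into the $\C$-linear and $\C$-antilinear components is obtained by keeping respectively the sum over $l+m=n$ and the sum over $l+m=-n$, which gives $\Delta_n^{\pm}$ as stated. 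Finally, since $\dot C_{lm}(0)=\tfrac1{4\pi}\int e^{-i(l+m)\theta}\alpha(\theta)\,\dd\theta$ depends $\R$-linearly on the Taylor coefficients of $v$, and any $\bv\in\mathrm{Vect}(\D)$ with $v(0)=0$ differs from the Markovian field $\omega\bv_0+\bv$ (for $\omega$ large) only by $\omega\bv_0$, the prescription $\Delta_\bv:=\Delta_{\omega\bv_0+\bv}-\omega\Delta_{\bv_0}$ defines the unique linear extension to all of $\mathrm{Vect}(\D)$, independent of $\omega$, consistently with \eqref{eq:def_L}.

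The main obstacle is the diagonal analysis: proving $C_{lm}(t)=t\,\dot C_{lm}(0)+o(t)$ rigorously, i.e.\ controlling \emph{uniformly in $\theta$} the errors in $1-|f_t(e^{i\theta})|^2=t\alpha(\theta)+o(t)$ and in $|f_t(e^{i\theta})-f_t(e^{i\theta'})|\sim|\theta-\theta'|$, and splicing the rescaled diagonal integral with the off-diagonal $\mc{O}(t^2)$ bound through dominated convergence. Once this is in place, the Gaussian Taylor expansion, the algebraic identification of $\Delta_n^{\pm}$, and the upgrade from pointwise to $L^2$ convergence are routine.
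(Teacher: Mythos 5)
Your argument is correct in substance but reaches the key quantity $\dot C_{lm}(0)$ by a genuinely different route. The paper splits $G_\D(z,z')=\log|1-z\bar z'|-\log|z-z'|$ and treats the two pieces by two different limiting arguments: for $-\log|f_t(z)-f_t(z')|$ the $t$-derivative is the difference quotient $\Re\big(\tfrac{v(f_t(z))-v(f_t(z'))}{f_t(z)-f_t(z')}\big)$, which is bounded on $\bar\D^2$, so dominated convergence applies; for $\log|1-f_t(z)\overline{f_t(z')}|$ the Fourier coefficients on $\T^2$ are identified with Taylor coefficients of a function (anti)holomorphic on $\D^2$, which converge by locally uniform convergence, and the limit is read off from the geometric series of $(1-z\bar z')^{-1}$ monomial by monomial for $\bv_n$ — that is where the nonzero contribution is captured. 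You instead keep the kernel whole, use $|1-z\bar z'|^2-|z-z'|^2=(1-|z|^2)(1-|z'|^2)$, observe (correctly: $\del_z G_\D(z,z')=\tfrac12\big(\tfrac{1}{z-z'}-\tfrac{\bar z'}{1-z\bar z'}\big)=0$ for distinct boundary points) that the pointwise $t$-derivative vanishes off the diagonal, and extract the answer from a boundary layer of width $\sim t$ via the rescaling $u=t\alpha(\theta)s$ and $\int_\R\log(1+s^{-2})\,\dd s=2\pi$. Your closed form $\dot C_{lm}(0)=\tfrac1{4\pi}\int e^{-i(l+m)\theta}\alpha(\theta)\,\dd\theta$ agrees with the paper's $h_{p,q}=\tfrac12(\delta_{q,p-n}+\delta_{q,p+n})$ for $\bv_n$, and it has the merit of exhibiting the diffusion matrix as the moment matrix of the positive measure $\varrho_\bv$ of Section 3, making Markovianity manifest; the price is that the uniform-in-$\theta$ boundary-layer estimates and the splicing with the off-diagonal $\mc O(t^2)$ bound still need to be written out (you flag this yourself; the needed inputs — smoothness of $v$ up to $\T$ and $\inf_\T\alpha>0$ — are available, so there is no gap in principle), whereas the paper's holomorphy argument gets convergence of the coefficients essentially for free. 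Your overall sign $\Delta_n=-\tfrac12\Re\sum_m\del_{n-m}\del_m$ matches the displayed formula in the statement; note that the paper's own proof and its formulas for $\Delta_n^{\pm}$ carry the opposite sign, an internal inconsistency of the source that you should not be expected to resolve.
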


The proof of this proposition relies on the following simple lemma about finite dimensional Gaussian random variables.

\begin{lemma}
Let $n\in\N$ and $C_t=t(\dot{C}+K_t)$ be covariance matrices on $\R^n$ such that $K_t\to0$ as $t\to0$. Let $\xi_t\sim\cN(0,\dot{C}+K_t)$. Then, for all $F\in\cC^\infty(\R^n)$ such that $F$ and its derivatives have at most exponential growth, for all $x\in\R^n$, we have
\[\underset{t\to0}{\lim}\,\frac{\E[F(x+\sqrt{t}\xi_t)]-F(x)}{t}=\frac{1}{2}\E[\mathrm{Hess}\,F_x(\xi_0,\xi_0)]\]
where ${\rm Hess}\, F_x$ denotes the Hessian of $F$ at $x$.
\end{lemma}
\begin{proof}
Let $\epsilon\in(0,\frac{1}{2(n+3)})$. We have
\begin{align*}
&|\E[F(x+\sqrt{t}\xi_t)]-F(x)-\frac{t}{2}\E[\mathrm{Hess}\,F_x(\xi_t,\xi_t)]|\\
&=|\E[F(x+\sqrt{t}\xi_t)-F(x)-\sqrt{t}\nabla F_x(\xi_t)-\frac{t}{2}\mathrm{Hess}\,F_x(\xi_t,\xi_t)]|\\
&\leq\E|F(x+\sqrt{t}\xi_t)-F(x)-\sqrt{t}\nabla F_x(\xi_t)-\frac{t}{2}\mathrm{Hess}\,F_x(\xi_t,\xi_t)|\\
&=\int_{\R^n}|F(x+\sqrt{t}u)-F(x)-\sqrt{t}\nabla F_x(u)-\frac{t}{2}\mathrm{Hess}\,F_x(u,u)|\frac{e^{-\frac{1}{2}\langle(\dot{C}+K_t)u,u\rangle}}{(2\pi)^{n/2}\det(\dot{C}+K_t)}\d u\\
&=\int_{|u|\leq t^{-\epsilon}}|F(x+\sqrt{t}u)-F(x)-\sqrt{t}\nabla F_x(u)-\frac{t}{2}\mathrm{Hess}\,F_x(u,u)|\frac{e^{-\frac{1}{2}\langle(\dot{C}+K_t)u,u\rangle}}{(2\pi)^{n/2}\det(\dot{C}+K_t)}\d u\\
&\qquad+\int_{|u|>t^{-\epsilon}}|F(x+\sqrt{t}u)-F(x)-\sqrt{t}\nabla F_x(u)-\frac{t}{2}\mathrm{Hess}\,F_x(u,u)|\frac{e^{-\frac{1}{2}\langle(\dot{C}+K_t)u,u\rangle}}{(2\pi)^{n/2}\det(\dot{C}+K_t)}\d u\\
&\leq M_1 t^{3(\frac{1}{2}-\epsilon)-n\epsilon}+M_2 e^{-at^{-2\epsilon}}=O(t^{\frac{3}{2}-\epsilon(n+3)})=o(t),
\end{align*}
for some constants $M_1,M_2,a>0$. For the first bound, we used that $F$ is $C^3$ around $x$, so the integrand is uniformly $O(t^{3(\frac{1}{2}-\epsilon)})$ in the region $|u|\leq t^{-\epsilon}$. For the second bound, we used the exponential bound on the growth of $F$ and some elementary bounds on the Gaussian density in the region $|u|>t^{-\epsilon}$. 

Finally, we have $|\E[\mathrm{Hess}\,F_x(\xi_t,\xi_t)-\mathrm{Hess}\,F_x(\xi_0,\xi_0)]|=\mc{O}(\norm{K_t})=o(1)$.
\end{proof}

\begin{proof}[Proof of Proposition \ref{prop:second_order}]
Since $F\in\cC_\mathrm{exp}$, we will apply the previous lemma where the covariance matrices are those of the finite-dimensional distributions of $(X_\D\circ f_t)|_\T$. By the lemma, it suffices to compute the derivative at $t=0$ of this covariance matrix.

We write
\[(X_\D\circ f_t)(e^{i \theta})=\sum_{n\in\Z}X_n(t)e^{i n \theta},\]
where $X_n(t)$ is given by \eqref{X_n(t)} with covariance \eqref{covarianceXn}. First, we show that the covariance is differentiable with respect to $t$ and that the differential exists at $t=0$. First, we have for $z\neq z'\in\bbar{\D}$,
\begin{align*}
\del_t\log|f_t(z)-f_t(z')|
=\del_t\Re\log(f_t(z)-f_t(z'))
=\Re\frac{\del_tf_t(z)-\del_tf_t(z')}{f_t(z)-f_t(z')}
=\Re\frac{v(f_t(z))-v(f_t(z'))}{f_t(z)-f_t(z')}.
\end{align*}
This term has the finite limit ${\rm Re}(f_t'(z)v'(f_t(z)))$ as $z'\to z$, and  $\del_t\log|f_t(z)-f_t(z')|$ is uniformly bounded with respect to $t$ (in compact sets) on $\bbar{\D}^2$. Applying the dominated convergence theorem, we have
\begin{align*}
\del_t\int_0^{2\pi}\int_0^{2\pi}\log|f_t(e^{i\theta})-f_t(e^{i\theta'})|e^{-ip\theta}e^{iq\theta'}\frac{\d\theta\d\theta'}{4\pi^2}
&=\int_0^{2\pi}\int_0^{2\pi}\Re\left(\frac{v(f_t(e^{i\theta}))-v(f_t(e^{i\theta'}))}{f_t(e^{i\theta})-f_t(e^{i\theta'})}\right)e^{-ip\theta}e^{iq\theta'}\frac{\d\theta\d\theta'}{4\pi^2}\\
&\underset{t\to0}{\to}\int_0^{2\pi}\int_0^{2\pi}\Re\left(\frac{v(e^{i\theta})-v(e^{i\theta'})}{e^{i\theta}-e^{i\theta'}}\right)e^{-ip\theta}e^{iq\theta'}\frac{\d\theta\d\theta'}{4\pi^2}.
\end{align*}

Now we treat the term $\log|1-f_t(z)\overline{f_t(z')}|$, which is the real part of a holomorphic (resp. antiholomorphic) function in $z$ (resp. $z'$) converging in the disc. The values $\int\int\log(1-f_t(e^{i\theta})\overline{f_t(e^{i\theta'})})e^{-ip\theta}e^{iq\theta'}\frac{\d\theta\d\theta'}{4\pi^2}$ are the coefficients of the power series expansion of this function at $0$. The derivatives with respect to $t$ of these coefficients are the coefficients of the function $\del_t\log(1-f_t(z)\overline{f_t(z')})=-\frac{v(f_t(z))\overline{f_t(z')}+f_t(z)\overline{v(f_t(z'))}}{1-f_t(z)\overline{f_t(z')}}$. As $t\to0$, this function converges uniformly on compact sets of $\D^2$ to $-\frac{v(z)\bar{z}'+z\bar{v}(z')}{1-z\bar{z}'}$, which is holomorphic (resp. antiholomorphic) in $z$ (resp. $z'$). Therefore, the coefficients of the expansion converge individually. Moreover, the limiting function has a continuation to $\bbar{\D}^2$ with simple poles at $z=z'\in\T$. Thus,
\[\del_t|_{t=0}\int_{[0,2\pi]^2}\log|1-f_t(e^{i\theta})\bbar{f_t(e^{i\theta'})}|e^{-ip\theta+iq\theta'}\frac{\d\theta\d\theta'}{4\pi^2}=-\int_{[0,2\pi]^2}\Re\left(\frac{v(e^{i\theta})e^{-i\theta'}+e^{i\theta}\bbar{v(e^{i\theta'})}}{1-e^{i(\theta-\theta')}}\right)e^{-ip\theta}e^{iq\theta'}\frac{\d\theta\d\theta'}{4\pi^2},\]
where the integral is understood in the principal value sense (the formula computes the coefficients of the power series expansion of the integrand).

Putting the two contributions together, we get
\begin{equation}\label{eq:correl_asymp}
\begin{aligned}
h_{p,q}
&:=\del_t|_{t=0}\E\left[X_p(t)X_{-q}(t)\right]\\
&=-\int_0^{2\pi}\int_0^{2\pi}\Re\left(\frac{v(e^{i\theta})-v(e^{i\theta'})}{e^{i\theta}-e^{i\theta'}}+\frac{e^{i\theta}\bbar{v(e^{i\theta'})}}{1-e^{i(\theta-\theta')}}+\frac{e^{-i\theta'}v(e^{i\theta})}{1-e^{i(\theta-\theta')}}\right)e^{-ip\theta}e^{iq\theta'}\frac{\d\theta\d\theta'}{4\pi^2}.
\end{aligned}
\end{equation}

To get explicit formulas for these operators, we need to compute $h_{p,q}$ for $v(z)=-z-z^{n+1}$, $n\geq0$. Since the derivative of the 
 covariance this is linear in $v$, it suffices to compute each term in \eqref{eq:correl_asymp} separately for $v(z)=-z$ and $v(z)=-z^{n+1}$. We thus do it for $v(z)=-z^{n+1}$ for $n\geq 0$. The first term is given by
\begin{equation}\label{eq:first_term}
\begin{aligned}
\int_0^{2\pi}\int_0^{2\pi}\frac{e^{i(n+1)\theta}-e^{i(n+1)\theta'}}{e^{i\theta}-e^{i\theta'}}e^{-ip\theta}e^{iq\theta'}\frac{\d\theta\d\theta'}{4\pi^2}
&=\sum_{k=0}^n\int_0^{2\pi}\int_0^{2\pi}e^{i(k-p)\theta}e^{i(n-k+q)\theta'}\frac{\d\theta\d\theta'}{4\pi^2}\\
&=\left\lbrace\begin{aligned}
&1\text{ if }0\leq p\leq n\text{ and }q=p-n\\
&0\text{ otherwise}.
\end{aligned}\right.
\end{aligned}
\end{equation}
and the conjugate term by
\begin{equation}\label{eq:conjugate}
\begin{aligned}
\int_0^{2\pi}\int_0^{2\pi}\frac{e^{-i(n+1)\theta}-e^{-i(n+1)\theta'}}{e^{-i\theta}-e^{-i\theta'}}e^{-ip\theta}e^{iq\theta'}\frac{\d\theta\d\theta'}{4\pi^2}
&=\left\lbrace\begin{aligned}
&1\text{ if }-n\leq p\leq0\text{ and }q=p+n\\
&0\text{ otherwise}.
\end{aligned}\right.
\end{aligned}
\end{equation}
To compute the other terms, we look at the power series expansion of the integrand and extract the relevant coefficient. For instance, we have
\begin{align*}
\int_0^{2\pi}\int_0^{2\pi}\frac{e^{i\theta}e^{-i(n+1)\theta'}}{1-e^{i(\theta-\theta')}}e^{-ip\theta}e^{iq\theta'}\frac{\d\theta\d\theta'}{4\pi^2}
&=\int_0^{2\pi}\int_0^{2\pi}\sum_{k=0}^{\infty}e^{-i(p-1-k)\theta}e^{i(q-n-1-k)\theta'}\frac{\d\theta\d\theta'}{4\pi^2}\\
&=\left\lbrace\begin{aligned}
&1\text{ if }p>0\text{ and }q=p+n\\
&0\text{ otherwise}.
\end{aligned}\right.
\end{align*}
We proceed similarly for the other terms and we collect all the results below:
\begin{equation}\label{eq:residues}
\begin{aligned}
&\int_0^{2\pi}\int_0^{2\pi}\frac{e^{-i\theta}e^{i(n+1)\theta'}}{1-e^{-i(\theta-\theta')}}e^{-ip\theta}e^{iq\theta'}\frac{\d\theta\d\theta'}{4\pi^2}=\left\lbrace\begin{aligned}
&1\text{ if }p<0\text{ and }q=p-n\\
&0\text{ otherwise}.
\end{aligned}\right.\\
&\int_0^{2\pi}\int_0^{2\pi}\frac{e^{-i\theta'}e^{i(n+1)\theta}}{1-e^{i(\theta-\theta')}}e^{-ip\theta}e^{iq\theta'}\frac{\d\theta\d\theta'}{4\pi^2}=\left\lbrace\begin{aligned}
&1\text{ if }p>n\text{ and }q=p-n\\
&0\text{ otherwise}.
\end{aligned}\right.\\
&\int_0^{2\pi}\int_0^{2\pi}\frac{e^{i\theta'}e^{-i(n+1)\theta}}{1-e^{-i(\theta-\theta')}}e^{-ip\theta}e^{iq\theta'}\frac{\d\theta\d\theta'}{4\pi^2}=\left\lbrace\begin{aligned}
&1\text{ if }p<-n\text{ and }q=p+n\\
&0\text{ otherwise}.
\end{aligned}\right.
\end{aligned}
\end{equation}

 For Markovian $\bv$, the operator $\Delta_\bv$ is the generator of a (complex) Brownian motion with covariance matrix $(h_{p,q})_{p,q\in\Z}$, i.e.
\[\Delta_\bv=\frac{1}{2}\sum_{p,q\in\Z}h_{p,q}\del_p\del_{-q}.\]
For $F\in\cC_\mathrm{exp}$, this reduces to a finite sum, and we deduce the claim about the convergence to $\Delta_\bv F$ in $L^2$.

Now, summing up all six terms appearing in \eqref{eq:first_term}, \eqref{eq:conjugate} and \eqref{eq:residues}, we obtain for $\bv=\bv_n$:
\begin{equation}\label{eq:covariance_wn}
h_{p,q}=\frac{1}{2}(\delta_{q,p-n}+\delta_{q,p+n})
\end{equation}
so that
\begin{align*}
\Delta_n=\frac{1}{4}\sum_{p,q\in\Z}(\delta_{q,p+n}+\delta_{q,p-n})\del_p\del_{-q}
&=\frac{1}{4}\sum_{m\in\Z}\del_{n-m}\del_m+\del_{-n-m}\del_m\\
&=\frac{1}{2}\Re\left(\sum_{m\in\Z}\del_{n-m}\del_m\right).
\end{align*}

Notice that \eqref{eq:correl_asymp} is the real part of a complex linear map in $\bv$; this gives the decomposition $\Delta_\bv=\Delta_\bv^++\Delta_\bv^-$. Retaining only the complex linear (resp. antilinear) terms in $\bv$, we have
\[h_{p,q}^+=\frac{1}{2}\delta_{q,p-n};\qquad h_{p,q}^-=\frac{1}{2}\delta_{q,p+n},\]
so that
\[\Delta_n^+=\frac{1}{4}\sum_{m\in\Z}\del_{n-m}\del_m;\qquad\Delta_n^-=\frac{1}{4}\sum_{m\in\Z}\del_{-n-m}\del_m.\qedhere\]
\end{proof}

\section{Liouville operators}\label{subsec:operators}

In this Section, we construct a representation of the Virasoro algebra for the Liouville theory by adding a potential term in the Markovian 
process introduced above for the Gaussian Free Field, producing a Feynman-Kac for the Liouville semi-groups. The potential is the mass 
of $\D\setminus f_t(\D)$ for the Gaussian Multiplicative Chaos measure if $f_t$ is the flow of the considered Markovian vector field.  
We show that the generators of the Liouville semi-groups induce a representation of the Virasoro algebra, and we compare the construction 
of the Liouville descendant states from \cite{GKRV} (constructed using the stress-energy tensor and an intertwining procedure between the free field theory and the Liouville theory) with the iterated applications of our Virasoro operators to the primary fields. 

\subsection{Definition and Feynman-Kac formula}
	
In this section we define the Liouville operators. These will be densely defined, unbounded operators on the Liouville Hilbert space $L^2(\d c\otimes\P)$. The operators associated with Markovian vector fields are closable operators. They form the natural generalisation of the Liouville Hamiltonian of \cite[Section 5]{GKRV}, and they are the basis for the definition of operators associated with non-Markovian vector fields.

Let $\bv=v(z)\pl_z\in\mathrm{Vect}_\eps^+(\D)$ with $v(z)=-\sum_{n=-1}^\infty v_nz^{n+1}$ such that $v(0)=0$, $v'(0)=-\omega$ and satisfying the conditions of Theorem \ref{theoremfreefield}. Recall that $\omega> K \sum_{n \geq 1}|v_n|n^2 $ where $K>0$ is the constant which appears in Theorem \ref{theoremfreefield}. We consider the positive measure $\d\varrho_\bv(\theta)=-\Re(e^{-i\theta}v(e^{i\theta}))\d\theta$. One can then define the Dirichlet form on $\cC_{\rm exp}$ via the formula
\begin{equation}\label{defform}
\mathcal{Q}_{\bv}(F,G): = \mc{Q}_{\bv }^0(F,G)+ \mu \cjg e^{\gamma c} V_\bv  F , G \cjd_2 ,\quad \textrm{with } V_\bv:= \int_{0}^{2 \pi} e^{\gamma\varphi(\theta)}\d\varrho_\bv(\theta).
 \end{equation}  
 When $\gamma<\sqrt{2}$, this potential is an $L^{p}(\Omega_\T)$ function if $1\leq p<2/\gamma^2$, while when $\gamma\in [\sqrt{2},2)$, we define the potential term in \eqref{defform} by using the Girsanov formula to shift the variables
similarly to \cite[Section 5]{GKRV}. More specifically for $F,G \in \cC_{\rm exp}$ of the form $F(c,\varphi)=F(c,x_1,y_1, \cdots, x_N,y_N)$ and $G(c,\varphi)=G(c,x_1,y_1, \cdots, x_N,y_N)$ , we set the following definition 
\begin{align*}
 & \cjg e^{\gamma c} V_\bv  F , F \cjd_2   := \\
 & \int_{\R} \int_0^{2 \pi} e^{\gamma c } \E\big[ \big(F(c,x_1+ \gamma \cos (\theta),y_1- \gamma \sin (\theta) , \cdots, x_N+ \frac{\gamma}{\sqrt{N}} \cos (N \theta) ,y_N-  \frac{\gamma}{\sqrt{N}} \sin (N \theta)\big)^2  \big] \d\varrho_\bv(\theta) \d c.
\end{align*}
The general case $\cjg e^{\gamma c} V_\bv  F , G \cjd_2.$ with $F,G \in \cC_{\rm exp}$ can be obtained by polarization. Since $\d\varrho_\bv$ has a positive smooth density with respect to $\d\theta$, there is a constant $C>1$ such that $C^{-1}V \leq V_\bv \leq CV$ as bounded operators $\cD(\cQ)\to\cD'(\cQ)$ and therefore we have for all $F \in \cC_{\rm exp}$
 \begin{equation}\label{fundinequality}
 C^{-1} \mathcal{Q}(F,F) \leq \mathcal{Q}_{\bv}(F,F) \leq C \mathcal{Q}(F,F).
 \end{equation} 
The form  $\mathcal{Q}_{\bv}$ is a positive definite bilinear form satisfying the strong sector condition, namely there exists $K>0$ such that for all $F,G \in \cD(\cQ)$, the (modified) Cauchy-Schwarz inequality holds $$\mathcal{Q}_{\bv}(F,G) \leq K \mathcal{Q}_{\bv}(F,F)^{1/2} \mathcal{Q}_{\bv}(G,G)^{1/2}.$$ By inequality \eqref{fundinequality} and using the fact that $\mathcal{Q}$ is closable, we know that $ \mathcal{Q}_{\bv}$ is closable and the domain of $\mathcal{Q}_{\bv}$ coincides with the domain of $\mathcal{Q}$. Hence  $\mathcal{Q}_{\bv}$ is a coercive closed form: there exists a unique continuous contraction semigroup associated to $\mathcal{Q}_{\bv}$, 
denoted $e^{-t \bH_\bv }$ and its generator 
\[{\bf H}_{\bv}: \mc{D}(\mc{Q})\to \mc{D}'(\mc{Q})\] 
is the operator  
\begin{equation}\label{defHv}  
\begin{split}
\bH_\bv=&  \bH_\bv^0+  \mu e^{\gamma c} V_{\bf v}= \omega  \bH^0  + \sum_{n \geq 1}  \Re (v_n)  (\mathbf{L}_n^0+\widetilde{\mathbf{L}}_n^0 )   + i \sum_{n \geq 1}  \Im (v_n) (\mathbf{L}_n^0-\widetilde{\mathbf{L}}_n^0)+  \mu e^{\gamma c} V_{\bf v}\\
=& \omega {\bf H}+ \sum_{n\geq 1}v_n\, \mathbf{L}_n  + \sum_{n\geq 1}\bbar{v_n} \, \widetilde{\mathbf{L}}_n
\end{split}
\end{equation}   
where 
\begin{equation}\label{formulaLn}  
\mathbf{L}_n =  \mathbf{L}_n^0+ \frac{\mu}{2}e^{\gamma c}\int_0^{2\pi} e^{\gamma \varphi(\theta)}e^{in\theta}d\theta,\quad  \widetilde{\mathbf{L}}_n =  
\widetilde{\mathbf{L}}_n^0+ \frac{\mu}{2}e^{\gamma c}\int_0^{2\pi} e^{\gamma \varphi(\theta)}e^{-in\theta}d\theta.
\end{equation}
Notice that ${\bf L}_n$ can be recovered as a sum of operators ${\bf H}_{\bv}$ with $\bv$ Markovian by the formula 
\begin{equation}\label{Ln_en_terme_de_Hv_n} 
{\bf L}_n=\frac{1}{2}({\bf H}_{\omega \bv_0+\bv_n}-i{\bf H}_{\omega\bv_0+i\bv_n})-\frac{1}{2}\omega(1-i){\bf H}.
\end{equation}
The adjoint ${\bf H}_{\bv}^*:\mc{D}(\mc{Q})\to \mc{D}'(\mc{Q})$ of ${\bf H}_{\bv}$ is defined by the identity: for all $F,G\in \mc{D}(\mc{Q})$
\[ \cjg {\bf H}_{\bv}^*F,G\cjd_2=\cjg F, {\bf H}_{\bv}G\cjd_2,\]
and we set for $n\geq 1$
\[ {\bf L}_{-n}:={\bf L}_n^* : \mc{D}(\mc{Q})\to \mc{D}'(\mc{Q}), \quad \tilde{{\bf L}}_{-n}:=\tilde{{\bf L}}_n^*:\mc{D}(\mc{Q})\to \mc{D}'(\mc{Q}).\]
Now, we consider the following operator $P_t$, for $t\geq 0,$ defined for $F$ bounded on $H^{-s}(\T)$ (with $s>0$) by the formula:
\begin{equation} \label{FeynmanKac}
  P_t F(c,\varphi)   :=    |f'_t(0)|^{\frac{Q^2}{2}}    \E_\varphi \Big [  F\Big(   (c+X \circ f_t  + Q \ln | \frac { f_t'} {f_t} | )|_{\T}    \Big)     e^{- \mu e^{\gamma c} \int_{\D_t^c}    e^{\gamma X(x)} \frac{1} {|x| ^{\gamma Q} } \dd x  }  \Big]  
 \end{equation}
where $\D_t^c:=\D\setminus f_t(\D)$, $X=P\varphi+X_{\D}$ is the Gaussian Free Field from \eqref{decomposGFF} (we use that  $X_{\D^*}\circ f_t|_{\T}=0$ and $X_{\D^*}|_{\D_t^c}=0$).
We prove the following important property of $P_t$:  
\begin{theorem}\label{FeynmanKacPt}
The operator $P_t$ is a semigroup, which coincides with $e^{-t \bH_\bv}$ on $L^2(\R \times \Omega_\T)$.
\end{theorem}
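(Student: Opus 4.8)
The plan is to follow the proof of Theorem~\ref{theoremfreefield}, now carrying along the Gaussian multiplicative chaos weight in \eqref{FeynmanKac}. Since \eqref{fundinequality} shows that $\cQ_{\bv}$ is a coercive closed form with the same domain as $\cQ$, Dirichlet form theory already produces the contraction semigroup $e^{-t\bH_\bv}$ attached to it; it therefore suffices to prove that the Feynman--Kac operator $P_t$ is a strongly continuous contraction semigroup on $L^2(\R\times\Omega_\T)$ whose generator coincides with $\bH_\bv=\bH_\bv^0+\mu e^{\gamma c}V_\bv$ on the form core $\mc{C}_{\rm exp}$, and then to invoke uniqueness of the semigroup associated to $\cQ_{\bv}$, exactly as at the end of the proof of Theorem~\ref{theoremfreefield}. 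An equivalent route is to derive the Duhamel identity $P_tF=P_t^0F-\int_0^tP^0_{t-s}\big(\mu e^{\gamma c}V_\bv\,P_sF\big)\,ds$ and match it with the Duhamel identity for $e^{-t\bH_\bv}$.

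First I would establish the cocycle property $P_{t+s}=P_tP_s$. This rests on: the flow identity $f_{t+s}=f_t\circ f_s$, which yields $\D\setminus f_{t+s}(\D)=(\D\setminus f_t(\D))\cup f_t(\D\setminus f_s(\D))$; the conformal covariance of the GMC measure $e^{\gamma X(x)}|x|^{-\gamma Q}dx$ under the univalent maps $f_t$ (here $\gamma Q=2+\tfrac{\gamma^2}{2}$ is precisely the exponent making this density transform correctly, as in \cite{DKRV,GKRV}); and the domain Markov property of the GFF $X$ along the curve $f_t(\T)$, which factors the conditional expectation $\E_\varphi$ into an average over the field inside $f_t(\D)$ given its boundary values. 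Combining these, after the change of variables $x=f_t(y)$ the contribution of $f_t(\D\setminus f_s(\D))$ becomes the $s$-time GMC functional built from the field $\big(X\circ f_t+Q\log\tfrac{|f_t'|}{|f_t|}\big)\big|_{\T}=(B_t,\varphi_t)$, i.e.\ exactly one more application of $P_s$. Strong continuity on $L^2$ follows from continuity of the process $(B_t,\varphi_t)$ (Proposition~\ref{continuousprocess}) and the $L^p$ integrability of the GMC mass of $\D\setminus f_t(\D)$.

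The heart of the matter is the generator computation on $\mc{C}_{\rm exp}$. By the product rule, $\tfrac{d}{dt}\big|_{t=0}P_tF$ splits in two. Differentiating the prefactor $|f_t'(0)|^{Q^2/2}$ and the argument $\big(X\circ f_t+Q\log\tfrac{|f_t'|}{|f_t|}\big)\big|_{\T}$ gives $-\bH_\bv^0F$ by Propositions~\ref{prop:first_order} and~\ref{prop:second_order} (this is Theorem~\ref{theoremfreefield}). Differentiating the exponential $\exp\big(-\mu e^{\gamma c}\int_{\D\setminus f_t(\D)}e^{\gamma X(x)}|x|^{-\gamma Q}dx\big)$, which equals $1$ at $t=0$, gives $-\mu e^{\gamma c}F$ times $\tfrac{d}{dt}\big|_{t=0}\int_{\D\setminus f_t(\D)}e^{\gamma X(x)}|x|^{-\gamma Q}dx$. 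The region $\D\setminus f_t(\D)$ is an annular collar of $\T$ whose inner boundary $f_t(\T)$ moves inward at normal speed $-\Re(e^{-i\theta}v(e^{i\theta}))$; parametrizing the collar by flow time and angle, using $v(f_s(z))=f_s'(z)v(z)$ from Lemma~\ref{limitandh} to compute the Jacobian (which produces precisely $d\varrho_\bv(\theta)=-\Re(e^{-i\theta}v(e^{i\theta}))\,d\theta$) and again the conformal covariance of GMC to absorb the factors $|f_s'|^{\gamma Q}$ and the shift $Q\log|f_s'/f_s|$, one expects $\tfrac1t\int_{\D\setminus f_t(\D)}e^{\gamma X(x)}|x|^{-\gamma Q}dx\to\tfrac{1}{2\pi}\int_0^{2\pi}e^{\gamma\varphi(\theta)}\,d\varrho_\bv(\theta)=V_\bv$ as $t\to0$. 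Hence $\tfrac{d}{dt}\big|_{t=0}P_tF=-\bH_\bv F$ in $L^2$. The Duhamel packaging is the same statement read differently: writing $e^{-N_t}=1-\int_0^te^{-N_s}\,dN_s$ with $N_s:=\mu e^{\gamma c}\int_{\D\setminus f_s(\D)}e^{\gamma X}|x|^{-\gamma Q}dx$ and identifying $dN_s$ with the additive functional $\mu e^{\gamma(c+B_s)}V_\bv(\varphi_s)\,ds$ via the same change of variables, then using the Markov property, yields the Duhamel identity; uniqueness of its solution then gives $P_t=e^{-t\bH_\bv}$, the identity extending from $\mc{C}_{\rm exp}$ to $L^2$ by density and contraction.

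The main obstacle is this last convergence, i.e.\ the bulk-to-boundary GMC limit: since $e^{\gamma X(x)}|x|^{-\gamma Q}dx$ is singular and the collar $\D\setminus f_t(\D)$ collapses onto the Lebesgue-null set $\T$, one must show that the normalized GMC mass of the collar converges to the independently constructed boundary GMC $e^{\gamma\varphi(\theta)}\,d\theta$ of \eqref{GMCcircle}, with enough uniformity in $t$ (an $L^1$ or $L^2(\mu_0)$ bound on the remainder divided by $t$) to differentiate under $\E_\varphi$ and in $L^2$. For $\gamma<\sqrt2$ the potential $V_\bv$ is a genuine $L^p$ multiplication operator and this is essentially a Fubini argument along radial slices; for $\gamma\in[\sqrt2,2)$ the boundary measure is too rough and, exactly as for the Liouville Hamiltonian in \cite[Section~5]{GKRV}, one first performs a Girsanov shift of the Gaussian variables and reads the generator off the Dirichlet form $\cQ_{\bv}$ rather than from a pointwise expression. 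Modulo this point, which is imported from \cite{GKRV} together with the covariance structure of GMC under the conformal maps $f_t$, the proof is a routine adaptation of that of Theorem~\ref{theoremfreefield}.
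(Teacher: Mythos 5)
Your proposal follows essentially the same route as the paper: the polar parametrization of the collar $\D\setminus f_t(\D)$ by flow time and angle, the identity $v(f_s(z))=f_s'(z)v(z)$ from Lemma \ref{limitandh} producing the density $\d\varrho_\bv$, the contraction bound inherited from the free case, the reduction of the hard bulk-to-boundary GMC convergence (and the Girsanov treatment for $\gamma\in[\sqrt2,2)$) to \cite[Section 5]{GKRV}, and the final identification with the Dirichlet-form semigroup by uniqueness. The only cosmetic difference is that where you propose a Duhamel identity or a strong generator computation, the paper works with the weak limit $\langle (I-P_t)F/t,G\rangle_2\to\langle\bH_\bv F,G\rangle_2$ and closes via a finite-harmonic truncation $P_t^N=e^{-t\bH_\bv^N}$ together with resolvent convergence (Lemma 2.12 of \cite{MaRockner}) — a packaging choice forced by exactly the $L^2$-regularity issue you flag, not a different idea.
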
 
\begin{proof} 
 The map $(s,\theta)\mapsto f_s(e^{i\theta})$ is a diffeomorphism from $(0,t)\times \T$ to $\D_t^c$, and its Jacobian is given by 
 $-\Re (e^{- i \theta} v(f_s (e^{i \theta}))\bbar{f'_s(e^{i \theta})})$. Therefore, for $\epsilon>0$ we get that 
 \begin{equation*}
  \int_{\D_t^c}  \epsilon^{\frac{\gamma^2}{2}}  e^{\gamma X_\epsilon(x)} \frac{1} {|x| ^{\gamma Q} } \dd x= -\int_{0}^t  \int_{0}^{2 \pi} \epsilon^{\frac{\gamma^2}{2}} 
  |f_s(e^{i\theta})|^{-\gamma Q}e^{\gamma   X_{\epsilon} (f_s(e^{i \theta})) }  \Re \Big(e^{- i \theta} v(f_s (e^{i \theta})) \bbar{f'_s(e^{i \theta})}\Big) \dd \theta \dd s
 \end{equation*}
where the cutoff $X_\epsilon$ is defined via circle averages  $X_\epsilon(x)= \frac{1}{2 \pi} \int_0^{2 \pi}  X(x+ \epsilon e^{i \theta})  \dd \theta$. Now we have up to a neglectable term $X_{\epsilon} (f_s(e^{i \theta}))=  (X \circ f_s) _{\epsilon_{s, \theta}} (e^{i \theta})$ where $\epsilon_{s, \theta}=  \frac{\epsilon}{|f_s'(e^{i \theta})|}$. This leads to (up to a neglectable term) 
 \begin{equation*}
  \int_{\D_t^c}  \epsilon^{\frac{\gamma^2}{2}}  e^{\gamma X_\epsilon(x)} \frac{1} {|x| ^{\gamma Q} } \dd x= -\int_{0}^t  \int_{0}^{2 \pi} \epsilon_{s, \theta}^{\frac{\gamma^2}{2}} e^{\gamma  ( (X \circ f_s) _{\epsilon_{s, \theta}} (e^{i \theta}) + Q \log \frac{|f_s'(e^{i \theta}|}{|f_s(e^{i \theta})|}  )}  \Re \Big(e^{- i \theta}   \frac{v(f_s (e^{i \theta}))}{f'_s(e^{i \theta})}  \Big) \dd \theta \dd s.
 \end{equation*}
Since $\log \frac{|f_s'(e^{i \theta}|}{|f_s(e^{i \theta})|}  $ is regular, we get by a change of variables on the cutoff that
\begin{equation}\label{polardecomposition}
 \int_{\D_t^c}    e^{\gamma X(x)} \frac{1} {|x| ^{\gamma Q} } \dd x= -\underset{\epsilon \to 0}{\lim}  \int_{0}^t  \int_{0}^{2 \pi} \epsilon^{\frac{\gamma^2}{2}} e^{\gamma \varphi_{s,\epsilon}(\theta)}  \Re \Big(e^{- i \theta} \frac{v(f_s (e^{i \theta}))}{f'_s(e^{i \theta})}\Big) \dd \theta \dd s
\end{equation}
where we recall that $\varphi_t(\theta)= (X \circ f_t)(e^{i \theta})   + Q \ln | \frac { f_t'(e^{i \theta})} {f_t(e^{i \theta})}  |$ and $\varphi_{t,\epsilon}(\theta)$ is its regularization at scale $\eps$. 
Recall from Lemma \ref{limitandh} that $\frac{v(f_s (e^{i \theta}))}{f'_s(e^{i \theta})}$ does not depend on $s$ and therefore
\begin{equation}\label{polardecompositionctd}
 \int_{\D^c_t}    e^{\gamma X(x)} \frac{1} {|x| ^{\gamma Q} } \dd x= -\underset{\epsilon \to 0}{\lim}  \int_{0}^t  \int_{0}^{2 \pi} \epsilon^{\gamma^2/2} e^{\gamma \varphi_{s,\epsilon}(\theta)}  \Re (e^{- i \theta} v(e^{i \theta})) \dd \theta \dd s .
\end{equation}
The above formula shows that $P_t$ is a semigroup since it is obtained from $P_t^0$ via a Feynman-Kac type formula.
 Thanks to the free case, we know that for $F \in \cC_{\rm exp}$,  $\|P_tF\|_{2}\leq \|P_t^0F\|_{2}\leq  \|F\|_2$ and hence $P_t$ can be extended on $L^2$ into a contraction semigroup $\tilde{T}_t$. By adapting the arguments of \cite[Section 5]{GKRV} one can show that for all $F,G \in \cC_{\rm exp}$ one has
 \begin{equation*}
  \Big\cjg  \frac{(I-\tilde{T}_t)F}{t} , G \Big\cjd_2= \Big\cjg  \frac{(I-P_t)F}{t} , G \Big\cjd_2  \underset{t \to 0}{\rightarrow}  \cjg  \bH_\bv F , G \cjd_2 . 
 \end{equation*} 
The above convergence is based on the the weak convergence of measures $\frac{\ind_{\D_t^c}}{t}\d x\to\varrho_\bv$ on $\bar{\D}$, and the rest of the proof follows \cite[Section 5]{GKRV} since we know that $\frac{(I-P_t^0)F}{t}$ converges in $L^2$ to $\bH_\bv^0 F$ as $t$ goes to $0$.

Let $P_t^N$ be the semigroup  of the form \eqref{FeynmanKac} where one works with the first $N$ harmonics of $\varphi$ and similarly for $\bH_\bv^N, \mathcal{Q}_{\bv}^N$. One can adapt the arguments of \cite[Section 5]{GKRV} to show that $P_t^N=e^{-t\bH_\bv^N}$ (see Proposition 5.3 in \cite{GKRV}). In the sequel, we will consider the semigroups and the quadratic forms on the space of real functions (this no restriction by a linearity argument). Let $\lambda>0$ and $F \in \mathcal C_\mathrm{exp}$ be real valued. We have for all real valued $G \in \mathcal C_\mathrm{exp}$
\begin{equation*}
\lambda \cjg  R_\lambda^N F  , G \cjd_2+ \mathcal{Q}_{\bv}^N(R_\lambda^N F,G)=  \cjg F  ,G \cjd_2
\end{equation*} 
where $R_\lambda^N$ is the resolvent associated to $P_t^N$. Since  $R_\lambda^N F$ and $G$ depends on the first $N$ harmonics for $N$ large, the following holds by definition of $\mathcal{Q}_{\bv}$ if $G$ depends on the first $N$ harmonics
\begin{equation}\label{quadrelation}
\lambda \cjg  R_\lambda^N F  , G \cjd_2+ \mathcal{Q}_{\bv}(R_\lambda^N F, G)=  \cjg F  , G \cjd_2.
\end{equation} 
The above identity for $G= R_\lambda^N F$ shows that $\sup_{N \geq 1} \mathcal{Q}_{\bv}(R_\lambda^N F,R_\lambda^N F ) < \infty$. Moreover, by adapting  \cite[Section 5]{GKRV} (see proof of Proposition 5.5), we know that $R_\lambda^N F$ converges in $L^2(\R \times \Omega_\T)$ to $R_\lambda F$ where $R_\lambda$ is the resolvent associated to $P_t$. Therefore, we can apply Lemma 2.12 in \cite{MaRockner} which yields that $R_\lambda F \in \mathcal{D}(\mathcal{Q}_{\bv} )$ and one can find a subsequence $(R_\lambda^{N_k}F)_{k \geq 1}$ which converges in Cesaro mean to $R_\lambda F$ with respect to $\mathcal{Q}_{\bv}$. Hence, we can take the limit in \eqref{quadrelation} which yields for all $G \in \mathcal C_\mathrm{exp}$
\begin{equation}\label{quadrelationlimit}
\lambda \cjg  R_\lambda F , G \cjd_2+ \mathcal{Q}_{\bv}(R_\lambda F, G)=  \cjg F , G \cjd_2.
\end{equation} 
Since $\mathcal C_\mathrm{exp}$ is dense in the domain of  $\mathcal{Q}_{\bv}$, this shows that $R_\lambda$ is equal to the resolvent associated to $\mathcal{Q}_{\bv}$ on $\mathcal{C}_\mathrm{exp}$. Since $\mathcal{C}_\mathrm{exp}$ is dense in $L^2(\R \times \Omega_\T)$, this implies that $R_\lambda$ is the resolvent associated to  $\mathcal{Q}_{\bv}$. By uniqueness of the resolvent, we have proved the desired result.
\end{proof}

\subsection{Liouville descendant states}\label{subsec:poisson}

In this section we recall the definition and properties of the Poisson operator introduced in \cite[Section 6]{GKRV}. This will be used to describe the diagonalization of the Liouville Hamiltonian and obtain a representation of the Virasoro algebra.
 
\subsubsection{The spectral Riemann surface}

Let $\mc{D}_0:=\{\alpha_{\pm j}:=Q\pm i \sqrt{2j} \, |\, j\in \N\}$ and consider the non-compact Riemann surface $\Sigma$ so that $r_j(\alpha):=\sqrt{-(\alpha-Q)^2-2j}$ are 
holomorphic functions for all $2j>0$ and so that for each $\alpha\in \Sigma$, ${\rm Im}(\sqrt{-(\alpha-Q)^2-2j})>0$ for $j\in\N$ large enough. Here we use the convention that for $j=0$, $r_0(\alpha)=-i(\alpha-Q)$.
The surface $\Sigma$ is as a ramified covering $\pi:\Sigma\to \C$ with ramifications of order $2$ at $\mc{D}_0$. We identify 
the half-space $\{\alpha \in\C \,|\, {\rm Re}(\alpha)<Q\}$ with a subset of $\Sigma$ using a complex embedding by the 
section $s$ so that ${\rm Im}(\sqrt{-(s(\alpha)-Q)^2-2j})>0$ for all $j\in\N_0$ if ${\rm Re}(\alpha)<Q$. 
The group $G$ of automorphisms of the covering is abelian and generated by elements $(\gamma_{\pm j})_j$, of order $2$, corresponding to simple 
loops around $\alpha_{\pm j}$ and we have $r_j(\gamma_j(\alpha))=-r_j(\alpha)$.

\subsubsection{Poisson operator}\label{sec:Poisson_Operator}
Let us define the family of Poisson operators $\mc{P}_\ell(\alpha)$ as  in \cite[Section 6]{GKRV}. 
First, we use the finite dimensional space for $\ell\in \N_0$
\begin{equation}\label{def_E_ell}
E_\ell:= \bigoplus_{j=0}^{\ell} \ker ({\bf P}-j) \subset L^2(\Omega_\T ,\mathbb{P}_\T).
\end{equation}
and let $\Pi_{E_\ell}$ and $\Pi_{\ker ({\bf P}-j)}$ be the orthogonal projections on respectively $E_\ell$ and $\ker ({\bf P}-j)$. 

We next choose a function $\rho\in C^\infty(\R)$ satisfying $\rho(c)=c$ for $c\leq -1$ and $\rho(c)=0$ for $c\geq 0$. We consider the space $e^{-\beta \rho}\mc{D}(\mc{Q})$ as the space of functions $F$ such that $e^{\beta \rho}F \in \mc{D}(\mc{Q})$ and it is equipped with the norm $\|F\|_{e^{\beta \rho}\mc{D}(\mc{Q})}:=\|e^{\beta \rho}F\|_{\mc{D}(\mc{Q})}$ . Below, we use the determination of the logarithm with a cut at $\R^+$: we set 
$\sqrt{Re^{i\theta}}=\sqrt{R}e^{i\theta/2}$ for $\theta \in [0,2\pi)$ and $R>0$.

\begin{proposition}[Proposition 6.19 in \cite{GKRV}]
\label{poissonprop}
Let $0<\beta<\gamma/2$ and $\ell\in\N$.
Then there is an analytic family of operators $\mc{P}_\ell(\alpha)$  
\[\mc{P}_\ell(\alpha): E_\ell\to e^{-\beta \rho}\mc{D}(\mc{Q})\]
in the region 
\begin{equation}\label{regionvalide}
\Big\{\alpha=Q+ip\in \C\,\Big|\, {\rm Re}(\alpha)< Q, {\rm Im}\sqrt{p^2-2\ell}<\beta\Big\}\cup \Big\{Q+ip\in Q+i\R\, \Big| \, |p|\in \bigcup_{j\geq \ell} (\sqrt{2j},\sqrt{2(j+1)})\Big\},
\end{equation}
continuous at each $Q\pm i\sqrt{2j}$ for $j\geq \ell$, satisfying $(\mathbf{H}-\tfrac{Q^2+p^2}{2})\mc{P}_\ell(\alpha)F=0$ and in the region $\{c\leq 0\}$
\begin{equation}\label{expansionP} 
\mc{P}_\ell(\alpha)F=\sum_{j\leq \ell}\Big(F^-_je^{ic\sqrt{p^2-2j}}+F_j^+(\alpha)
e^{-ic\sqrt{p^2-2j}}\Big)+G_\ell(\alpha,F)
\end{equation}
with $F^-_j=\Pi_{\ker(\mathbf{P}-j)}F$, $F_j^+(\alpha)\in \ker (\mathbf{P}-j)$, and
$G_\ell(\alpha,F)\in \mc{D}(\mc{Q})$ can be decomposed into
\[G_\ell(\alpha,F)=G_\ell^1(\alpha,F)+G_\ell^2(\alpha,F), \quad G_\ell^1(\alpha,F)\in e^{\frac{\beta}{2}\rho}L^2(\R\times\Omega_\T), \,\, \Pi_\ell(G_\ell^2(\alpha,F))=0.\] 
For $F\in E_\ell$, $u=\mc{P}_\ell(\alpha)F$ is the unique solution in $e^{-\beta\rho}\mc{D}(\mc{Q})$ 
to the equation $(\mathbf{H}-\tfrac{Q^2+p^2}{2})u=0$ satisfying that for $\chi\in C^\infty(\R)$equal to $1$ in $c\leq -1$ and with support in $(-\infty,0)$, there are 
$F_j^+(\alpha)\in \ker (\mathbf{P}-j)$ such that
\[\chi(c)\Big(u-\sum_{j\leq \ell}\Big((\Pi_{\ker(\mathbf{P}-j)}F)e^{ic\sqrt{p^2-2j}}+F_j^+(\alpha)
e^{-ic\sqrt{p^2-2j}}\Big)\Big)\in \mc{D}(\mc{Q)}.\]
Finally, $F_j^+(\alpha)$ depends analytically on $\alpha$ in the region \eqref{regionvalide} and the operator $\mc{P}_\ell(\alpha)$ admits a meromorphic extension to the region  
\begin{equation}\label{regiondextension}
\Big\{\alpha=Q+ip \in \Sigma\, |\,  \forall j\leq \ell, {\rm Im}\sqrt{p^2-2j}\in (\beta/2-\gamma,\beta/2)\Big\}\end{equation}
and $\mc{P}_\ell(\alpha)F$ satisfies \eqref{expansionP} in that region. 
\end{proposition}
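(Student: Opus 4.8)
The plan is to realise $\mc{P}_\ell(\alpha)F$ as a controlled perturbation of an explicit ``free'' Poisson operator attached to $\bH^0=-\tfrac12\partial_c^2+\tfrac{Q^2}{2}+\mathbf{P}$, exploiting that the Liouville potential $\mu e^{\gamma c}V$ decays like $e^{\gamma c}$ as $c\to-\infty$. Since $\mathbf{P}$ has discrete spectrum $\N_0$, the free operator is the direct sum over $j\in\N_0$ of the one-dimensional operators $-\tfrac12\partial_c^2+\tfrac{Q^2}{2}+j$ acting in the variable $c$ on the fibre $\ker(\mathbf{P}-j)$, so its resolvent is completely explicit: on the $j$-th channel the kernel of $(\bH^0-\tfrac{Q^2+p^2}{2}-i0)^{-1}$ is a constant multiple of $\tfrac{1}{\sqrt{p^2-2j}}e^{i\sqrt{p^2-2j}|c-c'|}$, with the branch of the root dictated by the section $s$ (so that on the region \eqref{regionvalide} the closed channels contribute exponentials decaying in $c$, and $\Im\sqrt{p^2-2\ell}<\beta$ puts all of them in $e^{-\beta\rho}L^2$). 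First I would set, for $F=\sum_{j\leq\ell}F^-_j$ with $F^-_j=\Pi_{\ker(\mathbf{P}-j)}F$, the model solution $u_{\mathrm{free}}(\alpha):=\sum_{j\leq\ell}F^-_je^{ic\sqrt{p^2-2j}}$ (which solves the free equation channel by channel), pick a cutoff $\chi\in C^\infty(\R)$ with $\chi\equiv1$ on $\{c\leq-1\}$ and $\chi\equiv0$ on $\{c\geq0\}$, and put $u_{\mathrm{app}}:=\chi\,u_{\mathrm{free}}(\alpha)$. Then
\[
g(\alpha):=\Big(\bH-\tfrac{Q^2+p^2}{2}\Big)u_{\mathrm{app}}=\big[-\tfrac12\partial_c^2,\chi\big]u_{\mathrm{free}}(\alpha)+\chi\,\mu e^{\gamma c}V\,u_{\mathrm{free}}(\alpha)
\]
is supported in $\{c\leq0\}$ and decays like $e^{\gamma c}$ times an $L^p(\Omega_\T)$-factor as $c\to-\infty$ (using $\mc{Q}_V\leq C\mc{Q}$), and $e^{\beta\rho}g(\alpha)\in\mc{D}'(\mc{Q})$ depends analytically on $\alpha$ in \eqref{regionvalide}.

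The core step will be to invert this error: I would define $\mc{P}_\ell(\alpha)F:=u_{\mathrm{app}}-R_{\mathrm{out}}(\alpha)g(\alpha)$, where $R_{\mathrm{out}}(\alpha)$ is the outgoing (limiting-absorption) boundary value of $(\bH-\tfrac{Q^2+p^2}{2}-i\epsilon)^{-1}$ as $\epsilon\downarrow0$, viewed as a bounded map $e^{\beta\rho}\mc{D}'(\mc{Q})\to e^{-\beta\rho}\mc{D}(\mc{Q})$. Establishing the existence, analyticity and threshold behaviour of $R_{\mathrm{out}}(\alpha)$ is the heart of the matter, since $\tfrac{Q^2+p^2}{2}$ lies in the continuous spectrum $[Q^2/2,\infty)$ of $\bH$: I would prove a limiting absorption principle either by a positive commutator (Mourre) estimate with conjugate operator $A=\tfrac12(c\partial_c+\partial_cc)$ --- using non-negativity of $\bH$, discreteness of $\sigma(\mathbf{P})$, and control of the commutators of $\mu e^{\gamma c}V$ with $A$ through $C^{-1}\mc{Q}\leq\mc{Q}_V\leq C\mc{Q}$ and Girsanov shifts as in \cite[Section 5]{GKRV} --- or, since the problem is one-dimensional in $c$ with operator-valued coefficients, by a direct Fredholm perturbation of the explicit free resolvent, $\mu e^{\gamma c}V$ being (after composition with the free resolvent) a compact perturbation between the weighted spaces. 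This yields boundary values of the resolvent between $e^{\beta\rho}\mc{D}'(\mc{Q})$ and $e^{-\beta\rho}\mc{D}(\mc{Q})$, locally uniform away from the thresholds $|p|\in\{\sqrt{2j}:j\in\N_0\}$, analytic in $\alpha$ on \eqref{regionvalide} off the open-channel thresholds $\sqrt{2j}$ ($j\leq\ell$), and continuous --- with at worst a square-root branch point inherited from the $r_j$'s, the threshold-singular parts of the free resolvent being incompatible with membership in $e^{-\beta\rho}\mc{D}(\mc{Q})$ --- at the points $Q\pm i\sqrt{2j}$ with $j\geq\ell$. Feeding $g(\alpha)$ into $R_{\mathrm{out}}(\alpha)$ and reading off the explicit free kernel, one finds as $c\to-\infty$ that $R_{\mathrm{out}}(\alpha)g(\alpha)=-\sum_{j\leq\ell}F^+_j(\alpha)e^{-ic\sqrt{p^2-2j}}+(\text{an element of }\mc{D}(\mc{Q}))$ with $F^+_j(\alpha)\in\ker(\mathbf{P}-j)$ analytic in $\alpha$; combined with $u_{\mathrm{app}}=u_{\mathrm{free}}(\alpha)$ on $\{c\leq-1\}$ this is precisely \eqref{expansionP}, and $(\bH-\tfrac{Q^2+p^2}{2})\mc{P}_\ell(\alpha)F=g(\alpha)-g(\alpha)=0$.

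For uniqueness I would argue as follows: if $u_1,u_2\in e^{-\beta\rho}L^2(\R\times\Omega_\T)$ both solve the equation with expansion \eqref{expansionP} and the same $F$, then $w:=u_1-u_2$ has vanishing incoming part, lies in $\mc{D}(\mc{Q})$ plus purely outgoing waves at $c\to-\infty$, and is $L^2$ near $c=+\infty$ (the confining potential $\mu e^{\gamma c}V$ blows up there, forcing decay). Pairing the equation against $\bar w$ and integrating by parts on $\{c\geq-T\}$ --- legitimate by self-adjointness of $\bH$ and the decay at $+\infty$ --- the only surviving boundary term at $c=-T$ is a Wronskian-type flux through the open channels, namely $\sum_{j\leq\ell}\sqrt{p^2-2j}\,\|\Pi_{\ker(\mathbf{P}-j)}(e^{ic\sqrt{p^2-2j}}w)\|^2$, which must vanish as $T\to\infty$; a purely outgoing solution with zero flux and $L^2$-decay at $+\infty$ is then forced to be $0$ (pair over all of $\R$ now that the boundary term is killed, using analyticity in $p$ to pass from real $p$ to the general case). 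This also shows that $\mc{P}_\ell(\alpha)$ does not depend on the auxiliary choices of $\chi$ and $\rho$.

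Finally, the meromorphic continuation onto $\Sigma$ would follow by rewriting the construction as a fixed-point equation $(\mathrm{Id}+K(\alpha))u=u_{\mathrm{app}}$ with $K(\alpha):=R^0_{\mathrm{out}}(\alpha)\,\mu e^{\gamma c}V$, where $R^0_{\mathrm{out}}(\alpha)$ is the free outgoing resolvent: this continues explicitly and pole-free across $Q\pm i\sqrt{2j}$ once one passes to the surface $\Sigma$ on which all the $r_j$ are single-valued, and $K(\alpha)$ is a compact analytic family of operators on $e^{-\beta\rho}\mc{D}(\mc{Q})$ over the region \eqref{regiondextension} --- here one crucially uses that the factor $e^{\gamma c}$ in the potential dominates the at most $e^{2\beta\rho}$-growth produced by $R^0_{\mathrm{out}}(\alpha)$ there, which is exactly where $\beta<\gamma/2$ and the $L^p(\Omega_\T)$-regularity of the GMC for $p<2/\gamma^2$ enter. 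The analytic Fredholm theorem then makes $(\mathrm{Id}+K(\alpha))^{-1}$, hence $\mc{P}_\ell(\alpha)$, meromorphic on \eqref{regiondextension}, the poles being the $L^2$-eigenvalues and resonances of $\bH$, and \eqref{expansionP} continues by the same kernel computation. The hard part throughout will be the limiting absorption principle together with the compactness and analyticity of $K(\alpha)$ between the weighted spaces, i.e.\ taming the singular GMC potential $V$; everything else reduces to the explicit one-dimensional resolvent calculus packaged channel by channel.
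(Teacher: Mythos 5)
This proposition is not proved in the present paper at all: it is imported verbatim as Proposition~6.19 of \cite{GKRV}, so there is no in-paper argument to compare against. Your sketch (cutoff incoming free waves $\chi\,u_{\mathrm{free}}$ plus correction by the outgoing resolvent, with the resolvent's boundary values and meromorphic continuation to $\Sigma$ obtained by analytic Fredholm perturbation of the explicit channel-by-channel free resolvent using $\beta<\gamma/2$ to absorb the $e^{\gamma c}$-decay of the potential, and uniqueness via a flux/boundary-pairing argument together with absence of embedded eigenvalues) is essentially the strategy actually carried out in \cite[Section 6]{GKRV}, so it is the right proof in outline.
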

We notice that, by uniqueness of the expansion \eqref{expansionP}, for $j<\ell$ and $|p|>\sqrt{2\ell}$, we have 
\begin{equation}\label{egalitepoissonell} 
\mc{P}_\ell(Q+ip)|_{\ker ({\bf P}-j)}= \mc{P}_{j}(Q+ip)|_{\ker ({\bf P}-j)}.
\end{equation}

\subsubsection{Verma modules and highest weight vectors for the free field}\label{Vermafreefield}

We shall use a particular eigenbasis of ${\bf P}$ that is parametrized by the Young diagrams  and that is adapted to the Virasoro algebra. 
Recall that a Young diagram $\nu$ is a non-increasing finite sequence of integers $\nu_1\geq \dots\geq \nu_k>0$. We denote by $\mc{T}$ the union of $\{0\}$ and the set of Young diagrams and we denote by $|\nu|=\sum_{j}\nu_j$ its length. We shall write $\mc{T}_n:=\{\nu \in \mc{T}\,|\, |\nu|=n\}$. 
Notice that $\mc{T}$ can be viewed as a subset of $\mc{N}$, where $\mc{N}$ is the set 
of sequences $(k_n)_{n\geq 0}\in \N_0^{\N_0}$ so that there is $n_0$ with  $k_n=0$ for all $n\geq n_0$.
Given two elements $\nu= (\nu_i)_{i \in [1,k]}\in\mc{N}$ and $\tilde{\nu}= (\tilde{\nu}_i)_{i \in [1,j]}\in\mc{N}$ 
we define the operators on $\mc{C}_{\rm exp}$ 
\begin{equation}\label{LnuL-nu}
\begin{gathered}
 \mathbf{L}_{-\nu}^0:=\mathbf{L}_{-\nu_k}^0 \cdots \, \mathbf{L}_{-\nu_1}^0,  \qquad  \tilde{\mathbf{L}}_{-\tilde \nu}^0:=\tilde{\mathbf{L}}_{-\tilde\nu_j}^0 \cdots\, \tilde{\mathbf{L}}_{-\tilde\nu_1}^0\\
 \mathbf{L}_{\nu}^0:=\mathbf{L}_{\nu_k}^0 \cdots \, \mathbf{L}_{\nu_1}^0,   \qquad  \tilde{\mathbf{L}}_{\tilde \nu}^0:=\tilde{\mathbf{L}}_{\tilde\nu_j}^0 \cdots\, \tilde{\mathbf{L}}_{\tilde\nu_1}^0
\end{gathered}\end{equation}
and define the functions
\begin{align}\label{psibasis}
\Psi^0_{\alpha,\nu, \tilde\nu}:=\mathbf{L}_{-\nu}^0\tilde{\mathbf{L}}_{-\tilde\nu}^0  \: \Psi^0_\alpha,
\end{align}
with the convention that $\Psi^0_{\alpha,0,0}:=\Psi^0_{\alpha}=e^{(\alpha-Q)c}$. The following 3 properties are proved in \cite[Proposition 4.9]{GKRV}: first,
\[ {\bf H}^0\Psi^0_{\alpha,\nu, \tilde\nu}=(2\Delta_\alpha+|\nu|+|\tilde{\nu}|)\Psi^0_{\alpha,\nu, \tilde\nu}\quad  \textrm{ where recall that }\Delta_\alpha =\frac{\alpha}{2}(Q-\frac{\alpha}{2}).\]
then, for each $\nu,\tilde{\nu}\in \mc{T}$, there is a polynomial $\mc{Q}_{\alpha,\nu,\tilde{\nu}}$ in the variables $(x_n,y_n)_n$ so that 
\begin{equation}\label{descendantfree}
\Psi^0_{\alpha,\nu, \tilde\nu}=\mc{Q}_{\alpha,\nu,\tilde{\nu}}\Psi_{\alpha}^0, \qquad {\bf P} \mc{Q}_{\alpha,\nu,\tilde{\nu}}=(|\nu|+|\tilde{\nu}|) \mc{Q}_{\alpha,\nu,\tilde{\nu}}.
\end{equation}
and finally ${\rm span}\{  \mc{Q}_{\alpha,\nu,\tilde{\nu}}\, |\, \nu,\tilde{\nu}\in \mc{T},\,  |\nu|+|\tilde{\nu}|=\ell\}=\ker({\bf P}-\ell)$ if $\alpha\notin Q-\frac{\gamma}{2}\N-\frac{2}{\gamma}\N$. More generally, a relation of the form \eqref{descendantfree} holds if $\nu,\tilde{\nu}\in \mc{N}$ with 
$\mc{Q}_{\alpha,\nu,\tilde{\nu}}\in \ker ({\bf P}-|\nu|-|\tilde{\nu}|)$ where $|\nu|=\sum_j \nu_j$ and $|\tilde{\nu}|=\sum_j \tilde{\nu}_j$. In fact if
$n_1,\dots,n_k\in \N$ and $\tilde{n}_1,\dots,\tilde{n}_{k'}\in \N$, then
\[
{\bf L}_{-n_k}^0\dots {\bf L}_{-n_1}^0\tilde{{\bf L}}_{-\tilde{n}_{k'}}^0\dots \tilde{{\bf L}}_{-\tilde{n}_1}^0\Psi^0_\alpha\in {\rm span}\Big\{ \Psi^0_{\alpha,\nu,\tilde{\nu}}\,|\, \nu,\tilde{\nu}\in \mc{T}, |\nu|=\sum_{j=0}^kn_j, \; |\tilde{\nu}|=\sum_{j=0}^{k'}\tilde{n}_j\Big\}
\]
if $\alpha\notin Q-\frac{\gamma}{2}\N-\frac{2}{\gamma}\N$. In particular, for $n\in \Z$,  if $\nu,\tilde{\nu}\in\mc{T}$, there are 
coefficients $\ell_{n}^\alpha (\nu,\nu')\in \C$, that are $0$ when $|\nu|-n\not=|\nu'|$, so that 
\begin{equation}\label{descendant_general}  
{\bf L}_{n}^0 \Psi_{\alpha,\nu,\tilde{\nu}}^0=\sum_{\nu'\in \mc{T}}\ell_{n}^\alpha (\nu,\nu')\Psi_{\alpha,\nu',\tilde{\nu}}^0 , 
\quad \tilde{{\bf L}}_{n}^0 \Psi_{\alpha,\nu,\tilde{\nu}}^0=\sum_{\tilde{\nu}'\in \mc{T}}\ell_{n}^\alpha (\tilde{\nu},\tilde{\nu}')\Psi_{\alpha,\nu,\tilde{\nu}'}^0.
\end{equation}
Moreover, when $0<n\leq \nu_k$ and $\nu=(\nu_1,\dots,\nu_k)$, then $\ell_{-n}^{\alpha}(\nu,\nu')=\delta_{\nu',(\nu,n)}$, while when $n>|\nu|$, 
$\ell_{n}^{\alpha}(\nu,\cdot)=0$. 
The functions $\ell_{n}^{\alpha}$ can be computed by 
commuting recursively ${\bf L}^0_{n}$ with the ${\bf L}_{-\nu_j}^0$ for those $\nu_j\leq n$ when writing $\Psi_{\alpha,\nu,\tilde{\nu}}^0$ under the form \eqref{psibasis}. In the proof of \cite[Proposition 4.9]{GKRV}, it is shown that 
\[ {\bf L}_{n}^0 \Psi_{\alpha,\nu,\tilde{\nu}}^0= e^{(\alpha-Q)c} {\bf L}_{n}^{0,\alpha}\mc{Q}_{\alpha,\nu,\tilde{\nu}},\qquad \tilde{{\bf L}}_{n}^0 \Psi_{\alpha,\nu,\tilde{\nu}}^0= e^{(\alpha-Q)c} \tilde{{\bf L}}_{n}^{0,\alpha}\mc{Q}_{\alpha,\nu,\tilde{\nu}}\]
where ${\bf L}_n^{0,\alpha}$ (and similarly for $\tilde{{\bf L}}_n^{0,\alpha}$) 
is an unbounded operator acting on $L^2(\Omega_\T)$ (i.e only in the $(x_n,y_n)_n$ variables but not on $c$) and well defined on $\mc{S}\subset L^2(\Omega_\T)$ ($\mc{S}$ is defined in Section \ref{freeham}); it also satisfies $({\bf L}_n^{0,\alpha})^*={\bf L}_{-n}^{0,2Q-\bar{\alpha}}$ on $L^2(\Omega_\T)$. By \eqref{descendant_general}
\begin{equation}\label{Ln0alpha}
 \sum_{\nu'\in \mc{T}}\ell_{n}^{\alpha}(\nu,\nu')\mc{Q}_{\alpha,\nu',\tilde{\nu}}={\bf L}_n^{0,\alpha}\mc{Q}_{\alpha,\nu,\tilde{\nu}}, \qquad 
\sum_{\tilde{\nu}'\in \mc{T}}\ell_{n}^{\alpha}(\tilde{\nu},\tilde{\nu}')\mc{Q}_{\alpha,\nu,\tilde{\nu}'}=\tilde{{\bf L}}_n^{0,\alpha}\mc{Q}_{\alpha,\nu,\tilde{\nu}}.
\end{equation}
Following \cite[Section 6.4]{GKRV}, we define the Hilbert space 
\[ \mc{H}_{\mc{T}}=\bigoplus_{n=0}^\infty \C^{d_n} , \quad \cjg v,v'\cjd_{\mc{T}}=\sum_{n=0}^\infty \cjg v_n,v_n'\cjd_{d_n}\] 
where $d_n=\sharp \mc{T}_n$, and $\cjg \cdot,\cdot\cjd_{d_n}$ is the standard Hermitian product on $\C^{d_n}$.  
If $(e_{\nu})_{\nu \in \mc{T}}$ denotes 
the canonical Hilbert basis of $\mc{H}_{\mc{T}}$, we see that for $\alpha\notin Q-\frac{\gamma}{2}\N_0-\frac{2}{\gamma}\N_0$
\[ \mc{I}_{\alpha}: \mc{H}_{\mc{T}}\otimes \mc{H}_{\mc{T}} \to L^2(\Omega_\T), \quad e_{\nu}\otimes e_{\tilde{\nu}} \mapsto \mc{Q}_{\alpha,\nu,\tilde{\nu}}\]
is an isomorphism, but it is not a unitary map since $(\mc{Q}_{\alpha,\nu,\tilde{\nu}})_{\nu,\tilde{\nu}}$ is not orthonormal. 
By \cite[Proposition 4.9]{GKRV} we have for $P>0$
\[ \cjg \mc{Q}_{Q+iP,\nu,\tilde{\nu}}, \mc{Q}_{Q+iP,\nu',\tilde{\nu}'}\cjd_{L^2(\Omega_\T)}=F_{Q+iP}(\nu,\nu')F_{Q+iP}(\tilde{\nu},\tilde{\nu}') \]
where $(F_{Q+iP}(\nu,\nu'))_{\nu,\nu'\in \mc{T}}$ are the so-called \emph{Shapovalov matrices}, which satisfy $F_{Q+iP}(\nu,\nu')=0$ if $|\nu|\not=|\nu'|$ and so that 
$(F_{Q+iP}(\nu,\nu'))|_{\nu,\nu'\in \mc{T}_n}$ are invertible for all $n\in \N$; the inverse of $F_{Q+iP}$ is denoted $F_{Q+iP}^{-1}$. 
Therefore, if we equip $\mc{H}_{\mc{T}}$ with the following scalar product
\[ \cjg u,u'\cjd_{Q+iP}:= \sum_{\nu,\nu'\in\mc{T}} F_{Q+iP}(\nu,\nu') u_\nu \bar{u}_{\nu'} ,\quad \textrm{for } u=\sum_{\nu} u_\nu e_\nu, \,\,  u'=\sum_{\nu} u'_\nu e_\nu ,\]
the map $\mc{I}_{Q+iP}: (\mc{H}_{\mc{T}}\otimes \mc{H}_{\mc{T}}, \cjg\cdot,\cdot\cjd_{Q+iP}\otimes \cjg\cdot,\cdot\cjd_{Q+iP}) \to L^2(\Omega_\T)$ becomes a unitary isomorphism of Hilbert spaces. 
Let us consider the linear maps for $\alpha\notin Q-\frac{\gamma}{2}\N-\frac{2}{\gamma}\N$
\begin{equation}\label{def:ell_nalpha}
\ell_n^{\alpha}:\mc{H}_{\mc{T}}\to \mc{H}_{\mc{T}},\quad 
\ell_n^{\alpha}(e_{\nu})=\sum_{\nu'\in \mc{T}}\ell_n^{\alpha}(\nu,\nu')e_{\nu'} 
\end{equation}
and we also call $\ell_n^\alpha$ (resp. $\tilde{\ell}_n^\alpha$) the action of $\ell_n^\alpha$ on $\mc{H}_{\mc{T}}\otimes \mc{H}_{\mc{T}}$ on the left variable 
(resp.  on $\mc{H}_{\mc{T}}\otimes \mc{H}_{\mc{T}}$ on the right variable).
Notice  from \eqref{Ln0alpha} that 
$\mc{I}_{\alpha}\ell_n^{\alpha}={\bf L}^{0,\alpha}_n\mc{I}_{\alpha}$ and $\mc{I}_{\alpha}\tilde{\ell}_n^{\alpha}=\tilde{{\bf L}}^{0,\alpha}_n\mc{I}_{\alpha}$.
From \eqref{Ln0alpha} and the fact that $({\bf L}_n^{0,Q+iP})^*={\bf L}_{-n}^{0,Q+iP}$ for $P>0$, we conclude that $(\ell_n^{Q+iP})^*=\ell_{-n}^{Q+iP}$ on $(\mc{H}_{\mc{T}}, \cjg\cdot,\cdot\cjd_{Q+iP})$ and 
\begin{equation}\label{ellnadjoint}
(\ell_n^{Q+iP})^*=\ell_{-n}^{Q+iP}, \quad  (\tilde{\ell}_n^{\, Q+iP})^*=\tilde{\ell}_{-n}^{\, Q+iP} \textrm{ on }(\mc{H}_{\mc{T}}\otimes \mc{H}_{\mc{T}}, \cjg\cdot,\cdot\cjd_{Q+iP})\otimes \cjg\cdot,\cdot\cjd_{Q+iP}).
\end{equation}

Let us now define the vector spaces $\mc{V}_\alpha^0, \bbar{\mc{V}}_\alpha^0$  by 
\[ \begin{gathered}
\mc{V}_\alpha^0:= {\rm span}\{ \Psi^0_{\alpha,\nu,0}\,|\, \nu \in \mc{T} \} \subset e^{(|{\rm Re}(\alpha)-Q|+\eps)|\rho(c)|}L^2(\R\times \Omega_\T), \\
\bbar{\mc{V}}_\alpha^0:= {\rm span}\{ \Psi^0_{\alpha,0,\tilde{\nu}}\,|\, \nu \in \mc{T} \} \subset e^{(|{\rm Re}(\alpha)-Q|+\eps)|\rho(c)|}L^2(\R\times \Omega_\T)
\end{gathered}\]
In the langage of representation theory (see \cite[Chapter 6]{Schottenloher}), if $\alpha\notin Q-\frac{\gamma}{2}\N_0-\frac{2}{\gamma}\N_0$ the state $\Psi_{\alpha}^0$ is a \emph{highest weight vector} associated to the highest weight representation of Virasoro algebra ${\rm Vir}(c_L)$ with central charge $c_L=1+6Q^2$ generated by the operators ${\bf L}_n^0$ for $n\in \Z$ and $c_L$, 
and $\mc{V}_\alpha^0$ is an associated \emph{Verma module} (and similarly for $\bbar{\mc{V}}_\alpha^0$):
\[ \left \{ \begin{array}{lll}
 {\bf L}_n^{0}\Psi_\alpha^0=0, \,\, \forall n>0,  \\
{\bf L}_0^0 \Psi_{\alpha}^0= \Delta_{\alpha} \Psi_{\alpha}^0
 \end{array}\right. ,\qquad 
\left \{ \begin{array}{lll}
 \tilde{{\bf L}}_n^{0}\Psi_\alpha^0=0, \,\, \forall n>0,  \\
\tilde{{\bf L}}_0^0 \Psi^0_{\alpha}= \Delta_{\alpha} \Psi^0_{\alpha} 
 \end{array}\right..
 \]
 For $\alpha=Q+iP$ with $P>0$, we equip $\mc{V}_{Q+iP}^{0}$ and $\bbar{\mc{V}}_{Q+iP}^{0}$ with the scalar product 
\begin{equation}\label{scalarproduct}
\cjg F,F'\cjd_{\mc{V}^0_{Q+iP}}:= \cjg e^{-iPc}F, e^{-iPc}F'\cjd_{L^2(\Omega_\T)}.
\end{equation} 
Here, we recall from \eqref{descendantfree} that $e^{-iPc}\Psi_{Q+ip,\nu,0}=\mc{Q}_{Q+ip,\nu,0}$ can be viewed as an element in $L^2(\Omega_{\mathbb{T}})$ since it is independent of $c$ (thus the same holds for any $F\in \mc{V}_{Q+iP}^{0}$), and the discussion above shows that \eqref{scalarproduct} is non-degenerate if $P>0$ since it is given in terms of the Schapovalov matrices,
and the two representations ${\bf L}_n^0$ in  $\mc{V}_{Q+iP}^{0}$  and $\tilde{\bf L}_n^0$ in $\bbar{\mc{V}}_{Q+iP}^{0}$ are then unitary.   
The direct sum ${\rm Vir}(c_L)\oplus {\rm Vir}(c_L)$ also has a representation into 
\[\mc{W}^{0}_{\alpha}:={\rm span}\{ \Psi^0_{\alpha,\nu,\tilde{\nu}}\,|\, \nu,\tilde{\nu} \in \mc{T} \}\] 
given by the action of $({\bf L}_n^0)_n$ and $(\tilde{\bf L}_n^0)_n$ on the basis elements $\Psi^0_{\alpha,\nu,\tilde{\nu}}$; 
notice that the representation of $({\bf L}_n^0)_n$ on $\mc{W}^{0}_{\alpha}$ produces canonically the representation of 
$({\bf L}_n^0)_n$ on $\mc{V}_\alpha^0$ by using the inclusion $\mc{V}_\alpha^0\subset \mc{W}_\alpha^0$. 
We use again the expression \eqref{scalarproduct} for the scalar product on $\mc{W}^{0}_{Q+iP}$.
We see that 
\[ \left\{\begin{array}{rcl} 
(\mc{H}_{\mc{T}}, \cjg\cdot,\cdot\cjd_{Q+iP})& \to  & (\mc{V}_{Q+iP}^{0},\cjg \cdot,\cdot \cjd_{\mc{V}^0_{Q+iP}}) \\
 e_{\nu}& \mapsto & \Psi^0_{Q+iP,\nu,0}
\end{array}\right.,\quad 
\left\{\begin{array}{rcl} 
(\mc{H}_{\mc{T}}, \cjg\cdot,\cdot\cjd_{Q+iP})& \to  & (\mc{V}_{Q+iP}^{0},\cjg \cdot,\cdot \cjd_{\mc{V}^0_{Q+iP}}) \\
 e_{\tilde{\nu}}& \mapsto & \Psi^0_{Q+iP,0,\tilde{\nu}}
\end{array}\right.
\]
are unitary isomorphisms, which conjugate respectively ${\bf L}_{n}^0$ and $\tilde{\bf L}_{n}^0$ with $\ell_{n}^{Q+iP}$. 
One also has a unitary isomorphism 
\[\left\{\begin{array}{rcl} 
(\mc{H}_{\mc{T}}\otimes \mc{H}_{\mc{T}}, \cjg\cdot,\cdot\cjd_{Q+iP}\otimes  \cjg\cdot,\cdot\cjd_{Q+iP})& \to  & (\mc{W}_{Q+iP}^0,\cjg \cdot,\cdot \cjd_{\mc{V}^0_{Q+iP}}) \\
 e_{\nu}\otimes e_{\tilde{\nu}}& \mapsto & \Psi^0_{Q+iP,\nu,\tilde{\nu}}
\end{array}\right.\]
which allows us to identify 
\[\mc{V}_{Q+iP}^{0}\otimes \bbar{\mc{V}}_{Q+iP}^{0} \simeq \mc{W}^{0}_{Q+iP}.\]

Since for $\alpha\notin Q-\frac{\gamma}{2}\N_0-\frac{2}{\gamma}\N_0$, $\ell_{n}^{\alpha}$ maps a finite sum $\sum_{\nu,\tilde{\nu}\in \mc{T}_N}a_{\nu,\tilde{\nu}}e_{\nu}\otimes e_{\tilde{\nu}}$ to a finite sum of the same form with $N$ replaced by $N-n$, we can compose $\ell_n^\alpha \ell_m^\alpha$ on such finite sums, and their commutators are given by the Virasoro commutation laws 
\[ [ \ell^{\alpha}_n,\ell^{\alpha}_m]=(n-m)\ell^{\alpha}_{n+m}+\frac{c_L}{12}(n^3-n)\delta_{n,-m}, \quad [\tilde{\ell}^{\alpha}_n,\tilde{\ell}^{\alpha}_m]=(n-m)\tilde{\ell}^{\alpha}_{n+m}+\frac{c_L}{12}(n^3-n)\delta_{n,-m}.\]
For each $P>0$, the $(\ell_n^{Q+iP})_n$ thus forms a unitary representations of the Virasoro algebra into 
$\mc{H}_{\mc{T}}$, and $(\ell_n^{Q+iP}, \tilde{\ell}_m^{\, Q+iP})_{n,m}$ is a representation of ${\rm Vir}(c_L)\oplus {\rm Vir}(c_L)$ into $(\mc{H}_{\mc{T}}\otimes \mc{H}_{\mc{T}}, \cjg\cdot,\cdot\cjd_{Q+iP}\otimes  \cjg\cdot,\cdot\cjd_{Q+iP})$, with each copy being unitary.

\subsubsection{Liouville descendants}

The Liouville descendant states $\Psi_{\alpha,\nu,\tilde{\nu}}$ are defined using the Poisson operator of Proposition \ref{poissonprop} (unlike in Section \ref{sec:Poisson_Operator}, 
the square root below is now defined so that $\sqrt{Re^{i\theta}}=\sqrt{R}e^{i\theta/2}$ for $R>0$ and $\theta \in (-\pi,\pi)$): 
\begin{proposition}[Proposition 6.26 and Proposition 6.9 of \cite{GKRV}]\label{constPoisson}
For $\nu,\tilde{\nu}\in \mc{T}$, let $\ell:=|\nu|+|\nu'|$, and 
\begin{equation}\label{defWell}
W_\ell:= \Big\{ \alpha=Q+iP \in \C \setminus \mc{D}_0 \, |\,  
{\rm Re}(\alpha)\leq Q, \,\, {\rm Im}(\sqrt{P^2+2\ell})>{\rm Im}(P)-\gamma/2\Big\}
\end{equation}
where $\mc{D}_0=\bigcup_{j\geq 0}\{Q\pm i\sqrt{2j}\}$. Then for $\alpha=Q+iP$ with $\alpha \in W_\ell $
\[ \Psi_{\alpha,\nu,\tilde{\nu}}:=\mc{P}_\ell(Q+i\sqrt{P^2+2\ell})\mc{Q}_{\alpha,\nu,\tilde{\nu}},\]
is well defined, analytic in $\alpha$, as an element in $e^{- \beta \rho}\mc{D}(\mc{Q})$ for all $\beta>Q-{\rm Re}(\alpha)$ and it is continuous in $\alpha$ with at most square root singularities at $\mc{D}_0$. 
Moreover $\Psi_{\alpha,\nu,\tilde{\nu}}$ 
is an eigenfunction of ${\bf H}$ with eigenvalue 
$\frac{Q^2+P^2}{2}+\ell$ if $\alpha=Q+iP$.
The set $W_\ell$ is a connected subset of the half-plane $\{{\rm Re}(\alpha)\leq Q\}$, containing $(Q+i\R)\setminus \mc{D}_0$ and the real half-line $(-\infty,Q-\frac{2\ell}{\gamma}-\frac{\gamma}{4})$. Moreover, there is $C_{|\nu|+|\tilde{\nu}|}<Q$  such that for real valued $\alpha<C_{|\nu|+|\tilde{\nu}|}$,  $\Psi_{\alpha,\nu,\tilde{\nu}}$ is given by 
\begin{equation}\label{defdescendantsbis}
\Psi_{\alpha,\nu,\tilde{\nu}}=\lim_{t \to +\infty}e^{t(2 \Delta_{\alpha}+|\nu|+|\tilde{\nu}|)}e^{-t{\bf H}}\Psi_{\alpha,\nu,\tilde{\nu}}^0
\end{equation}
where the limit holds in a weighted space $e^{-\beta\rho}L^2$ for $\beta>0$ large enough. 
\end{proposition}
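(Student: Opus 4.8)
Since the statement is a recollection of \cite[Propositions~6.9 and~6.26]{GKRV}, the plan is to assemble it from the Poisson operator of Proposition~\ref{poissonprop} together with the free-field material of Section~\ref{Vermafreefield}. First I would record the relevant algebraic facts: the vector $\mc{Q}_{\alpha,\nu,\tilde{\nu}}$ defined by $\Psi^0_{\alpha,\nu,\tilde{\nu}}=\mc{Q}_{\alpha,\nu,\tilde{\nu}}\Psi^0_\alpha$ is obtained from $e^{(\alpha-Q)c}$ by applying the operators $\mathbf{L}^0_{-\nu_j},\tilde{\mathbf{L}}^0_{-\tilde{\nu}_j}$, hence is a polynomial in $(x_n,y_n)_n$ whose coefficients are polynomial in $\alpha$, and satisfies $\mathbf{P}\,\mc{Q}_{\alpha,\nu,\tilde{\nu}}=\ell\,\mc{Q}_{\alpha,\nu,\tilde{\nu}}$ with $\ell=|\nu|+|\tilde{\nu}|$; in particular $\alpha\mapsto\mc{Q}_{\alpha,\nu,\tilde{\nu}}$ is entire with values in the finite-dimensional space $\ker(\mathbf{P}-\ell)\subset E_\ell$.

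The heart of the construction is a matching of spectral parameters. Writing $\alpha=Q+iP$, the operator $\mc{P}_\ell(Q+iq)$ solves $(\mathbf{H}-\tfrac{Q^2+q^2}{2})u=0$, so the choice $q=\sqrt{P^2+2\ell}$ makes $\tfrac{Q^2+q^2}{2}=\tfrac{Q^2+P^2}{2}+\ell$ the desired eigenvalue. I would then set $\Psi_{\alpha,\nu,\tilde{\nu}}:=\mc{P}_\ell(Q+i\sqrt{P^2+2\ell})\mc{Q}_{\alpha,\nu,\tilde{\nu}}$; this is automatically an $\mathbf{H}$-eigenfunction with eigenvalue $\tfrac{Q^2+P^2}{2}+\ell$, and since $\mc{Q}_{\alpha,\nu,\tilde{\nu}}$ is a pure $\mathbf{P}$-eigenvector of eigenvalue $\ell$ only the $j=\ell$ term survives in the incoming part of \eqref{expansionP}, carrying $e^{ic\sqrt{q^2-2\ell}}=e^{iPc}=e^{(\alpha-Q)c}$; reading off \eqref{expansionP} gives the stated expansion $\Psi_{\alpha,\nu,\tilde{\nu}}=\mc{Q}_{\alpha,\nu,\tilde{\nu}}e^{(\alpha-Q)c}+(\text{outgoing terms})+G$ with $G\in\mc{D}(\mc{Q})$. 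Analyticity with values in $e^{-\beta\rho}\mc{D}(\mc{Q})$ for $\beta>Q-\Re(\alpha)$ then follows by composing the analytic family $\mc{P}_\ell$ with the entire map $\alpha\mapsto\mc{Q}_{\alpha,\nu,\tilde{\nu}}$, once one checks that $\alpha\mapsto Q+i\sqrt{P^2+2\ell}$ takes values in the regions \eqref{regionvalide}--\eqref{regiondextension}; tracking the branch of the square root shows the admissible set is exactly $W_\ell$, the singularities inherited from $\mc{P}_\ell$ and from the square root being confined to the discrete threshold set $\mc{D}_0=\bigcup_{j\geq0}\{Q\pm i\sqrt{2j}\}$ and of square-root type there.

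For the topology of $W_\ell$ I would do the elementary estimates on $\Im\sqrt{P^2+2\ell}$: on $Q+i\R$ one has $\Im(P)=0$ and $\sqrt{P^2+2\ell}\geq0$, so $\Im\sqrt{P^2+2\ell}=0>-\gamma/2$, giving $(Q+i\R)\setminus\mc{D}_0\subset W_\ell$; for $\alpha$ real, writing $t=Q-\alpha$ so that $P=it$ and (continuing the square root through $\alpha=Q-\sqrt{2\ell}$) $\Im\sqrt{P^2+2\ell}=\sqrt{t^2-2\ell}$ for $t>\sqrt{2\ell}$, the inequality $\sqrt{t^2-2\ell}>t-\gamma/2$ holds once $t>\tfrac{2\ell}{\gamma}+\tfrac{\gamma}{4}$ (being trivial when $t\leq\gamma/2$), so $(-\infty,Q-\tfrac{2\ell}{\gamma}-\tfrac{\gamma}{4})\subset W_\ell$; connectedness holds because the underlying slab-like region is connected and removing the discrete set $\mc{D}_0$ does not disconnect it in the plane.

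Finally, for the limit formula I would argue as follows. For $\alpha\ll-1$ real the eigenvalue $\lambda:=\tfrac{Q^2+P^2}{2}+\ell=\alpha(Q-\tfrac\alpha2)+\ell$ lies strictly below $\inf\mathrm{spec}(\mathbf{H})=Q^2/2$ and tends to $-\infty$, so $(\mathbf{H}-\lambda)^{-1}=\int_0^\infty e^{t\lambda}e^{-t\mathbf{H}}\,\dd t$ converges and is bounded on a weighted space $e^{-\beta\rho}L^2$ for $\beta$ large. Since $\mathbf{H}^0\Psi^0_{\alpha,\nu,\tilde{\nu}}=\lambda\Psi^0_{\alpha,\nu,\tilde{\nu}}$, we have $(\mathbf{H}-\lambda)\Psi^0_{\alpha,\nu,\tilde{\nu}}=\mu e^{\gamma c}V\Psi^0_{\alpha,\nu,\tilde{\nu}}=:r$, which gains an extra $e^{\gamma c}$ as $c\to-\infty$ and is controlled in the weighted space. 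Duhamel gives $\del_t(e^{t\lambda}e^{-t\mathbf{H}}\Psi^0_{\alpha,\nu,\tilde{\nu}})=-e^{t\lambda}e^{-t\mathbf{H}}r$, hence $e^{t\lambda}e^{-t\mathbf{H}}\Psi^0_{\alpha,\nu,\tilde{\nu}}\to\Psi^0_{\alpha,\nu,\tilde{\nu}}-(\mathbf{H}-\lambda)^{-1}r$ in $e^{-\beta\rho}L^2$; this limit solves $(\mathbf{H}-\lambda)u=0$ with an expansion of the form \eqref{expansionP}, so by the uniqueness in Proposition~\ref{poissonprop} it equals $\Psi_{\alpha,\nu,\tilde{\nu}}$, which is \eqref{defdescendantsbis}, and the general case follows by analytic continuation in $\alpha$ to $W_\ell$. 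The main obstacle is not the algebra but the analysis: making the indicial/Fredholm bounds for $\mc{P}_\ell$, the resolvent bound below the continuous spectrum, and the Feynman--Kac semigroup estimates uniform in $\alpha$ in the weighted spaces $e^{-\beta\rho}L^2$, and controlling the square-root branching near $\mc{D}_0$ --- exactly the work done in \cite{GKRV}.
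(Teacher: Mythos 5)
The paper gives no proof of this proposition --- it is quoted from \cite[Propositions 6.9 and 6.26]{GKRV} --- so there is nothing internal to compare against; your reconstruction assembles the statement from Proposition \ref{poissonprop} in exactly the way the cited source does. The spectral-parameter matching $q=\sqrt{P^2+2\ell}$, the branch computation showing $(Q+i\R)\setminus\mc{D}_0$ and $(-\infty,Q-\tfrac{2\ell}{\gamma}-\tfrac{\gamma}{4})$ lie in $W_\ell$, and the Duhamel/resolvent identification of the $t\to\infty$ limit via the uniqueness clause of Proposition \ref{poissonprop} are all correct, with the genuinely hard weighted-space estimates rightly attributed to \cite{GKRV}; note also that your eigenvalue $\lambda=2\Delta_\alpha+\ell$ is the correct one, the exponent $\Delta_\alpha$ in \eqref{defdescendantsbis} being an inconsistency of the paper itself (compare \eqref{defdescendantsbisstrong} and Lemma \ref{prop:intertwine_reps}).
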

Notice that, since ${\bf H}^0\Psi^0_{\alpha,\nu,\tilde{\nu}}=(2\Delta_{Q+ip}+|\nu|+|\nu'|)\Psi^0_{\alpha,\nu,\tilde{\nu}}$, the term in \eqref{defdescendantsbis} corresponds (at least formally) to looking at the large time limit of $e^{-t{\bf H}}e^{t{\bf H}_0}\Psi^0_{\alpha,\nu,\tilde{\nu}}$.
Since $\Psi_{\alpha,\nu,\tilde{\nu}}=e^{(\alpha-Q)c}\mc{Q}_{\alpha,\nu,\tilde{\nu}}$ by \eqref{descendantfree}, the identity \eqref{defdescendantsbis} shows that the Poisson operator can be viewed as a correspondence between the eigenstates $\Psi_{\alpha,\nu,\tilde{\nu}}^0$ of the free field Hamiltonian ${\bf H}^0$ and the eigenstates $\Psi_{\alpha,\nu,\tilde{\nu}}$ of the Liouville Hamiltonian.
We first upgrade the statement \eqref{defdescendantsbis} to a limit in $e^{-\beta \rho}\mc{D}(\mc{Q})$ for $\alpha$ real valued and negative enough and $\beta>Q-\alpha$:
\begin{equation}\label{defdescendantsbisstrong}
\lim_{t\to \infty}\|\Psi_{\alpha,\nu,\tilde{\nu}}-e^{t(\Delta_{\alpha}+|\nu|+|\tilde{\nu}|)}e^{-t{\bf H}}\Psi_{\alpha,\nu,\tilde{\nu}}^0\|_{e^{-\beta \rho}\mc{D}(\mc{Q})}=0
\end{equation} 
 To prove this, we note that for some small $\eps>0$, there is $C_\eps>0$ independent of $t$ such that
 \begin{align*} 
 & \|\Psi_{\alpha,\nu,\tilde{\nu}}-e^{t(\Delta_{\alpha}+|\nu|+|\tilde{\nu}|)}e^{-t{\bf H}}\Psi_{\alpha,\nu,\tilde{\nu}}^0\|_{e^{-\beta\rho}\mc{D}(\mc{Q})} \\
&= \|e^{-\eps{\bf H}}e^{\eps(\Delta_{\alpha}+|\nu|+|\tilde{\nu}|)}(\Psi_{\alpha,\nu,\tilde{\nu}}-e^{(t-\eps)(\Delta_{\alpha}+|\nu|+|\tilde{\nu}|)}e^{-(t-\eps){\bf H}}\Psi_{\alpha,\nu,\tilde{\nu}}^0)\|_{e^{-\beta\rho}\mc{D}(\mc{Q})}\\
 & \leq  C_\eps \|\Psi_{\alpha,\nu,\tilde{\nu}}-e^{(t-\eps)(\Delta_{\alpha}+|\nu|+|\tilde{\nu}|)}e^{-(t-\eps){\bf H}}\Psi_{\alpha,\nu,\tilde{\nu}}^0)\|_{e^{-\beta \rho} L^2}
 \end{align*}
where we used \cite[Lemma 6.5]{GKRV} which states that for all $t>0$ and all $\beta \in \R$
\[
e^{-t{\bf H}}: e^{-\beta \rho}L^2(\R\times \Omega_\T) \to e^{-\beta \rho}\mc{D}(\mc{Q}), \quad e^{-t{\bf H}}: e^{-\beta \rho}\mc{D}'(\mc{Q})\to e^{-\beta \rho}L^2(\R\times \Omega_\T)
\]
are bounded. Note that, writing $e^{-t{\bf H}}=e^{-\frac{t}{2}{\bf H}}e^{-\frac{t}{2}{\bf H}}$, this also implies boundedness of the operator
\begin{equation}\label{boundednesspropag}
e^{-t{\bf H}}: e^{-\beta \rho}\mc{D}'(\mc{Q})\to e^{-\beta \rho}\mc{D}(\mc{Q}).
\end{equation}
\begin{corollary}\label{cor:cts}
Let $\bv$ be a Markovian vector field which satisfies the conditions of Theorem \ref{theoremfreefield}. If $\alpha\ll -1$ real valued and $\beta>Q-\alpha$ then the following limit holds 
\[ \lim_{t\to +\infty}\|\bH_\bv\Psi_{\alpha,\nu,\tilde{\nu}}-e^{t(2\Delta_\alpha+|\nu|+|\tilde{\nu}|)}\bH_\bv e^{-t\bH}\Psi_{\alpha,\nu,\tilde{\nu}}^0\|_{e^{-\beta \rho}\mc{D}'(\mc{Q})}=0.\]
\end{corollary}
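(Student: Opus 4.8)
The plan is to obtain the corollary directly from the strengthened convergence \eqref{defdescendantsbisstrong}, after establishing that $\bH_\bv$ extends to a \emph{bounded} operator $e^{-\beta\rho}\mc{D}(\mc{Q})\to e^{-\beta\rho}\mc{D}'(\mc{Q})$ for every $\beta>0$. Once this is known, one observes that for $\beta>Q-\alpha$ both $\Psi_{\alpha,\nu,\tilde{\nu}}$ (by Proposition~\ref{constPoisson}) and $e^{-t\bH}\Psi^0_{\alpha,\nu,\tilde{\nu}}$ lie in $e^{-\beta\rho}\mc{D}(\mc{Q})$; the latter because $\Psi^0_{\alpha,\nu,\tilde{\nu}}$ is a polynomial in the $(x_n,y_n)$ times $e^{(\alpha-Q)c}$, hence belongs to $e^{-\beta\rho}L^2$, while $e^{-t\bH}:e^{-\beta\rho}L^2\to e^{-\beta\rho}\mc{D}(\mc{Q})$ is bounded by \cite[Lemma~6.5]{GKRV}. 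Then linearity of $\bH_\bv$ gives
\[\big\|\bH_\bv\Psi_{\alpha,\nu,\tilde{\nu}}-e^{t(2\Delta_\alpha+|\nu|+|\tilde{\nu}|)}\bH_\bv e^{-t\bH}\Psi^0_{\alpha,\nu,\tilde{\nu}}\big\|_{e^{-\beta\rho}\mc{D}'(\mc{Q})}\leq C\,\big\|\Psi_{\alpha,\nu,\tilde{\nu}}-e^{t(2\Delta_\alpha+|\nu|+|\tilde{\nu}|)}e^{-t\bH}\Psi^0_{\alpha,\nu,\tilde{\nu}}\big\|_{e^{-\beta\rho}\mc{D}(\mc{Q})},\]
whose right-hand side tends to $0$ as $t\to\infty$ by \eqref{defdescendantsbisstrong}.

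The only real work is thus the weighted boundedness of $\bH_\bv$, which I would prove by showing that $e^{\beta\rho}\bH_\bv e^{-\beta\rho}:\mc{D}(\mc{Q})\to\mc{D}'(\mc{Q})$ is bounded. Since $\bH_\bv:\mc{D}(\mc{Q})\to\mc{D}'(\mc{Q})$ is already bounded (see after \eqref{defHv}), it suffices to bound the commutator $e^{\beta\rho}\bH_\bv e^{-\beta\rho}-\bH_\bv$. Writing $\bH_\bv=\bH_\bv^0+\mu e^{\gamma c}V_\bv$ as in \eqref{defHv}, the potential term commutes with $e^{\beta\rho}$ (both $e^{\gamma c}$ and $e^{\beta\rho}$ depend only on $c$), so the commutator is produced entirely by $\bH_\bv^0$. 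The key structural point is that every $c$-derivative occurring in $\bH_\bv^0$ sits either in the $-\partial_c^2$ of ${\bf H}^0$ or linearly in ${\bf A}_0=\tfrac{i}{2}(\partial_c+Q)$ inside the Virasoro generators; hence a direct computation yields $e^{\beta\rho}\bH_\bv^0 e^{-\beta\rho}=\bH_\bv^0+E_\beta$, where $E_\beta$ is the sum of a first-order operator $\omega\beta\rho'\partial_c$, a bounded multiplication operator built from $\rho'$ and $\rho''$, and operators $(\text{bounded multiplier})\cdot{\bf A}_n$ and $(\text{bounded multiplier})\cdot\widetilde{\bf A}_n$ with multipliers proportional to $\beta\rho'$ and paired against $v_n$.

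Finally I would check that $E_\beta$ is bounded $\mc{D}(\mc{Q}_0)\to\mc{D}'(\mc{Q}_0)$, hence $\mc{D}(\mc{Q})\to\mc{D}'(\mc{Q})$ since $\mc{Q}\geq\mc{Q}_0$: indeed $\rho'$ and $\rho''$ are bounded ($\rho''$ even compactly supported), one has $\mc{Q}_0(F,F)\geq\tfrac12\|\partial_cF\|_2^2$ and $\mc{Q}_0(F,F)\geq 2\|{\bf A}_nF\|_2^2+2\|\widetilde{\bf A}_nF\|_2^2$ for each $n$, and $\sum_n|v_n|<\infty$, so every contribution is controlled by Cauchy--Schwarz as in the proof of Lemma~\ref{boundonLn}. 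This gives the boundedness claim and completes the argument. The hard part is precisely this weighted boundedness of $\bH_\bv$: what makes it manageable is that the exponential weight $e^{-\beta\rho}$ commutes with the singular GMC potential $\mu e^{\gamma c}V_\bv$ (so that term needs no reweighting at all), and that $\rho'$ and $\rho''$ are bounded, so conjugating the free part only produces genuinely lower-order corrections, all controlled by the quadratic form $\mc{Q}_0$ and Lemma~\ref{boundonLn}.
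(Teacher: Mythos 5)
Your proposal is correct and follows essentially the same route as the paper: the corollary is deduced from \eqref{defdescendantsbisstrong} together with the boundedness of $\bH_\bv: e^{-\beta\rho}\mc{D}(\mc{Q})\to e^{-\beta\rho}\mc{D}'(\mc{Q})$, which the paper also obtains by conjugation, using that the GMC potential commutes with the weight, that ${\bf L}_n^0e^{-\beta\rho}=e^{-\beta\rho}({\bf L}_n^0-i\beta\rho'{\bf A}_n)$, and that $e^{\beta\rho}{\bf H}e^{-\beta\rho}-{\bf H}$ consists of lower-order terms controlled by $\mc{Q}_0$. Your additional check that $e^{-t{\bf H}}\Psi^0_{\alpha,\nu,\tilde\nu}\in e^{-\beta\rho}\mc{D}(\mc{Q})$ is a harmless elaboration of what the paper leaves implicit.
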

\begin{proof}
This is an immediate consequence of \eqref{defdescendantsbisstrong} and the continuity of 
\begin{equation}\label{boundednessHvweight}
\bH_\bv: e^{-\beta \rho}\cD(\cQ)\to e^{-\beta\rho}\cD'(\cQ).
\end{equation}
This last fact can be checked by considering the expression \eqref{defHv}: one has  
${\bf L}_n^0e^{-\beta \rho}= e^{-\beta \rho}({\bf L}_n^0-i\beta\rho'{\bf A}_n)$ (and similarly for $\tilde{{\bf L}}_n^0e^{-\beta \rho}$) and 
 ${\bf L}_n^0,{\bf A}_n$ are bounded as maps $\cD(\cQ)\to \cD'(\cQ)$, while  $e^{\beta \rho}{\bf H}e^{-\beta \rho}=\mathbf{H} -\tfrac{\beta^2}{2}(\rho'(c))^2+\tfrac{\beta}{2} \rho''(c)+\beta \rho'(c)\pl_{c}$ and all terms in the RHS are bounded from $\mc{D}(\mc{Q})$ to $\mc{D}'(\mc{Q})$.
\end{proof}

Next, we show that the action of $\bH_\bv$ on Liouville descendants can be written in terms of the action of ${\bf H}_{\bv}^0$ on descendants of the free field using the limit \eqref{defdescendantsbis}: in other words, the Poisson operator intertwines these actions.
\begin{lemma}\label{prop:intertwine_reps}
Under the assumptions of Corollary \ref{cor:cts}, we have the following limit in $e^{-\beta \rho}\mc{D}'(\mc{Q})$
\[\bH_\bv\Psi_{\alpha,\nu,\tilde{\nu}}=\underset{t\to\infty}{\lim}\, e^{t(2\Delta_\alpha+|\nu|+|\tilde{\nu}|)} e^{-t\bH}\bH^0_{\bv_t}\Psi_{\alpha,\nu,\tilde{\nu}}^0.\]
where $\bv_t=e^tv(e^{-t}z)\del_z$. Similarly, if $\bv$ is a polynomial then 
\[\bH_\bv^*\Psi_{\alpha,\nu,\tilde{\nu}}=\underset{t\to\infty}{\lim}\, e^{t(2\Delta_\alpha+|\nu|+|\tilde{\nu}|)} e^{-t\bH}(\bH^0_{\bv_{-t}})^*\Psi_{\alpha,\nu,\tilde{\nu}}^0.\]
In particular, for all $n\in\Z$,
\begin{align}
&\bL_n\Psi_{\alpha,\nu,\tilde{\nu}}=\underset{t\to\infty}{\lim}\, e^{t(2\Delta_\alpha+|\nu|+|\tilde{\nu}|-n)} e^{-t\bH}\bL_n^0\Psi_{\alpha,\nu,\tilde{\nu}}^0\label{LnPsi}\\
&\tilde{\bL}_n\Psi_{\alpha,\nu,\tilde{\nu}}=\underset{t\to\infty}{\lim}\, e^{t(2\Delta_\alpha+|\nu|+|\tilde{\nu}|-n)} e^{-t\bH}\tilde{\bL}_n^0\Psi_{\alpha,\nu,\tilde{\nu}}^0.\label{tildeLnPsi}
\end{align}
\end{lemma}

\begin{proof}
We begin with the case of a Markovian vector field $\bv$ and we introduce the notation $v_t(z)=e^tv(e^{-t}z)$. First, we remark that both $\bH_\bv e^{-t\bH}$ and $e^{-t\bH}\bH_{\bv_t}^0$ are well-defined as continuous operators $e^{-\beta \rho}\cD(\cQ)\to e^{-\beta \rho}\cD'(\cQ)$ for all $\beta\in \R$ by \eqref{boundednesspropag} and \eqref{boundednessHvweight} since for all $t\geq 0$ the vector field $\bv_t=v_t\pl_z$ satisfies the conditions of Theorem \ref{theoremfreefield}. By definition, we have $\bH_\bv e^{-t\bH}=-\frac{\d}{\d\varepsilon}_{|\varepsilon=0}e^{-\varepsilon\bH_\bv}e^{-t\bH}$. 
By the Markov property of the GFF, we have for $g_{t,\eps}(z)=e^{\varepsilon\bv}e^{t\bv_0}(z)$ and $F\in \mc{C}_{\rm exp}$
\begin{equation}\label{bigexpression}
e^{-\varepsilon\bH_\bv}e^{-t\bH}F(c,\varphi)=|g_{t,\eps}'(0)|^{-\frac{Q^2}{2}}\E_{\varphi}\Big[ F\Big(\Big(c+P\varphi\circ g_{t,\eps}  +X_\D \circ g_{t,\eps}+Q\log \frac{|g_{t,\eps}'|}{|g_{t,\eps}|}\Big)\Big|_{\T}\Big)  e^{- \mu e^{\gamma c} \int_{\D\setminus g_{t,\eps}(\D)}     \frac{e^{\gamma X(x)}} {|x| ^{\gamma Q} } \dd x  }  \Big]. 
\end{equation}
Now, we want to commute $e^{\varepsilon\bv}$ with $e^{t\bv_0}$. Since for each $t\geq 0,z\in \D$, 
both $e^{\varepsilon\bv}e^{t\bv_0}(z)=e^{\varepsilon\bv}(e^{-t}z)$ and 
$e^{t\bv_0}e^{\varepsilon\bv_t}(z)=e^{-t}e^{\varepsilon\bv_t}(z)$ solve the ODE $\pl_\varepsilon u_{\eps,t}(z)=v(u_{\eps,t}(z))$ with initial condition 
$u_{0,t}(z)=e^{-t}z$, we deduce that $e^{\varepsilon\bv}e^{t\bv_0}=e^{t\bv_0}e^{\varepsilon\bv_t} $.  This relation can easily be promoted to propagators
\begin{equation}\label{eq:bch}
e^{-\varepsilon\bH_\bv}e^{-t\bH}= e^{-t\bH}e^{-\varepsilon\bH_{\bv_t}}
\end{equation}
following the proof of the Markov property in the proof of Proposition \ref{continuousprocess}.
By differentiating at $\varepsilon=0$, we get that
\begin{equation} \label{eq:BCH}
 \bH_\bv e^{-t\bH}=e^{-t\bH}\bH_{\bv_t}
 \end{equation} 
 as continuous operators $e^{-\beta \rho}\cD(\cQ)\to e^{-\beta \rho}\cD'(\cQ)$ for all $\beta\in \R$.  
Therefore, by Corollary \ref{cor:cts}, we have the following limit in $e^{-\beta \rho} \cD'(\cQ)$ as $t\to\infty$:
\[
\bH_\bv\Psi_{\alpha,\nu,\tilde{\nu}}
=\underset{t\to\infty}{\lim}\,e^{t(2\Delta_\alpha+|\nu|+|\tilde{\nu}|)}\bH_\bv e^{-t\bH}\Psi_{\alpha,\nu,\tilde{\nu}}^0
=\underset{t\to\infty}{\lim}\,e^{t(2\Delta_\alpha+|\nu|+|\tilde{\nu}|)}e^{-t\bH}\bH_{\bv_t}\Psi_{\alpha,\nu,\tilde{\nu}}^0.
\]
Now, we write $\bH_{\bv_t}=\bH_{\bv_t}^0+\mu e^{\gamma c}V_{\bv_t}$ and we need to show that 
\[\underset{t\to\infty}{\lim}\,e^{t(2\Delta_\alpha+|\nu|+|\tilde{\nu}|)}e^{-t\bH}e^{\gamma c}V_{\bv_t}\Psi_{\alpha,\nu,\tilde{\nu}}^0=0\]
in $e^{-\beta \rho}\mc{D}'(\mc{Q})$.  We can use the probabilistic representation of the semigroup \eqref{FeynmanKac}. Writing $X=X_\D+P\varphi$, $\varphi_t(e^{i\theta})=X(e^{-t+i\theta})-B_t$ and $c_t=c+B_t$, using that $|v_t|\leq C$ for some uniform $C>0$, we have for all $c\in\R$ and $t>0$ 
 \begin{align*}
&\left|\E_\varphi \left[e^{-t\bH}( e^{\gamma c}V_{\bv_t}\Psi_{\alpha,\nu,\tilde{\nu}}^0)\right]\right|\\
&\leq C e^{-\frac{Q^2}{2}t}e^{(\gamma+\alpha-Q)c}\E_\varphi\left[e^{(\alpha-Q)B_t}|\cQ_{\alpha,\nu,\tilde{\nu}}(\varphi_t)|\int_0^{2\pi}e^{\gamma X(e^{-t+i\theta})}\d\theta e^{-\mu e^{\gamma c}\int_{\D_t^c}\frac{e^{\gamma X(x)}}{|x|^{\gamma Q}} \dd x }\right]\\
&=C |1-e^{-t}|^{\frac{\gamma^2}{2}}e^{\frac{t}{2}((\gamma+\alpha-Q)^2-Q^2)}e^{(\gamma+\alpha-Q)c}\int_0^{2\pi}e^{\gamma P\varphi (e^{-t+i \theta})}  \E_\varphi   \left[|\cQ_{\alpha,\nu,\tilde{\nu}}(\varphi_t+c_{t,\theta})|e^{-\mu \int_{\D_t^c}\frac{ e^{\gamma (X(x)+c)}  }{|x|^{\gamma\alpha}|x-e^{-t+i\theta}|^{\gamma^2}}\dd x}\right]\d\theta\\
&\leq C  e^{t(\gamma(\alpha-Q)+\frac{\gamma^2}{2} -2\Delta_\alpha)}e^{(\gamma+\alpha-Q)c} e^{\gamma \sup_{\theta} P\varphi (e^{-t+i \theta})}  \sup_{\theta} \E_\varphi [|\cQ_{\alpha,\nu,\tilde{\nu}}(\varphi_t+c_{t,\theta})|]
\end{align*}
where we used the Girsanov transform in the third line to remove the terms $e^{(\alpha-Q)B_t}$ and $e^{\gamma X_{\D}(e^{-t+i\theta})}$, 
and $c_{t,\theta}(\theta')= \ln \frac{|1-e^{-2t} e^{i (\theta-\theta')} |}{|1- e^{i (\theta-\theta')} |}+t$. Each Fourier coefficient of $c_{t,\theta}$ is uniformly bounded in $t,\theta$ and therefore by expanding in $c_{t,\theta}$ we get that 
\begin{equation*}
 \sup_{\theta} \E_\varphi [|\cQ_{\alpha,\nu,\tilde{\nu}}(\varphi_t+c_{t,\theta})|] \leq \E_\varphi [   G(\varphi_t)   ]
\end{equation*}
where $G$ is a finite sum of the form $\sum_{i} \alpha_i |P_i|$ where $P_i$ are polynomials in the Fourier coefficients. Since $\P_\T$ is stationary for the Ornstein-Uhlenbeck process, we have $\E[G(\varphi_t)^4]=\E[G(\varphi)^4]<\infty$ uniformly in $t$. Hence for $t \geq 1$ and by using Jensen on the conditional expectation $\E_\varphi[\cdot]$
\begin{align*}
\E[   | e^{\gamma \sup_{\theta} P\varphi (e^{-t+i \theta})}  \E_\varphi [G(\varphi_t)]  |^2] & \leq \E[   | e^{4 \gamma \sup_{\theta} P\varphi (e^{-t+i \theta})}  | ]^{1/2} \E[ \E_\varphi [G(\varphi_t)] ^4] ]^{1/2} \\
& \leq \E[   | e^{4 \gamma \sup_{\theta} P\varphi (e^{-t+i \theta})}  | ]^{1/2} \E[ G(\varphi_t)^4 ] ^{1/2}   \\
& \leq C  .
\end{align*}
Moreover, we have $\int_\R e^{2(\gamma+\alpha-Q)c}e^{2\beta\rho(c)}\d c<\infty$ since $\beta>Q-\alpha$ and $\alpha-Q+\gamma<0$ so there is $C>0$ such that
\[e^{t(2\Delta_\alpha+|\nu|+|\tilde{\nu}|)}\|e^{-t\bH}(e^{\gamma c}V_{\bv_t}\Psi_{\alpha,\nu,\tilde{\nu}}^0)\|_{e^{-\beta\rho}L^2}\leq  Ce^{(\gamma(\alpha-Q)+|\nu|+|\tilde{\nu}|)t} \]
which converges to 0 for $\alpha <  Q-\frac{|\nu|+|\tilde{\nu}|}{\gamma} $. This proves that the limit
\[\bH_\bv\Psi_{\alpha,\nu,\tilde{\nu}}=\underset{t\to\infty}{\lim}\, e^{t(2\Delta_\alpha+|\nu|+|\tilde{\nu}|)} e^{-t\bH}\bH^0_{\bv_t}\Psi_{\alpha,\nu,\tilde{\nu}}^0\]
holds in $e^{-\beta \rho}\mc{D}'(\mc{Q})$. 

In fact, the identity \eqref{eq:BCH} holds for all vector field $\bv$ (not just Markovian) which admits a holomorphic extension in a neighorhood of $\D$. Indeed, for such vector field $\bv$, we know that $\omega \bv_0+\bv$ is Markovian for some $\omega>0$ large enough: then \eqref{eq:BCH} applied with $\omega \bv_0+\bv$ gives (by real linearity of $\bv\mapsto \bH_{\bv}$) 
\[ (\omega \bH+\bH_\bv) e^{-t\bH}=e^{-t\bH}(\bH_{\bv_t}+\omega \bH)\]
which shows \eqref{eq:BCH} for $\bv$. We use \eqref{eq:BCH} and the fact that $e^{-t\bH}:\cD'(\cQ)\to\cD(\cQ)$ is continuous to obtain for all $F,G\in\cD(\cQ)$
\[
\langle\bH_{\bv_t}^*e^{-t\bH}F,G\rangle_2
=\langle e^{-t\bH}F,\bH_{\bv_t}G\rangle_2=\langle F,e^{-t\bH}\bH_{\bv_t}G\rangle_2
=\langle F,\bH_\bv e^{-t\bH}G\rangle_2
=\langle\bH_\bv^*F,e^{-t\bH}G\rangle_2
=\langle e^{-t\bH}\bH_\bv^*F,G\rangle_2,
\]
which implies the equality of bounded operators $\cD(\cQ)\to\cD'(\cQ)$
\begin{equation}\label{staridentity}
\bH_{\bv_t}^*e^{-t\bH}=e^{-t\bH}\bH_\bv^*.
\end{equation}
If $\bv$ is a polynomial then we can apply identity \eqref{staridentity} with $\bv_{-t}$ in place of $\bv$ and using similar arguments as in the previous case we deduce that
$\bH_\bv^*\Psi_{\alpha,\nu,\tilde{\nu}}=\underset{t\to\infty}{\lim}\, e^{t(2\Delta_\alpha+|\nu|+|\tilde{\nu}|)} e^{-t\bH}(\bH^0_{\bv_{-t}})^*\Psi_{\alpha,\nu,\tilde{\nu}}^0$  provided that $\alpha <  Q-\frac{N-1+|\nu|+|\tilde{\nu}|}{\gamma} $ where $N$ is the degree of the polynomial (this is due to the fact that $|e^{-t}v(e^{t}z)| \leq e^{(N-1)t}$).

To prove \eqref{LnPsi} and \eqref{tildeLnPsi}, we use 
 \eqref{eq:BCH} with $\bv=-z^{n+1}\pl_z$: since $\bv_t=$we obtain $\bL_{n}e^{-t\bH}=e^{-nt}e^{-t\bH}\bL_{n}$ for all $n\geq0$, and similarly for $\tilde{\bL}_n$. Taking adjoints, we get  $\bL_{-n}e^{-t\bH}=e^{nt}e^{-t\bH}\bL_{-n}$. Repeating the above argument shows that the potential part of $\bL_{-n}$ converges to 0 in the $t\to\infty$ limit, so we obtain
\[\bL_{-n}\Psi_{\alpha,\nu,\tilde{\nu}}=\underset{t\to\infty}{\lim}\,e^{t(2\Delta_\alpha+|\nu|+|\tilde{\nu}|+n)}e^{-t\bH}\bL_{-n}\Psi_{\alpha,\nu,\tilde{\nu}}=\underset{t\to\infty}{\lim}\,e^{t(2\Delta_\alpha+|\nu|+|\tilde{\nu}|+n)}e^{-t\bH}\bL_{-n}^0\Psi_{\alpha,\nu,\tilde{\nu}}\]
and the same holds for \eqref{tildeLnPsi}.
\end{proof}

\begin{proposition}\label{intertwining_for_descendants}
Let $\ell\in \N$, and $\nu,\tilde{\nu}\in\mc{T}$ such that $|\nu|+|\tilde{\nu}|=\ell$. For $n\in \Z$, and $\alpha$ in the set $W_\ell\cap W_{\ell-n}$ defined by \eqref{defWell} and $\alpha\notin Q-\frac{\gamma}{2}\N-\frac{2}{\gamma}\N$, for $\beta>Q-{\rm Re}(\alpha)$, 
we have the following identities in $e^{-\beta\rho}\cD(\cQ)$ 
\begin{equation}\label{eq:intertwine_reps_ctd}
 \bL_{n}\Psi_{\alpha,\nu,\tilde{\nu}}=\sum_{\nu'\in \mc{T}}\ell_{n}^\alpha(\nu,\nu') \Psi_{\alpha,\nu',\tilde{\nu}},\qquad 
 \tilde{\bL}_{n}\Psi_{\alpha,\nu,\tilde{\nu}}=\sum_{\tilde{\nu}\in \mc{T}}\tilde{\ell}_{n}^{\alpha}(\tilde{\nu},\tilde{\nu}')\Psi_{\alpha,\nu,\tilde{\nu}'}
 \end{equation}
 where $\ell_{\alpha,n}(\nu,\nu')$ is defined by \eqref{descendant_general}. 
In particular, if $n>0$, $\nu=(\nu_1,\dots,\nu_k)\in \mc{T}$ and $\tilde{\nu}=(\tilde{\nu}_1,\dots,\tilde{\nu}_{k'})$,
\begin{equation}\label{eq:intertwine_reps_ctd2} 
\forall n\leq \nu_k, \,\,   \bL_{-n}\Psi_{\alpha,\nu,\tilde{\nu}}=\Psi_{\alpha,(\nu,n),\tilde{\nu}}, \qquad 
\forall n\leq \tilde{\nu}_{k'} ,\, \,  \tilde{\bL}_{-n}\Psi_{\alpha,\nu,\tilde{\nu}}=\Psi_{\alpha,\nu,(\tilde{\nu},n)}.
\end{equation}
\end{proposition}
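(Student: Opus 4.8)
The strategy is to combine Lemma \ref{prop:intertwine_reps} (which rewrites the Liouville action of $\bL_n$ on descendants as a limit of the free-field action transported by the Liouville propagator) with the free-field intertwining relation \eqref{descendant_general} and the defining property \eqref{defdescendantsbis}--\eqref{defdescendantsbisstrong} of the Liouville descendants. First I would fix $\alpha\ll -1$ real, in the range where all the limits of Lemma \ref{prop:intertwine_reps} converge, and where $\alpha\notin Q-\frac{\gamma}{2}\N-\frac{2}{\gamma}\N$ so that the coefficients $\ell_n^\alpha(\nu,\nu')$ of \eqref{descendant_general} are defined. By Lemma \ref{prop:intertwine_reps},
\[
\bL_n\Psi_{\alpha,\nu,\tilde\nu}=\lim_{t\to\infty}e^{t(2\Delta_\alpha+|\nu|+|\tilde\nu|-n)}e^{-t\bH}\bL_n^0\Psi^0_{\alpha,\nu,\tilde\nu}
\]
in $e^{-\beta\rho}\cD'(\cQ)$ for $\beta>Q-\alpha$. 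Now apply \eqref{descendant_general}: $\bL_n^0\Psi^0_{\alpha,\nu,\tilde\nu}=\sum_{\nu'}\ell_n^\alpha(\nu,\nu')\Psi^0_{\alpha,\nu',\tilde\nu}$, a \emph{finite} sum with $|\nu'|=|\nu|-n$. Since $|\nu'|+|\tilde\nu|=\ell-n$ and $2\Delta_\alpha+|\nu'|+|\tilde\nu|$ is precisely the exponent attached to $\Psi^0_{\alpha,\nu',\tilde\nu}$, each term $e^{t(2\Delta_\alpha+|\nu'|+|\tilde\nu|)}e^{-t\bH}\Psi^0_{\alpha,\nu',\tilde\nu}$ converges to $\Psi_{\alpha,\nu',\tilde\nu}$ by \eqref{defdescendantsbisstrong}; linearity of the finite sum gives $\bL_n\Psi_{\alpha,\nu,\tilde\nu}=\sum_{\nu'}\ell_n^\alpha(\nu,\nu')\Psi_{\alpha,\nu',\tilde\nu}$, and similarly for $\tilde\bL_n$. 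The special cases \eqref{eq:intertwine_reps_ctd2} are then immediate from the explicit values $\ell_{-n}^\alpha(\nu,\nu')=\delta_{\nu',(\nu,n)}$ recorded after \eqref{descendant_general} when $0<n\leq\nu_k$.

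The second step is analytic continuation in $\alpha$. Both sides of \eqref{eq:intertwine_reps_ctd} are, by Proposition \ref{constPoisson}, analytic functions of $\alpha$ valued in $e^{-\beta\rho}\cD(\cQ)$ on the connected set $W_\ell\cap W_{\ell-n}$ minus the discrete set $\mc{D}_0\cup(Q-\frac{\gamma}{2}\N-\frac{2}{\gamma}\N)$ — one needs to check that $\ell_n^\alpha(\nu,\nu')$ is itself meromorphic/analytic in $\alpha$ off $Q-\frac{\gamma}{2}\N-\frac{2}{\gamma}\N$, which follows from its description as a polynomial expression in $\alpha$ obtained by commuting the $\bL^0$'s (it is the matrix of the operator $\bL_n^{0,\alpha}$ acting on the basis $\mc{Q}_{\alpha,\cdot,\cdot}$, whose dependence on $\alpha$ is explicitly polynomial/rational as in the proof of \cite[Proposition 4.9]{GKRV}). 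One also has to verify that $\bL_n\Psi_{\alpha,\nu,\tilde\nu}$ lands in $e^{-\beta\rho}\cD(\cQ)$ and depends analytically on $\alpha$ there; this is where I would invoke the boundedness \eqref{boundednessHvweight} of $\bH_\bv:e^{-\beta\rho}\cD(\cQ)\to e^{-\beta\rho}\cD'(\cQ)$ together with \eqref{boundednesspropag}, plus formula \eqref{Ln_en_terme_de_Hv_n} expressing $\bL_n$ through the Markovian operators $\bH_{\bv}$, to get the regularity statement. Since a real half-line $\alpha\ll-1$ lies in the (connected) domain, the identity theorem for analytic Banach-space-valued functions propagates \eqref{eq:intertwine_reps_ctd} to all of $W_\ell\cap W_{\ell-n}$ off the bad set.

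The main obstacle, I expect, is controlling the convergence in Lemma \ref{prop:intertwine_reps} well enough that one may legitimately \emph{split} the limit over the finite sum in \eqref{descendant_general} and then \emph{upgrade} the mode of convergence from $e^{-\beta\rho}\cD'(\cQ)$ to $e^{-\beta\rho}\cD(\cQ)$ (the latter being needed for the statement to make sense and for analyticity): the terms $e^{t(\cdots)}e^{-t\bH}\Psi^0_{\alpha,\nu',\tilde\nu}$ have exponentially large prefactors, so one must use the smoothing \cite[Lemma 6.5]{GKRV} / \eqref{boundednesspropag} (writing $e^{-t\bH}=e^{-\eps\bH}e^{-(t-\eps)\bH}$ exactly as in the derivation of \eqref{defdescendantsbisstrong}) to trade a $\cD'$-estimate for a $\cD$-estimate at the cost of a fixed constant. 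A secondary but more routine point is bookkeeping: keeping track of which intersection $W_\ell\cap W_{\ell-n}$ of the validity regions \eqref{defWell} is relevant for a given $n$ (for $n>0$ one moves from level $\ell$ down to $\ell-n$, for $n<0$ up to $\ell+|n|$), and checking that this intersection is nonempty, connected, and contains the real half-line used as the base point of the continuation — all of which follows from the explicit description of $W_\ell$ in Proposition \ref{constPoisson}.
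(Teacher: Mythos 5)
Your proposal is correct and follows essentially the same route as the paper: apply Lemma \ref{prop:intertwine_reps} to reduce to the free-field intertwining \eqref{descendant_general}, identify each term of the resulting finite sum via \eqref{defdescendantsbisstrong} for $\alpha\ll-1$ real, and then continue analytically in $\alpha$ using Proposition \ref{constPoisson} (the paper phrases the first step through $\bH_{\omega\bv_0+\bv_n}$ and its adjoint, which is exactly how the ``in particular'' clause of that lemma is obtained). Your worry about upgrading the convergence from $e^{-\beta\rho}\cD'(\cQ)$ to $e^{-\beta\rho}\cD(\cQ)$ is moot: once the $\cD'$-limit is identified with the finite linear combination of the $\Psi_{\alpha,\nu',\tilde{\nu}}$, which already lie in $e^{-\beta\rho}\cD(\cQ)$, the identity holds in that space.
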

\begin{proof} Consider the vector field ${\bf v}=\omega {\bf v}_0+{\bf v}_n=(-\omega z+z^{n+1})\pl_z=:v(z)\pl_z$ for $n\geq 1$ and $\omega>0$ large enough so that ${\bf v}$ is Markovian. The vector field ${\bf v}_t=e^{t}v(e^{-t})\pl_z$ can be written 
\[ {\bf v}_t= -\omega {\bf v}_0+e^{-nt}{\bf v}_n.\]
By \eqref{defHv} (applied to the free-field case), one has ${\bf H}^0_{{\bf v}_t}=\omega {\bf H}^0+e^{-nt}{\bf L}_n^0$. 
By Lemma \ref{prop:intertwine_reps}, we then get 
\[\begin{split} 
{\bf H}_{\bv} \Psi_{\alpha,\nu,\tilde{\nu}}=& 
\underset{t\to\infty}{\lim}\, e^{t(2\Delta_\alpha+|\nu|+|\tilde{\nu}|)} e^{-t\bH}\bH^0_{\bv_t}\Psi_{\alpha,\nu,\tilde{\nu}}^0\\
=&  \omega(\Delta_{\alpha}+|\nu|+|\tilde{\nu}|) \Psi_{\alpha,\nu,\tilde{\nu}} + 
\underset{t\to\infty}{\lim}\, e^{t(2\Delta_\alpha+|\nu|+|\tilde{\nu}|-n)} \sum_{\nu'\in\mc{T}}\ell_{n}^\alpha(\nu,\nu')\Psi_{\alpha,\nu',\tilde{\nu}}^0\\
=&  \omega(\Delta_{\alpha}+|\nu|+|\tilde{\nu}|) \Psi_{\alpha,\nu,\tilde{\nu}} + \sum_{\nu'\in\mc{T}}\ell_{n}^\alpha(\nu,\nu')\Psi_{\alpha,\nu',\tilde{\nu}}.
\end{split}\]  
Since \eqref{defHv} tells us that $\bH_\bv=\omega \bH+{\bf L}_n$, we see that \eqref{eq:intertwine_reps_ctd} holds for $n\leq -1$ when $\alpha$ is real valued. Using the holomorphic extension of 
$\Psi_{\alpha,\nu,\tilde{\nu}}$ with respect to $\alpha$ in Proposition \ref{constPoisson}, one obtains the result in $W_{\ell}\cap W_{\ell+n}$.

To deal with the ${\bf L}_{-n}$ case with $n>0$, we apply the same type of argument: 
one has $({\bf H}^0_{{\bf v}_{-t}})^*=\omega {\bf H}^0+e^{nt}{\bf L}_{-n}^0$ and 
\[\begin{split} 
{\bf H}_{\bv}^* \Psi_{\alpha,\nu,\tilde{\nu}}=& 
\underset{t\to\infty}{\lim}\, e^{t(2\Delta_\alpha+|\nu|+|\tilde{\nu}|)} e^{-t\bH}(\bH^0_{\bv_{-t}})^*\Psi_{\alpha,\nu,\tilde{\nu}}^0\\
=&  \omega(\Delta_{\alpha}+|\nu|+|\tilde{\nu}|) \Psi_{\alpha,\nu,\tilde{\nu}} + 
\underset{t\to\infty}{\lim}\, e^{t(2\Delta_\alpha+|\nu|+|\tilde{\nu}|+n)} e^{-t\bH}\sum_{\nu'\in\mc{T}}\ell_{n}^\alpha(\nu,\nu')\Psi_{\alpha,\nu'\tilde{\nu}}^0\\
=&  \omega(\Delta_{\alpha}+|\nu|+|\tilde{\nu}|) \Psi_{\alpha,\nu,\tilde{\nu}} + \sum_{\nu'\in\mc{T}}\ell_{n}^\alpha(\nu,\nu')\Psi_{\alpha,\nu',\tilde{\nu}}.
\end{split}\]  
Since ${\bf H}_{\bv}^*=\omega \bH+{\bf L}_{-n}$, we obtain the desired result, since the same argument works exactly the same for  $\tilde{\bf L}_n\Psi_{\alpha,\nu,\nu'}$. 
\end{proof}

\section{Scattering coefficient, analyticity of the eigenstates and Verma modules of Liouville}

In this Section, we compute explicitly the scattering matrix for the Liouville conformal field theory by combining the scattering theory developed in \cite{GKRV}
with the properties and formula of the Virasoro operators for Liouville. This implies analytic extension of the primary and descendant states $\Psi_{\alpha,\nu,\tilde{\nu}}$ to the complex plane $\alpha\in \C$ and completes the analysis performed in \cite{GKRV}.

We can define the scattering matrices as in \cite[Definition 6.23]{GKRV}: 
\begin{definition} 
For each $\ell\in \N_0=\N\cup\{0\}$, the scattering matrix ${\bf S}_\ell(Q+ip):E_\ell\to E_\ell$, defined for $p\in \R\setminus[-\sqrt{2\ell},\sqrt{2\ell}]$ 
is the linear map such that:  $\forall F_j\in \ker ({\bf P}-j)$ with $j\leq \ell$
\begin{equation}\label{scatmatrix}
{\bf S}_\ell(Q+ip)\sum_{j=0}^{\ell}F_j:= \sum_{j=0}^{\ell}F_j^+(Q+ip)
\end{equation}
where $F_j^+(Q+ip)\in \ker ({\bf P}-j)$ are the functions appearing in the asymptotic expansion \eqref{expansionP} of $\mc{P}_\ell(Q+ip)\sum_{j=0}^\ell F_j$. By \cite[Corollary 6.24]{GKRV} the matrices ${\bf S}_0(\alpha)$ admits an meromorphic extension from $(Q+i\R)\setminus \mc{D}_0$ to a neighborhood in $\Sigma$
of the set $\{\alpha \,| \, {\rm Re}(\alpha)\in(Q-\eps,Q], \alpha\notin \mc{D}_0\}$ for some $\eps>0$\footnote{The interested reader can check from the proof of Corollary 6.4 in \cite{GKRV} that in fact ${\bf S}_0(\alpha)$ is holomorphic in 
${\rm Re}(\alpha)\in(Q-\eps,Q]$.}.
\end{definition}
From \eqref{egalitepoissonell}, we also see that for $j<\ell$
\[ {\bf S}_\ell(Q+ip)|_{\ker ({\bf P}-j)}={\bf S}_j(Q+ip)|_{\ker ({\bf P}-j)}.\]
It is convenient to use a change of complex parameter $p=\sqrt{P^2+2\ell}$ and define 
\[ \tilde{\bf S}_\ell(Q+iP):={\bf S}_\ell(Q+i\sqrt{P^2+2\ell}).\]
By \cite[Corollary 6.24]{GKRV}, $\tilde{\bf S}_\ell(Q+iP)$ is meromorphic in 
\[\{P\in \C\,|\, {\rm Im}(P)\in [0,\gamma/4), 
{\rm Im}\sqrt{P^2+2\ell}\geq 0\} \setminus \bigcup_{j\geq 0}\{\pm\sqrt{2j}\}.\]

In particular we see from \eqref{expansionP} that for $P \in \R\setminus \{0\}$, 
$\Psi_{Q+iP,\nu,\tilde{\nu}}$ have asymptotic expansion for $\ell:=|\nu|+|\tilde{\nu}|$
\begin{equation}\label{asymptoticdescendants}
\Psi_{Q+iP,\nu,\tilde{\nu}}(c,\varphi)=e^{iPc}\mc{Q}_{Q+iP,\nu,\tilde{\nu}}(\varphi)+ \sum_{j=0}^\ell \Pi_{\ker ({\bf P}-j)}(\tilde{{\bf S}}_\ell(Q+iP)\mc{Q}_{Q+iP,\nu,\tilde{\nu}})(\varphi)e^{-ic\sqrt{P^2+2(\ell-j)}}+G_{Q+iP,\nu,\tilde{\nu}}(c,\varphi)
\end{equation}
where $G_{Q+iP,\nu,\tilde{\nu}}\in \mc{D}(\mc{Q})$, and they are the unique solution of $({\bf H}-\frac{Q^2+P^2}{2}+\ell)u=0$ with such an asymptotic expansion where the coefficient of $e^{iPc}$ is 
given by $\mc{Q}_{Q+iP,\nu,\tilde{\nu}}(\varphi)$.

\subsection{Computation of the reflection coefficient for primary fields} 
In this section we compute the scattering matrix ${\bf S}_0(\alpha)$, also called reflection coefficient.
 
The primary fields are $\Psi_{\alpha}:=\Psi_{\alpha,0,0}=\mc{P}_0(\alpha)1$, by Proposition \ref{poissonprop} they are analytic in $\{{\rm Re}(\alpha)\leq Q\}\setminus \mc{D}_0$ and satisfy in ${\rm Re}(\alpha)\in (Q-\gamma/4,Q]$ and $c\leq 0$
\begin{equation}\label{asymptpsialpha}
 \Psi_{\alpha}=e^{(\alpha-Q)c}+ e^{(Q-\alpha)c}{\bf S}_0(\alpha)1+G(\alpha),
 \end{equation}
 with $G(\alpha)\in \mc{D}(\mc{Q})$ satisfying for all $0\leq \beta<\gamma/2$
 \[ G(\alpha) =G^1(\alpha)+G^2(\alpha), \quad \textrm{ with }G^1(\alpha)\in e^{\frac{\beta}{2}\rho}L^2(\R\times\Omega_\T), \, \textrm{ and } \Pi_0(G^2(\alpha))=\mathbb{E}[G^2(\alpha)]=0.\]
 Notice that $e^{(Q-\alpha)c}$ is much larger than $G^1(\alpha)$ as $c\to -\infty$ if $\alpha\in (Q-\gamma/4,Q)$, and since $\mathbb{E}[G^2(\alpha)]=0$ 
the term $e^{(Q-\alpha)c}{\bf S}_0(\alpha)1$ can be recovered by considering the asymptotic at $c\to -\infty$ of 
\[ \mathbb{E}[ \Psi_{\alpha}]=e^{(\alpha-Q)c}+ e^{(Q-\alpha)c}{\bf S}_0(\alpha)1+ \mathbb{E}[G^1(\alpha)].\] 
\begin{proposition}\label{coefreflection}
For ${\rm Re}(\alpha)\in (Q-\gamma/2,Q]$, the scattering coefficient $R(\alpha):={\bf S}_0(\alpha)1$ is given by the explicit expression 
\begin{equation}\label{Ralpha} 
R(\alpha)=-\Big(\pi \mu \frac{\Gamma(\frac{\gamma^2}{4})}{\Gamma(1-\frac{\gamma^2}{4})}\Big)^{2\frac{(Q-\alpha)}{\gamma}}\frac{\Gamma(-\frac{\gamma(Q-\alpha)}{2})\Gamma(-\frac{2(Q-\alpha)}{\gamma})}{\Gamma(\frac{\gamma(Q-\alpha)}{2})\Gamma(\frac{2(Q-\alpha)}{\gamma})}.
\end{equation}
\end{proposition}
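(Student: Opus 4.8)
The plan is to read the reflection coefficient off the probabilistic formula \eqref{Psialphadef} directly. First I would fix a real $\alpha\in(Q-\tfrac{\gamma}{2},Q)$ and write $Z_\alpha:=\int_\D|x|^{-\gamma\alpha}e^{\gamma X(x)}\dd x$, so that $\Psi_\alpha(c,\varphi)=e^{(\alpha-Q)c}\E_\varphi[e^{-\mu e^{\gamma c}Z_\alpha}]$. Since for $\ell=0$ the expansion \eqref{expansionP} is exact on $\{c\leq0\}$ and, by Proposition \ref{constPoisson}, $\Psi_\alpha=\mc{P}_0(\alpha)1$, one has $\Psi_\alpha(c,\varphi)=e^{(\alpha-Q)c}+R(\alpha)e^{(Q-\alpha)c}+G_\alpha(c,\varphi)$ with $G_\alpha\in\mc{D}(\mc{Q})$. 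A short computation using $\bH=\bH^0+\mu e^{\gamma c}V$ shows that $G_\alpha$ solves $(\bH-2\Delta_\alpha)G_\alpha=-\mu e^{\gamma c}V(e^{(\alpha-Q)c}+R(\alpha)e^{(Q-\alpha)c})$, whose right-hand side decays like $e^{(\gamma+\alpha-Q)c}$; hence $G_\alpha=O(e^{(\gamma+\alpha-Q)c})=o(e^{(Q-\alpha)c})$ as $c\to-\infty$ because $\alpha>Q-\tfrac{\gamma}{2}$. Averaging over $\P_\T$ (i.e. projecting on $\ker\mathbf{P}$) and setting $u=\mu e^{\gamma c}$ then reduces the whole statement to
\begin{equation*}
R(\alpha)=-\mu^{\frac{2(Q-\alpha)}{\gamma}}\lim_{u\to0^+}u^{-\frac{2(Q-\alpha)}{\gamma}}\,\E\big[1-e^{-uZ_\alpha}\big],
\end{equation*}
and, by Karamata's Tauberian theorem (applicable since $\tfrac{2(Q-\alpha)}{\gamma}\in(0,1)$ in this range), to the precise tail asymptotics $\P(Z_\alpha>t)\sim\bar{c}_\alpha\,t^{-\frac{2(Q-\alpha)}{\gamma}}$.

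The next step is to compute $\bar{c}_\alpha$ by the radial decomposition of $X$. Writing $X(x)=B_{-\log|x|}+Y(x)$, with $B$ the Brownian motion of the radial averages (independent of the lateral field $Y$) and passing to the variable $s=-\log|x|$, the renormalised chaos takes the perpetuity form
\begin{equation*}
Z_\alpha=\int_0^\infty e^{\gamma(B_s-(Q-\alpha)s)}\,\dd N_s,
\end{equation*}
an exponential functional of a Brownian motion with negative drift $-(Q-\alpha)$ against an increasing process $\dd N_s$ built from the circular GMC of $Y$ at radius $e^{-s}$. Since $\alpha>Q-\tfrac{\gamma}{2}$ one has $\tfrac{2(Q-\alpha)}{\gamma}<\tfrac{4}{\gamma^2}$, so the near-origin mechanism dominates the bulk one and the tail of $Z_\alpha$ is exactly that of this perpetuity, of exponent $\tfrac{2(Q-\alpha)}{\gamma}$. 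The constant $\bar c_\alpha$ is then obtained from Williams' path decomposition of $s\mapsto B_s-(Q-\alpha)s$ conditioned to reach a high level: the pre-maximum part resums, by scale invariance in $s$, to a power $\big(\pi\mu\,\Gamma(\tfrac{\gamma^2}{4})/\Gamma(1-\tfrac{\gamma^2}{4})\big)^{\frac{2(Q-\alpha)}{\gamma}}$, while the post-maximum part contributes an exact fractional moment of the GMC measure on the unit circle together with a bulk GMC moment, which assemble (via the integrability inputs of \cite{dozz,GKRV}) into the quotient $\Gamma(-\tfrac{\gamma(Q-\alpha)}{2})/\Gamma(\tfrac{\gamma(Q-\alpha)}{2})$.

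Combining the two steps, the Tauberian relation $\E[1-e^{-uZ_\alpha}]\sim\Gamma(1-\tfrac{2(Q-\alpha)}{\gamma})\,\bar c_\alpha\,u^{\frac{2(Q-\alpha)}{\gamma}}$ together with the Euler reflection identity $\Gamma(1-z)=-z\Gamma(-z)$ turns the factor $\Gamma(1-\tfrac{2(Q-\alpha)}{\gamma})$ and the constant $\bar c_\alpha$ into the quotient $\Gamma(-\tfrac{2(Q-\alpha)}{\gamma})/\Gamma(\tfrac{2(Q-\alpha)}{\gamma})$ and fixes the sign, giving \eqref{Ralpha} for real $\alpha\in(Q-\tfrac{\gamma}{2},Q)$. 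Finally, both sides of \eqref{Ralpha} are analytic on $\{\mathrm{Re}(\alpha)\in(Q-\tfrac{\gamma}{2},Q]\}$ — the left-hand side because $\mathbf{S}_0$ admits an analytic extension there, as recorded after the Definition above and in Proposition \ref{poissonprop} — so the identity principle extends \eqref{Ralpha} to the full strip.

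The hard part is the second step: extracting the \emph{exact} tail constant $\bar c_\alpha$, which is of the same order of difficulty as the integrability inputs behind the DOZZ formula (the exact moments of GMC on the circle and the Dufresne-type law of the Brownian perpetuity); I would import these from \cite{dozz,GKRV} rather than re-prove them. A secondary technical point is to justify that the $\varphi$-dependent tail constant has a \emph{deterministic} $\ker\mathbf{P}$-projection — this is forced a posteriori by the uniqueness clause attached to \eqref{expansionP} — and that the error estimates above hold in the $e^{-\beta\rho}\mc{D}(\mc{Q})$ topology and not merely pointwise in $c$. Alternatively, since $\mathbf{S}_0(\alpha)$ is defined exactly as in \cite[Definition 6.23]{GKRV}, one may simply invoke the evaluation of $R(\alpha)$ already performed there.
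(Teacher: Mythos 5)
Your proposal follows essentially the same route as the paper: read $R(\alpha)$ off the $c\to-\infty$ asymptotics of the probabilistic formula \eqref{Psialphadef}, reduce this to the power-law tail of the GMC mass $\int_\D|x|^{-\gamma\alpha}e^{\gamma X(x)}\dd x$, import the exact tail constant (the unit-volume reflection coefficient) from \cite{dozz}, and conclude by analytic continuation in $\alpha$. The paper's main additional technical content, which your sketch of the tail constant glosses over, is the localization to a small disc around the origin together with Kahane's convexity inequalities to replace the Dirichlet covariance by the exact logarithmic kernel so that \cite[Lemma 3.1]{dozz} applies verbatim; your Williams-decomposition rederivation of that lemma is unnecessary since, as you note, it can simply be cited.
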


\begin{proof} 

First we prove the result for $\alpha\in (Q-\gamma/2,Q)$ and then we use the fact that both ${\bf S}_0(\alpha)1$ and the RHS of \eqref{Ralpha} extend analytically in $\{{\rm Re}(\alpha)\in  (Q-\eps,Q]\}\setminus \mc{D}_0$ for some $\eps>0$ small. 
In Section 7.1 of \cite{GKRV}, it is proved that for $ \alpha<Q$ the primary field $\Psi_\alpha$ can be expressed by the probabilistic expression 
\begin{equation}\label{defvalpha}
\Psi_\alpha(c,\varphi) =e^{(\alpha-Q)c} \E_\varphi\Big[ \exp\Big(-\mu e^{\gamma c}\int_{\D} |x|^{-\gamma\alpha }e^{\gamma X(x)}\dd x \Big)\Big];
\end{equation}
this expression is a direct consequence of the Feynman-Kac representation \eqref{FeynmanKac} of $e^{-t{\bf H}}$ with $f_t(z)=e^{-t}z$ and of the representation \eqref{defdescendantsbis} of $\Psi_\alpha$.
Let $\alpha\in (\frac{2}{\gamma},Q)$. To compute the $c\to-\infty$ asymptotic expansion we first note that
\begin{align*}
\int_{\D} |x|^{-\gamma\alpha }  e^{\gamma X(x)}\dd x=\int_\D  |x|^{-\gamma\alpha} e^{\gamma P\varphi(x)}(1-|x|^2)^{\frac{\gamma^2}{2}} M_{\gamma,\D}(\dd x)
\end{align*}
where in this proof $M_{\gamma,\D}$ is the GMC measure with respect to the Dirichlet GFF $X_\D$, i.e. the measure defined by
\begin{equation}\label{GMCdisk}
\begin{gathered}
M_{\gamma,\D} (\dd x):=  \underset{\epsilon \to 0} {\lim}  \; \; e^{\gamma X_{\D,\epsilon}(x)-\frac{\gamma^2}{2} \E[X_{\D,\epsilon}(x)^2]} \dd x
\end{gathered}
\end{equation}
where $X_{\D,\epsilon}= X_\D \ast \theta_\epsilon$ is the mollification of $X_\D$ with an  approximation   $(\theta_\epsilon)_{\epsilon>0}$ of the Dirac mass $\delta_0$. First let us show that the contribution of the integral outside of any disk  
$\D_r=\{|x|\leq r\}$ does not not play a part in the asymptotics.
Using the inequality $1-e^{-\mu x}\leq \mu x$ in the second line below, we get
\begin{align}
|  \E_\varphi [ e^{-\mu e^{\gamma c}\int_{\D} |x|^{-\gamma\alpha }e^{\gamma X(x)} \dd x}]  -\E_\varphi [e^{-\mu e^{\gamma c}\int_{\D_r} |x|^{-\gamma\alpha }e^{\gamma X(x)} \dd x}]| &
\leq    \E_\varphi[1-e^{-\mu e^{\gamma c}\int_{\D\setminus \D_r} |x|^{-\gamma\alpha }e^{\gamma X(x)} \dd x } ] \nonumber
\\
\leq & \mu e^{\gamma c}r^{-\alpha\gamma} \E_\varphi [\int_\D e^{\gamma P\varphi}(1-|x|^2)^{\frac{\gamma^2}{2}}M_{\gamma,\D}(\dd x)] \nonumber \\
\leq & \mu e^{\gamma c}r^{-\alpha\gamma} \int_\D e^{\gamma P\varphi(x)}(1-|x|^2)^{\frac{\gamma^2}{2}}\dd x. \label{approximater}
\end{align}
For fixed $r$, this quantity is $e^{\gamma c}\mc{O}(1)$, where $\mc{O}(1)$ means bounded in $L^2(\Omega_\T)$ as $c\to-\infty$. Therefore we can focus on evaluating what happens inside $\D_r$. We want to make use of Kahane's convexity inequalities (see \cite{review}) to estimate the contribution coming from $\D_r$. Let us set $m_r:=-\inf_{x,x'\in\D_r}\log |1-x\bar x'| $. For $x,x'\in \D_r$ 
\begin{equation}
\ln \frac{1}{|x-x'|}-m_r\leq G_\D(x,x')\leq \log \frac{1}{|x-x'|}.
\end{equation}
Let us consider a centered Gaussian field $\tilde{X}$ with covariance $\E[\tilde{X}(x)\tilde{X}(x')]=\ln \frac{1}{|x-x'|}$ inside $\D_r$, a standard Gaussian random variable  $Z$ and the GFF on the circle $\varphi$, all of them independent of each other under $\P$ (with expectation $\E$). Let  $M_{\gamma,\tilde{X}}$ be the GMC measure of $\tilde{X}$ and set $E_r:= e^{\gamma m_r^{1/2}Z-\frac{\gamma^2m_r}{2}}$.  Using Kahane's inequalities, we get
\begin{equation}\label{eq:def_D}
\begin{aligned}
D_r(c,\varphi)
&:=\E_\varphi [ 1-e^{-\mu e^{\gamma c}\int_{\D_r}|x|^{-\alpha\gamma} e^{\gamma P\varphi(x)}(1-|x|^2)^{\frac{\gamma^2}{2}}M_{\gamma,\tilde{X}}(\dd x)}] \\
&\geq\E_\varphi\Big[1-e^{-\mu e^{\gamma c}E_r\int_{\D_r}|x|^{-\alpha\gamma}e^{\gamma P\varphi(x)}(1-|x|^2)^\frac{\gamma^2}{2}M_{\gamma,\D}(\d x)}\Big]
\end{aligned}
\end{equation}
and 
\begin{align*}
D_r(c,\varphi)
\leq\E_\varphi\Big[1-e^{-\mu e^{\gamma c}\int_{\D_r}|x|^{-\alpha\gamma}e^{\gamma P\varphi(x)}(1-|x|^2)^\frac{\gamma^2}{2}M_{\gamma,\D}(\d x)}\Big].
\end{align*}
Next we show that, in $L^2(\R\times\Omega_\T)$, 
\begin{align}
&\lim_{r \to 0^+} \limsup_{c\to-\infty}e^{2(\alpha-Q)c} D_r(c,\varphi)
\leq-\mu^{\frac{2(Q-\alpha)}{\gamma}} \tfrac{2(Q-\alpha)}{\gamma}\bar{R}(\alpha)\Gamma\big(-\tfrac{2(Q-\alpha)}{\gamma}\big)\label{limsup}\\
&\lim_{r \to 0^+}\liminf_{c\to-\infty}e^{2(\alpha-Q)c} D_r(c,\varphi)
\geq-\mu^{\frac{2(Q-\alpha)}{\gamma}} \tfrac{2(Q-\alpha)}{\gamma}\bar{R}(\alpha)\Gamma\big(-\tfrac{2(Q-\alpha)}{\gamma}\big)\label{liminf}
\end{align}
with $\bar{R}(\alpha)$ the unit volume reflection coefficient (see \cite[Equations (3.10) \& (3.12)]{dozz}). 
We will also show that
\begin{align}
&\underset{r \to 0}{\lim}\underset{c\to-\infty}{\limsup}\, e^{2(\alpha-Q)c}\Big|\E_\varphi\Big[e^{-\mu e^{\gamma c}E_r\int_{\D_r}e^{\gamma P\varphi(x)}\frac{(1-|x|^2)^\frac{\gamma^2}{2}}{|x|^{\alpha\gamma}}M_{\gamma,\D}(\d x)}-e^{-\mu e^{\gamma c}\int_{\D_r}|x|^{-\alpha\gamma}e^{\gamma P\varphi(x)}\frac{(1-|x|^2)^\frac{\gamma^2}{2}}{|x|^{\alpha\gamma}}M_{\gamma,\D}(\d x)}\Big]\Big|=0 \label{laststep}.
\end{align}
With the previous considerations (namely \eqref{approximater}), this yields the desired result.  As the proof of \eqref{limsup} and \eqref{liminf} are quite similar, we will only focus on the first one.

It was proven in \cite[Lemma 3.1]{dozz} that
$$\big|\P\Big(\int_{\D_r}|x|^{-\alpha\gamma} (1-|x|^2)^{\frac{\gamma^2}{2}} M_{\gamma,\tilde{X}}(\dd x)>u\Big)-u^{-\frac{2}{\gamma}(Q-\alpha)}\bar R(\alpha)\big|\leq C_ru^{-\frac{2}{\gamma}(Q-\alpha)-\eta}$$
for some constants $C_r>0$  and $\eta>0$. Therefore, setting $S_r:=\sup_{x\in\D_r}e^{\gamma P\varphi(x)}$ and using for the 
random variable $Y\geq 0$ the relation $\E[1-e^{-aY}]=\int_0^\infty a\P(Y>y)e^{-ay}\,\d y$, we get
\begin{align*}
\E_\varphi\Big[  1-&\exp\big(-\mu e^{\gamma c}\int_{\D_r} |x|^{-\alpha\gamma} e^{\gamma P\varphi(x)}(1-|x|^2)^{\frac{\gamma^2}{2}} M_{\gamma,\tilde{X}}(\dd x))\Big]\\
\leq & \E_\varphi\Big[  1-\exp\big(-\mu e^{\gamma c}S_r\int_{\D_r}|x|^{-\alpha\gamma}  (1-|x|^2)^{\frac{\gamma^2}{2}} M_{\gamma,\tilde{X}}(\dd x)\big)\Big]\\
= &\E_\varphi\Big[\int_0^\infty\P \Big(\int_{\D_r}|x|^{-\alpha\gamma}   (1-|x|^2)^{\frac{\gamma^2}{2}} M_{\gamma,\tilde{X}}(\dd x)>u\Big)\mu e^{\gamma c}S_re^{-u\mu e^{\gamma c}S_r}\,du\Big]\\
\leq &\E_\varphi\Big[\int_0^\infty \Big( u^{-\frac{2}{\gamma}(Q-\alpha)}\bar R(\alpha)+C_ru^{-\frac{2}{\gamma}(Q-\alpha)-\eta}\Big)\mu e^{\gamma c}S_re^{-u \mu e^{\gamma c}S_r }\,du\Big]\\
=&\bar R(\alpha)\Gamma\Big(1-\frac{2}{\gamma}(Q-\alpha)\Big)\mu^{\frac{2}{\gamma}(Q-\alpha)}e^{2(Q-\alpha)c}S_r^{\frac{2}{\gamma}(Q-\alpha)}  +C_r \mu^{\frac{2}{\gamma}(Q-\alpha)+\eta}e^{2(Q-\alpha)c+\gamma\eta c}S_r^{\frac{2}{\gamma}(Q-\alpha)+\eta}.
\end{align*}
As  $\lim_{r\to 0}S_r^{\beta}=1$ in $L^2(\Omega_\T)$ for any fixed $\beta>0$,  we deduce that
\begin{align*}
\limsup_{c\to-\infty}e^{2(\alpha-Q)c}  &\E_\varphi\Big[ 1-\exp\big(-\mu e^{\gamma c}\int_{\D_r}|x|^{-\alpha\gamma} e^{\gamma P\varphi(x)}(1-|x|^2)^{\frac{\gamma^2}{2}}M_{\gamma,\tilde{X}}(\dd x)\big)\Big]\\
\leq &\bar R(\alpha)\Gamma\big(1-\frac{2}{\gamma}(Q-\alpha)\big)\mu^{\frac{2}{\gamma}(Q-\alpha)}.
\end{align*}
To conclude, we need to show \eqref{laststep}.  Along the same lines as previously and using the relation $\E[1-e^{-aY}]=\int_0^\infty a\P(Y>y)e^{-ay}\,\d y$ for a variable $Y \geq 0$ 
\begin{align*}
&\underset{r \to 0}{\lim}\underset{c\to-\infty}{\limsup}\,\Big|\E_\varphi\Big[e^{-\mu e^{\gamma c}E_r\int_{\D_r}|x|^{-\alpha\gamma}e^{\gamma P\varphi(x)}(1-|x|^2)^\frac{\gamma^2}{2}M_{\gamma,\D}(\d x)}-e^{-\mu e^{\gamma c}\int_{\D_r}|x|^{-\alpha\gamma}e^{\gamma P\varphi(x)}(1-|x|^2)^\frac{\gamma^2}{2}M_{\gamma,\D}(\d x)}\Big]\Big|=0.
\end{align*}
Indeed, the previous asymptotic expansions give bounds in terms of $\mu^{\frac{2}{\gamma}(Q-\alpha)}$ so the asymptotic expansions of the two terms in \eqref{laststep} are the same as $r$ goes to $0$ since $E_r$ converges to 1. 
\end{proof}

\subsection{Reflection coefficients for descendants}

Recall from \eqref{defdescendantsbis} and from \cite[Proposition 7.2]{GKRV} the link between $\Psi_{\alpha,\nu,\tilde{\nu}}$ and $\Psi^0_{\alpha,\nu,\tilde{\nu}}$:
\[ \Psi_{\alpha,\nu,\tilde{\nu}}=\lim_{t\to +\infty}e^{t(2\Delta_\alpha+|\nu|+|\tilde{\nu}|)}e^{-t{\bf H}}\Psi^0_{\alpha,\nu,\tilde{\nu}}\]
for all $\alpha< \min(Q-\frac{\gamma}{4}-\frac{2(|\nu|+|\tilde{\nu}|)}{\gamma},Q-\gamma)$, with $\Delta_\alpha=\frac{\alpha}{2}(Q-\frac{\alpha}{2})$ if $|\nu|+|\tilde{\nu}|>0$, while this holds for all $\alpha<Q$ if $\nu=\tilde{\nu}=0$.

Let us define for $P\in \R$ the involution (preserving each $\ker ({\bf P}-\ell)$ for $\ell\geq 0$)
\[ \mc{L}_P: L^2(\Omega)\to L^2(\Omega) , \qquad \mc{L}_P(\mc{Q}_{Q+iP,\nu,\tilde{\nu}}):=\mc{Q}_{Q-iP,\nu,\tilde{\nu}}, \,\, \forall \nu,\tilde{\nu}\in \mc{T};\]
recall that $\mc{Q}_{\alpha,\nu,\tilde{\nu}}$ are polynomials in $x_n,y_n$ variables, defined in \eqref{descendantfree}, and the set 
$(\mc{Q}_{\alpha,\nu,\tilde{\nu}})_{\nu\in \mc{T},\tilde{\nu}\in \mc{T}}$ with $|\nu|+|\tilde{\nu}|=\ell$ forming a 
basis of $\ker ({\bf P}-\ell)$ if $\alpha\notin Q-\frac{\gamma}{2}\N_0-\frac{2}{\gamma}\N_0$.
\begin{theorem}\label{scatteringtheorem}
The scattering matrix for the Liouville conformal field theory is of the form 
\[ \forall \ell\in \N_0,\quad  \tilde{{\bf S}}_\ell (Q+iP)=R(Q+iP)\mc{L}_P\]
where $R(\alpha)$ is the reflection coefficient \eqref{coefreflection}. 
\end{theorem}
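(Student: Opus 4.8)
\textbf{Proof plan for Theorem \ref{scatteringtheorem}.}

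The plan is to reduce the statement for general $\ell$ to the case $\ell=0$ (which is Proposition \ref{coefreflection}) by exploiting the intertwining relations of Proposition \ref{intertwining_for_descendants}, i.e. the compatibility of the Virasoro action with the Poisson operator. The key point is that $\tilde{\bf S}_\ell(Q+iP)$ is read off from the $c\to-\infty$ asymptotic expansion \eqref{asymptoticdescendants} of $\Psi_{Q+iP,\nu,\tilde\nu}$, and that by part 3) of Theorem \ref{descendantsandLn} (proved via Proposition \ref{intertwining_for_descendants}), every $\Psi_{\alpha,\nu,\tilde\nu}$ is obtained from the primary state $\Psi_\alpha$ by applying a string of lowering operators ${\bf L}_{-\nu_k}\cdots{\bf L}_{-\nu_1}\tilde{\bf L}_{-\tilde\nu_{k'}}\cdots\tilde{\bf L}_{-\tilde\nu_1}$. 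So the scattering coefficient of a descendant is controlled by that of the primary, together with the action of the ${\bf L}_n$ on the leading and subleading terms of the expansion.

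Concretely, the steps I would carry out are: First, establish the analytic-extension statements — parts 1) and 2) of Theorem \ref{descendantsandLn} — so that all identities below make sense on $Q+i\R$ and can be obtained by analytic continuation from the real region $\alpha\ll-1$ where the probabilistic/semigroup representations of Proposition \ref{constPoisson} and Lemma \ref{prop:intertwine_reps} are valid. Second, use the functional equation $\Psi_{2Q-\alpha,\nu,\tilde\nu}=R(2Q-\alpha)\Psi_{\alpha,\nu,\tilde\nu}$ (part 4 of Theorem \ref{descendantsandLn}): starting from $\Psi_{2Q-\alpha}=R(2Q-\alpha)\Psi_\alpha$ for primaries (which is exactly the content of \eqref{asymptpsialpha} rewritten, since ${\bf S}_0(\alpha)1=R(\alpha)$), I would apply the lowering operators to both sides and use the intertwining relation \eqref{eq:intertwine_reps_ctd} together with the fact that the structure constants $\ell_{-n}^\alpha(\nu,\nu')$ of the free Verma module agree on the two sheets (they are polynomial in $\alpha$ and in fact depend on $\alpha$ only through $\Delta_\alpha=\Delta_{2Q-\alpha}$, and the Shapovalov-type combinatorics are $\alpha$-independent in the relevant normalization), to propagate the functional equation to all descendants. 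Third, match this with the definition \eqref{scatmatrix}: for $\alpha=Q+iP$ real $P$, the expansion \eqref{asymptoticdescendants} identifies the coefficient of $e^{-ic\sqrt{P^2+2\ell-2j}}$ with $\Pi_{\ker({\bf P}-j)}(\tilde{\bf S}_\ell(Q+iP)\mc{Q}_{Q+iP,\nu,\tilde\nu})$; on the other hand the functional equation forces $\Psi_{Q-iP,\nu,\tilde\nu}=R(Q+iP)\Psi_{Q+iP,\nu,\tilde\nu}$, and comparing leading terms $e^{-iPc}\mc{Q}_{Q-iP,\nu,\tilde\nu}$ shows that $\tilde{\bf S}_\ell(Q+iP)\mc{Q}_{Q+iP,\nu,\tilde\nu}=R(Q+iP)\mc{Q}_{Q-iP,\nu,\tilde\nu}=R(Q+iP)\mc{L}_P\mc{Q}_{Q+iP,\nu,\tilde\nu}$, which is the claim since the $\mc{Q}_{Q+iP,\nu,\tilde\nu}$ span $E_\ell$.

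I expect the main obstacle to be the bookkeeping in Step 2: showing that the lowering operators ${\bf L}_{-n}$ genuinely carry the functional equation across the two branches of the spectral surface $\Sigma$, i.e. that applying ${\bf L}_{-n}$ to $\Psi_\alpha$ and to $\Psi_{2Q-\alpha}$ produces states whose scattering data are related by the same scalar $R(2Q-\alpha)$. This requires knowing that the free-field structure constants $\ell_n^\alpha(\nu,\nu')$ entering \eqref{eq:intertwine_reps_ctd} are invariant under $\alpha\mapsto 2Q-\alpha$ — equivalently under $(\alpha-Q)\mapsto -(\alpha-Q)$, which holds because ${\bf L}_n^{0,\alpha}=e^{(Q-\alpha)c}[{\bf L}_n^0,e^{(\alpha-Q)c}]$ depends on $\alpha$ only through $(\alpha-Q)\partial_c$-type terms in a way that, after the change of basis to the $\mc{Q}_{\alpha,\nu,\tilde\nu}$, depends only on $\Delta_\alpha$ — and that no extra scalar is picked up. An alternative to avoid relying on part 4) of Theorem \ref{descendantsandLn} would be to argue directly: take the explicit expansion \eqref{asymptoticdescendants} for the primary ($\ell=0$) case, apply ${\bf L}_{-n}^0$ (the free operator, which acts on the asymptotic coefficients by the known free intertwining) termwise to the $e^{iPc}\mc{Q}$ and $e^{-iPc}\mc{Q}$ pieces using that ${\bf L}_{-n}$ and ${\bf L}_{-n}^0$ agree up to terms controlled by the potential (whose contribution to the leading asymptotics vanishes, exactly as in the proof of Lemma \ref{prop:intertwine_reps}), and collect the coefficient of each $e^{-ic\sqrt{P^2+2\ell-2j}}$. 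Either way the heart of the matter is the invariance of the free Virasoro structure constants under the reflection $\alpha\mapsto 2Q-\alpha$, combined with uniqueness of the solution with prescribed leading term in \eqref{asymptoticdescendants}.
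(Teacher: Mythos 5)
Your second, ``alternative'' route is in substance the proof the paper gives: the argument proceeds by induction on the level $\ell$, writing $\Psi_{Q+iP,(\nu',n),\tilde{\nu}}={\bf L}_{-n}\Psi_{Q+iP,\nu',\tilde{\nu}}$ and transporting the (inductively known) asymptotic expansion of $\Psi_{Q+iP,\nu',\tilde{\nu}}$ through ${\bf L}_{-n}$. The one technical device you do not name, and which is what makes ``apply ${\bf L}_{-n}$ termwise to the expansion'' legitimate, is dualization: the paper uses $\cjg \Psi_{Q+iP,(\nu',n),\tilde{\nu}},F\cjd_2=\cjg \Psi_{Q+iP,\nu',\tilde{\nu}},{\bf L}_nF\cjd_2$ against wave packets $F_T=\chi(c+T)e^{iP'c}h(\varphi)$ and lets $T\to\infty$; the remainder $G\in\mc{D}(\mc{Q})$ and the GMC potential term are then killed by the localization at $c\approx -T$, and the surviving oscillatory coefficients are matched for each choice of frequency $P'$. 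Your identification of the ``heart of the matter'' is correct but simpler than you fear: since one only ever appends a box ($n\leq \nu'_k$), the needed structure constant is $1$ on both branches, i.e.\ ${\bf L}_{-n}^0(e^{\pm iPc}\mc{Q}_{Q\pm iP,\nu',\tilde{\nu}})=e^{\pm iPc}\mc{Q}_{Q\pm iP,(\nu',n),\tilde{\nu}}$ by definition of the $\mc{Q}_{\alpha,\nu,\tilde{\nu}}$, so no Shapovalov combinatorics or reflection-invariance of $\ell_n^\alpha(\nu,\nu')$ has to be checked.

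Your primary route, however, has a genuine gap. Deriving the descendant functional equation $\Psi_{Q-iP,\nu,\tilde{\nu}}=R(Q-iP)\Psi_{Q+iP,\nu,\tilde{\nu}}$ from the primary one and the intertwining relations is legitimate and non-circular (the paper does exactly this later, in Theorem \ref{extensionPsialpha}), but comparing the two asymptotic expansions only determines the top block of the scattering matrix: matching the coefficients of $e^{\mp iPc}$ gives $\Pi_{\ker({\bf P}-\ell)}\tilde{{\bf S}}_\ell(Q+iP)\mc{Q}_{Q+iP,\nu,\tilde{\nu}}=R(Q-iP)^{-1}\mc{Q}_{Q-iP,\nu,\tilde{\nu}}$ (and one must then also invoke $R(Q+iP)R(Q-iP)=1$). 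For $j<\ell$ the two expansions carry the \emph{same} exponentials $e^{-ic\sqrt{P^2+2(\ell-j)}}$, so the functional equation merely yields the relation $\Pi_{\ker({\bf P}-j)}\tilde{{\bf S}}_\ell(Q-iP)\mc{Q}_{Q-iP,\nu,\tilde{\nu}}=R(Q-iP)\,\Pi_{\ker({\bf P}-j)}\tilde{{\bf S}}_\ell(Q+iP)\mc{Q}_{Q+iP,\nu,\tilde{\nu}}$ and does not force these components to vanish. Their vanishing is part of the theorem's content --- it is precisely what makes the scattering matrix proportional to the identity --- and it does not follow from \eqref{egalitepoissonell} either, since that identity restricts the \emph{input}, not the output projection. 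In the paper this is obtained by running the same wave-packet pairing at the shifted frequencies $P'=-\sqrt{P^2+2(\ell-j)}$, $j<\ell$. So to complete your plan you must either fall back on your alternative route for the off-diagonal part, or supply a separate argument (e.g.\ unitarity of $\tilde{{\bf S}}_\ell$ on $E_\ell$ combined with induction), neither of which is currently in the proposal.
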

\begin{proof}
We shall proceed by induction. For $\ell\in \N_0$,  we call $\mc{I}(\ell)$ the following property: 
for all $\nu,\tilde{\nu}\in \mc{T}$ with $|\nu|+|\nu'|\leq \ell$ and $P\in \R_+\setminus \cup_{j=0}^\infty \{\sqrt{2j}\}$,
\begin{equation}\label{assumptionpsi}
\Psi_{Q+iP,\nu,\tilde{\nu}}(c,\varphi)=e^{iPc}\mc{Q}_{Q+iP,\nu,\tilde{\nu}}(\varphi)+R(Q+iP)e^{-iPc}\mc{Q}_{Q-iP,\nu,\tilde{\nu}}(\varphi)+G_{Q+iP,\nu,\tilde{\nu}}(c,\varphi)
\end{equation}
for some $G_{Q+iP,\nu,\tilde{\nu}}\in \mc{D}(\mc{Q})$.

First, $\mc{I}(0)$ is satisfied by Proposition \ref{coefreflection}.

Next, we assume $\mc{I}_{\ell-1}$ for some $\ell\geq1$ and we want to show $\mc{I}_{\ell}$. For $\nu,\tilde{\nu}\in \mc{T}$ with $|\nu|+|\tilde{\nu}|=\ell$,
$\Psi_{Q+iP,\nu,\tilde{\nu}}$ is of the form 
\[\begin{gathered} 
\Psi_{Q+iP,\nu,\tilde{\nu}}=\Psi_{Q+iP,(\nu',n),\tilde{\nu}}, \textrm{ with } |\nu'|+n=|\nu|, \,\,  \nu'\in \mc{T}, \, \textrm{ or }\\
\Psi_{Q+iP,\nu,\tilde{\nu}}=\Psi_{Q+iP,\nu,(\tilde{\nu}',n)}, \textrm{ with } |\tilde{\nu}'|+n=|\tilde{\nu}|, \,\, \tilde{\nu}'\in \mc{T}
\end{gathered}\]
Since the proof is the same in both cases, let us assume we are in the first situation. By Proposition \ref{intertwining_for_descendants}, we have 
for all  $F\in \mc{C}_{\rm exp}$ 
\begin{equation}\label{egalitedescendant}
 \cjg \Psi_{Q+iP,\nu',\tilde{\nu}},{\bf L}_n  F\cjd_2 =\cjg \Psi_{Q+iP,(\nu',n),\tilde{\nu}}, F\cjd_2.  
\end{equation}
Let us take $P>0$ fixed. Then by \eqref{assumptionpsi} and \eqref{formulaLn}, we have if $F$ is supported in $\{c\leq 0\}$
\begin{equation}\label{splitting}
 \begin{split}
\cjg \Psi_{Q+iP,\nu',\tilde{\nu}},{\bf L}_n F\cjd_{2} =  &\cjg  e^{iPc}\mc{Q}_{Q+iP,\nu',\tilde{\nu}}+R(Q+iP)e^{-iPc}\mc{Q}_{Q-iP,\nu',\tilde{\nu}}, {\bf L}^0_nF\cjd_{2} \\
& +\cjg G_{Q+iP,\nu',\tilde{\nu}},{\bf L}_nF\cjd_{2}  \\
& +  \frac{\mu}{2}\cjg  e^{iPc}\mc{Q}_{Q+ip,\nu',\tilde{\nu}}+R(Q+iP)e^{-iPc}\mc{Q}_{Q-iP,\nu',\tilde{\nu}}, e^{\gamma c} V_n F\cjd_{2}
\end{split}\end{equation}
where $V_n$ is the potential term in \eqref{formulaLn}. 
We consider some particular choice of $F$ depending on a parameter $T>0$, namely
\[ F_T(c,\varphi)=\chi_T(c) e^{iP'c}h(\varphi) , \quad \textrm{ with }h\in E_{\ell}, \quad \chi_T(c)=\chi(c+T)\]
for $P'\in \R$, and  $\chi \in C_0^\infty(\R;[0,1])$ satisfying $\chi=1$ near $0$. We also choose $\tilde{\chi}$ with the same property but with $\tilde{\chi}\chi=\chi$ and we let $\tilde{\chi}_T(c)=\tilde{\chi}(c+T)$.
Note that 
${\bf L}_n^0(e^{iP'c}h(\varphi))= e^{iP'c}h'(\varphi)$ for some $h'\in E_{\ell-n}$.

We will show that, as $T\to \infty$, the first line in the RHS of \eqref{splitting} has an explicit asymptotic behaviour while the second and third line goes to $0$. This will allow us to isolate the scattering coefficient in the expression $\cjg \Psi_{Q+iP,\nu',\tilde{\nu}},{\bf L}_n(\chi_T e^{iP'c}h)\cjd_2$.

We start with the term  $\cjg G_{Q+iP,\nu',\tilde{\nu}},{\bf L}_nF_T\cjd_{L^2(\R^-\times \Omega_\T)}$ and decompose ${\bf L}_n={\bf L}_n^0+\frac{\mu}{2}e^{\gamma c}V_n$.
First, we will show that $\cjg G_{Q+iP,\nu',\tilde{\nu}},e^{\gamma c}V_nF_T\cjd \to 0$.
 Since $\| \chi_Te^{iP'c}h\|_{\mc{D}(\mc{Q})}\leq C$ for some $C>0$ uniform with respect to $T>1$,
then  $\chi_T e^{\gamma c}V_ne^{iP'c}h\in \mc{D}'(\mc{Q})$ with uniform norm with respect to $T>1$.
Using that $\psi G_{Q+iP,\nu',\tilde{\nu}}\in \mc{D}(\mc{Q})$ for any $\psi \in C^\infty(\R;[0,1])$ with support 
in $\R_-$ and bounded derivative, there is $C>0$ such that as $T\to \infty$
\begin{equation}\label{firstboundG}
 |\cjg G_{Q+iP,\nu',\tilde{\nu}},e^{\gamma c}\chi_T V_ne^{iP'c}h\cjd_{2}|\leq C\|\tilde{\chi}_TG_{Q+iP,\nu',\tilde{\nu}}\|_{\mc{D}(\mc{Q})} \|\chi_Te^{iP'c}h\|_{\mc{D}(\mc{Q})}
\end{equation}
where we used the boundedness $e^{\gamma c}V_n:\mc{D}(\mc{Q})\to \mc{D}'(\mc{Q})$.
One has 
\[ \|\chi_Te^{iP'c}h\|_{\mc{D}(\mc{Q})}^2\leq \frac{1}{2}\int_{\R} (|-iP'\chi_T+\chi'_T|^2+(Q^2+2\ell)\chi^2_T) \|h\|_{L^2(\Omega_\T)}^2+\chi_T(c)^2\mc{Q}_{e^{\gamma c}V}(h) \dd c \]
which is uniformly bounded as $T\to \infty$ and 
\[ \begin{split}
\|\tilde{\chi}_TG_{Q+iP,\nu',\tilde{\nu}}\|_{\mc{D}(\mc{Q})}^2\leq &  \frac{1}{2}\int_{\R} (|\tilde{\chi}'_T|^2+Q^2) \|G_{Q+iP,\nu',\tilde{\nu}}\|^2
+ \tilde{\chi}_T^2 \|\pl_c G_{Q+iP,\nu',\tilde{\nu}}\|^2 \dd c\\
& + \int_{\R} \tilde{\chi}_T^2 \|{\bf P}^{\frac{1}{2}}G_{Q+iP,\nu',\tilde{\nu}}\|^2+\tilde{\chi}_T^2 \mc{Q}_{e^{\gamma c}V}(G_{Q+iP,\nu',\tilde{\nu}})\dd c
\end{split}
\]
converges to $0$ as $T\to \infty$ since $G_{Q+iP,\nu',\tilde{\nu}}\in \mc{D}(\mc{Q})$ and  $\supp\, \tilde{\chi}_T\subset [-T-1,-T+1]$. Consequently \eqref{firstboundG} goes to $0$ as $T\to \infty$.

Next we show that $\cjg G_{Q+iP,\nu',\tilde{\nu}},{\bf L}_n^0F_T\cjd_2 \to 0$. First, we compute 
\begin{equation}\label{commutationLnchi} 
[{\bf L}_n,\chi_T]=[{\bf L}^0_n,\chi_T]=  i\chi_T'{\bf A}_n,
\end{equation}
\begin{equation}\label{Ln0Psi_T}
\begin{split} 
 {\bf L}_n^0 (\chi_Te^{iP'c}h) = ie^{iP'c}\chi_T'{\bf A}_nh+ 
 \chi_T {\bf L}_{n}^0(e^{iP'c}h).
 \end{split}
 \end{equation}
 Notice that ${\bf A}_nh\in E_{\ell-n}$ since $n>0$, and that $e^{-iP'c}{\bf L}_{n}^0(e^{iP'c}h)\in E_{\ell-n}$. In particular one can bound 
$\|{\bf L}_n^0 (\chi_Te^{iP'c}h)\|_{L^2(\R\times \Omega_\T)}\leq C$ for some $C>0$ uniform in $T$.  We then have 
 \[  |\cjg G_{Q+iP,\nu',\tilde{\nu}},{\bf L}_n^0F_T\cjd_2|\leq \|\tilde{\chi}_TG_{Q+iP,\nu',\tilde{\nu}}\|_{L^2}\|{\bf L}_n^0F_T\|_{L^2}\leq C\|\tilde{\chi}_TG_{Q+iP,\nu',\tilde{\nu}}\|_{L^2} \to 0\]
 as $T\to 0$, thus the second line of \eqref{splitting} goes to $0$ as $T\to \infty$.
 
The third line of \eqref{splitting} also goes to $0$ as $T\to \infty$: 
\begin{equation}\label{3rdline}
 |\cjg e^{\pm iPc}\mc{Q}_{Q\pm iP,\nu',\tilde{\nu}}, e^{\gamma c}V_n F_T\cjd_{2}|\leq C \Big| \int_{-T-1}^{-T+1}e^{\gamma c}\dd c\Big|\times 
|\cjg V_n h,\mc{Q}_{Q\pm iP,\nu',\tilde{\nu}}\cjd_{L^2(\Omega_\T)}| \to 0
\end{equation}
where $\cjg V_n h,\mc{Q}_{Q+iP,\nu',\tilde{\nu}}\cjd_{L^2(\Omega_\T)}$ makes sense 
since $\mc{Q}_{Q+iP,\nu',\tilde{\nu}},h\in \mc{S}$.

Finally, we deal with the first line of \eqref{splitting}. Since $({\bf L}_n^0)^*={\bf L}_{-n}^0$,
\begin{align*}
 & \cjg  e^{iPc}\mc{Q}_{Q+iP,\nu',\tilde{\nu}}+R(Q+iP)e^{-iPc}\mc{Q}_{Q-iP,\nu',\tilde{\nu}}, {\bf L}^0_n F_{T}\cjd_2=\\
& \cjg \chi _T(c){\bf L}^0_{-n}(e^{iPc}\mc{Q}_{Q+iP,\nu',\tilde{\nu}}+R(Q+iP)e^{-iPc}\mc{Q}_{Q-iP,\nu',\tilde{\nu}}),  e^{iP'c}h\cjd_2.
\end{align*}
We compute for all $P\in \R$
\[
{\bf L}^0_{-n}(e^{\pm iPc}\mc{Q}_{Q\pm iP,\nu',\tilde{\nu}})= e^{\pm iPc}\mc{Q}_{Q\pm iP,(\nu',n),\tilde{\nu}}
\]
so that 
\[\begin{split} 
\cjg  e^{\pm iPc}\mc{Q}_{Q\pm iP,\nu',\tilde{\nu}}, {\bf L}^0_n (\chi_Te^{ iP'c}h)\cjd_2=& \int \chi(c+T)e^{ic(\pm P-P')}\dd c 
\cjg\mc{Q}_{Q\pm iP,(\nu',n),\tilde{\nu}} , h\cjd_{L^2(\Omega_\T)}\\
=&e^{iT(P'\mp P)} \hat{\chi}(P'\mp P)\cjg\mc{Q}_{Q\pm iP,(\nu',n),\tilde{\nu}} , h \cjd_{L^2(\Omega_\T)}
\end{split}\]
where $\hat{\chi}$ denotes the Fourier transform. We therefore have, choosing $P'=-P$ and $\chi$ so that $\int \chi=1$, that as $T\to +\infty$
\begin{equation}\label{asymptoticPsiLnF}
\begin{split}
\cjg \Psi_{Q+iP,\nu',\tilde{\nu}},{\bf L}_n F_T\cjd_{2} = & e^{-2iTP}\hat{\chi}(-2P)\cjg\mc{Q}_{Q+iP,(\nu',n),\tilde{\nu}} , h\cjd_{L^2(\Omega_\T)}\\
& +R(Q+ip)\cjg\mc{Q}_{Q- iP,(\nu',n),\tilde{\nu}} , h\cjd_{L^2(\Omega_\T)}+o(1).
\end{split}
\end{equation}
Next, we use \eqref{asymptoticdescendants} and the same arguments as above to obtain, as $T\to \infty$,
\begin{align*} 
& \cjg \Psi_{Q+iP,(\nu',n),\tilde{\nu}},  F_{T}\cjd_2= \\ 
&\qquad e^{-2iTP}\hat{\chi}(-2P)\cjg\mc{Q}_{Q+iP,(\nu',n),\tilde{\nu}} , h\cjd_{L^2(\Omega_\T)}+ \cjg 
\Pi_{\ker ({\bf P}-\ell)}(\tilde{{\bf S}}_\ell(Q+iP)\mc{Q}_{Q+iP,\nu,\tilde{\nu}}), h\cjd_{L^2(\Omega_\T)}
\\
&\qquad+\sum_{j<\ell} e^{iT(P-P_{j\ell})}\hat{\chi}(P-P_{j\ell}) 
\cjg \Pi_{\ker ({\bf P}-j)}(\tilde{{\bf S}}_\ell(Q+iP)\mc{Q}_{Q+iP,\nu,\tilde{\nu}}), h\cjd_{L^2(\Omega_\T)}+o(1)
\end{align*}
with $P_{j\ell}:=\sqrt{P^2+2(\ell-j)}$.
Combining this asympotic expansion with \eqref{asymptoticPsiLnF} and the fact that we can choose $h$ arbitrary in $E_\ell$,
we deduce that 
\[ \Pi_{\ker ({\bf P}-\ell)}(\tilde{{\bf S}}_\ell(Q+iP)\mc{Q}_{Q+iP,\nu,\tilde{\nu}}) =R(Q+iP) \mc{Q}_{Q-iP,\nu,\tilde{\nu}}.\]
Applying the same arguments but choosing $P'=-P_{j\ell}$ for $j<\ell$, we also get 
\[ \Pi_{\ker ({\bf P}-j)}(\tilde{{\bf S}}_\ell(Q+iP)\mc{Q}_{Q+iP,\nu,\tilde{\nu}})=0.\]
We have thus proved \eqref{assumptionpsi} and therefore $\mc{I}(\ell)$ holds. This ends the proof.
\end{proof}

\subsection{Analyticity and functional equation for the Liouville eigenstates $\Psi_{\alpha,\nu,\tilde{\nu}}$.}

First, let us prove the following:
\begin{lemma}\label{analyticity}
The function $\Psi_{\alpha}$ extends analytically from $\{{\rm Re}(\alpha)<Q\}$ to a neighborhood of 
$\{{\rm Re}(\alpha)\leq Q\}$ in $\C$ and it satisfies the functional equation on ${\rm Re}(\alpha)=Q$
\[ \Psi_{Q-iP}=R(Q-iP)\Psi_{Q+iP}\]
where $R(\alpha)$ is the reflection coefficient \eqref{Ralpha}.
\end{lemma}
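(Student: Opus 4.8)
The plan is to establish the functional equation first for $\alpha$ on a neighborhood of the critical line using the scattering expansion, and then promote analyticity by a reflection/gluing argument. Recall from Proposition \ref{poissonprop} and Proposition \ref{constPoisson} that $\Psi_\alpha=\mc{P}_0(\alpha)1$ is analytic on $\{{\rm Re}(\alpha)\le Q\}\setminus\mc{D}_0$ with at worst square-root singularities at the points of $\mc{D}_0$, and that by Proposition \ref{coefreflection} the scattering coefficient $R(\alpha)={\bf S}_0(\alpha)1$ is itself analytic (indeed meromorphic with no poles there) in ${\rm Re}(\alpha)\in(Q-\eps,Q]$. The key structural input is the asymptotic expansion \eqref{asymptpsialpha}: for ${\rm Re}(\alpha)\in(Q-\gamma/2,Q]$,
\[
\Psi_\alpha=e^{(\alpha-Q)c}+R(\alpha)e^{(Q-\alpha)c}+G_\alpha,\qquad G_\alpha\in\mc{D}(\mc{Q}).
\]
Since $\Psi_\alpha$ is an eigenfunction of ${\bf H}$ with eigenvalue $\Delta_\alpha(Q-\alpha/2)$ depending only on the pair $\{\alpha,2Q-\alpha\}$ — more precisely, writing $\alpha=Q+iP$ the eigenvalue is $\frac{Q^2+P^2}{2}$ and is invariant under $P\mapsto-P$ — both $\Psi_{Q+iP}$ and $R(Q+iP)^{-1}\Psi_{Q-iP}$ solve the same equation $({\bf H}-\tfrac{Q^2+P^2}{2})u=0$.

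First I would compare leading asymptotics. Applying \eqref{asymptpsialpha} at $\alpha=Q-iP$ (legitimate for ${\rm Re}(\alpha)$ slightly less than $Q$, hence by the analytic continuation of both sides up to $P\in\R$), we get $\Psi_{Q-iP}=e^{-iPc}+R(Q-iP)e^{iPc}+G_{Q-iP}$. Now consider $u:=\Psi_{Q-iP}-R(Q-iP)\Psi_{Q+iP}$. Using $R(Q+iP)R(Q-iP)=1$ (which follows from the unitarity/involutive property of ${\bf S}_0$, or equivalently directly from the explicit formula \eqref{Ralpha} — one checks $R(\alpha)R(2Q-\alpha)=1$ by the reflection formula for $\Gamma$), the coefficients of $e^{iPc}$ and $e^{-iPc}$ in the asymptotic expansion of $u$ cancel, leaving $u\in\mc{D}(\mc{Q})\subset L^2$, and $u$ solves $({\bf H}-\tfrac{Q^2+P^2}{2})u=0$. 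Since $\tfrac{Q^2+P^2}{2}\ge Q^2/2$ lies in the (absolutely) continuous spectrum of ${\bf H}$ with no embedded $L^2$-eigenvalues — this is exactly the uniqueness clause in Proposition \ref{poissonprop}, which states that a solution in $e^{-\beta\rho}L^2$ with the prescribed expansion is unique, here with vanishing scattering data forcing $u=0$ — we conclude $\Psi_{Q-iP}=R(Q-iP)\Psi_{Q+iP}$ for $P\in\R\setminus\{0\}$ with $Q\pm iP\notin\mc{D}_0$, and then for all such $P$ by continuity.

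For the analytic extension across $\{{\rm Re}(\alpha)=Q\}$: the functional equation exhibits $\Psi_\alpha$ for ${\rm Re}(\alpha)>Q$ as $R(\alpha)\Psi_{2Q-\alpha}$, and since $2Q-\alpha$ then has ${\rm Re}<Q$ where $\Psi_{2Q-\alpha}$ is already known to be analytic (away from $\mc{D}_0$), and $R(\alpha)$ is analytic near the line, this defines an analytic continuation to a neighborhood of $\{{\rm Re}(\alpha)\le Q\}$ that agrees with the original on the overlap — one must check the two determinations glue, i.e. that $\Psi_\alpha$ and $R(\alpha)\Psi_{2Q-\alpha}$ coincide on ${\rm Re}(\alpha)=Q$, which is precisely the functional equation just proved. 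The possible square-root branch points at $\mc{D}_0$ are handled as in \cite{GKRV}: they are at worst square-root singularities, not poles, and the statement is only about a neighborhood of $\{{\rm Re}(\alpha)\le Q\}$, so they do not obstruct. The main obstacle I anticipate is the bookkeeping at $P=0$ and at the ramification points $\mc{D}_0$, where the two exponentials $e^{\pm iPc}$ degenerate and where $\Psi_\alpha$ may have branching; there one argues by continuity from nearby $P$, using that $R(\alpha)$ remains finite and nonzero there, rather than by a direct expansion argument.
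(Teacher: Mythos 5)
Your derivation of the functional equation is sound and uses the same mechanism as the paper: both $\Psi_{Q-iP}$ and $R(Q-iP)\Psi_{Q+iP}$ solve $(\mathbf{H}-\tfrac{Q^2+P^2}{2})u=0$ with matching coefficient of $e^{iPc}$ in the expansion \eqref{asymptpsialpha}, and the uniqueness clause of Proposition \ref{poissonprop} (equivalently, absence of embedded $L^2$-eigenvalues) forces them to coincide. In fact you need not verify $R(\alpha)R(2Q-\alpha)=1$ in advance: once the incoming coefficients agree, uniqueness already gives $u=0$, and the identity $R(Q+iP)R(Q-iP)=1$ falls out as a consequence (your direct check from \eqref{Ralpha} is also correct, though it is an immediate cancellation rather than an application of the reflection formula).

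The genuine gap is in the analyticity claim, at the points $\mc{D}_0=\{Q\pm i\sqrt{2j}\}$. These lie \emph{on} the line ${\rm Re}(\alpha)=Q$, so every neighborhood of $\{{\rm Re}(\alpha)\leq Q\}$ contains them, and a true square-root branch point there would contradict the lemma. Your sentence asserting that square-root singularities ``do not obstruct'' is exactly backwards: removing them is the main content of the paper's proof. What is known beforehand is only a meromorphic extension to the Riemann surface $\Sigma$, the cover of $\C$ ramified at $\mc{D}_0$; downstairs, near $Q\pm i\sqrt{2j}$, the function is a priori multivalued, so even ``analytic on a punctured disk'' is not meaningful until one proves invariance under the deck transformation $\gamma_j$. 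The paper does this with the very tool you already deployed for the functional equation: since $\gamma_j$ flips $r_k\mapsto-r_k$ only for $k\geq j>0$ and leaves the $j=0$ terms $e^{\pm iPc}$ of \eqref{asymptpsialpha} untouched, the difference $\gamma_j^*\Psi_\alpha-\Psi_\alpha$ lies in $\mc{D}(\mc{Q})\subset L^2$ and is an eigenfunction of $\mathbf{H}$, hence vanishes by \cite[Lemma 6.2]{GKRV}; the extension therefore descends to the base, and the singularity at $Q\pm i\sqrt{2j}$ is removable by continuity. You need to add this step. The rest of your gluing across the line via $\Psi_\alpha=R(\alpha)\Psi_{2Q-\alpha}$, using that $R$ has no poles near the line, matches the paper.
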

\begin{proof} We know from Proposition \ref{poissonprop} that $\Psi_{\alpha}$ admits a meromorphic extension from $\{{\rm Re}(\la)<Q\}\subset \Sigma$
to a open neighborhood $U\subset \Sigma$ of $\{{\rm Re}(\la)<Q\}$, where we recall that $\Sigma$ is the Riemann surface that is a ramified 
 covering of $\C$ of order $2$ associated to the functions $r_j(\alpha)=\sqrt{-(\alpha-Q)^2-2j}$. We will prove that this extension descends to $\pi(U)$ if $\pi:\Sigma\to \C$ as a meromorphic function. We need to check that if $\gamma_j\in G$ is an automorphism of the covering (for $j>0$), then 
 $\gamma_j^*\Psi_\alpha=\Psi_{\alpha}$. Since $\gamma_j^*r_k=-r_k$ for $k\geq j$ and $\gamma_j^*r_k=r_k$ if $k<j$, we see from \eqref{asymptpsialpha} 
 that for $P>0$ and $\alpha=Q+iP$
 \[ \gamma_j^*\Psi_{\alpha}(c,\varphi)=e^{iPc}+e^{-iPc}R(\alpha)+\gamma_j^*G(\alpha)\]
but then 
\[ (\gamma_j^*\Psi_{\alpha}-\Psi_{\alpha})\in \mc{D}(\mc{Q}).\]
Since ${\bf H}$ does not have $L^2$-eigenvalues by \cite[Lemma 6.2]{GKRV}, we deduce that $\gamma_j^*\Psi_{\alpha}=\Psi_{\alpha}$ and, by analytic continuation, this shows that the meromorphic extension of $\Psi_{\alpha}$ in $U$ descends to $\pi(U)$. The functional equation also comes from the uniqueness of $\Psi_{Q+iP}$ having prescribed asymptotic term $e^{iPc}$ and analyticity come from the fact that $R(\alpha)$ has only poles in ${\rm Re}(\alpha)<Q$ by \eqref{Ralpha}.
\end{proof}
This result implies the following  statement on all descendants: 
\begin{theorem}\label{extensionPsialpha}
For each $\nu,\tilde{\nu}\in \mc{T}$, the functions $\Psi_{\alpha,\nu,\tilde{\nu}}$ have an analytic extension to $\C$ as elements in $e^{-\beta \rho}\mc{D}(\mc{Q})$ for $\beta>|{\rm Re}(\alpha)-Q|$. They satisfy the functional equation 
\[ \Psi_{2Q-\alpha,\nu,\tilde{\nu}}=R(2Q-\alpha)\Psi_{\alpha,\nu,\tilde{\nu}}\]
where  $R(\alpha)$ is the reflection coefficient \eqref{Ralpha}. The zeros of $\alpha\mapsto \Psi_{\alpha,\nu,\tilde{\nu}}$ are located exactly at $(Q+\frac{\gamma}{2}\N_0) \cup(Q+\frac{2}{\gamma}\N_0)$ and for ${\rm Re}(\alpha)<Q$ we have 
\[(\Psi_{\alpha,\nu,\tilde{\nu}}-\Psi^0_{\alpha,\nu,\tilde{\nu}})|_{\{c\leq 0\}}\in e^{\min({\rm Re}(\alpha)-Q+\gamma/2,Q-{\rm Re}(\alpha)) c}\mc{D}(\mc{Q}).\]
\end{theorem}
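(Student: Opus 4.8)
The plan is to bootstrap from the two basic ingredients already available: the analytic/functional-equation statement for the primary field $\Psi_\alpha$ (Lemma \ref{analyticity}) and the intertwining relation between the Virasoro generators and the Poisson transforms of the free-field descendants (Proposition \ref{intertwining_for_descendants}), together with the diagonal scattering (Theorem \ref{scatteringtheorem}). First I would record that, by Proposition \ref{constPoisson}, $\Psi_{\alpha,\nu,\tilde\nu}=\mc{P}_\ell(Q+i\sqrt{P^2+2\ell})\mc{Q}_{\alpha,\nu,\tilde\nu}$ is already known to be analytic in $\alpha$, with values in $e^{-\beta\rho}\mc{D}(\mc{Q})$ for $\beta>Q-{\rm Re}(\alpha)$, on the connected set $W_\ell$ (containing $Q+i\R$ and a real half-line), with at most square-root branching on the discrete set $\mc{D}_0=\{Q\pm i\sqrt{2j}\}$. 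The heart of the matter is to remove these branch points and extend past ${\rm Re}(\alpha)=Q$.

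The key step is to relate $\Psi_{\alpha,\nu,\tilde\nu}$ to $\Psi_\alpha$ by repeated application of ${\bf L}_{-n},\tilde{\bf L}_{-n}$. By Proposition \ref{intertwining_for_descendants}, specifically \eqref{eq:intertwine_reps_ctd2}, for $n$ small enough one has $\Psi_{\alpha,(\nu,n),\tilde\nu}={\bf L}_{-n}\Psi_{\alpha,\nu,\tilde\nu}$ in $e^{-\beta\rho}\mc{D}(\mc{Q})$, wherever the right side makes sense and $\alpha\notin Q-\frac{\gamma}{2}\N-\frac{2}{\gamma}\N$. Iterating, I would write
\[ \Psi_{\alpha,\nu,\tilde\nu}={\bf L}_{-\nu_k}\cdots{\bf L}_{-\nu_1}\tilde{\bf L}_{-\tilde\nu_{k'}}\cdots\tilde{\bf L}_{-\tilde\nu_1}\Psi_\alpha. \]
Since each ${\bf L}_{-n}$ is, by \eqref{formulaLn}, the sum of ${\bf L}_{-n}^0$ (a differential operator, hence acting in a manifestly $\alpha$-analytic way on $e^{-\beta\rho}\mc{D}(\mc{Q})$, using the boundedness ${\bf L}_n^0:\mc{D}(\mc{Q})\to\mc{D}'(\mc{Q})$ of Lemma \ref{boundonLn} combined with \eqref{boundednesspropag} and the weight-conjugation in Corollary \ref{cor:cts}) and the GMC Fourier-mode multiplication operator $\frac{\mu}{4\pi}\int e^{\gamma\varphi}e^{-in\theta}d\theta$ (which does not depend on $\alpha$), the right-hand side inherits analyticity in $\alpha$ from $\Psi_\alpha$. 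Since $\Psi_\alpha$ extends analytically to all of $\C$ by Lemma \ref{analyticity}, so does $\Psi_{\alpha,\nu,\tilde\nu}$, on $\C\setminus(Q-\frac{\gamma}{2}\N-\frac{2}{\gamma}\N)$ a priori; one then checks, by continuity of $\alpha\mapsto\Psi_{\alpha,\nu,\tilde\nu}$ in $e^{-\beta\rho}\mc{D}(\mc{Q})$ away from those exceptional points and a removable-singularity argument (the functions and their ${\bf L}_n$-images are locally bounded in $e^{-\beta\rho}\mc{D}(\mc{Q})$, there being no $L^2$-eigenvalue by \cite[Lemma 6.2]{GKRV}), that the exceptional points are in fact removable, giving an entire family. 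The functional equation then follows by applying the same product of Virasoro operators to the identity $\Psi_{2Q-\alpha}=R(2Q-\alpha)\Psi_\alpha$ of Lemma \ref{analyticity} (which, written on $Q+i\R$ as $\Psi_{Q-iP}=R(Q-iP)\Psi_{Q+iP}$, propagates to all $\alpha$ by analytic continuation), noting that ${\bf L}_{-n}$ is independent of $\alpha$ and $R$ is a scalar; alternatively one invokes Theorem \ref{scatteringtheorem} and uniqueness of the solution with prescribed leading term $e^{iPc}\mc{Q}_{Q+iP,\nu,\tilde\nu}$.

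For the location of zeros: from the functional equation $\Psi_{2Q-\alpha,\nu,\tilde\nu}=R(2Q-\alpha)\Psi_{\alpha,\nu,\tilde\nu}$ the zero set is invariant under $\alpha\mapsto 2Q-\alpha$ up to the zeros and poles of $R$. On $\{{\rm Re}(\alpha)\leq Q\}$, $\Psi_{\alpha,\nu,\tilde\nu}=\mc{P}_\ell(\cdots)\mc{Q}_{\alpha,\nu,\tilde\nu}$ vanishes exactly where $\mc{Q}_{\alpha,\nu,\tilde\nu}$ degenerates, i.e.\ where the descendant construction in Section \ref{Vermafreefield} fails to be injective, which by \cite[Proposition 4.9]{GKRV} happens precisely on $Q-\frac{\gamma}{2}\N-\frac{2}{\gamma}\N$ (the reducibility/Kac points), and there $\mc{P}_\ell$ itself is analytic and non-degenerate; I would check that the leading asymptotics $e^{iPc}\mc{Q}_{\alpha,\nu,\tilde\nu}$ forces $\Psi_{\alpha,\nu,\tilde\nu}\neq 0$ unless $\mc{Q}_{\alpha,\nu,\tilde\nu}=0$. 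Reflecting through $\alpha\mapsto 2Q-\alpha$ and using that $R$ contributes no extra zeros or poles on the relevant half-plane except those coming from its explicit Gamma factors (which cancel against the reflected Kac points), one concludes the zeros of $\alpha\mapsto\Psi_{\alpha,\nu,\tilde\nu}$ lie exactly at $(Q+\frac{\gamma}{2}\N)\cup(Q+\frac{2}{\gamma}\N)$.

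The main obstacle I anticipate is the removable-singularity step at the Kac points $Q-\frac{\gamma}{2}\N-\frac{2}{\gamma}\N$: there the factorization $\Psi_{\alpha,\nu,\tilde\nu}={\bf L}_{-\nu}\tilde{\bf L}_{-\tilde\nu}\Psi_\alpha$ is only a priori valid off this set (Proposition \ref{intertwining_for_descendants} excludes it), and one must argue that both sides extend continuously — hence analytically — across it, using that the potential term in ${\bf L}_{-n}$ is a fixed bounded operator $\mc{D}(\mc{Q})\to\mc{D}'(\mc{Q})$ and that $e^{-t{\bf H}}$ smooths $\mc{D}'(\mc{Q})\to\mc{D}(\mc{Q})$, so no spurious pole can appear. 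Getting the zero-set statement sharp (rather than just an inclusion) will require carefully matching the Gamma-function zeros/poles of $R(\alpha)$ against the Kac locus, which is the other delicate point.
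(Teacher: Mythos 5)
Your proposal follows essentially the same route as the paper: the paper's (very terse) proof is precisely to extend $\Psi_\alpha$ via the functional equation of Lemma \ref{analyticity} and then apply Proposition \ref{intertwining_for_descendants} inductively to transfer analyticity, the functional equation and the zero set to the descendants $\Psi_{\alpha,\nu,\tilde{\nu}}$. Your extra attention to the removability of the Kac points $Q-\frac{\gamma}{2}\N-\frac{2}{\gamma}\N$ (which Proposition \ref{intertwining_for_descendants} formally excludes) fills in a detail the paper's one-line argument leaves implicit, and is consistent with its intent.
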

\begin{proof} First, for $\Psi_\alpha$, the analytic extension is given by the functional equation $\Psi_{\alpha}=R(\alpha)\Psi_{2Q-\alpha}$ 
and the fact that $\Psi_\alpha$ is analytic in $\{{\rm Re}(\alpha)\leq Q\}$. Moreover we see that $\Psi_\alpha \not=0$ if $\{{\rm Re}(\alpha)\leq Q\}
\setminus \{Q\}$ 
by considering its $c\to -\infty$ first asymptotic term which follows from the proof of \cite[Proposition 6.21]{GKRV}: 
if $m_\alpha:=\min({\rm Re}(\alpha)-Q+\gamma/2,Q-{\rm Re}(\alpha))$,
\[ \Psi_{\alpha}(c,\varphi)-e^{(\alpha-Q)c} \in  e^{m_\alpha c}\mc{D}(\mc{Q}).\]
The functional equation implies that the only $\alpha$'s where $\Psi_{\alpha}=0$ are at $(Q+\frac{\gamma}{2}\N) \cup(Q+\frac{2}{\gamma}\N)$.
 To deal with the descendants, we proceed by induction. Consider a descendant 
$\Psi_{\alpha,\nu,\tilde{\nu}}$ and assume that the result has been proved for all the descendants $\Psi_{\alpha,\nu',\tilde{\nu}'}$ with 
$|\nu'|<|\nu|$ and $\tilde{\nu}'\leq |\tilde{\nu}|$. We can 
write $\Psi_{\alpha,\nu,\tilde{\nu}}={\bf L}_{-n}\Psi_{\alpha,\nu',\tilde{\nu}}$ if $\nu=(\nu',n)$. For all $\beta>0$, this provides an analytic continuation of 
$\Psi_{\alpha,\nu,\tilde{\nu}}$ as an element in $e^{-\beta\rho}\mc{D}'(\mc{Q})$ in the region $|{\rm Re}(\alpha)-Q|<\beta$ using the induction assumption, 
and they belong to $e^{-\beta \rho}\mc{D}(\mc{Q})$ in the set $W_{|\nu|+|\tilde{\nu}|}\cup (Q-W_{|\nu|+|\tilde{\nu}|})\cup (Q+i\R)$ where 
$W_{|\nu|+|\tilde{\nu}|}$ is the set defined by \eqref{defWell}. Now, by \eqref{boundednesspropag}, we can define 
$e^{-t{\bf H}}\Psi_{\alpha,\nu,\tilde{\nu}}$ for $t\geq 0$ and this belongs to $e^{-\beta \rho}\mc{D}(\mc{Q})$ for $t>0$, but this also 
equal to $e^{-t(2\Delta_{\alpha}+|\nu|+|\tilde{\nu}|)}\Psi_{\alpha,\nu,\tilde{\nu}}$ (for example by differentiating in $t$). This shows that 
$\Psi_{\alpha,\nu,\tilde{\nu}}\in e^{-\beta \rho}\mc{D}(\mc{Q})$ for $|{\rm Re}(\alpha)-Q|<\beta$.
Moreover by the induction assumption, 
\[\Psi_{2Q-\alpha,\nu,\tilde{\nu}}={\bf L}_{-n}\Psi_{2Q-\alpha,\nu',\tilde{\nu}}=R(2Q-\alpha){\bf L}_{-n}\Psi_{\alpha,\nu',\tilde{\nu}}=R(2Q-\alpha)\Psi_{\alpha,\nu,\tilde{\nu}}.\]
Finally, to show that $\Psi_{\alpha,\nu,\tilde{\nu}}\not=0$ when $\alpha\notin (Q+\frac{\gamma}{2}\N)\cup (Q+\frac{2}{\gamma}\N)$, we can do this again by induction: it suffices to consider the half-plane ${\rm Re}(\alpha)\leq Q$ and to prove that
\[ (\Psi_{\alpha,\nu,\tilde{\nu}}(c,\varphi)-e^{(\alpha-Q)c}\mc{Q}_{\alpha,\nu,\tilde{\nu}})|_{\{c\leq 0\}}\in e^{m_\alpha c}\mc{D}(\mc{Q}).\]
We prove this by induction. The first step has been proved already. Assume this is true for $\Psi_{\alpha,\nu',\tilde{\nu}}$. We have, by \eqref{formulaLn}
\[\begin{split} 
\Psi_{\alpha,\nu,\tilde{\nu}}(c,\varphi)=&{\bf L}_{-n}\Psi_{\alpha,\nu',\tilde{\nu}}(c,\varphi)= {\bf L}^0_{-n}e^{(\alpha-Q)c}\mc{Q}_{\alpha,\nu',\tilde{\nu}}+  e^{(\alpha+\gamma-Q)c}F_\alpha (\varphi)+G_{\alpha}(c,\varphi)\\
=& e^{(\alpha-Q)c}\mc{Q}_{\alpha,\nu,\tilde{\nu}}+  e^{(\alpha+\gamma-Q)c}F_\alpha (\varphi)+G_{\alpha}(c,\varphi)
\end{split}\]
where $e^{(\alpha+\gamma-Q)c}F_\alpha \in e^{({\rm Re}(\alpha)+\gamma-Q-\eps)\rho}\mc{D}'(\mc{Q})$ and 
$G_\alpha\in e^{m_\alpha c}\mc{D}'(\mc{Q})$, using as in \eqref{boundednessHvweight} that ${\bf L}_{-n}:e^{\beta \rho}
\mc{D}(\mc{Q})\to e^{\beta \rho}\mc{D}'(\mc{Q})$ for $\beta\in \R$. We finally can apply the propagator $e^{-t{\bf H}}$ to this equation for $t>0$ and by \eqref{boundednesspropag} 
\[ \Psi_{\alpha,\nu,\tilde{\nu}}(c,\varphi)-e^{t(2\Delta_\alpha+|\nu|+|\tilde{\nu}|)}e^{-t{\bf H}}\Psi^0_{\alpha,\nu,\tilde{\nu}}\in 
e^{m_\alpha \rho}\mc{D}(\mc{Q}).\]
In the proof of \cite[Proposition 6.9]{GKRV}, it is shown that 
\[ (e^{-t{\bf H}}\Psi^0_{\alpha,\nu,\tilde{\nu}}-e^{-t(2\Delta_\alpha+|\nu|+|\tilde{\nu}|)}\Psi^0_{\alpha,\nu,\tilde{\nu}})|_{\{c\leq 0\}}\in e^{m_\alpha \rho}\mc{D}(\mc{Q})\]
and this concludes the argument. The same applies in the case $\Psi_{\alpha,\nu,\tilde{\nu}}=\tilde{{\bf L}}_{-n}\Psi_{\alpha,\nu,\tilde{\nu}'}$ with $\tilde{\nu}=(\tilde{\nu}',n)$.
\end{proof}

As a consequence of this result, we can extend the validity of Proposition \ref{intertwining_for_descendants} to all $\alpha\in \C\setminus Q\pm (\frac{\gamma}{2}\N_0+\frac{2}{\gamma}\N_0)$. 

\subsection{Verma modules for the Liouville CFT and representation of Virasoro algebra}\label{VermaLiouville}

Define for $\alpha\notin Q\pm (\frac{2}{\gamma}\N_0+\frac{\gamma}{2}\N_0)$ the vector space (for $\eps>0$)
\[\begin{gathered}
\mc{V}_\alpha:= {\rm span}\{\Psi_{\alpha,\nu,0} \, | \, \nu,\in \mc{T} \}\subset e^{(|{\rm Re}(\alpha)-Q|+\eps)|\rho(c)|}L^2(\R\otimes \Omega_\T),\\
\bbar{\mc{V}}_\alpha:= {\rm span}\{\Psi_{\alpha,0,\tilde{\nu}} \, | \, \tilde{\nu}\in \mc{T} \}\subset e^{(|{\rm Re}(\alpha)-Q|+\eps)|\rho(c)|}L^2(\R\otimes \Omega_\T)
\end{gathered}\]
where, for now, elements are simply finite linear combinations of elements $\Psi_{\alpha,\nu,\tilde{\nu}}$. We emphasize that 
Theorem \ref{extensionPsialpha} and \eqref{assumptionpsi} imply that the 
$\Psi_{\alpha,\nu,\tilde{\nu}'}$ are linearly independent if $\alpha\notin Q-\frac{\gamma}{2}\N_0-\frac{2}{\gamma}\N_0$, since the $\Psi^0_{\alpha,\nu,\tilde{\nu}'}$ are.
We also define 
\[ \mc{W}_\alpha:={\rm span}\{\Psi_{\alpha,\nu,\tilde{\nu}} \, | \, \nu,\tilde{\nu}\in \mc{T} \}\simeq \mc{V}_\alpha\otimes \bbar{\mc{V}}_\alpha \]
where the right identification is done by the map $\Psi_{\alpha,\nu,0}\otimes \Psi_{\alpha,0,\tilde{\nu}}\mapsto \Psi_{\alpha,\nu,\tilde{\nu}}$.
Proposition \ref{intertwining_for_descendants} then implies that ${\bf L}_n$ preserve $\mc{V}_\alpha$, $\tilde{\bf L}_n$ preserves $\bbar{\mc{V}}_\alpha$ and  both preserve $\mc{W}_\alpha$. This space $\mc{W}_\alpha$ can be identified with $\mc{H}_{\mc{T}}\otimes \mc{H}_{\mc{T}}$ by the linear isomorphism  $e_{\nu}\otimes e_{\tilde{\nu}}\mapsto \Psi_{\alpha,\nu,\tilde{\nu}}$
and the maps 
${\bf L}_n$ and $\tilde{\bf L}_n$ are then conjugated to $\ell_n^{\alpha}$ and $\tilde{\ell}_n^{\alpha}$, which implies that the commutations relations of 
the ${\bf L}_n,\tilde{{\bf L}}_m$ are the same as those of the free field Virasoro operators ${\bf L}^0_n,\tilde{{\bf L}}_m^0$:
\[ [\mathbf{L}_n,\mathbf{L}_m]=(n-m)\mathbf{L}_{n+m}+\frac{c_L}{12}(n^3-n)\delta_{n,-m}, \quad [\widetilde{\mathbf{L}}_n,\widetilde{\mathbf{L}}_m]=(n-m)\widetilde{\mathbf{L}}_{n+m}+\frac{c_L}{12}(n^3-n)\delta_{n,-m}\] 
on respectively $\mc{V}_\alpha$ and $\bbar{\mc{V}}_\alpha$, and both also hold on $\mc{W}_\alpha$ where in addition $[{\bf L}_n,\tilde{\bf L}_m]=0$.
 
When $\alpha=Q+iP$ with $P>0$, we define the scalar product on $\mc{W}_{Q+iP}$
\[ \cjg \Psi_{Q+iP,\nu,\tilde{\nu}}, \Psi_{Q+iP,\nu',\tilde{\nu}'} \cjd_{Q+iP}:= F_{Q+iP}(\nu,\nu')F_{Q+iP}(\tilde{\nu},\tilde{\nu}').
\]
By combining Proposition \ref{intertwining_for_descendants},  Theorem \ref{extensionPsialpha} and \cite[Corollary 6.5]{GKRV1}, we obtain the
\begin{theorem}\label{vermamodule}
For $\alpha\notin Q\pm (\frac{2}{\gamma}\N_0+\frac{\gamma}{2}\N_0)$, the operators $({\bf L}_n)_{n\in \Z}$ acting on the vector space $\mc{V}_\alpha$ give a representation of the Virasoro algebra ${\rm Vir}(c_L)$ with central charge $c_L=1+6Q^2$, $\mc{V}_\alpha$ is  
a Verma module associated to the highest weight state $\Psi_{\alpha}$, and this representation is equivalent to the representation 
of $(\ell_n^\alpha)_n$ on $\mc{H}_{\mc{T}}$ given in Section \ref{Vermafreefield}. 
The same holds for $(\tilde{\bf L}_n)_{n\in \Z}$ acting on $\bbar{\mc{V}}_\alpha$. 
Moreover, for $\alpha=Q+iP$ with $P>0$, these representations are unitary representations of the Virasoro algebra and are unitarily equivalent to the representation of $(\ell_n^{Q+iP})_n$ and $(\tilde{\ell}_n^{Q+iP})_n$ on $\mc{H}_T$. The family $({\bf L}_n, \tilde{\bf L}_m)_{n,m\in \Z}$ gives
 a representation of ${\rm Vir}(c_L)\oplus {\rm Vir}(c_L)$ into $\mc{W}_{\alpha}$, whose restriction to both ${\rm Vir}(c_L)\oplus \{0\}$ and $ \{0\}\oplus {\rm Vir}(c_L)$ is unitary when $\alpha=Q+iP$ for $P>0$.
Finally, one has the direct integral decomposition 
\[ L^2(\R\times \Omega_\T)= \frac{1}{2\pi}\int_{0}^\infty \mc{V}_{Q+iP}\otimes \bbar{\mc{V}}_{Q+iP} \, \dd P=\frac{1}{2\pi}\int_{0}^\infty \mc{W}_{Q+iP} \, \dd P\]
 in the sense that for all $u,u'\in L^2(\R\times \Omega_\T)$
 \[  \cjg u,u'\cjd_2=\frac{1}{2\pi} \sum_{\nu,\nu',\tilde{\nu},\tilde{\nu}'\in\mc{T}}\int_0^\infty \cjg u,\Psi_{Q+iP,\nu,\tilde{\nu}}\cjd_2 \cjg \Psi_{Q+iP,\nu',\tilde{\nu}'},u'\cjd_2 F_{Q+iP}^{-1}(\nu,\nu')F_{Q+iP}^{-1}(\tilde{\nu},\tilde{\nu}')\, \dd P.\]
\end{theorem}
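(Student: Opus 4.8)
The plan is to assemble Theorem \ref{vermamodule} from the pieces already in place, treating its four assertions (Verma module structure; unitarity on the critical line; the $\mathrm{Vir}(c_L)\oplus\mathrm{Vir}(c_L)$ representation on $\mc{W}_\alpha$; the direct integral decomposition) essentially as corollaries. First I would record that, by Theorem \ref{extensionPsialpha}, for $\alpha\notin Q\pm(\frac{2}{\gamma}\N+\frac{\gamma}{2}\N)$ each $\Psi_{\alpha,\nu,\tilde{\nu}}$ is a well-defined nonzero element of $e^{-\beta\rho}\mc{D}(\mc{Q})$, so $\mc{V}_\alpha$, $\bbar{\mc{V}}_\alpha$, $\mc{W}_\alpha$ are genuine (infinite-dimensional, graded) vector spaces; the grading is by $|\nu|$ (resp. $|\tilde\nu|$), since $\Psi_{\alpha,\nu,\tilde\nu}$ is an $\mathbf{H}$-eigenvector with eigenvalue $\frac{Q^2+P^2}{2}+|\nu|+|\tilde\nu|$ when $\alpha=Q+iP$ (and by analytic continuation the generalized eigen-relation persists). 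Then I would invoke Proposition \ref{intertwining_for_descendants} — now valid for all $\alpha\notin Q\pm(\frac{2}{\gamma}\N+\frac{\gamma}{2}\N)$ by the remark following Theorem \ref{extensionPsialpha} — to see that $\mathbf{L}_n$ maps $\Psi_{\alpha,\nu,\tilde{\nu}}$ into a finite linear combination of $\Psi_{\alpha,\nu',\tilde{\nu}}$ with $|\nu'|=|\nu|-n$, and symmetrically for $\tilde{\mathbf{L}}_n$. This simultaneously shows the spaces are preserved and that the linear map $\mc{I}_\alpha^{\mc{W}}:\mc{H}_{\mc{T}}\otimes\mc{H}_{\mc{T}}\to\mc{W}_\alpha$, $e_\nu\otimes e_{\tilde\nu}\mapsto\Psi_{\alpha,\nu,\tilde\nu}$, intertwines $\ell_n^\alpha$ with $\mathbf{L}_n$ and $\tilde\ell_n^\alpha$ with $\tilde{\mathbf{L}}_n$ (this is literally the content of \eqref{eq:intertwine_reps_ctd} compared with \eqref{def:ell_nalpha}).

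Next I would transport structure along $\mc{I}_\alpha^{\mc{W}}$. Since $\mc{I}_\alpha^{\mc{W}}$ is a bijection (the $\Psi_{\alpha,\nu,\tilde\nu}$ are linearly independent: their leading $c\to-\infty$ asymptotics $e^{iPc}\mc{Q}_{Q+iP,\nu,\tilde\nu}$ together with the fact that $\{\mc{Q}_{\alpha,\nu,\tilde\nu}\}$ is a basis of $\bigoplus_\ell\ker(\mathbf{P}-\ell)$ for generic $\alpha$ force independence — or one simply notes $\mc{I}_\alpha^{\mc{W}}$ factors through the isomorphism $\mc{I}_\alpha$ of Section \ref{Vermafreefield}), the Virasoro commutation relations $[\ell_n^\alpha,\ell_m^\alpha]=(n-m)\ell_{n+m}^\alpha+\frac{c_L}{12}(n^3-n)\delta_{n,-m}$ recorded in Section \ref{Vermafreefield} pull back to the same relations for $(\mathbf{L}_n)$ on $\mc{V}_\alpha$, those for $(\tilde\ell_n^\alpha)$ pull back for $(\tilde{\mathbf{L}}_n)$ on $\bbar{\mc{V}}_\alpha$, and $[\ell_n^\alpha,\tilde\ell_m^\alpha]=0$ gives $[\mathbf{L}_n,\tilde{\mathbf{L}}_m]=0$ on $\mc{W}_\alpha$. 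The highest-weight property $\mathbf{L}_n\Psi_\alpha=0$ for $n>0$ and $\mathbf{L}_0\Psi_\alpha=\Delta_\alpha\Psi_\alpha$ follows from Theorem \ref{descendantsandLn}(1) (or again from Proposition \ref{intertwining_for_descendants} with $\nu=\tilde\nu=0$, since $\ell_n^\alpha e_0=0$ for $n>0$). That $\mc{V}_\alpha$ is a \emph{Verma} module (and not merely a highest-weight module) — i.e. that for generic $\alpha$ the $\Psi_{\alpha,\nu,0}=\mathbf{L}_{-\nu}\Psi_\alpha$ are linearly independent, equivalently the Shapovalov form is nondegenerate — is exactly the generic irreducibility/nondegeneracy statement for $\mathrm{Vir}(c_L)$ with $c_L=1+6Q^2$ and conformal weight $\Delta_\alpha$; I would either cite \cite{Schottenloher} (Kac determinant away from degenerate weights, which is precisely the excluded set $Q\pm(\frac{2}{\gamma}\N+\frac{\gamma}{2}\N)$) or reduce it to the corresponding statement for $(\ell_n^\alpha)$ via $\mc{I}_\alpha$.

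For unitarity when $\alpha=Q+iP$, $P>0$: equip $\mc{V}_{Q+iP}$, $\bbar{\mc{V}}_{Q+iP}$, $\mc{W}_{Q+iP}$ with the sesquilinear form $\cjg\Psi_{Q+iP,\nu,\tilde\nu},\Psi_{Q+iP,\nu',\tilde\nu'}\cjd_{Q+iP}:=F_{Q+iP}(\nu,\nu')F_{Q+iP}(\tilde\nu,\tilde\nu')$. Under $\mc{I}_{Q+iP}^{\mc{W}}$ this is identified with $\cjg\cdot,\cdot\cjd_{Q+iP}\otimes\cjg\cdot,\cdot\cjd_{Q+iP}$ on $\mc{H}_{\mc{T}}\otimes\mc{H}_{\mc{T}}$, and \eqref{ellnadjoint} says $(\ell_n^{Q+iP})^*=\ell_{-n}^{Q+iP}$ there; transporting back gives $\mathbf{L}_n^*=\mathbf{L}_{-n}$ (and $\tilde{\mathbf{L}}_n^*=\tilde{\mathbf{L}}_{-n}$) with respect to $\cjg\cdot,\cdot\cjd_{Q+iP}$, which is the definition of a unitary Virasoro representation. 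Positive-definiteness of the form restricted to each graded piece $\mc{T}_n$ is the positivity of $F_{Q+iP}$ on $\mc{T}_n$ recorded after \eqref{descendant_general}. Finally, the direct integral decomposition: start from the Plancherel formula \eqref{diagonalisation} established in \cite{GKRV}, rewrite $\cjg u,\Psi_{Q+iP,\nu,\tilde\nu}\cjd_2\cjg\Psi_{Q+iP,\nu',\tilde\nu'},u'\cjd_2 F^{-1}_{Q+iP}(\nu,\nu')F^{-1}_{Q+iP}(\tilde\nu,\tilde\nu')$ summed over $\mc{T}$ as $\cjg u,u'\cjd$ integrated against $\cjg\cdot,\cdot\cjd_{Q+iP}^{-1}$-paired projections onto $\mc{W}_{Q+iP}$, and note that under the Hilbert-space completion of $(\mc{W}_{Q+iP},\cjg\cdot,\cdot\cjd_{Q+iP})$ (which is the unitary completion $\simeq\mc{V}_{Q+iP}\otimes\bbar{\mc{V}}_{Q+iP}$, the overline denoting the conjugate Hilbert space of antiholomorphic descendants) this reads $\cjg u,u'\cjd_2=\frac{1}{2\pi}\int_0^\infty\cjg u(P),u'(P)\cjd_{\mc{W}_{Q+iP}}\,\dd P$ with $u(P)$ the fiber component of $u$; the compatibility of the Virasoro actions with the fibration then follows because each $\Psi_{Q+iP,\nu,\tilde\nu}$ lies in the $P$-fiber and $\mathbf{L}_n,\tilde{\mathbf{L}}_n$ act fiberwise by the bounded-on-$\mc{D}(\mc{Q})$ estimates of Theorem \ref{theoremfreefieldintro}. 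Here I expect the main obstacle to be purely bookkeeping rather than conceptual: making the passage from the algebraic span $\mc{W}_{Q+iP}$ to its Hilbert completion and verifying that \eqref{diagonalisation} — which is stated with the inverse Shapovalov matrices $F^{-1}$ — is genuinely the direct integral over the completions $\mc{V}_{Q+iP}\otimes\bbar{\mc{V}}_{Q+iP}$ with their $F$-weighted inner products (a standard but slightly delicate "inverse Gram matrix vs. dual basis" manipulation), and invoking \cite[Corollary 6.5]{GKRV1} for the precise measurable-field structure; everything else is transport of structure along the explicit isomorphism $\mc{I}_\alpha^{\mc{W}}$.
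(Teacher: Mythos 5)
Your proposal is correct and follows essentially the same route as the paper, which obtains Theorem \ref{vermamodule} directly by combining Proposition \ref{intertwining_for_descendants}, Theorem \ref{extensionPsialpha} and the Plancherel/direct-integral input from \cite[Corollary 6.5]{GKRV1} together with \eqref{diagonalisation}. Your write-up simply makes explicit the transport of structure along $e_\nu\otimes e_{\tilde\nu}\mapsto\Psi_{\alpha,\nu,\tilde\nu}$ (commutation relations, highest-weight property, Shapovalov form and unitarity) that the paper leaves implicit, so there is nothing to correct.
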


\end{document}